\documentclass[a4paper,11pt]{article} 
\title{On Thermal Transpiration and Thermomolecular Pressure Difference} 
\author{
	I-Kun Chen\\
	\textit{Institute of Applied Mathematical Sciences, National Taiwan University}\\
	\quad\\
	Kai-Li Wang\\
	\textit{Institute of Mathematics, National Taiwan University}
}\date{\today} 
\usepackage[a4paper,margin=0.8in]{geometry} 
\usepackage{framed} 
\usepackage{graphicx} 
\usepackage{enumerate} 
\usepackage[utf8]{inputenc} 
\usepackage{CJKutf8} 
\usepackage{amsmath} 
\usepackage{amsfonts} 
\usepackage{pgf,tikz} 
\usetikzlibrary{arrows} 
\usepackage{float} 
\usepackage{mathtools} 
\usepackage{mathrsfs} 
\usepackage{amsthm} 
\usepackage{bm} 
\usepackage{multicol} 
\usepackage{verbatim} 
\usepackage{flushend,cuted} 
\usepackage{amssymb} 
\usepackage{makeidx} 
\usepackage{hyperref} 
\usepackage{titlesec}
\hypersetup
{
	colorlinks=true,
	linkcolor=blue,
	filecolor=magenta,      
	urlcolor=cyan,
	pdftitle={Overleaf Example},
	pdfpagemode=FullScreen,
}
\usepackage{type1cm} 
\usepackage{tabularx} 
\usepackage{array} 
\usepackage{catchfilebetweentags} 
\newcommand{\di}{\displaystyle} 
\newtheorem{theorem}{Theorem}[section]
\newtheorem{definition}{Definition}[section]
\newtheorem{proposition}{Proposition}[section]

\newtheorem{lemma}{Lemma}[section]
\newtheorem{remark}{Remark}[section]
\newtheorem{Assumption}{Assumption}[section]

\titleformat{\section}[block]
{\large\bfseries\filcenter} 
{Section\,\thesection.}{0.3em}{}
\titlespacing*{\section}{0pt}{0pt}{10pt}

\titleformat{\subsection}[block]
{\normalsize\bfseries} 
{\thesubsection.}{0.3em}{}

\begin{document}
	\maketitle
	\tableofcontents
	\begin{center}
		\textbf{Acknowledgments}
	\end{center}
	
	The authors would like to thank Professor Kazuo Aoki and Professor Tai-Ping Liu
	for their interest and insightful suggestions. The authors would also like to thank
	Professor Chun-Hsiung Hsia and Dr. Jhe-Kuan Su for their fruitful comments.
	The research of the first author is supported in part by NSTC Grant
	112-2115-M-002-009-MY3.
	
	\begin{abstract}
		In this article, we demonstrate the phenomenon of thermal transpiration in a bounded convex domain. We employ the stationary Boltzmann equation with a cutoff potential. For boundary condition, we partition the boundary into diffuse reflection and incoming regions. We establish the existence of solution in a weighted $L^\infty$ space. Furthermore, we consider a convex domain with diffuse reflection boundary condition in the middle and incoming boundary condition at the two ends.
		We first consider Maxwellians with the same pressure but different temperatures at the two ends, $T_1=1$ and $T_2>1$. We prove that the total flux $U(x)$ is directed toward the hot end. Furthermore, we derive an estimate for the total flux:
		\begin{align}
			U(x)\geq C\left(1-\frac{1}{\sqrt{T_2}}\right).
		\end{align}
		In addition, we show that when the pressures $P_1,P_2$  and temperatures $T_1,T_2$ on the two ends satisfy the relation
		\begin{align}
			\frac{P_1}{P_2}=\sqrt{\frac{T_1}{T_2}}
		\end{align}
		the total flux of the solution is of order $\mathcal{O}(\frac{1}{\kappa})$. This result is consistent with Knudsen's finding of thermomolecular pressure difference in 1909.
	\end{abstract}
	\textbf{Keyword} : Stationary Boltzmann equation, thermal transpiration, thermomocular pressure difference, mixed boundary condition.
\section{Introduction}\label{section 1}
	
	\subsection{Introduction}\label{subsection 1.1}
	
	\qquad The thermal transpiration is a phenomenon that temperature gradient  on the boundary induces a flow in a pipe \cite{Sone Y}. This phenomenon has many engineering applications, including the Crookes radiometer \cite{Crookes W 1874}, which can be used in energy harvesting \cite{Strongrich A Pikus A Sebastiao I B  Alexeenko A}; the Knudsen compressor, which pumps gas without any moving mechanical part \cite{Vargo S E Muntz E P Shiflett G R Tang W C 1999}; microelectromechanical systems \cite{Passian A Warmack R J Ferrell T L  Thundat T 2003}, and MEMS-based multistage Knudsen compressor, where
	the gas moves from a cold chamber to a hot chamber creating a pressure difference across the compressor element \cite{Han Y L Phillip Muntz E Alexeenko A Young M 2007}. Further applications and details can be found in \cite{Kandlikar S 2006}, \cite{Sone Y}.\par
	
	\qquad The thermal transpiration was first observed by Reynolds in 1879 \cite{Reynolds O 1879}. He observed that gas moves along a solid surface due to temperature differences. In the same year, Maxwell developed a theory to explain this effect \cite{Maxwell J C 1879}. Moreover, when considering two chambers connected by a tube, Knudsen showed that in the limit of infinitely small tube, the pressure between the two chambers satisfies
	\begin{align}\label{1.1}
		\frac{P_1}{\sqrt{T_1}}=\frac{P_2}{\sqrt{T_2}}
	\end{align}
	in equilibrium. This phenomenon is called the thermomolecular pressure difference. This relation is discussed in \cite{Knudsen M 1909},  \cite{Knudsen M 1910}. In an experiment, S. Chu Liang provided a more accurate expression of \eqref{1.1} in \cite{Liang S C 1953}, with further applications given in \cite{Bennett M J Tompkins F C 1957}, \cite{Takaishi T Sensui Y 1963}.\par
	\qquad In kinetic theory, the stationary Boltzmann equation serves as a fundamental  mathematical model for describing the behavior of rarefied gas. It reads
	\begin{align}\label{eqeq1.2}
		v\cdot\nabla_x F(x,v)=\frac{1}{\kappa}Q(F,F)(x,v), \quad\text{in}\quad \Omega\times\mathbb{R}^3. 
	\end{align}
	Here, $\kappa$ is the Knudsen number and $F$ represents the velocity distribution function of the gas molecules. The velocity distribution function is used to describe the number of molecules $\mathrm{d}N$ in a six-dimensional phase-space element 
	$\mathrm{d}\mathbf{X}\mathrm{d}\boldsymbol{\xi}$. That is, 
	\[
	\mathrm{d}N =F(\mathbf{X}, \boldsymbol{\xi})\, \mathrm{d}\mathbf{X}\mathrm{d}\boldsymbol{\xi}.
	\]
	The collision operator $Q$ that appears in \eqref{eqeq1.2} is defined as follows.
	\begin{align*}
		&Q(F,G)(x,v)\\
		&=\int_{\mathbb{R}^3}\int_{S^2}\left[F(v')G(u')+F(u')G(v')-F(v)G(u)-F(u)G(v)\right]B(|v-u|,\omega)d\omega du,
	\end{align*}
	where 
	\begin{align}
		v'=v+((u-v)\cdot\omega)\omega \quad\text{and}\quad u'=u-((u-v)\cdot\omega)\omega.
	\end{align}
	Note that $v',u'$ can also be expressed in the equivalent form
	\begin{align}
		v'=\frac{v+u}{2}+\frac{|v-u|}{2}\sigma\quad \text{and}\quad u'=\frac{v+u}{2}-\frac{|v-u|}{2}\sigma,
	\end{align}
	where $\sigma\in\mathbb{S}^2$.\par
	\qquad Consequently, stationary Boltzmann equation has been extensively applied to describe the gas distribution in the domains involving thermal transpiration. However, in engineering applications, obtaining numerical solutions requires substantial computational efforts, either
	deterministically \cite{Su W Zhu L Wang P Zhang Y W L}, \cite{Tcheremissine F 2005 May}, \cite{Wu L Reese J M Zhang Y 2014} or stochastically \cite{Bird G A 1994}, \cite{Radtke G A Hadjiconstantinou N G  Wagner W 2011}. Some numerical analysis of thermal transpiration employ the lattice Boltzmann model \cite{Tang G H Zhang Y H Gu X J Barber R W  Emerson D R 2009}, the BGK model \cite{Guo Y 2003}, \cite{Loyalka S K 1971}, \cite{Niimi H 1971} or the linearized Boltzmann Equation model \cite{Takata S Funagane H 2011}. In particular, for the linearized Boltzmann equation model, Kazuo Aoki, Yoshio Sone. have made many significant contributions \cite{A rarefied gas flow induced by a temperature field}, \cite{Ohwada T Sone Y Aoki K 1989}. Moreover, Sone provided an explicit form of the solution when the free transport equation is used to describe the thermal transpiration and thermomolecular pressure difference \cite{Sone Y}.\par
	
	\qquad Although there are many results on the numerical analysis of thermal transpiration using the linearized Boltzmann equation, there are only a few mathematical results. 
	In 2007, Chiun-Chuan Chen, I-Kun Chen, Tai-Ping Liu and Yoshio Sone solved this problem for gas between two plates or within an infinite long pipe under diffuse reflection boundary conditions. They showed that when the Knudsen number $\kappa$ is sufficiently large, the local flow between two plates was of the order log $\kappa$ and the flow in a pipe has finite positive velocity. Moreover, they proved that in both case, the local flux is proportional to the wall temperature gradient \cite{Chen C C Chen I K Liu T P Sone Y 2007}.
	This result is used to study 
	the boundary logarithmic singularity \cite{Chen I K Liu T P Takata S 2014},\cite{Takata S Funagane H 2011}.\par
	\qquad In this paper, we address the  problem in the case where the pipe has finite length, and we impose the incoming boundary condition at both ends with Maxwellians and diffusion boundary condition in the middle.\par
	\qquad In this article, we formulate the mixed boundary problem for the Boltzmann equation as follows:
	\begin{align}\label{eq 1.1}
		\left\{
		\begin{aligned}
			v\cdot\nabla_x F&=\frac{1}{\kappa}Q(F,F),&&\quad\text{in}\quad \Omega\times\mathbb{R}^3, \\
			F(x,v)&=F_{in}(x,v), &&\quad\text{on}\quad \partial\Omega^{-}_1,\\
			F(x,v)&=m_{T(x)}\int_{u\cdot n(x)>0}F(x,u)u\cdot n(x)du, &&\quad\text{on}\quad \partial\Omega^{-}_2,
		\end{aligned}
		\right.
	\end{align}
	where $\Omega$ is a bounded convex domain in $\mathbb{R}^3$. Here, $\partial\Omega_1$ and $\partial\Omega_2$ are subsets of $\partial\Omega$ and satisfy the following assumption
	
	\begin{Assumption}\label{assumption of boundary}
		\qquad
		\begin{enumerate}
			\item $\partial\Omega_1\cup\partial\Omega_2=\partial\Omega$.
			\item $\partial\Omega_1$ and $\partial\Omega_2$ are of positive measure.
			\item $\partial\Omega_1$ and $\partial\Omega_2$ are unions of connected components of  $\partial\Omega$.
			\item $\partial\Omega_1$ is relative open to $\partial\Omega$.
			\item $\overline{\partial\Omega_1}\cap\overline{\partial\Omega_2}$ consists of finitely many $C^2$ curves in $\mathbb{R}^3$.
		\end{enumerate}
	\end{Assumption}
	
	Here $m_T=\frac{1}{2\pi T^2}e^{-\frac{|v|^2}{2T}}$ and  $m_{T(x)}=\frac{1}{2\pi T^2(x)}e^{-\frac{|v|^2}{2T(x)}}$. Also, we denote $m=m_1$.
	Let $\partial E$ is a subset of  $\partial\Omega$. We define $\partial E^{-}, \partial E^{+}$ and $\partial E^{0}$ as follows. 
	\begin{align*}
		\partial E^{-}&=\{(x,v)\in\partial E\times\mathbb{R}^3 :\, n(x)\cdot v<0\},\\
		\partial E^{+}&=\{(x,v)\in\partial E\times\mathbb{R}^3 :\, n(x)\cdot v>0\},\\
		\partial E^{0}&=\{(x,v)\in\partial E\times\mathbb{R}^3 :\, n(x)\cdot v=0\},
	\end{align*}
	where $n(x)$ denotes the outward unit normal vector as $x\in\partial\Omega$. In this article, we always assume that $T(x)$ is positive and belongs to $ L^\infty(\partial\Omega)$. In this article, we consider that $\mathbf{B}(|v-u|,\omega)$ satisfies
	\begin{align}\label{eq 1.2}
		\mathbf{B}(|v-u|,\omega)=\mathfrak{B}|v-u|^\gamma\left(\frac{u-v}{|u-v|}\cdot \omega\right),\quad -3<\gamma\leq 1,
	\end{align}
	for some constant $\mathfrak{B}>0$. When we discuss the thermal transpiration, we define two parameters as
	\begin{align*}
		A&\coloneq\inf\{x_3\in\mathbb{R}^3 | (x_1,x_2,x_3)\in\Omega\},\\
		B&\coloneq\sup\{x_3\in\mathbb{R}^3 | (x_1,x_2,x_3)\in\Omega\}.
	\end{align*}
	Let $A<a<b<B$, and define
	\begin{align}
		\partial\Omega_1=\Big(\partial\Omega\cap\{z<a\}\Big)\cup\Big(\partial\Omega\cap\{z>b\}\Big).
	\end{align}
	Finally, we set the distribution of gas $F(x,v)$ in $\Omega$ which satisfies the following boundary value problem:
	\begin{align}\label{eq 1.3}
		\left\{
		\begin{aligned}
			v\cdot\nabla_x F&=\frac{1}{\kappa}Q(F,F),&&\text{in}\quad \Omega\times\mathbb{R}^3, \\
			F(x,v)&=\frac{1}{(2\pi)^{3/2}}e^{-\frac{|v|^2}{2}},&& \text{on}\quad \partial \Omega^-_{1,a}=\partial\Omega^-_1\cap\{x_3<a\},\\
			F(x,v)&=\frac{1}{(2\pi)^{3/2}T^{5/2}_2}e^{-\frac{|v|^2}{2T_2}},&&\text{on}\quad \partial \Omega^-_{1,b}=\partial\Omega^-_1\cap\{x_3>b\},\\
			F(x,v)&=m_T(x,v)\int_{u\cdot n(x)>0}F(x,u)u\cdot n(x)du,&&\text{on}\quad \partial \Omega^-_2=\partial \Omega^-\setminus\left(\partial \Omega^-_{1,a}\cup \partial \Omega^-_{1,b}\right).
		\end{aligned}
		\right.
	\end{align}
	Furthermore, we impose that $T(x_1,x_2,x_3)=1$ when $x_3\leq a$, and that $T(x_1,x_2,x_3)=T_2$ when $x_3\geq b$. We write the solution as a perturbation near the standard Maxwellian $M(v)=\frac{1}{\sqrt{2\pi}}m(v)$. Let $F=M+m^{1/2}f$ and $F_{in}=M+m^{1/2}f_{in}$. Then, the function $f$ solves the following boundary value problem:
	\begin{align}\label{eq 1.1.1}
		\left\{
		\begin{aligned}
			v\cdot\nabla_x f&=-\frac{1}{\kappa}L(f)+\frac{1}{\kappa}\Gamma(f,f),&&\quad\text{in}\quad \Omega\times\mathbb{R}^3, \\
			f(x,v)&=f_{in}(x,v), &&\quad\text{on}\quad \partial\Omega^{-}_1,\\
			f(x,v)&=m^{-1}m_{T(x)}P(f)+\frac{1}{\sqrt{2\pi}}m^{-1/2}(m_{T(x)}-m), &&\quad\text{on}\quad \partial\Omega^{-}_2,
		\end{aligned}
		\right.
	\end{align}
	where 
	\begin{align*}
		L(f)&=m^{-1/2}\Big(Q(M,m^{1/2}f)+Q(m^{1/2}f,M)\Big),\\
		\Gamma&=m^{-1/2}Q(m^{1/2}f,m^{1/2}f),\\
		P(f)(x,v)&=m^{1/2}\int_{u\cdot n(x,v)>0}f(x,u)m^{1/2}u\cdot n(x)du.
	\end{align*}
	The operator $L$ has two components: the multiplicative operator $\nu$ and integral operator $K$, that is , $L(f)=\nu(v)f-K(f)$. Their definition  can be found in \textbf{section \ref{section 2}} or \cite{Glassey Robert T 1996}.
	In what follows, we discuss the well-posedness of the boundary value problem \eqref{eq 1.1.1}. The integral form of $\eqref{eq 1.1.1}$ is
	\begin{align}
		f=\frac{1}{\kappa}S^\kappa_{\Omega}Kf+J^{\kappa}\left(f_{in}\textbf{1}_{\partial\Omega^{-}_1}+\left(m^{-1}m_{T(x)}P(f)+\frac{1}{\sqrt{2\pi}}m^{-1/2}(m_{T(x)}-m)\right)\textbf{1}_{\partial\Omega^{-}_2} \right)+\frac{1}{\kappa}S^\kappa_{\Omega}\Gamma(f,f).
	\end{align}
	Here, we define
	\begin{align}
		S_\Omega^\kappa f(x,v)&\coloneq\int_{0}^{\tau_{-}(x,v)}e^{-\frac{1}{\kappa}\nu(v)s}f(x-sv,v)ds,\\
		J^\kappa f&\coloneq e^{-\frac{1}{\kappa}\nu(v)\tau_{-}(x,v)}f(q(x,v),v),
	\end{align}
	where
	\begin{align}
		\tau_{-}(x,v)=\sup\{t\, | \, x-tv\in\overline{\Omega}\} \quad\text{and}\quad q(x,v)=x-\tau_{-}(x,v)v.
	\end{align}
	For convenience, we define
	\begin{align}
		\langle f, g\rangle&=\int_{\Omega\times\mathbb{R}^3}fgdxdv,\quad \langle f, g\rangle_v=\int_{\mathbb{R}^3}fgdv,\\
		\langle f, g\rangle_{\partial^-_2}&=\int_{\partial\Omega^-_2}fg |v\cdot n(x)|dxdv.
	\end{align}
	To state our main theorem, we introduce some function spaces. For $\alpha\geq 0$, $\beta\in\mathbb{R}$ and $p\geq 1$, we say $f\in L^p_{\alpha,\beta}(\Omega\times\mathbb{R}^3)$ if
	\begin{align*}
		\begin{aligned}
			\lVert f\rVert_{L^p_{\alpha,\beta}}=\lVert f\rVert_{L^p_{\alpha,\beta}(\Omega\times\mathbb{R}^3)}
			&\coloneq\left(\int_{\Omega}\int_{\mathbb{R}^3}\left|f(x,v)e^{\alpha|v|^2}(1+|v|)^{\beta}\right|^pdvdx\right)^{1/p}<\infty \quad &&\text{for }p<\infty,\\
			\lVert f\rVert_{L^\infty_{\alpha,\beta}}=\lVert f\rVert_{L^\infty_{\alpha,\beta}(\Omega\times\mathbb{R}^3)}&\coloneq \underset{{(x,v)\in\Omega\times\mathbb{R}}}{\text{ess sup}}\left|f(x,v)e^{\alpha|v|^2}(1+|v|)^{\beta}\right|<\infty &&\text{for } p=\infty.
		\end{aligned}
	\end{align*}
	Moreover, we say $f\in L^p_{\alpha,\beta}(\partial E^-)$  for some $\partial E\subset\partial\Omega$, if
	\begin{align*}
		\begin{aligned}
			\lVert f\rVert_{L^p_{\alpha,\beta}(\partial E^-)}
			&\coloneq\left(\int_{\partial E^-}\left|f(x,v)e^{\alpha|v|^2}(1+|v|)^{\beta}\right|^p|v\cdot n(x)|dSdv\right)^{1/p}<\infty \quad &&\text{for }p<\infty,\\
			\lVert f\rVert_{L^\infty_{\alpha,\beta}(\partial E^-)}&\coloneq \underset{{(x,v)\in\partial E^-}}{\text{ess sup}}\left|f(x,v)e^{\alpha|v|^2}(1+|v|)^{\beta}\right|<\infty &&\text{for } p=\infty,
		\end{aligned}
	\end{align*}
	where $dS$ denotes the surface measure on $\partial\Omega$. For convenience, we write $L^p(A)=L^p_{0,0}(A)$ when $A=\Omega\times\mathbb{R}^3\, \text{or }\partial E^-\subseteq\partial\Omega\times\mathbb{R}^3$. Throughout this article, we denote $f(x)=\mathcal{O}(g(x))$ when $\lim_{x\to\infty}\frac{f(x)}{g(x)}$ is a constant.\par
	\qquad We now introduce the concept of an unsealed set $\Omega$ in $\mathbb{R}^3$. The term \emph{unseal} describes the visibility of $\partial\Omega_1$ from points on $\partial\Omega$. If every point $x$ can directly see the $\partial\Omega_1$, then we call $\Omega$ a \textbf{completely unsealed set}. If, for every $x\in\partial\Omega$, the $\partial\Omega_1$ becomes visible after tracing back $m$ times for some fixed integer $m$, then we call $\Omega$ an \textbf{unsealed set}. The precise definition is in \textbf{section \ref{section 4}}.
	\begin{theorem}\label{theorem 1.1}
		Let $\Omega\subset\mathbb{R}^3$ be a bounded convex domain with $C^2$ boundary. Suppose that $\Omega$ is unsealed with respect to $\partial\Omega_1$ and satisfies the \textbf{assumption \ref{assumption of boundary}}. Assume that $-3<\gamma\leq 1$ and that \eqref{eq 1.2} holds. Then, for any $0< \alpha <1/4$ and $\kappa_0>0$, there exists $\delta>0$ which depends on $\alpha$, $\Omega$, $\partial\Omega_1$, and $\kappa_0$ such that if 
		\begin{align}
			\lVert m^{-1/2}\left(F_{in}-M\right)\rVert_{L^\infty_{a,0}(\partial\Omega^-_1)}\leq \delta
		\end{align}
		and 
		\begin{align}
			|T(x)-1|\leq \delta,\quad \forall x\in \Omega_2
		\end{align}
		then the equation \eqref{eq 1.1} has a unique mild solution for $\kappa\geq \kappa_0$. Furthermore,
		\begin{align}\label{eq 1.9}
			&\lVert m^{-1/2}\left(F-M\right)\rVert_{L^\infty_{\alpha,0}}+\lVert m^{-1/2}\left(F-M\right)\rVert_{L^\infty_{\alpha,0}(\partial\Omega^+_2)}\\\nonumber
			&\leq 2C_{\zeta}\Big(\left\lVert m^{-1/2}\left(F_{in}-M\right)\right\rVert_{L^\infty_{\alpha,0}(\partial\Omega^-_1)}+\left\lVert m^{-1/2}(m_{T(x)}-m)\right\rVert_{L^\infty_{\alpha,0}(\partial\Omega^-_2)}\Big).
		\end{align}
	\end{theorem}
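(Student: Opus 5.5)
We will follow the standard $L^\infty$ approach for the stationary Boltzmann equation in the large Knudsen number regime: first solve the linear problem with a source and mixed boundary data, then close the nonlinear problem by a contraction argument. We work with the weighted unknown $h=e^{\alpha|v|^2}f$, so that $K$ becomes $K_\alpha h=e^{\alpha|v|^2}K(e^{-\alpha|v|^2}h)$. For $\alpha<1/4$ this operator has an integrable kernel, and $\sup_v\int|k_\alpha(v,u)|du\le C_\alpha$ holds uniformly; this is the familiar Grad-type estimate that underlies the whole argument. The mild formulation above then reads
\begin{align*}
h=\frac{1}{\kappa}S^\kappa_\Omega K_\alpha h+J^\kappa\big(h_{in}\mathbf{1}_{\partial\Omega^-_1}+e^{\alpha|v|^2}m^{-1}m_{T}P(f)\mathbf{1}_{\partial\Omega^-_2}+g_T\mathbf{1}_{\partial\Omega^-_2}\big)+\frac{1}{\kappa}S^\kappa_\Omega e^{\alpha|v|^2}\Gamma(f,f),
\end{align*}
where $g_T$ is the weighted inhomogeneous diffuse term, whose norm is controlled by $\|m^{-1/2}(m_T-m)\|_{L^\infty_{\alpha,0}}=O(\delta)$.

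The first and central step is an a priori estimate for the linear problem obtained by dropping $\Gamma$ and replacing it with a given source. The difficulty is that the diffuse operator $J^\kappa m^{-1}m_T P$ is not a contraction: it has norm close to $1$, because it conserves flux. We handle this with the unsealed hypothesis from Section \ref{section 4}. Iterating the boundary relation along backward characteristics, each diffuse bounce integrates over outgoing velocities, and after at most $m$ bounces a fixed positive proportion $\zeta>0$ of the measure of backward trajectories reaches $\partial\Omega_1$. On $\partial\Omega_1$ the data are prescribed, so no mass returns from there. This yields an estimate of the form
\begin{align*}
\|(J^\kappa m^{-1}m_TP)^m\|_{L^\infty_\alpha\to L^\infty_\alpha}\le 1-\zeta+C\delta .
\end{align*}
Proving this requires three ingredients: the convexity and $C^2$ regularity of $\partial\Omega$, a change of variables from velocity to boundary point, namely $dv\,|n\cdot v| \sim dS\,d|v|$, and the fact that $T$ is close to $1$. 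We also need the damping factor $e^{-\nu\tau_-/\kappa}$ to stay bounded below when $\kappa\ge\kappa_0$, which holds because $\Omega$ is bounded; grazing velocities have measure zero and the $\partial\Omega^0$ set is negligible by part 5 of Assumption \ref{assumption of boundary}. Summing the Neumann series in blocks of length $m$ gives a bounded solution operator for the boundary part, with constant $C_\zeta\sim m/\zeta$.

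Next we treat the $K$ term. We have $\|\frac1\kappa S^\kappa_\Omega K_\alpha\|\le C_\alpha\,\mathrm{diam}(\Omega)\sup_v|v|^{-1}\ldots$ only after handling small velocities. For $\kappa\ge\kappa_0$ with $\kappa_0$ arbitrary, smallness of this term is not automatic, so we use the double Duhamel iteration, $K$ composed with itself along characteristics. We split into the regions of large velocity, small $s$, and near-grazing, plus a compact remainder. The remainder is controlled by the $L^2$ or $L^1$ flux argument via conservation of mass, where the mass-flux constraint is again fixed by the incoming ends. This step, combining the unsealed iteration with the $K$ iteration in a single $L^\infty$ bound with constant $C_\zeta$, is where we expect the main obstacle. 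If $\kappa_0$ were large we could simply absorb the $K$ term; for general $\kappa_0$ we would first try the $L^2$-coercivity of $L$ on the orthogonal complement of the null space, together with the boundary dissipation $\langle (1-P)f,(1-P)f\rangle_{\partial^+_2}$, and then upgrade to $L^\infty$ by the $L^2$--$L^\infty$ bootstrap.

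Finally, we use the bilinear estimate $\|\nu^{-1}e^{\alpha|v|^2}\Gamma(f,g)\|_{L^\infty}\le C\|h\|_{L^\infty}\|h_g\|_{L^\infty}$, valid for $-3<\gamma\le1$; soft potentials need the usual treatment with the weight $\nu$. Together with the linear bound this shows that the map $h\mapsto$ (linear solution with source $\Gamma$) sends a ball of radius $2C_\zeta(\|h_{in}\|+\|g_T\|)$ into itself and is a contraction when $\delta$ is small. The fixed point gives existence and uniqueness, the bound \eqref{eq 1.9} on $\Omega\times\mathbb{R}^3$, and the trace bound on $\partial\Omega^+_2$, which follows by evaluating the mild formula at boundary points.
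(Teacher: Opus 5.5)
\textbf{Verdict: genuine gap.} Your architecture matches the paper's: iterate the diffuse bounces using the unsealed property, handle $K$ by a double Duhamel expansion, close by an $L^2$--$L^\infty$ bootstrap, then run a contraction for $\Gamma$. But the step you flag as the main obstacle is where the proof actually lives, and what you sketch there does not close.

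For general $\kappa_0$ you cannot absorb the $K$ term in $L^\infty$. The operator $\frac{1}{\kappa}S^\kappa_\Omega K_\alpha$ has $L^\infty$ norm bounded below uniformly in $\kappa$, because velocities with $|v|\lesssim\mathcal{D}/\kappa$ feel the full collision term. After the double Duhamel and unsealed iterations, the compact remainder (bounded velocities, $|v-u|$ and $|u-w|$ away from $0$, $s$ away from $0$) is controlled, as in the paper's Lemma 3.2, only through the change of variables $y=x-tv-su$ (or $y=q(x,v)-tv_1$ when a diffuse bounce intervenes). That bounds it by $C_\epsilon\min\{1,\kappa^{-1}\}\lVert h/w\rVert_{L^2_{0,\gamma/2}}$, so you need an actual $L^2$ bound for the linear mixed problem.
\begin{itemize}
\item Conservation of mass gives one scalar constraint on the flux and controls nothing of this kind.
\item Coercivity of $L$ plus boundary dissipation is not enough. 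Green's identity controls only $\kappa^{-1}\lVert (I-P_L)f\rVert^2_{L^2_{0,\gamma/2}}$, $\lVert f\rVert^2_{L^2(\partial\Omega^+_1)}$ and $\lVert (I-P)f\rVert^2_{L^2(\partial\Omega^+_2)}$. The macroscopic part $P_Lf$ and the trace $P(f)$ on $\partial\Omega^+_2$ remain uncontrolled.
\item Once $K$ is included, a Neumann series no longer gives \emph{existence} of the linear solution.
\end{itemize}

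The missing ingredients are the paper's Proposition 2.7 and Theorem 3.1.
\begin{itemize}
\item One bounds $\lVert P_Lf\rVert_{L^2_{0,\gamma/2}}$ by the microscopic part, the source and the boundary terms, by testing against functions built from elliptic solutions.
\item For the density component $\langle f,\Phi_0\rangle_v$ the mixed problem has no zero-mass normalization, so the Neumann problem used for pure diffuse reflection is not solvable. The paper instead takes $\psi=(|v|^2-\beta)v\cdot\nabla_x\phi\,\Phi_0$ with $-\Delta\phi=\langle f,\Phi_0\rangle_v$ and the Robin condition $\chi\phi+\partial_n\phi=0$.
\item Here $\chi=0$ on $\partial\Omega_2$, so the contribution of $Pf=z(x)\Phi_0$ vanishes by $\partial_n\phi=0$ and oddness. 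Also $\chi>0$ on a positive-measure part of $\partial\Omega$, which gives coercivity and the $H^2$ bound of Section 6.
\item The trace $\lVert P(f)\rVert_{L^2(\partial\Omega^+_2)}$ is then recovered from the Ukai trace lemma on the non-grazing set $\gamma^\epsilon$.
\item Existence comes from the penalized iteration in $\varpi,\eta$ together with compactness of $S^\kappa_\Omega K$.
\end{itemize}
The resulting bound $\lVert f\rVert_{L^2_{0,\gamma/2}}\lesssim\max\{1,\kappa\}(\cdots)$, multiplied by the factor $\min\{1,\kappa^{-1}\}$ from Lemma 3.2, is what makes the $L^\infty$ bound uniform in $\kappa\ge\kappa_0$.

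Two smaller points:
\begin{itemize}
\item The paper keeps the isothermal $P$ in the linear operator and puts $m^{-1}(m_T-m)P(f_k)$ into the boundary source of the nonlinear iteration. This preserves the exact identity $\lVert Pf\rVert_{L^2(\partial\Omega^-_2)}=\lVert Pf\rVert_{L^2(\partial\Omega^+_2)}$ that the energy argument relies on.
\item Your one-block bound $1-\zeta+C\delta$ in $L^\infty_\alpha$ misses the prefactor $\sup_v w m^{1/2}\int w^{-1}m^{1/2}|u\cdot n|\,du$, which exceeds $1$ for $\alpha<1/4$. What holds is a bound $C\theta^k$ with $\theta<1$ after $k$ blocks, which is enough.
\end{itemize}
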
	
	\qquad Notice that, when $\kappa>\kappa_0$, $\delta$ and \eqref{eq 1.9} is independent of $\kappa$. This provides a foundation to discuss thermal transpiration and thermomolecular pressure difference.

	\qquad For $x=(x_1,x_2,z)\in\Omega$, the local flux $u(x)$ and the total flux $U(z)$ in the $x_3$-direction are defined as follows:
	\begin{align}\label{eq 1.15}
		u(x)\coloneq\int_{\mathbb{R}^3}v_3F(x,v)dv,\quad U(z)\coloneq \int_{\{(x_1,x_2,z)\in\Omega\}}u(x) dS.
	\end{align}
	By conservation law, the total flux is a constant with respect to $z$ when $z\in [a,b]$.
	\begin{theorem}\label{theorem 1.2}
		Let $\Omega\subset\mathbb{R}^3$ be a bounded domain with $C^1$ boundary such that $\Omega\cap\{a\leq x_3\leq b\}$ is convex. Suppose for every $\kappa>\kappa_0>0$, there exists a constant $\mathcal{C}$ such that \eqref{eq 1.3} admits a mild solution with the following estimate:
		\begin{align}
			\lVert F\rVert_{L^\infty_{0,\beta}(\Omega\times\mathbb{R}^3)}\leq \mathcal{C},
		\end{align}
		where $\beta>\max\{\gamma,0\}+5$. Then, there exists $\kappa_1>\kappa_0$ such that the total flux $U(x_3)$ of $F$ is positive whenever $\kappa\geq\kappa_1$. Moreover,
		\begin{align}
			U(x_3)\geq C\left(1-\frac{1}{\sqrt{T_2}}\right).
		\end{align}  
	\end{theorem}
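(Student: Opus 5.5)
Our plan is to compare $F$ with the solution of the free transport problem ($\kappa=\infty$). Write the mild formulation of \eqref{eq 1.3} along characteristics:
\[
F(x,v)=F_b(q(x,v),v)+\frac1\kappa\int_0^{\tau_-(x,v)}Q(F,F)(x-sv,v)\,ds,
\]
where $F_b$ denotes the boundary value at $q(x,v)$. On $\partial\Omega_{1,a}^-$ and $\partial\Omega_{1,b}^-$ it is the prescribed Maxwellian; on $\partial\Omega_2^-$ it is $m_T$ times the outgoing flux, itself evaluated through the same formula. Since $U$ is constant for $z\in[a,b]$, we may compute it on one convenient cross section. We split $F=F^0+R$, where $F^0$ solves the free transport problem $v\cdot\nabla_x F^0=0$ with the same mixed boundary conditions, and $R$ collects every term containing a factor $\frac1\kappa Q$. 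The hypothesis $\lVert F\rVert_{L^\infty_{0,\beta}}\le\mathcal C$ with $\beta>\max\{\gamma,0\}+5$ gives a pointwise bound on the collision term. Indeed, $|Q(F,F)(x,v)|\lesssim \mathcal C^2(1+|v|)^{\max\{\gamma,0\}-\beta}$: the gain part is handled by the standard weighted estimate and the loss part by $\nu(v)\lesssim(1+|v|)^\gamma$, using integrability of $(1+|u|)^{-\beta}$. Since $\tau_-(x,v)\le \mathrm{diam}(\Omega)/|v|$, the remainder satisfies $|R(x,v)|\lesssim \kappa^{-1}|v|^{-1}(1+|v|)^{\max\{\gamma,0\}-\beta}$, including the parts propagated through the diffuse reflections, whose kernel is bounded. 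Then $\int|v_3||R|\,dv\lesssim \kappa^{-1}$, because $\beta>\max\{\gamma,0\}+4$ ensures integrability; the extra margin in $\beta>\cdots+5$ absorbs the additional $|v|$ weights coming from the boundary flux integrals. Consequently $U=U^0+\mathcal O(\kappa^{-1})$.

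The core step is to show that the free-transport total flux satisfies $U^0\ge c\,(1-T_2^{-1/2})$ with $c>0$ depending only on the geometry. For free transport with diffuse walls this becomes a linear problem on the wall flux density $\phi(x)=\int_{u\cdot n>0}F^0(x,u)\,u\cdot n\,du$. It satisfies an integral equation $\phi=\phi_{a}+\phi_{b}+\mathcal K\phi$, where $\mathcal K$ is the view-factor kernel between wall points, which is symmetric and sub-stochastic because of the openings. Note that the flux of the inflow Maxwellian $F_{in,b}$ through a surface element is proportional to $\rho_2\sqrt{T_2}=T_2^{-5/2}\cdot T_2^{3/2}\cdot T_2^{1/2}$ under the normalization in \eqref{eq 1.3}. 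This is the pressure $P=\rho T$ divided by $\sqrt{T}$. With equal pressures, the flux emitted from the hot end is therefore proportional to $1/\sqrt{T_2}$, while the cold end emits flux proportional to $1$.

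Diffuse reflection and free transport preserve particle number but erase the temperature information in the angular distribution; only the number flux is carried. Hence $U^0$ equals the net number flux, given by a transmission-probability formula,
\[
U^0=W\left(\Phi_a-\Phi_b\right),\qquad \Phi_a\propto 1,\ \Phi_b\propto T_2^{-1/2}.
\]
Here $W>0$ is the symmetric Clausing-type transmission factor. Its symmetry follows from reciprocity of the view-factor kernel, and its positivity from the unsealed/convexity assumption, since some lines join the two ends. This yields $U^0=c(1-T_2^{-1/2})$. We expect this identification, especially proving that the wall temperature profile $T(x)$ does not enter $U^0$ and establishing reciprocity rigorously via a Neumann series for $(I-\mathcal K)^{-1}$, to be the main obstacle.

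Finally, we choose $\kappa_1$ so large that the $\mathcal O(\kappa^{-1})$ error is at most $\frac c2(1-T_2^{-1/2})$, which gives the claimed bound with $C=c/2$. One caveat: this step requires $T_2$ fixed with $T_2>1$; otherwise the bound must be read with a constant allowed to depend on $T_2$.
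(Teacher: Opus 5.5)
Your plan is correct in outline, but the core step is organized differently from the paper's proof. The paper does not subtract the free-transport solution with the given data. Instead it subtracts the explicit state $m_{T_q}(x,v)=(2\pi)^{-3/2}T(q(x,v))^{-2}e^{-|v|^2/(2T(q(x,v)))}$, which is the free-transport solution for the $P/\sqrt{T}$-matched data of \eqref{eq 1.4}. Since $\int_0^\infty m_T(\rho\hat v)\rho^3\,d\rho=1/\pi$ for every $T$, this state satisfies the diffuse condition for any wall profile $T(x)$ and has $\int v_3 m_{T_q}\,dv=0$.

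The remainder $f=F-m_{T_q}$ then has zero data at the cold end and a single-signed deficit at the hot end, namely $-C_{T_2}m_{T_2}$ with $C_{T_2}\propto 1-T_2^{-1/2}$. The paper traces back $n+1$ reflections and evaluates the flux on the cold-end cross-section $x_3=a$. There, rays with $v_3>0$ trace back to the cold end (zero data), while rays with $v_3<0$ carry nonpositive propagated data. Hence every free-transport contribution is nonnegative and the direct one is strictly positive. The collision terms are $O(1/\kappa)$ by Proposition \ref{proposition 2.9}, and the truncation remainder is $O(\zeta^{n+1})$.

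In your language, $m_{T_q}$ is the uniform-radiance equilibrium, and it removes both obstacles you anticipate. The wall temperature drops out at once. Moreover, the uniform state has zero flux, so $U_a+U_b=0$ for the unit-emission responses of the two ends. That is exactly your reciprocity $W_{ab}=W_{ba}$, obtained without view-factor symmetry or an analysis of $(I-\mathcal K)^{-1}$.

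What each route buys: yours gives an exact, untruncated formula $U^0=W(\Phi_a-\Phi_b)$ and a clean separation of the $O(1/\kappa)$ error. The paper's gives a one-line sign argument. Both yield Theorem \ref{corollary 1.1} immediately.

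Two points you should tighten:
\begin{enumerate}
\item Summing the reflected parts of $R$, and even defining $F^0$ uniquely, needs more than a bounded or sub-stochastic kernel. You need a uniform escape bound $\sup_{x\in\partial\Omega_2}\int_{u\cdot n(x)>0}\mathbf{1}_{\partial\Omega_2}(q(x,u))\,m(u)\,u\cdot n(x)\,du\le\zeta<1$ (or the same for some iterate). This is complete unsealedness, which the paper proves in Lemmas \ref{lemma 5.1} and \ref{lemma 5.2} by continuity and compactness.
\item The collision bound you actually need is the integrated one, $\int|v|\,|S^\kappa_\Omega Q_\pm(F,F)|\,dv\lesssim\lVert F\rVert^2_{L^\infty_{0,\beta}}$, as in Proposition \ref{proposition 2.9}. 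A sharp pointwise decay rate for $Q_+$ is not required.
\end{enumerate}
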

	\qquad In Knudsen's experiment \cite{Knudsen M 1909}, he connects two chambers with different temperature $T_1,T_2$ with a narrow tube. He measures the pressures in two tanks $P_1,P_2$ at equilibrium when there is no flux in the tube. He observes in the limit of infinitely small tubing $\frac{P_1}{\sqrt{T_1}}=\frac{P_2}{\sqrt{T_2}}$.
	For the free transport equation case,  
	the relation is exact \cite{Sone Y}. In our case, we obtain the same result by considering the following boundary value problem:
	\begin{align}\label{eq 1.4}
		\left\{
		\begin{aligned}
			v\cdot\nabla_x F&=\frac{1}{\kappa}Q(F,F),&&\text{in}\quad \Omega\times\mathbb{R}^3, \\
			F(x,v)&=\frac{1}{(2\pi)^{3/2}}e^{-\frac{|v|^2}{2}},&& \text{on}\quad \partial \Omega^-_{1,a}=\partial\Omega^-_1\cap\{x_3<a\},\\
			F(x,v)&=\frac{1}{(2\pi)^{3/2}T^2_2}e^{-\frac{|v|^2}{2T_2}},&&\text{on}\quad \partial \Omega^-_{1,b}=\partial\Omega^-_1\cap\{x_3>b\},\\
			F(x,v)&=m_T(x,v)\int_{v\cdot n(x)>0}F(x,v)v\cdot n(x)dv,&&\text{on}\quad \partial \Omega^-_2=\partial \Omega^-\setminus\left(\partial \Omega^-_{1,a}\cup \partial \Omega^-_{1,b}\right).
		\end{aligned}
		\right.
	\end{align}
	\begin{theorem}\label{corollary 1.1}
		Let $\Omega\subset\mathbb{R}^3$ be a bounded domain with $C^1$ boundary such that $\Omega\cap\{a\leq x_3\leq b\}$ is convex. Suppose that for every $\kappa>\kappa_0$, equation \eqref{eq 1.4} admits a mild solution bounded in $L^\infty_{0,\beta}$ by a constant independent of $\kappa$, where $\beta>\max\{\gamma,0\}+5$. Then, the total flux $U(x_3)$ of $F$ satisfies the following estimate:
		\begin{align}
			|U(x_3)|=\mathcal{O}(\frac{1}{\kappa}).
		\end{align}
	\end{theorem}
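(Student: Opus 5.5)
We compare $F$ with the solution $F^0$ of the free transport problem, i.e. \eqref{eq 1.4} with the collision term removed, and show that $F^0$ carries exactly zero total flux. The collision term then contributes only $\mathcal{O}(1/\kappa)$.

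\emph{Step 1: the collisionless problem has zero flux.} Let $F^0$ solve $v\cdot\nabla_x F^0=0$ in $\Omega\times\mathbb{R}^3$ with the same boundary data as \eqref{eq 1.4}.
\begin{itemize}
\item The data on $\partial\Omega^-_{1,a}$ is $m_1=\frac{1}{(2\pi)^{3/2}}e^{-|v|^2/2}$. Its wall flux is $\int_{u\cdot n>0}m_1\, u\cdot n\,du=\frac{1}{(2\pi)^{3/2}}\cdot 2\pi=(2\pi)^{-1/2}$.
\item With the normalization $\frac{1}{(2\pi)^{3/2}T_2^2}e^{-|v|^2/(2T_2)}$ at the hot end, the outgoing half-flux is $\frac{1}{(2\pi)^{3/2}T_2^2}\cdot 2\pi T_2^2=(2\pi)^{-1/2}$. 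This is exactly the condition $P_1/\sqrt{T_1}=P_2/\sqrt{T_2}$.
\item Let $\rho(x)=\sqrt{2\pi}\int_{u\cdot n>0}F(x,u)\,u\cdot n\,du$. We claim $F^0$ is given by the diffuse ansatz: on each incoming boundary point, $F^0(x,v)=\rho(x)\,m_{T(x)}/\sqrt{2\pi}$ with $\rho\equiv1$.
\item To verify the ansatz, note that the diffuse condition $F=m_T\int F u\cdot n$ reproduces the same normalized half-Maxwellian whenever the total incoming flux at every wall point equals $(2\pi)^{-1/2}$. Every emitter (both ends and the walls) emits with the same flux normalization, and $\int m_{T}\,|u\cdot n|\,du$ is independent of $T$ after normalization. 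Hence the flux arriving at a wall point is $(2\pi)^{-1/2}$ times the total view-factor, which is $1$ by convexity of the middle section.
\end{itemize}
Uniqueness of this fixed point then follows from the unsealed contraction argument used for Theorem \ref{theorem 1.1}.

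\emph{Step 1, computing $U$ for $F^0$.} Compute the total flux across a cross-section $z\in[a,b]$ through the boundary. Let $D=\Omega\cap\{z'<z\}$. By the divergence theorem, $U(z)$ equals minus the net mass flux out of the remaining boundary of $D$.
\begin{itemize}
\item On the diffuse wall the net flux vanishes pointwise.
\item On $\partial\Omega_{1,a}$ the net flux is $\int_{\partial\Omega_{1,a}}\big[(2\pi)^{-1/2}-\int_{v\cdot n>0}F^0\, v\cdot n\big]dS$. Using the reciprocity of view factors (the kernel $\frac{(n_x\cdot\omega)(n_y\cdot\omega)}{\pi|x-y|^2}$ is symmetric), this equals the flux sent from the $a$-end to the $b$-end minus the flux sent back.
\item Both ends emit at the same normalized rate. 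The reciprocity relation $\int_{E_a}\!\int_{E_b}$ is symmetric, and the multiple wall reflections define a symmetric (self-adjoint with respect to $dS$) transmission operator.
\end{itemize}
Hence $U^0\equiv0$. This is Sone's free-molecular result, which we would reprove in this geometric form.

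\emph{Step 2: perturbation.} Write $F=F^0+G$. Then
\[
v\cdot\nabla_x G=\frac1\kappa Q(F,F),
\]
and $G$ satisfies homogeneous incoming data on $\partial\Omega_1$ and the linear diffuse condition $G=m_T\int_{u\cdot n>0} G\,u\cdot n\,du$ on $\partial\Omega_2$.
\begin{itemize}
\item The source is bounded: $|Q(F,F)|\le C\|F\|_{L^\infty_{0,\beta}}^2(1+|v|)^{\gamma-\beta}$ for $\beta>\max\{\gamma,0\}+5$. Here we use $\beta>5$ to make the gain and loss integrals converge. Since $\|F\|_{L^\infty_{0,\beta}}\le\mathcal{C}$ uniformly in $\kappa$, we get $\|Q(F,F)\|_{L^\infty_{0,\beta-\gamma}}\le C$.
\item Integrating along characteristics gives a first piece of size $\frac1\kappa\,\mathrm{diam}(\Omega)\, C(1+|v|)^{\gamma-\beta}|v|^{-1}$. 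The $|v|^{-1}$ singularity is integrable against $v_3\,dv$.
\item Each diffuse reflection is followed by transport. The total flux of $G$ across the section is the flux of this source-driven transport solution, which is bounded by $\frac{C}{\kappa}$ times the norm of the resolvent $(I-\mathcal{K})^{-1}$ of the diffuse-reflection-with-absorption-at-ends operator.
\end{itemize}

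\emph{Main obstacle.} The hard part is bounding the resolvent $(I-\mathcal{K})^{-1}$ independently of $\kappa$. Since $\partial\Omega_1$ has positive measure and is visible after finitely many reflections, some fixed power $\mathcal{K}^m$ has norm less than $1$ in weighted $L^\infty$. We would prove this as in the unsealed argument: a fixed fraction of the flux escapes through the ends every $m$ reflections. Summing the Neumann series then gives $|U|\le C/\kappa$.
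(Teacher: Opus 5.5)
Your proposal is correct and is essentially the paper's argument. The paper also writes $F=m_{T_q}+f$, with $m_{T_q}$ the free-transport solution built from the $T^{-2}$-normalized Maxwellians, and notes that this part carries no flux. It then traces $f$ back through the diffuse wall $n+1$ times, bounding the collision contributions by $C_\zeta/\kappa$ via Proposition~\ref{proposition 2.9} and the remainder by $C\zeta^{n+1}$ via complete unsealedness (Lemma~\ref{lemma 5.2}); this is exactly your Neumann series.

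Two simplifications are available. First, the $T$-independence of $\int_0^\infty \rho^3 m_T\,d\rho$, which you already use to verify the ansatz, gives $\int_{\mathbb{R}^3} v_3\, m_{T_q}\,dv=0$ pointwise, so your divergence-theorem and view-factor-reciprocity computation of $U^0$ is unnecessary. Second, since $Q$ is kept entirely on the right-hand side, your operator $\mathcal{K}$ has no $e^{-\nu\tau_-/\kappa}$ factor. Its resolvent is therefore automatically $\kappa$-independent, and the ``main obstacle'' reduces to the unsealed contraction alone.
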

	\begin{remark}
		We can extend \textbf{Theorem \ref{corollary 1.1}} to the case when the incoming data is not localized at two places, i.e.,
		\begin{align}\label{eq 1.5}
			\left\{
			\begin{aligned}
				v\cdot\nabla_x F&=\frac{1}{\kappa}Q(F,F),&&\text{in}\quad \Omega\times\mathbb{R}^3, \\
				F(x,v)&=\frac{1}{(2\pi)^{3/2}T^2_i}e^{-\frac{v^2}{2}},&& \text{on}\quad \partial B^-_i, i=1,\cdots,n,\\
				F(x,v)&=m_T(x,v)\int_{v\cdot n(x)>0}F(x,v)v\cdot n(x)dv,&&\text{on}\quad \partial \Omega^-_2=\partial \Omega^-\setminus\left(\cup_{i=1}^n \partial B^-_i\right),
			\end{aligned}
			\right.
		\end{align}
		where $\partial B_i$ is an open subset of $\partial\Omega$ and we assume that $T(x)=T_i$ if $x_i\in\partial B_i$.
	\end{remark}
	\begin{remark}
		Note that the condition $\Omega\cap\{a\leq x_3\leq b\}$ is convex ensures that $\Omega$ is an unsealed set with respect to 
		\begin{align*}
			\Big(\partial\Omega\cap\{z<a\}\Big)\cup\Big(\partial\Omega\cap\{z>b\}\Big).
		\end{align*}
		The proof of \textbf{Theorem \ref{theorem 1.2}} and \textbf{Theorem \ref{corollary 1.1}} is valid for a domain in which $\Omega\cap\{a\leq x_3\leq b\}$ is not convex but is unsealed with respect to $\Big(\partial\Omega\cap\{z<a\}\Big)\cup\Big(\partial\Omega\cap\{z>b\}\Big)$.  
	\end{remark}
	\begin{remark}
		Note that when $\Omega$ is a bounded convex domain with $C^2$ boundary. The \textbf{Theorem \ref{theorem 1.1}} ensures the condition of \textbf{Theorem \ref{theorem 1.2}} and \textbf{Theorem \ref{corollary 1.1}}.
	\end{remark}
	\subsection{Organization of the article}\label{subsection 1.2}
	\qquad We briefly outline the proof in this article. We use the traditional $L^2\text{--}L^\infty$ estimate together with the bootstrap method to construct a solution of \eqref{eq 1.1}.\par 
	\qquad In \textbf{section \ref{section 2}}, we review some elementary properties of the linear operator $L$. In \textbf{section \ref{section 3}}, inspired by \cite{Duan R 2019}, \cite{Esposito R 2013}, and \cite{Kawagoe Daisuke 2025}, we introduce a  simpler and more efficient approach to obtaining the $L^2$ estimate 
	This approach is based on the operator $S_\Omega^\kappa K$, which is compact in $L^2_{0,\gamma/2}$.\par
	\qquad In \textbf{section \ref{section 4}}, we use the unsealed condition to obtain the $L^\infty$ estimate. Then, we prove \textbf{Theorem \ref{theorem 1.1}} in this section. The concept of the unseal set provides several advantages, such as reducing the weight requirement in $L^\infty$. Moreover, this is a key tool in proving \textbf{Theorem \ref{theorem 1.2}} and \textbf{Theorem \ref{corollary 1.1}}. \par
	\qquad In \textbf{section \ref{section 5}}, we use original Boltzmann equation to address the problem by modeling the distribution as in \eqref{eq 1.3}, with incoming data at $\Big(\partial\Omega\cap\{z<a\}\Big)$ and $\Big(\partial\Omega\cap\{z>b\}\Big)$. The benefit of linearity heavily used in \cite{Chen C C Chen I K Liu T P Sone Y 2007} does not apply to our case. However, we observe that the crucial mechanism for the appearance of flux lies in the different between the incoming data and the solution of the free transport model, which satisfies \eqref{eq 1.2}. By combining this observation with the concept of the unsealed set, we complete the proof of \textbf{Theorem \ref{theorem 1.2}} and \textbf{Theorem \ref{corollary 1.1}}.\par
	\qquad In \textbf{Section \ref{section 6}}, we discuss the $H^2$ estimate for the elliptic problem with Robin boundary conditions, which is used in the proof of \textbf{Proposition \ref{proposition 2.7}}.\par
	\qquad In \textbf{Section \ref{section 7}}, we provide a sufficient condition for a set to be unsealed.

\section{Preliminaries}\label{section 2}
	\qquad In this section, we review some properties of the linear operator $L$ and its components: the multiplicative operator $\nu$ and the integral operator, $K$. We also present several propositions about the operator $S^\kappa_\Omega$ and $J^\kappa$, which are useful in \textbf{section \ref{section 3}} and \textbf{section \ref{section 4}}. In addition, we recall some basic properties of $Q_+$ and $Q_{-}$.\par
	\qquad We emphasize that we linearize around Maxwellian $M=\frac{1}{(2\pi)^{3/2}}e^{-\frac{|v|^2}{2}}$. There is another popular convention to linearize around $m=\frac{1}{\pi^{3/2}}e^{-|v|^2}$, for example \cite{Kawagoe Daisuke 2025}, \cite{Sone Y}.
	Thus, the exponents differ by a factor of 2 and the constants are  different.
	\subsection{Estimates for $\nu$ and $K$}
	\qquad Recall that $L(f)=\nu(v)f-K(f)$ when the assumption \eqref{eq 1.2} holds, and that $\nu(v)$ and $K(f)$ have explicit forms as follows
	\begin{align*}
		\nu(v)&=\iint_{\mathbb{R}^3\times\mathbb{S}^2}B(|v-u|,\sigma)m(u)dud\sigma,\\
		K(f)(x,v)&=\int_{\mathbb{R}^3}k(v,u)f(x,u)du=\int_{\mathbb{R}^3}\big(k_1(v,u)-k_2(v,u)\big)f(x,u)du.
	\end{align*}
	The explicit formulas for $k_1$ and $k_2$ are:
	\begin{align*}
		k_1(v,u)&=B_0\pi|v-u|^{\gamma}e^{-\frac{1}{4}(|v|^2+|u|^2)},\\
		k_2(v,u)&=\frac{2\mathfrak{B}}{\pi^{3/2}}\frac{1}{|v-u|}e^{-\frac{1}{8}|v-u|^2-|V_1(v,u)|^2}\times\int_{W_{v-u}}e^{-|w+V_2(v,u)|^2}\left(|v-u|^2+|w|^2\right)^\frac{\gamma}{2}dw,
	\end{align*} 
	where
	\begin{align*}
		V_1(v,u)&\coloneq \frac{1}{2\sqrt{2}}\frac{|v|^2-|u|^2}{|v-u|^2}(v-u),\\
		V_2(v,u)&\coloneq\frac{(v-u)\times(v\times u)}{\sqrt{2}|v-u|^2},\\
		W_{v-u}&\coloneq\{w\in\mathbb{R}^3| w\perp(v-u)\}.
	\end{align*}
	Notice that we have the following identity
	\begin{align*}
		|V_1(v,u)|^2+|V_2(v,u)|^2&=\frac{1}{8}|v+u|^2,\\
		V_2(v,u)\cdot(v-u)&=0,\\
		\frac{1}{8}|v-u|^2+|V_1(v,u)|^2+|V_2(v,u)|^2&=\frac{1}{4}(|v|^2+|u|^2).
	\end{align*}
	For their derivation, see \cite{Duan R 2017}. We first give some estimates for the collision frequency $\nu(v)$.
	\begin{proposition}\label{proposition 2.1}
		Suppose that \eqref{eq 1.2} holds and $-3<\gamma\leq 1$. For the collision frequency $\nu(v)$, there exist positive constants $\nu_0$ and $\nu_1$ such that
		\begin{align*}
			\nu_0(1+|v|)^{\gamma/2}\leq \nu(v) \leq \nu_1(1+|v|)^{\gamma/2}
		\end{align*}
		for all $v\in\mathbb{R}^3$.
	\end{proposition}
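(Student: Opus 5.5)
The plan is to reduce $\nu$ to a one-dimensional radial integral in $u$ and then estimate that integral separately for bounded and unbounded $|v|$. I flag at the outset a point I cannot resolve: the computation below naturally produces the weight $(1+|v|)^{\gamma}$, and I do not see how to reach the weight $(1+|v|)^{\gamma/2}$ exactly as stated when $\gamma\neq0$.

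\textbf{Step 1 (reduction).} By \eqref{eq 1.2}, the kernel factorizes as $|v-u|^\gamma$ times an angular factor depending only on $\frac{u-v}{|u-v|}\cdot\omega$. Integrating over $\omega\in\mathbb{S}^2$ gives a constant $c_b>0$ independent of $v,u$, by rotation invariance of the sphere measure and the angular cutoff. Hence
\begin{align*}
\nu(v)=c_b\,\mathfrak{B}\int_{\mathbb{R}^3}|v-u|^{\gamma}m(u)\,du .
\end{align*}
Since $-3<\gamma$, the singularity at $u=v$ is integrable. Since $m$ is Gaussian, the integral is finite for every $v$. It is continuous and strictly positive in $v$, by dominated convergence together with local integrability of $|\cdot|^\gamma$.

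\textbf{Step 2 (bounded $|v|$).} On $|v|\le R$, continuity and positivity give $0<\min\nu\le\nu\le\max\nu<\infty$. Any weight $(1+|v|)^{s}$ is comparable to $1$ there, so on this set the two-sided bound holds for whatever exponent is used.

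\textbf{Step 3 (large $|v|$).} For $|v|\ge R$, I split $\{|u|\le|v|/2\}$ from its complement.

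On the first region, $|v|/2\le|v-u|\le 3|v|/2$, so $|v-u|^\gamma\asymp|v|^\gamma$ for either sign of $\gamma$. The Gaussian mass of this region is bounded below by a positive constant once $R\ge 2$. This gives a lower bound $\gtrsim|v|^\gamma$ and a matching upper contribution $\lesssim|v|^\gamma$.

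On the complement, $m(u)\le Ce^{-|v|^2/8}m(u)^{1/2}$, so the contribution is $\lesssim e^{-|v|^2/8}\sup_v\int|v-u|^\gamma m^{1/2}(u)\,du$. This is exponentially small, hence negligible against $|v|^\gamma$.

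Combined with Step 2, this yields $\nu(v)\asymp(1+|v|)^{\gamma}=\big((1+|v|)^{2}\big)^{\gamma/2}\asymp(1+|v|^2)^{\gamma/2}$.

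\textbf{Step 4 (main obstacle).} The hard step is reconciling Step 3 with the weight $(1+|v|)^{\gamma/2}$ written in Proposition \ref{proposition 2.1}. My plan is to verify the stated form directly at large $|v|$. There one would need $|v|^\gamma\lesssim|v|^{\gamma/2}$ and $|v|^{\gamma/2}\lesssim|v|^{\gamma}$ simultaneously, and for $\gamma\ne0$ one of these inequalities fails as $|v|\to\infty$.

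For this reason I expect the intended meaning of $(1+|v|)^{\gamma/2}$ is the bracket $(1+|v|^2)^{\gamma/2}$. The estimate Steps 1--3 establish is $\nu_0(1+|v|^2)^{\gamma/2}\le\nu(v)\le\nu_1(1+|v|^2)^{\gamma/2}$. It coincides with the statement as literally written only when $\gamma=0$.
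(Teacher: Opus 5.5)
Your diagnosis is correct. Since $\nu(v)|v|^{-\gamma}$ tends to a positive constant as $|v|\to\infty$, the proposition as printed is false for every $\gamma\neq0$: the upper bound $\nu_1(1+|v|)^{\gamma/2}$ fails for $\gamma>0$, and the lower bound $\nu_0(1+|v|)^{\gamma/2}$ fails for $\gamma<0$.

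The paper gives no proof of its own; it only cites Duan et al.\ and Glassey. There the result is $\nu_0(1+|v|)^{\gamma}\le\nu(v)\le\nu_1(1+|v|)^{\gamma}$, obtained by the same reduction to $\int|v-u|^\gamma m(u)\,du$ and near/far splitting that you use. The paper's later arguments also rely on exponent $\gamma$. The proof of Proposition~\ref{proposition 2.8} bounds $(1+|v|)^{\gamma}/\nu(v)$ by a constant, and the $L^2_{0,\gamma/2}$ weight squares to $(1+|v|)^{\gamma}$. So the $\gamma/2$ is a slip, and your Steps 1--3 prove the intended statement $\nu\asymp(1+|v|)^{\gamma}\asymp(1+|v|^2)^{\gamma/2}$.

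Step 3 needs two small repairs:
\begin{enumerate}
\item With the paper's $m(u)=\frac{1}{2\pi}e^{-|u|^2/2}$, on $\{|u|>|v|/2\}$ you get $m(u)\le Ce^{-|v|^2/16}m(u)^{1/2}$, not $e^{-|v|^2/8}$. This is harmless.
\item More substantively, $\sup_v\int|v-u|^{\gamma}m^{1/2}(u)\,du$ is infinite when $0<\gamma\le1$. Replace it by $\int|v-u|^{\gamma}m^{1/2}(u)\,du\le C(1+|v|)^{\max\{\gamma,0\}}$, using $|v-u|^\gamma\le(|v|+|u|)^\gamma$ when $\gamma\ge0$. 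The far-region contribution is then $O\big(e^{-|v|^2/16}(1+|v|)^{\max\{\gamma,0\}}\big)$, which is still negligible against $|v|^{\gamma}$, and the rest of your argument goes through unchanged.
\end{enumerate}
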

	For the proof of \textbf{Proposition \ref{proposition 2.1}}, see \cite{Duan R 2017}, \cite{Glassey Robert T 1996}.

	\begin{proposition}\label{proposition 2.2}
		Let $-3<\gamma\leq 1$ and let $\Omega$ be a bounded domain in $\mathbb{R}^3$. Suppose \eqref{eq 1.2} holds. Then, we have
		\begin{align*}
			\int_{0}^{\frac{L}{|v|}}e^{-\frac{1}{\kappa}\nu(|v|)t}dt\leq C_{\gamma,L} \frac{\kappa}{1+|v|}
		\end{align*}
		for any fix $L>0$ and $(x,v)\in\Omega\times\mathbb{R}^3$.
	\end{proposition}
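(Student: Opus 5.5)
The plan is to compute the integral directly, splitting into $|v|\le 1$ and $|v|>1$ and using \textbf{Proposition \ref{proposition 2.1}} for the lower bound $\nu(v)\ge \nu_0(1+|v|)^{\gamma/2}$. (Here $\nu(|v|)$ means $\nu(v)$, which is radial.) There are two elementary bounds on the integral, and each is useful in one regime. First, dropping the exponential gives
\begin{align*}
\int_0^{L/|v|}e^{-\frac{1}{\kappa}\nu(v)t}\,dt\le \frac{L}{|v|}.
\end{align*}
Second, extending the integral to $(0,\infty)$ gives
\begin{align*}
\int_0^{L/|v|}e^{-\frac{1}{\kappa}\nu(v)t}\,dt\le \frac{\kappa}{\nu(v)}\le \frac{\kappa}{\nu_0}(1+|v|)^{-\gamma/2}.
\end{align*}

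\emph{Large velocities, $|v|\ge1$.} The first bound gives $L/|v|\le 2L/(1+|v|)$. To reach the stated form $\kappa/(1+|v|)$ I must absorb the factor $\kappa$. This is only possible if $\kappa$ is bounded below, say $\kappa\ge\kappa_0$ as in the rest of the paper. Then $L/|v|\le (2L/\kappa_0)\,\kappa/(1+|v|)$, so the constant depends on $\kappa_0$ as well as on $\gamma$ and $L$.

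\emph{Small velocities, $|v|<1$.} Here $1+|v|\in[1,2)$. The second bound gives at most $\nu_0^{-1}2^{|\gamma|/2}\kappa$, which is bounded by $C\kappa/(1+|v|)$ since $1+|v|<2$. This regime needs no lower bound on $\kappa$.

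Combining the two regimes gives the claim with $C_{\gamma,L}=\max\{2L/\kappa_0,\;2\nu_0^{-1}2^{|\gamma|/2}\}$.

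The main obstacle is the large-$|v|$ regime, since the power of $\kappa$ must be made uniform. For $\gamma<0$ (soft potentials), $\nu(v)$ decays, so the $\kappa/\nu$ bound alone grows like $(1+|v|)^{-\gamma/2}$ and cannot give decay in $|v|$; the geometric cutoff $L/|v|$ is therefore essential there. The same cutoff is needed for $\gamma\in(0,1]$, because $\kappa/\nu$ only gives $(1+|v|)^{-\gamma/2}$, which is not $(1+|v|)^{-1}$. The price of using the cutoff is the dependence on a lower bound for $\kappa$. I would state this dependence explicitly, or, if the paper intends $\kappa\ge\kappa_0$ throughout, absorb $\kappa_0$ into the constant. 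An alternative is to split using $\min\{L/|v|,\kappa/\nu(v)\}$, but a single power $\kappa^1$ still forces such an assumption when $\kappa$ is small.
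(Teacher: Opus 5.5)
Your proof is correct. The paper gives no proof of this proposition and only cites \cite{Kuan-Hsiang Wang 2024}. Your argument is therefore a self-contained replacement rather than a variant of something in the text: you bound the integral by $\min\{L/|v|,\kappa/\nu(v)\}$ and split at $|v|=1$, using only a positive lower bound for $\nu$ on $\{|v|<1\}$.

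Your caveat about $\kappa$ is a real correction to the statement, not a weakness of your proof. With $C_{\gamma,L}$ independent of $\kappa$, the claim fails as $\kappa\to 0$. The integral equals $\frac{\kappa}{\nu(v)}\bigl(1-e^{-\nu(v)L/(\kappa|v|)}\bigr)$. Under Proposition \ref{proposition 2.1} we have $\nu(v)/|v|\to 0$ as $|v|\to\infty$, so for small $\kappa$ you can choose $v$ with $\nu(v)L=\kappa|v|$. The integral is then $(1-e^{-1})L/|v|$, which is at least $(1-e^{-1})L\kappa^{-1}$ times $\kappa/(1+|v|)$.

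Your two bounds actually give a version uniform in all $\kappa>0$: $C_{\gamma,L}\max\{1,\kappa\}/(1+|v|)$, with $C_{\gamma,L}=\max\{2L,\,2^{1+|\gamma|/2}\nu_0^{-1}\}$. This is exactly the factor $\max\{1,\kappa\}$ in the estimate \eqref{eq 4.6} of Theorem \ref{theorem 4.1}, and for $\kappa\ge\kappa_0$ it reduces to your $\kappa_0$-dependent constant. That is the only regime the paper uses.

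One small remark. Your claim that the cutoff $L/|v|$ is needed for every $\gamma\in(0,1]$ depends on the exponent $\gamma/2$ in Proposition \ref{proposition 2.1}. That exponent looks like a misprint for the standard $\nu(v)\sim(1+|v|)^{\gamma}$. With the standard exponent, the case $\gamma=1$ follows from $\kappa/\nu(v)$ alone, uniformly in $\kappa$. Your proof is unaffected either way.
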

	The proof of \textbf{Proposition \ref{proposition 2.2}} can be found in \cite{Kuan-Hsiang Wang 2024}. We next introduce some estimate for integral operator $K(f)$.
	\begin{proposition}\label{proposition 2.3}
		Let $-3<\gamma\leq 1$ and $0<\delta<1$. Suppose that \eqref{eq 1.2} holds. Then, we have
		\begin{align}\label{proposition eq 2.1}
			|k(v,u)|\leq C_{\gamma,\delta}\frac{w_\gamma(|v-u|)}{(1+|v|+|u|)^{1-\gamma}}E_\delta(v,u)
		\end{align}
		for all $v,u\in\mathbb{R}^3$, where 
		\begin{align*}
			E_\delta(v,u)\coloneq e^{-\frac{1-\delta}{8}\left(|v-u|^2+4|V_1(v,u)|^2\right)}=e^{-\frac{1-\delta}{8}\left(|v-u|^2+\left(\frac{|v|^2-|u|^2}{|v-u|}\right)^2\right)}
		\end{align*}
		and 
		\begin{align}
			w_\gamma(|v-u|)\coloneq\left\{
			\begin{aligned}
				&\frac{1}{|v-u|}, &&-1<\gamma\leq 1, \\
				&\frac{|\ln|v-u||+1}{|v-u|}, &&\gamma=-1,\\
				&\frac{1}{|v-u|^{|\gamma|}}, &&-3<\gamma < -1.
			\end{aligned}
			\right.
		\end{align}
	\end{proposition}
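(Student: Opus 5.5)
We prove the bound on $k_1$ and $k_2$ separately, starting from the explicit formulas above and the identity
\begin{align*}
\tfrac18|v-u|^2+|V_1|^2+|V_2|^2=\tfrac14(|v|^2+|u|^2).
\end{align*}
The goal in both cases is to extract the Gaussian factor $E_\delta(v,u)$, i.e.\ $\exp\!\bigl(-\tfrac{1-\delta}{8}(|v-u|^2+4|V_1|^2)\bigr)$. The leftover $\delta$-portion of the exponent is then spent to produce the polynomial factor $(1+|v|+|u|)^{\gamma-1}$ and the singular weight $w_\gamma$.

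\textbf{The term $k_1$.} Write
\begin{align*}
e^{-\frac14(|v|^2+|u|^2)}=e^{-\frac18|v-u|^2-|V_1|^2-|V_2|^2}\le E_\delta(v,u)\,e^{-\frac{\delta}{8}(|v-u|^2+4|V_1|^2)-|V_2|^2}.
\end{align*}
Since $|V_1|^2+|V_2|^2=\tfrac18|v+u|^2$, the remainder is bounded by $e^{-c_\delta(|v-u|^2+|v+u|^2)}=e^{-2c_\delta(|v|^2+|u|^2)}$. This absorbs any polynomial in $1+|v|+|u|$.

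It remains to compare $|v-u|^\gamma$ with $w_\gamma(|v-u|)$.
\begin{itemize}
\item For $\gamma\ge0$: $|v-u|^\gamma\le(1+|v|+|u|)^{\gamma+1}\cdot\frac{1}{|v-u|}$, and the Gaussian absorbs the polynomial factor.
\item For $-1<\gamma<0$: $|v-u|^\gamma\le C\,|v-u|^{-1}(1+|v-u|)$, and again the Gaussian absorbs the polynomial factor.
\item For $\gamma\le-1$: $|v-u|^\gamma\le C\,w_\gamma$ directly on $\{|v-u|\le1\}$, and the bound is trivial elsewhere.
\end{itemize}

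\textbf{The term $k_2$.} The main work lies in the inner integral
\begin{align*}
I(v,u)=\int_{W_{v-u}}e^{-|w+V_2|^2}\bigl(|v-u|^2+|w|^2\bigr)^{\gamma/2}\,dw
\end{align*}
over the two-dimensional plane $W_{v-u}$. Set $r=|v-u|$ and $\rho=|V_2|$.

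For $\gamma\ge0$, the weight satisfies $(r^2+|w|^2)^{\gamma/2}\le C(r^\gamma+|w+V_2|^\gamma+\rho^\gamma)$, so $I\le C(r+\rho+1)^\gamma$. For $\gamma<0$, I would use $(r^2+|w|^2)^{\gamma/2}\le\min(r^\gamma,|w|^\gamma)$. A two-dimensional integral of $|w|^{\gamma}e^{-|w+V_2|^2}$ is finite when $\gamma>-2$ and is $\lesssim(1+\rho)^\gamma$. For $\gamma\le-2$, split into $|w|<r$ and $|w|\ge r$. This produces extra powers $r^{\gamma+2}$ (with a logarithm at $\gamma=-2$), which is exactly where the singular weight $w_\gamma$ with exponent $|\gamma|$ comes from after multiplying by the prefactor $1/r$.

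In all cases we obtain
\begin{align*}
I\lesssim (1+r+\rho)^{\gamma}\times\{\text{singular correction in } r\}.
\end{align*}
Then
\begin{align*}
k_2\lesssim \frac1r\,e^{-\frac18 r^2-|V_1|^2}(1+r+\rho)^\gamma\times(\cdots).
\end{align*}

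\textbf{Main obstacle: recovering $(1+|v|+|u|)^{\gamma-1}$.} We need
\begin{align*}
\frac{(1+r+\rho)^\gamma}{(1+|v|+|u|)^{\gamma-1}}\lesssim e^{\frac{\delta}{8}(r^2+4|V_1|^2)}.
\end{align*}
Here the $V_2$-Gaussian is not available. The key geometric fact is
\begin{align*}
|v|^2+|u|^2=\tfrac12\bigl(r^2+|v+u|^2\bigr)\quad\text{with}\quad |v+u|^2=8(|V_1|^2+\rho^2),
\end{align*}
so $1+|v|+|u|\sim1+r+|V_1|+\rho$.

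\begin{itemize}
\item If $r+|V_1|\gtrsim\rho$, the missing power $(1+|v|+|u|)^{1-\gamma}$ (note $1-\gamma\ge0$) is absorbed by the $\delta$-portion of the exponent.
\item If $\rho$ dominates, then $(1+r+\rho)^\gamma\sim(1+|v|+|u|)^\gamma$, and one more factor $(1+|v|+|u|)^{-1}$ is needed.
\end{itemize}

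For the second case I would use the sharper bound on $I$: the integral is effectively concentrated near $w=-V_2$, where $|w|\approx\rho$. This should give
\begin{align*}
I\lesssim(1+\rho)^{\gamma}\min\Bigl(1,\ \frac{1}{r}\Bigr)\cdot r \;\text{-type gains},
\end{align*}
i.e.\ the standard Grad-type estimate $k_2\lesssim \frac{1}{r}(1+|v|+|u|)^{\gamma-1}e^{-c(\cdots)}$ after an angular computation. I expect this refinement, which matches the literature estimates in the paper's reference [Duan R 2017], to be the delicate step. I would follow that reference's change of variables, $w$ measured from $-V_2$, carefully to check uniformity in $\delta$.
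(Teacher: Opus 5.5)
Your treatment of $k_1$ is fine. The overall scheme for $k_2$ is also the right one: peel off $E_\delta$, spend the $\delta$-part of the exponent on polynomial factors, and use $1+|v|+|u|\sim 1+r+|V_1|+\rho$.

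The gap is exactly the step you flag, and it cannot be filled as proposed. If you take the inner integral literally, with weight $(|v-u|^2+|w|^2)^{\gamma/2}$, there is no sharper bound on $I$; in fact the inequality is false for that integrand. Take $|v|=|u|=R$ and $|v-u|=1$. Then $V_1=0$. Since $V_2\perp(v-u)$, the point $-V_2$ lies in $W_{v-u}$, and $\rho\approx R/\sqrt2$. On the unit disc of $W_{v-u}$ centred at $-V_2$, the Gaussian is bounded below and the weight is $\ge c\,\rho^{\gamma}$. Hence $k_2\gtrsim R^{\gamma}$. Meanwhile $k_1=O(e^{-R^2/2})$, and the right-hand side of the proposition is of order $R^{\gamma-1}$. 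Concentration near $w=-V_2$ is precisely what produces the bad factor $\rho^\gamma$, not a gain. Your ``$\min(1,1/r)\cdot r$'' factor only helps for small $r$ and never yields $(1+\rho)^{-1}$.

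The missing idea is that the displayed formula for $k_2$ is off by one power, so you have to derive the correct Grad kernel yourself. The paper gives no proof and defers to the cited references. Write $z=u-v$, $y=(z\cdot\omega)\omega$, $w=z-y$. Then $dz\,d\omega=2|y|^{-2}\,dy\,dw$ (a double cover), and the angular factor of $B$ contributes $|z\cdot\omega|/|z|=|y|/|z|$. Hence
\[
B\,dz\,d\omega=2\mathfrak{B}\,|y|^{-1}\bigl(|y|^2+|w|^2\bigr)^{\frac{\gamma-1}{2}}\,dy\,dw ,
\]
so the inner integral of $k_2$ carries $(|v-u|^2+|w|^2)^{(\gamma-1)/2}$, up to a constant rescaling of $w$. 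For $\gamma=1$ this reproduces the classical hard-sphere kernel $k_2\sim|v-u|^{-1}e^{-\frac18|v-u|^2-|V_1|^2}$, which the displayed $\gamma/2$ version contradicts.

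With this weight your argument closes.
\begin{itemize}
\item For $\gamma>-1$, you get $I\lesssim\min\{r^{\gamma-1},(1+\rho)^{\gamma-1}\}\lesssim(1+r+\rho)^{\gamma-1}$. Then
\[
\frac{(1+r+\rho)^{\gamma-1}}{(1+|v|+|u|)^{\gamma-1}}\,e^{-\delta(\frac18 r^2+|V_1|^2)}\lesssim(1+|V_1|)^{1-\gamma}e^{-\delta|V_1|^2}\le C_\delta ,
\]
with no further case analysis needed.
\item For $\gamma\le-1$, the only new feature is the singularity at $w=0$. The local integral $\int_{|w|\le \lambda}(r^2+|w|^2)^{\frac{\gamma-1}{2}}dw$ is $\lesssim 1+|\ln r|+\ln(1+\lambda)$ at $\gamma=-1$, and $\lesssim r^{\gamma+1}$ for $\gamma<-1$ when $r\le1$. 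When $\rho$ is large this region enters only with the factor $e^{-\rho^2/4}$, while $|w|\ge\rho/2$ still gives $\rho^{\gamma-1}$. After the prefactor $1/r$ this is exactly $w_\gamma$, with the threshold at $\gamma=-1$ as in the statement.
\end{itemize}

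Your singular bookkeeping shows the same off-by-one symptom. With the $\gamma/2$ weight the logarithm sits at $\gamma=-2$, and $r^{-1}\cdot r^{\gamma+2}=r^{\gamma+1}$, not $r^{-|\gamma|}$ as you claimed.
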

	The detail of proof can be found in \cite{Duan R 2017} and the appendix of \cite{Kawagoe Daisuke 2025}.\par
	\qquad In the \textbf{section \ref{section 4}}, we estimate $L^\infty_{\alpha,0}$ norm of the solution of Boltzmann equation. Hence, we need to discuss
	\begin{align*}
		K_\alpha(f)\coloneq w(v)K(w^{-1}f)=\int_{\mathbb{R}^3}w(v)k(v,u)w^{-1}(u)f(x,u)du\coloneq\int_{\mathbb{R}^3}k_\alpha(v,u)f(x,u)du
	\end{align*} 
	for $w(v)=e^{\alpha|v|^2}$, $\alpha>0$. It is known in \cite{Chen I-Kun 2024} that
	\begin{align*}
		E_\delta(v,u)&=e^{a|v|^2}e^{-\alpha_{1,a,\delta}|v-u|^2}e^{-(1-\delta)\left(\frac{(v-u)\cdot v}{|v-u|}-\alpha_{2,a,\delta}|v-u|\right)^2}e^{-a|u|^2}\\
		&=e^{-a|v|^2}e^{-\alpha_{1,a,\delta}|v-u|^2}e^{-(1-\delta)\left(\frac{(v-u)\cdot v}{|v-u|}+\alpha_{2,a,\delta}|v-u|\right)^2}e^{a|u|^2}
	\end{align*} 
	for $-1/4 <a <1/4$, $\delta>0$ and $v,u\in\mathbb{R}^3$, where
	\begin{align*}
		\alpha_{1,a,\delta}&\coloneq \frac{(1-\delta)^2-4a^2}{8(1-\delta)},\\
		\alpha_{2,a,\delta}&\coloneq \frac{1-\delta-2a}{2\sqrt{2}(1-\delta)}.
	\end{align*}
	Plugging these identities into that estimate \eqref{proposition eq 2.1} with $a=\alpha$ and $\delta=1/2-\alpha$, we obtain
	\begin{align}\label{eq 2.3}
		|k_\alpha(v,u)|\leq C_{\gamma,\alpha} \frac{w(|v-u|)}{(1+|v|+|u|)^{1-\gamma}}e^{-\alpha_1|v-u|^2}e^{-\alpha_3\left(\frac{(v-u)\cdot v}{|v-u|}-\alpha_2|v-u|\right)^2}
	\end{align}
	for some $\alpha_1,\alpha_2,\alpha_3>0$ depending only on $\alpha$. Based on the above calculations, we obtain the following estimates, which are related to the kernel of the operator $K_\alpha$.
	\begin{proposition}\cite{Caflisch ; Russel E.}\label{proposition 2.4}
		Let $\alpha_1,\alpha_3>0$ and $\alpha_2\in\mathbb{R}^3$. Then, we have
		\begin{align}
			\int_{\mathbb{R}^3}\frac{1}{|v-u|^\mu}e^{-\alpha_1|v-u|^2}e^{-\alpha_3\left(\frac{(v-u)\cdot v}{|v-u|}-\alpha_2|v-u|\right)^2}\leq \frac{C_{\alpha,\mu}}{1+|v|}
		\end{align}
		for any $\mu<3$ and $v\in\mathbb{R}^3$.
	\end{proposition}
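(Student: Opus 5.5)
The plan is to reduce the integral to a one–dimensional radial problem by working in coordinates adapted to $v$. Write $\eta=u-v$, so $(v-u)\cdot v/|v-u| = -\eta\cdot v/|\eta|$. If $|v|\le 1$, the second exponential is simply bounded by $1$, and
\begin{align*}
\int_{\mathbb{R}^3}|\eta|^{-\mu}e^{-\alpha_1|\eta|^2}d\eta<\infty \quad\text{for }\mu<3,
\end{align*}
which gives the claim because $1+|v|\le 2$. So assume $|v|\ge 1$ from now on. We use spherical coordinates for $\eta$ with polar axis along $v$: $\eta = r\omega$, with $\omega\cdot \hat v=\cos\theta=:s\in[-1,1]$. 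Then $(v-u)\cdot v/|v-u|=-|v|s$, and the integrand becomes
\begin{align*}
r^{2-\mu}e^{-\alpha_1 r^2}e^{-\alpha_3(|v|s+\alpha_2 r)^2}\,2\pi\, ds\, dr .
\end{align*}
Here $\alpha_2$ is a scalar in \eqref{eq 2.3}; if a vector is intended, one reads $\alpha_2|v-u|$ as a scalar quantity bounded by $|\alpha_2||\eta|$, and the argument below is unchanged.

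The key step is the $s$-integral. For fixed $r$, substitute $t=|v|s+\alpha_2 r$. This gives
\begin{align*}
\int_{-1}^{1}e^{-\alpha_3(|v|s+\alpha_2 r)^2}ds\le \frac{1}{|v|}\int_{\mathbb{R}}e^{-\alpha_3 t^2}dt=\frac{1}{|v|}\sqrt{\pi/\alpha_3}.
\end{align*}
This bound is uniform in $r$. What remains is $\frac{C}{|v|}\int_0^\infty r^{2-\mu}e^{-\alpha_1 r^2}dr$, which is finite precisely because $\mu<3$ (integrability at $r=0$), with the Gaussian handling $r\to\infty$. Since $|v|\ge1$, we have $1/|v|\le 2/(1+|v|)$, and combining the two cases gives the bound $C_{\alpha,\mu}/(1+|v|)$.

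The only delicate point is that the gain of $1/|v|$ comes from a one-dimensional Gaussian in the angular variable $s$, not in the radial variable. It is therefore essential that the angular measure in $\mathbb{R}^3$ is exactly $2\pi\,ds$, so that no Jacobian singularity appears. Apart from this, the estimate is the classical one of \cite{Caflisch ; Russel E.}, and no earlier results of the paper are needed.
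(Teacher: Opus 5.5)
Your proof is correct and is essentially the paper's argument. Both use spherical coordinates for $v-u$ with polar axis along $v$, and both make an affine substitution in $\cos\theta$ (absorbing the shift $\alpha_2 r$), which turns the angular integral into a one-dimensional Gaussian with Jacobian $1/|v|$; both then add a $v$-independent bound, obtained by dropping the second exponential, to cover small $|v|$. Your reading of $\alpha_2$ as a scalar is the right one: the $\alpha_2\in\mathbb{R}^3$ in the statement is a typo, as \eqref{eq 2.3} shows.
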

	\begin{proof}
		Let $w=v-u$. By using the sphere coordinate and setting $\omega=|v|\cos\theta-\alpha_2 r$, we have
		\begin{align*}
			\int_{\mathbb{R}^3}\frac{1}{|v-u|^\mu}e^{-\alpha_1|v-u|^2}e^{-\alpha_3\left(\frac{(v-u)\cdot v}{|v-u|}-\alpha_2|v-u|\right)^2}du&=\int_{0}^\infty\int_{0}^{2\pi}\int_{0}^{\pi}\frac{1}{r^{\mu-2}}e^{-\alpha_1r^2}e^{-\alpha_3\left(|v|\cos\theta-\alpha_2r\right)^2}\sin\theta d\theta d \phi dr\\
			&=\frac{2\pi}{|v|}\int_{0}^\infty\int_{-|v|-\alpha_2 r}^{|v|-\alpha_2r}\frac{1}{r^{\mu-2}}e^{-\alpha_1r^2}e^{-\alpha_3 w^2}d\omega \phi dr\\
			&\leq C_{\alpha_1,\alpha_3}\frac{1}{|v|}.
		\end{align*}
		Moreover, we can deduce that
		\begin{align*}
			\int_{\mathbb{R}^3}\frac{1}{|v-u|^\mu}e^{-\alpha_1|v-u|^2}e^{-\alpha_3\left(\frac{(v-u)\cdot v}{|v-u|}-\alpha_2|v-u|\right)^2}\leq C_\alpha.
		\end{align*}
		Here, the constant $C_\alpha$ is independent to $v$. Combining these results, the proof is complete.
	\end{proof}
	
	\begin{proposition}\cite{Kawagoe Daisuke 2025}\label{proposition 2.5}
		Let $0\leq \alpha <1/4, \beta\in\mathbb{R}^3, -3<\gamma\leq 1$ and $1\leq q <\min\{3,\frac{3}{|\gamma|}\}$. Suppose that \eqref{eq 1.2} holds. Then we have
		\begin{align}
			\di\int_{\mathbb{R}^3}|k_\alpha(v,u)|^q(1+|u|)^\beta d u\leq C_{\alpha,\gamma}(1+|v|)^{\beta+q(\gamma-1)-1}
		\end{align}
		for all $v\in\mathbb{R}^3.$
	\end{proposition}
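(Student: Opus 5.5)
The plan is to split $k_\alpha=w(v)(k_1-k_2)w^{-1}(u)$ and treat the two pieces separately. Since $|a-b|^q\le 2^{q-1}(|a|^q+|b|^q)$, it is enough to bound each piece.

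\emph{The $k_1$ piece.} Here
\begin{align*}
w(v)k_1(v,u)w^{-1}(u)=C|v-u|^\gamma e^{-(\frac14-\alpha)|v|^2-(\frac14+\alpha)|u|^2}.
\end{align*}
Because $\alpha<1/4$, this is Gaussian in both variables. So $\int |v-u|^{q\gamma}e^{-q(\frac14+\alpha)|u|^2}(1+|u|)^\beta du\le C(1+|v|)^{q\gamma}$; local integrability is exactly where $q|\gamma|<3$ enters. Multiplying by $e^{-q(\frac14-\alpha)|v|^2}$ beats any polynomial, so this piece is certainly bounded by $C(1+|v|)^{\beta+q(\gamma-1)-1}$.

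\emph{The $k_2$ piece.} For this piece (or simply for $k_\alpha$ directly) I would use the pointwise bound \eqref{eq 2.3} from Proposition \ref{proposition 2.3} with the identity for $E_\delta$:
\begin{align*}
|k_\alpha(v,u)|^q\le C\frac{w_\gamma(|v-u|)^q}{(1+|v|+|u|)^{q(1-\gamma)}}e^{-q\alpha_1|v-u|^2}e^{-q\alpha_3\left(\frac{(v-u)\cdot v}{|v-u|}-\alpha_2|v-u|\right)^2}.
\end{align*}
\begin{itemize}
\item \textbf{Denominator.} Since $1-\gamma\ge 0$, we have $(1+|v|+|u|)^{-q(1-\gamma)}\le(1+|v|)^{q(\gamma-1)}$.
\item \textbf{The weight $(1+|u|)^\beta$.} Use $(1+|u|)^\beta\le C(1+|v|)^\beta(1+|v-u|)^{|\beta|}$ (Peetre). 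The factor $(1+|v-u|)^{|\beta|}$ is absorbed by shrinking $\alpha_1$ slightly: $(1+r)^{|\beta|}e^{-q\alpha_1 r^2}\le Ce^{-\frac{q\alpha_1}{2}r^2}$.
\item \textbf{Remaining integral.} What is left is of the form in Proposition \ref{proposition 2.4}, with singularity $w_\gamma(r)^q$. In the three cases this is $r^{-q}$ for $-1<\gamma\le1$, $(|\ln r|+1)^q r^{-q}$ for $\gamma=-1$, and $r^{-q|\gamma|}$ for $-3<\gamma<-1$. Since $q<\min\{3,3/|\gamma|\}$, each is dominated near $0$ by $r^{-\mu}$ for some $\mu<3$; in the logarithmic case take $\mu=q+\epsilon<3$. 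For $r\ge 1$ the singularity is bounded and the Gaussian handles the tail.
\item \textbf{Conclusion.} Proposition \ref{proposition 2.4} then gives the factor $C(1+|v|)^{-1}$. Multiplying all factors yields $C(1+|v|)^{\beta+q(\gamma-1)-1}$.
\end{itemize}

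\emph{Main obstacle.} The delicate step is the last one: applying Proposition \ref{proposition 2.4} with the exponent $\mu$ replaced by the $q$-th power of the singularity, while keeping the gain $(1+|v|)^{-1}$.

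\begin{itemize}
\item \textbf{Small $|v|$.} For $|v|\le1$ a crude bound using only the integrability of $r^{-\mu}e^{-cr^2}$ suffices, since then $(1+|v|)^{-1}\ge 1/2$.
\item \textbf{Large $|v|$.} Here I would redo the spherical-coordinate computation from the proof of Proposition \ref{proposition 2.4}. After the substitution $\omega=|v|\cos\theta-\alpha_2r$, the $\theta$-integral is $\le C/|v|$, uniformly in $r$. The radial integral $\int_0^\infty r^{2-\mu}e^{-cr^2}dr$ is finite precisely because $\mu<3$.
\end{itemize}

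Finally, I would check that the Peetre step does not destroy the anisotropic Gaussian factor. It does not, because that factor is only multiplied by a function of $r=|v-u|$, which is absorbed by the radial Gaussian. Combining the two pieces completes the bound.
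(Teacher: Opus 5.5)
Your proposal is correct and follows essentially the same route as the paper. The common steps are the pointwise bound \eqref{eq 2.3} for $k_\alpha$, a weight-transfer inequality for $(1+|u|)^\beta$ whose $|v-u|$-factor is absorbed into the radial Gaussian, domination of $w_\gamma(|v-u|)^q$ by $|v-u|^{-\mu}$ with $\mu<3$, and Proposition \ref{proposition 2.4} for the gain $(1+|v|)^{-1}$.

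The differences are cosmetic. Your separate $k_1$ estimate is unnecessary, since \eqref{eq 2.3} already covers all of $k_\alpha$. Your single Peetre inequality replaces the paper's split into $\beta\ge 0$ and $\beta<0$. It also lands on the correct exponent $\beta+q(\gamma-1)-1$, whereas the last line of the paper's Case 1 drops the $\beta$ by a typo.
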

	\begin{remark}
		We set $\min\{3,\frac{3}{|\gamma|}\}=3$ when $\gamma=0$
	\end{remark}
	The proof can be found in \cite{Kawagoe Daisuke 2025}. For convenience, we present the proof here.
	\begin{proof}
		\textbf{Case 1 $\beta\geq 0$}:\\
		Notice that $(1 + |u|)^{\beta} \leq C_{\beta} \left((1 + |v|)^{\beta} + |v - u|^{\beta}\right) $ for all $v,u\in\mathbb{R}^3$. By \eqref{eq 2.3}, we have
		\begin{align*}
			|k_\alpha(v,u)|^q(1+|u|)^\beta&\leq C(1+|v|)^{\beta+q(\gamma-1)}w_\gamma(|v-u|)^qe^{-q\alpha_1|v-u|}e^{-q\alpha_3\left(\frac{(v-u)\cdot v}{|v-u|}-\alpha_2|v-u|\right)^2}\\\nonumber
			&\quad+C(1+|v|)^{q(\gamma-1)}w_\gamma(|v-u|)^q|v-u|^\beta e^{-q\alpha_1|v-u|}e^{-q\alpha_3\left(\frac{(v-u)\cdot v}{|v-u|}-\alpha_2|v-u|\right)^2}. 
		\end{align*}
		Observe that there exists $\mu_{\gamma,q}<3$ such that $w_\gamma(|v-u|)^q\leq |v-u|^{-\mu_{\gamma,q}}$. The constant $\mu_{\gamma,q}$ only depend on $\gamma$ and $q$. Also, we note  
		\begin{align}
			w_{-1}(|v-u|)e^{-\alpha_1|v-u|^2}\leq C_\epsilon|v-u|^{-(1+\epsilon)}
		\end{align}
		for some small positive constant $\epsilon$. Thus, we can treat the case $\gamma=-1$ in the same way as for $-3<\gamma<-1$. In any case, by \textbf{proposition \ref{proposition 2.4}} we have
		\begin{align*}
			\int_{\mathbb{R}^3}|k_\alpha(v,u)|^q(1+|u|)^\beta d u&\leq C_{\alpha,\gamma}(1+|v|)^{q(\gamma-1)-1}+C_{\alpha,\gamma}(1+|v|)^{\beta+q(\gamma-1)-1}\\\nonumber
			&\leq C_{\alpha,\gamma}(1+|v|)^{q(\gamma-1)-1}. 
		\end{align*}
		\textbf{Case 2 $\beta < 0$}:\\
		Observe that 
		\begin{align*}
			(1+|u|)^\beta&=(1+|v|)^\beta\frac{(1+|v|)^{|\beta|}}{(1+|u|)^{|\beta|}}\\\nonumber
			&\leq (1+|v|)^\beta\frac{ (1 + |u|)^{|\beta|} + |v - u|^{|\beta|}}{(1+|u|)^{|\beta|}}\\\nonumber
			&\leq (1+|v|)^\beta\left(1+|v-u|^{|\beta|}\right).
		\end{align*}
		We apply the above argument to obtain the same estimate. Thus, the proof is complete.
	\end{proof}
	\subsection{Proposition of $L$}
	Next, we review some basic properties of $L$, including the spectral gap and the projection of $L$. Recall that the normal orthogonal basis of the null space Ker $L$ is given by 
	\begin{align*}
		\Phi_{0} &= (2\pi)^{-\tfrac{3}{4}} \, m^{\tfrac{1}{2}}(v), \\
		\Phi_{i} &= (2\pi)^{-\tfrac{3}{4}} v_{i}m^{\tfrac{1}{2}}(v), \quad i=1,2,3, \\
		\Phi_{4} &= \frac{(2\pi)^{-\tfrac{3}{4}}}{\sqrt{6}} \, (|v|^{2}-3)m^{\tfrac{1}{2}}(v).
	\end{align*}
	For each $f=f(v)\in L^2(\mathbb{R}^3_v)$, we denote the macroscopic part $P_L(f)$ as the projection of $f$ onto Ker $L$, that is, 
	\begin{align*}
		P_L(f)=\sum_{i=0}^{4}\langle f,\Phi_{i} \rangle_v\Phi_{i}, 
	\end{align*}
	and further denote $(I-P_L)(f)=f-P_L(f)$ to be the microscopic part of $f$. It is well-known that there exists a constant $C_L$ such that
	\begin{align}
		\langle L(f),f \rangle_v\geq C_L\lVert (I-P_L)(f)\rVert^2_{L^2_{0,\gamma/2}(dv)}.
	\end{align}
	Here, $L^2_{0,\gamma/2}(dv)$ denotes integration only with respect to the variable $v$. The proof can be found in \cite{Glassey Robert T 1996}, \cite{Guo Y 2003} and \cite{Mouhot Clément 2007}.\par
	 \qquad We present a key estimate concerning $P_L$ and $(I-P_L)$. This kind of estimate first appeared in the pure diffuse reflection boundary value problem. In that setting, Esposito, Guo, Kim, and Marra proved that the macroscopic part $P_L(f)$
	can be controlled by the microscopic part, together with the source term and the boundary term $(I-P)f$ \cite{Esposito R 2013}. The difference is that, in our problem, the boundary condition is more complicated, so we need to adapt this idea to our setting.
	\begin{proposition}\label{proposition 2.7}
		Let $\Omega$ be a bounded connected domain in $\mathbb{R}^3$ with $C^2$ boundary and $-3<\gamma\leq 1$. Suppose that $f\in L^2_{0,\gamma/2}$ is a solution to 	
		\begin{align}\label{Linear Boltz}
			\left\{
			\begin{aligned}
				v\cdot\nabla_x f+\frac{1}{\kappa}L(f)&=\phi_1,&&\quad\text{in}\quad \Omega\times\mathbb{R}^3, \\
				f&=f_{in}, &&\quad\text{on}\quad \partial\Omega^{-}_1,\\
				f&=P(f)+\phi_2(x,v), &&\quad\text{on} \quad\partial\Omega^{-}_2
			\end{aligned}
			\right.
		\end{align}
		in distribution sense. Then,
		\begin{align}\label{eq 2.9}
			\lVert P_L(f)\rVert^2_{L^2_{0,\gamma/2}}&\leq C
			\begin{pmatrix*}[l]
				&\left[1+\frac{1}{\kappa}\right]\lVert (I-P_L)(f)\rVert^2_{L^2_{0,\gamma/2}}+\lVert \phi_1\rVert^2_{L^2_{0,-\gamma/2}}+\lVert (I-P)f\rVert^2_{L^2(\partial\Omega^{+}_2)}\\
				&+\lVert f\rVert^2_{L^2(\partial\Omega^{+}_1)}+\lVert f_{in}\rVert^2_{L^2(\partial\Omega^{-}_1)}
				+\lVert\phi_2\rVert^2_{L^2(\partial\Omega^{-}_2)}
			\end{pmatrix*}. 
		\end{align}
	\end{proposition}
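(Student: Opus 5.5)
We will follow the test-function method of Esposito--Guo--Kim--Marra, adapted to the mixed boundary. Write
\[
P_L f=\bigl(a(x)+b(x)\cdot v+c(x)(|v|^2-3)\bigr)m^{1/2}
\]
(up to the normalizing constants of the $\Phi_i$). For a test function $\psi(x,v)$ the weak form of \eqref{Linear Boltz} reads
\begin{align*}
\int_{\partial\Omega\times\mathbb{R}^3}\psi f\,(v\cdot n)\,dS\,dv-\langle v\cdot\nabla_x\psi,f\rangle
=-\frac{1}{\kappa}\langle \psi,L(I-P_L)f\rangle+\langle\psi,\phi_1\rangle .
\end{align*}
Here we used $L P_L f=0$. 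The plan is to choose three families of test functions, one for each of $c$, $b$ and $a$. Each is built from the solution of an elliptic problem in $x$ whose data is the macroscopic component we want to control.

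\textbf{Test functions.}
\begin{enumerate}
\item For $c$: take $\psi_c=(|v|^2-\beta_c)\,v\cdot\nabla_x\phi_c\,m^{1/2}$, where $-\Delta\phi_c=c$, and choose $\beta_c$ so that the $a$- and $b$-contributions in $\langle v\cdot\nabla_x\psi_c,P_Lf\rangle$ vanish.
\item For $b$: take $\psi_b^{ij}=(v_iv_j-\delta_{ij})\partial_j\phi_b^i\,m^{1/2}$ together with the usual $|v|^2v_iv_j$ correction, where $-\Delta\phi^i_b=b_i$.
\item For $a$: take $\psi_a=(|v|^2-\beta_a)\,v\cdot\nabla\phi_a\,m^{1/2}$, where $-\Delta\phi_a=a$.
\end{enumerate}

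\textbf{Choice of boundary conditions.} This is where the mixed boundary enters. For $c$ and $b$ we impose Dirichlet conditions $\phi=0$ on all of $\partial\Omega$. For $a$, the classical argument uses a Neumann condition together with mass normalization $\int a=0$. That normalization is unavailable here, since the mass is not conserved: it is fixed by the incoming data. Instead we use a Robin-type condition, namely Neumann on $\partial\Omega_2$ and Dirichlet or Robin on $\partial\Omega_1$. This makes the problem coercive without a zero-mean constraint, since $\partial\Omega_1$ has positive measure by Assumption \ref{assumption of boundary}. The required $H^2$ bound $\|\phi\|_{H^2}\le C\|a\|_{L^2}$ is the one established in Section \ref{section 6}; the mixed condition is handled there as a Robin problem. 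The same $H^2$ bounds hold for $\phi_b,\phi_c$ by standard elliptic theory on $C^2$ domains. Traces of $\nabla\phi$ are then controlled in $H^{1/2}(\partial\Omega)\subset L^2(\partial\Omega)$.

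\textbf{Estimating the terms.} In each identity, the bulk term $\langle v\cdot\nabla_x\psi,P_Lf\rangle$ produces $\|c\|_{L^2}^2$ (respectively $\|b\|^2$, $\|a\|^2$), plus cross terms in the other macroscopic components. These cross terms are either zero by the choice of $\beta$ or are absorbed later by taking linear combinations. The remaining contributions are bounded as follows.
\begin{enumerate}
\item The microscopic part is bounded by $\|\nabla^2\phi\|_{L^2}\|(I-P_L)f\|$.
\item The collision term $\frac1\kappa\langle\psi,L(I-P_L)f\rangle$ is bounded using the boundedness of $L$ from $L^2_{0,\gamma/2}$ to $L^2_{0,-\gamma/2}$ on functions with Gaussian weight. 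This gives $\frac{1}{\kappa}\|\nabla\phi\|\,\|(I-P_L)f\|_{L^2_{0,\gamma/2}}$.
\item The source term is bounded by $\|\nabla\phi\|\,\|\phi_1\|_{L^2_{0,-\gamma/2}}$.
\end{enumerate}
Young's inequality then yields the $[1+\frac1\kappa]$ factor (after squaring and absorbing, the form in \eqref{eq 2.9} results).

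\textbf{Main obstacle: the boundary terms.} On $\partial\Omega_1$ we split $f$ into its outgoing trace and its incoming part $f_{in}$, so the terms are bounded by $\|f\|_{L^2(\partial\Omega_1^+)}+\|f_{in}\|_{L^2(\partial\Omega_1^-)}$ times a trace of $\nabla\phi$. On $\partial\Omega_2$ we write $f=Pf+(I-P)f$ on the outgoing part and $f=Pf+\phi_2$ on the incoming part. The issue is that the $Pf$ contribution, which is not controlled by the right-hand side, must cancel. For $\psi_c$ and $\psi_b$ this follows from oddness in $v\cdot n$ or from $\phi=0$ and the parity of the test polynomial. For $\psi_a$, the Neumann condition $\partial_n\phi_a=0$ on $\partial\Omega_2$ kills the normal component, and the tangential part integrates to zero against the even function $m^{1/2}$ times $|v\cdot n|$. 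Verifying these cancellations exactly is the delicate step. In particular, we must check that the Dirichlet part on $\partial\Omega_1$ does not generate an uncontrolled $Pf$ term; it does not, since no diffuse reflection is imposed there. The remaining pieces are bounded by $\|(I-P)f\|_{L^2(\partial\Omega_2^+)}+\|\phi_2\|_{L^2(\partial\Omega_2^-)}$.

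Summing the three estimates with suitable small weights and absorbing the cross terms gives \eqref{eq 2.9}.
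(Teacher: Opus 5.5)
Your proposal is correct and follows the paper's proof. The $b$- and $c$-components are treated exactly as in Esposito--Guo--Kim--Marra, which the paper simply cites. For $a$, the paper uses $\psi=(|v|^2-\beta)\,v\cdot\nabla_x\phi\,\Phi_0$ with $-\Delta\phi=\langle f,\Phi_0\rangle_v$ and the Robin condition $\chi\phi+\partial_n\phi=0$, where $\chi\in C^2$ vanishes on $\partial\Omega_2$ and is positive on a set of positive measure. The Neumann condition on $\partial\Omega_2$, together with oddness in $v_\perp$, kills the $Pf$ term, and Section~\ref{section 6} supplies $\|\phi\|_{H^2}\le C\|\langle f,\Phi_0\rangle_v\|_{L^2}$.

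Two small corrections:
\begin{enumerate}
\item Drop the ``Dirichlet on $\partial\Omega_1$'' option. A genuine Dirichlet--Neumann junction along $\overline{\partial\Omega_1}\cap\overline{\partial\Omega_2}$ destroys $H^2$ regularity; it is precisely the smoothness of $\chi$ that avoids this.
\item For $\psi_c$, the $Pf$ contribution on $\partial\Omega_2$ does not vanish by oddness or by $\phi_c=0$, since the surviving integrand $(|v|^2-\beta_c)(v\cdot n)^2\,\partial_n\phi_c\,z(x)\,m$ is even. It vanishes because of the moment condition $\int(|v|^2-\beta_c)(v\cdot n)^2 m\,dv=0$, which your choice of $\beta_c$ already enforces.
\end{enumerate}
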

	
	\begin{proof}
		Recall $P_L(f)=\sum_{i=0}^{4}\langle f,\Phi_{i} \rangle_v\Phi_{i}$. Thus, it suffices to compute $\langle f,\Phi_{i} \rangle_v$, for $i=0,\cdots,4$. The estimate of $\langle f,\Phi_{i} \rangle_v$ are the same as those given in \textbf{lemma 3.3} in \cite{Esposito R 2013} for $i=1,\cdots,4$. The standard elliptic estimates appearing in \cite{Esposito R 2013} can also be found in the section 2 of \cite{Grisvard Pierre.} or section 6 of \cite{Evans Lawrence C 2022}.
		For $i=0$, due to the difference between the mixed  and diffuse boundary conditions we need to find a different test function to estimate $\langle f,\Phi_0 \rangle$. The Green's identity provides the following weak version of \eqref{Linear Boltz}:
		\begin{equation}\label{weak version of solution}
			\int_{\partial\Omega\times\mathbb{R}^3} \psi f \, dS dv
			- \iint_{\Omega \times \mathbb{R}^3} v \cdot \nabla_x \psi \, f dxdv
			= - \iint_{\Omega \times \mathbb{R}^3} \psi \, \frac{1}{\kappa}L(\mathrm{I}-\mathbf{P}) fdxdv
			+ \iint_{\Omega \times \mathbb{R}^3} \psi \, \phi_1dxdv.
		\end{equation}
		We choose the test function
		\begin{align}
			\psi=(|v|^2-\beta_{\langle f,\Phi_0 \rangle_v})v\cdot\nabla_x \phi_{\langle f,\Phi_0 \rangle_v}\Phi_0,
		\end{align}
		where $\phi_{\langle f,\Phi_0 \rangle_v}$ is the solution of following elliptic problem
		\begin{align}\label{ellip}
			\left\{
			\begin{aligned}
				-\Delta \phi_{\langle f,\Phi_0 \rangle_v} &= \langle f,\Phi_0 \rangle_v, && \text{in }\Omega, \\
				\chi(x)\phi_{\langle f,\Phi_0 \rangle_v}+\partial_n\phi_{\langle f,\Phi_0 \rangle_v}&= 0, && \text{on }\partial\Omega.
			\end{aligned}
			\right.
		\end{align}
		Here $\chi(x)$ is a non-negative $C^2$ function. We require that $\chi(x)=0$ on $\partial\Omega_{2}$ but $\chi> 0$ on a positive measure subset of $\partial\Omega$. Notice that we have following estimate
		\begin{align}\label{H2 estimate}
			\lVert \phi_{\langle f,\Phi_0 \rangle_v}\rVert_{H^2(\Omega)}\leq C\lVert \langle f,\Phi_0 \rangle_v\rVert_{L^2(\Omega)}.
		\end{align}
		This estimate can be found in \cite{elliptic problem} and \cite{Renardy}. We then bound the right hand side of \eqref{weak version of solution} by 
		\begin{align*}
			\frac{1}{\kappa}\|\phi_{\langle f,\Phi_0 \rangle_v}\|_{H^2(dx)}\|(\mathrm{I}-\mathbf{P}_L)f\|_{L^2_{0,\gamma/2}} + \|\phi_{\langle f,\Phi_0 \rangle_v}\|_{H^2(dx)}\|\phi_1\|_{L^2_{0,\gamma/2}}.
		\end{align*}
		We choose $\beta_{\langle f,\Phi_0 \rangle_v}$ such that 
		\begin{align}\label{vanish}
			\int_{\mathbb{R}^3} \left( |v|^2 - \beta_{\langle f,\Phi_0 \rangle_v} \right)
			\left( \frac{|v|^2}{2} - \frac{3}{2} \right)
			(v_i)^2 \,\mu(v)\, dv = 0.
		\end{align}
		Moreover, we decompose that
		\begin{align}\label{decomposition}
			\begin{aligned}
				f
				&= \mathbf{1}_{\partial\Omega^+_1}f+\mathbf{1}_{\partial\Omega^-_1}f_{in}+\mathbf{1}_{\partial\Omega_2}Pf + \mathbf{1}_{\partial\Omega^+_2}(I - P) f
				+ \mathbf{1}_{\partial\Omega^-_2}\phi_2,
				&&\qquad \text{on } \partial\Omega\times\mathbb{R}^3,\\
				f
				&= \sum_{i=0}^{4}\langle f,\Phi_{i} \rangle_v\Phi_{i}
				+ (\mathrm{I}-\mathbf{P}_L) f,
				&&\qquad \text{on } \Omega \times \mathbb{R}^3.
			\end{aligned}
		\end{align}
		By \eqref{decomposition}, since the contribution of $\langle f,\Phi_{4} \rangle_v$ vanishes in \eqref{weak version of solution} due to
		our choice of $\beta_{\langle f,\Phi_0 \rangle_v}$ and the contributions of $\langle f,\Phi_1 \rangle_v$, $\langle f,\Phi_2 \rangle_v$, and $\langle f,\Phi_3 \rangle_v$  vanish in \eqref{weak version of solution} due to the oddness, the left-hand side of \eqref{weak version of solution} takes the form of
		\begin{align}\label{detail 2}
			\int_{\partial\Omega\times\mathbb{R}^3}
			n \cdot v\left(|v|^{2} - \beta_{\langle f,\Phi_0 \rangle_v}\right)v\cdot\nabla_x\phi_{\langle f,\Phi_0 \rangle_v}\Phi_0
			\bigl[\mathbf{1}_{\partial\Omega^+_1}f+\mathbf{1}_{\partial\Omega^-_1}f_{in}+\mathbf{1}_{\partial\Omega_2}Pf + \mathbf{1}_{\partial\Omega^+_2}(I - P) f
			+ \mathbf{1}_{\partial\Omega^-_2}\phi_2\bigr]
		\end{align}
		\begin{align}\label{detail 3}
			-\sum_{i,k=1}^{d} \iint_{\Omega \times \mathbb{R}^3}
			(|v|^{2} - \beta_{\langle f,\Phi_0 \rangle_v}) v_i v_k
			\, \partial_{ik} \phi_{\langle f,\Phi_0 \rangle_v}\, \langle f,\Phi_0 \rangle_v\Phi^2_0 dxdv
		\end{align}
		\begin{align}\label{detail 4}
			- \sum_{i,k=1}^{d} \iint_{\Omega \times \mathbb{R}^3}
			(|v|^{2} - \beta_{\langle f,\Phi_0 \rangle_v}) v_i v_k
			\, \partial_{ik} \phi_{\langle f,\Phi_0 \rangle_v}\Phi_0\, (\mathrm{I}-\mathbf{P}_L) f .
		\end{align}
		
		We make an orthogonal decomposition at the boundary $\partial\Omega_2$,
		\[
		v= (v \cdot n) n + v-(v \cdot n) n = v_n n+ v_\perp.
		\]
		The contribution of $Pf = z(x)m^{\frac{1}{2}}=z(x)\Phi_{0}$ is
		\begin{align*}
			&\int_{\partial\Omega_2\times\mathbb{R}^3}
			v_n(|v|^{2} - \beta_{\langle f,\Phi_0 \rangle_v}) v \cdot \nabla_x \phi_{\langle f,\Phi_0 \rangle_v}(x)z(x)\Phi^2_{0}dSdv\\
			&=
			\int_{\partial\Omega_2\times\mathbb{R}^3}
			v_n(|v|^{2} - \beta_{\langle f,\Phi_0 \rangle_v})\,
			v_n \, z(x)\partial_n\phi_{\langle f,\Phi_0 \rangle_v}\Phi_{0}^2dSdv+\int_{\partial\Omega_2\times\mathbb{R}^3}
			v_n (|v|^{2} - \beta_a)\,
			v_\perp\cdot \nabla_x \phi_{\langle f,\Phi_0 \rangle_v}\,
			z(x)\Phi_{0}^2dSdv.
		\end{align*}
		Notice that the boundary condition \eqref{ellip} reduces to the Neumann boundary condition on $\partial\Omega_{2}$. Therefore, the first term vanish. While, the second term also vanishes due to
		the oddness of $ v_nv_\perp$.
		Therefore, \eqref{detail 2} and \eqref{detail 4} are bounded by
		$$
		\|\phi_{\langle f,\Phi_0 \rangle_v}\|_{H^2(dx)}
		\lVert (I-P_L)(f)\rVert_{L^2_{0,\gamma/2}}
		$$
		and 
		$$
		\|\phi_{\langle f,\Phi_0 \rangle_v}\|_{H^2(dx)}\left(\lVert (I-P)f\rVert_{L^2(\partial\Omega^{+}_2)}
		+\lVert f\rVert_{L^2(\partial\Omega^{+}_1)}
		+\lVert\phi_2\rVert_{L^2(\partial\Omega^{-}_2)}
		+\lVert f_{in}\rVert_{L^2(\partial\Omega^{-}_1)}\right).
		$$
		
		For \eqref{detail 3}, if $k \neq i$, it vanishes due to the oddness.
		Hence, the only contribution comes from $k=i$.
		\[
		-\sum_{i=1}^{d}
		\iint_{\Omega \times \mathbb{R}^3}
		(|v|^{2} - \beta_a)\,(v_i)^2\,
		\mu\, \partial_{ii} \phi_a\, a .
		\]
		Using $-\Delta_x \phi_{\langle f,\Phi_0 \rangle_v} = {\langle f,\Phi_0 \rangle_v}$ and 
		$\lVert \phi_{\langle f,\Phi_0 \rangle_v}\rVert_{H^2(dx)}\leq C\lVert \langle f,\Phi_0 \rangle_v\rVert_{L^2(dx)}$, we obtain
		\begin{align*}
			\|{\langle f,\Phi_0 \rangle_v}\|_{L^2(dx)}
			\lesssim
			\begin{pmatrix*}[l]
				&
				\left[1+\frac{1}{\kappa}\right]\lVert (I-P_L)(f)\rVert_{L^2_{0,\gamma/2}}
				+\lVert \phi_1\rVert_{L^2_{0,-\gamma/2}}
				+\lVert (I-P)f\rVert_{L^2(\partial\Omega^+_2)}\\
				&+\lVert f\rVert_{L^2(\partial\Omega^{+}_1)}
				+\lVert f_{in}\rVert_{L^2(\partial\Omega^{-}_1)}
				+\lVert\phi_2\rVert_{L^2(\partial\Omega^{-}_2)}
			\end{pmatrix*}.
		\end{align*}
		We therefore deduce the proposition.
	\end{proof}

	\subsection{Proposition of $S_\Omega$ and $J$}
	Here we introduce a proposition from \cite{Kawagoe Daisuke 2025}.
	\begin{proposition}\label{proposition 1.3}
		Let $\Omega$ be a bounded convex domain with $C^1$ boundary, $\alpha\geq 0, \beta\in\mathbb{R}^3, -3<\gamma\leq 1$ and $2\leq p\leq\infty$. Suppose that \eqref{eq 1.2} holds. Then, the operator norm of $S^\kappa_\Omega : L^p_{\alpha,\beta}\to L^p_{\alpha,\beta+1}$ is $\mathcal{O}(\kappa)$. 
	\end{proposition}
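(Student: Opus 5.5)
We need to prove: $S^\kappa_\Omega : L^p_{\alpha,\beta}\to L^p_{\alpha,\beta+1}$ has operator norm $\mathcal{O}(\kappa)$ for $2\le p\le\infty$.

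The plan is to reduce to a pointwise-in-$v$ estimate along characteristics and then use Proposition \ref{proposition 2.2}. Since the weight $e^{\alpha|v|^2}(1+|v|)^{\beta}$ depends only on $v$, and $S^\kappa_\Omega$ acts only along the line $s\mapsto x-sv$ with $v$ frozen, the weight commutes with $S^\kappa_\Omega$. It therefore suffices to show that for every fixed $v$,
\begin{align*}
\Big\lVert \int_0^{\tau_-(\cdot,v)} e^{-\frac{1}{\kappa}\nu(v)s} g(\cdot-sv)\,ds\Big\rVert_{L^p(\Omega,dx)}\le C\frac{\kappa}{1+|v|}\lVert g\rVert_{L^p(\Omega,dx)},
\end{align*}
with $C$ independent of $v$ and $\kappa$. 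Multiplying by the weight, raising to the power $p$ and integrating in $v$ then yields the claim. For $p=\infty$ the same pointwise bound with a supremum over $v$ suffices.

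\emph{Case $p=\infty$.} We bound $|g(x-sv)|\le\lVert g\rVert_{L^\infty}$ directly. Since $\Omega$ is bounded with diameter $L$, we have $\tau_-(x,v)\le L/|v|$. Hence the integral is at most $\lVert g\rVert_{L^\infty}\int_0^{L/|v|}e^{-\nu(v)s/\kappa}ds$, and Proposition \ref{proposition 2.2} bounds this by $C_{\gamma,L}\kappa(1+|v|)^{-1}\lVert g\rVert_{L^\infty}$.

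\emph{Case $2\le p<\infty$.} We use Minkowski's integral inequality, extending $g$ by zero outside $\Omega$. The condition $s<\tau_-(x,v)$ means exactly that the segment from $x$ to $x-sv$ lies in $\overline\Omega$; convexity of $\Omega$ makes this equivalent to $x-sv\in\overline\Omega$, so the indicator $\mathbf 1_{s<\tau_-(x,v)}$ is dominated by $\mathbf 1_{\Omega}(x-sv)\mathbf 1_{s\le L/|v|}$. Then
\begin{align*}
\Big\lVert\int_0^{L/|v|}e^{-\frac{\nu(v)s}{\kappa}}|g(\cdot-sv)|\mathbf 1_\Omega(\cdot-sv)\,ds\Big\rVert_{L^p(dx)}\le\int_0^{L/|v|}e^{-\frac{\nu(v)s}{\kappa}}\lVert g\rVert_{L^p(\Omega)}\,ds,
\end{align*}
using translation invariance of Lebesgue measure. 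Proposition \ref{proposition 2.2} again gives the factor $C\kappa/(1+|v|)$. (Young's inequality in the form "$L^1$ kernel in $s$ times $L^p$ in $x$" is the same computation.) Convexity is used only to justify the indicator bound; the $C^1$ boundary ensures $\tau_-$ is measurable. The restriction $p\ge2$ is not actually needed here, so the argument covers the stated range.

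\emph{Main obstacle.} The only delicate point is uniformity as $|v|\to0$ and $|v|\to\infty$, which is the content of Proposition \ref{proposition 2.2}:
\begin{itemize}
\item For large $|v|$, the crude bound $L/|v|$ on the length of the characteristic gives decay $(1+|v|)^{-1}$ with no factor of $\kappa$ lost beyond a constant. This is where $\kappa\ge\kappa_0$ enters, with a constant depending on $\kappa_0$; if needed we would write $L/|v|\le (L/\kappa_0)\,\kappa/|v|$.
\item For small $|v|$, we use $\int_0^\infty e^{-\nu s/\kappa}ds=\kappa/\nu(v)$ together with $\nu(v)\ge\nu_0$ on bounded sets by Proposition \ref{proposition 2.1}, which gives an $\mathcal O(\kappa)$ bound.
\end{itemize}
Combining the two regimes gives the uniform $\mathcal O(\kappa)(1+|v|)^{-1}$ bound, and hence the stated operator norm.
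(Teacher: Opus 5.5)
Your argument is correct. The paper gives no proof of this proposition; it quotes it from Kawagoe's preprint, so your proposal supplies a self-contained argument rather than an alternative to one in the text.

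Your route is the standard one:
\begin{itemize}
\item freeze $v$, so that the weight $e^{\alpha|v|^2}(1+|v|)^{\beta}$ commutes with $S^\kappa_\Omega$;
\item apply Minkowski's integral inequality in $x$ with translation invariance (equivalently, H\"older in $s$ followed by the substitution $y=x-sv$);
\item close with the one-dimensional bound of \textbf{Proposition \ref{proposition 2.2}}.
\end{itemize}
It yields a little more than stated. It works for every $1\le p\le\infty$, and convexity enters only to dominate $\mathbf{1}_{\{s<\tau_-(x,v)\}}$ by $\mathbf{1}_{\overline{\Omega}}(x-sv)\mathbf{1}_{\{s\le \mathcal{D}/|v|\}}$.

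Your remark about $\kappa_0$ is necessary, not just cautious. For $\gamma<1$, pick $|v|$ with $\nu(v)\mathcal{D}=\kappa|v|$. Then $\int_0^{\mathcal{D}/|v|}e^{-\nu(v)s/\kappa}\,ds=(1-e^{-1})\mathcal{D}/|v|$, which exceeds $\kappa/(1+|v|)$ by a factor of order $\mathcal{D}/\kappa$. So \textbf{Proposition \ref{proposition 2.2}} cannot hold with a constant uniform as $\kappa\to 0$.

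What your two regimes actually prove is $\lVert S^\kappa_\Omega\rVert_{L^p_{\alpha,\beta}\to L^p_{\alpha,\beta+1}}\le C(1+\kappa)$ for all $\kappa>0$. This follows from using $\mathcal{D}/|v|$ for $|v|\ge 1$ and $\kappa/\inf_{|v|\le 1}\nu$ for $|v|\le 1$. It is $\mathcal{O}(\kappa)$ in the paper's sense (asymptotics as $\kappa\to\infty$) and gives $C_{\kappa_0}\kappa$ for $\kappa\ge\kappa_0$. Stating the result in that form would be cleaner than appealing to \textbf{Proposition \ref{proposition 2.2}} as written.
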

	We also have the following estimate.
	\begin{proposition}\label{proposition 2.8}
		Let $\Omega$ be a bounded convex domain with $C^1$ boundary and $-3<\gamma\leq 1$, Suppose $f\in L^2(\partial\Omega^{+}_2)$. Then, 
		\begin{align}
			\lVert J^\kappa\large(f_{in}\textbf{1}_{\partial\Omega^{-}_1}+(P(f)+\phi_2)\textbf{1}_{\partial\Omega^{-}_2})\rVert_{L^2_{0,\gamma/2}}\leq C\kappa\Big(\lVert f_{in}\rVert_{L^2(\partial\Omega^{-}_1)}+\lVert f\rVert_{L^2(\partial\Omega^{+}_2)}+\lVert\phi_2\rVert_{L^2(\partial\Omega^{-}_2)}\Big).
		\end{align}
	\end{proposition}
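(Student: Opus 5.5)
The plan is a change of variables that turns the $L^2_{0,\gamma/2}(\Omega\times\mathbb{R}^3)$ norm of $J^\kappa g$ into a boundary integral against the measure $|v\cdot n(x)|\,dS\,dv$. Throughout, write $g=f_{in}\textbf{1}_{\partial\Omega^{-}_1}+(P(f)+\phi_2)\textbf{1}_{\partial\Omega^{-}_2}$ for the incoming datum on $\partial\Omega^-$.

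\textbf{Step 1 (reduction to a boundary integral).} For a convex $C^1$ domain, the map $(y,v,t)\mapsto (x,v)=(y+tv,v)$, with $(y,v)\in\partial\Omega^-$ and $0<t<\tau_+(y,v)$, is a bijection onto $\Omega\times\mathbb{R}^3$ up to a null set. Its Jacobian gives $dx\,dv=|v\cdot n(y)|\,dS(y)\,dv\,dt$. Since $J^\kappa g(y+tv,v)=e^{-\nu(v)t/\kappa}g(y,v)$, we get
\begin{align*}
\lVert J^\kappa g\rVert^2_{L^2_{0,\gamma/2}}=\int_{\partial\Omega^-}|g(y,v)|^2(1+|v|)^{\gamma}\Big(\int_0^{\tau_+(y,v)}e^{-\frac{2}{\kappa}\nu(v)t}dt\Big)|v\cdot n(y)|\,dS\,dv .
\end{align*}

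\textbf{Step 2 (bounding the time integral).} Use $\tau_+(y,v)\le \mathrm{diam}(\Omega)/|v|$ together with Proposition \ref{proposition 2.2}. This gives that the inner integral is at most $C\kappa/(1+|v|)$. For $\gamma\le 1$ the weight $(1+|v|)^{\gamma-1}$ is bounded, so
\begin{align*}
\lVert J^\kappa g\rVert^2_{L^2_{0,\gamma/2}}\le C\kappa\lVert g\rVert^2_{L^2(\partial\Omega^-)}.
\end{align*}
Taking square roots gives the factor $\sqrt{\kappa}$, which is at most $C\kappa$ when $\kappa\ge\kappa_0$. Alternatively, the statement is loose enough that the factor $\kappa$ suffices.

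\textbf{Step 3 (splitting $g$).} By the triangle inequality it remains to bound the three pieces of $g$ separately. The pieces $f_{in}$ and $\phi_2$ are already measured in the desired norms. The only nontrivial point is $\lVert P(f)\rVert_{L^2(\partial\Omega_2^-)}\le \lVert f\rVert_{L^2(\partial\Omega_2^+)}$. Since $P(f)(x,v)=m^{1/2}(v)z(x)$ with $z(x)=\int_{u\cdot n>0}f m^{1/2}\,u\cdot n\,du$, Cauchy--Schwarz in $u$ gives $|z(x)|^2\le \int_{u\cdot n>0}m\,|u\cdot n|\,du\int_{u\cdot n>0}|f|^2|u\cdot n|\,du$. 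Integrating $m|v\cdot n|$ over $v\cdot n<0$ then yields the bound with a constant depending only on the normalization of $m$.

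\textbf{Main obstacle.} The main obstacle is Step 1: justifying the change of variables and the null-set issues (grazing set $\partial\Omega^0$, and the fact that $\tau_-$ is finite with $q(x,v)\in\partial\Omega$ almost everywhere). For convex $C^1$ domains I would cite the standard trace/characteristic lemma, or verify the Jacobian directly by parametrizing lines in direction $v$.
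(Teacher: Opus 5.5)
Your proposal is correct and is essentially the paper's proof: the paper also uses the characteristic change of variables (citing Choulli--Stefanov) to turn $\|J^\kappa g\|^2_{L^2_{0,\gamma/2}}$ into a boundary integral against $|v\cdot n|\,dS\,dv$, bounds the time integral by a multiple of $\kappa$, and controls $\|P(f)\|_{L^2(\partial\Omega^-_2)}$ by $\|f\|_{L^2(\partial\Omega^+_2)}$.

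There are two small differences. First, the paper extends the time integral to $\infty$ to get $\kappa/(2\nu(v))$ and then uses $(1+|v|)^{\gamma}/\nu(v)\le C$, whereas you use the travel-time bound together with Proposition \ref{proposition 2.2}. Second, the paper likewise only obtains $\sqrt{\kappa}$ on the norm. So your caveat $\kappa\ge\kappa_0$ is the right way to reconcile this with the stated factor $C\kappa$. Your ``alternatively'' remark is not, since $\sqrt{\kappa}>\kappa$ for $\kappa<1$.
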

	\begin{proof}
		By \textbf{Lemma 2.1} (\cite{Choulli 1999}), we have
		\begin{align*}
			&\lVert J^\kappa\large(f_{in}\textbf{1}_{\partial\Omega^{-}_1}+(P(f)+\phi_2)\textbf{1}_{\partial\Omega^{-}_2})\rVert^2_{L^2_{0,\gamma/2}}\\
			=&\int_{\Omega}\int_{\mathbb{R}^3}e^{-2\frac{1}{\kappa}\nu(|v|)\tau_{-}(x,v)}
			\Big[\big(f_{in}\textbf{1}_{\partial\Omega^{-}_1}+(P(f)+\phi_2)\textbf{1}_{\partial\Omega^{-}_2}\big)(q(x,v),v)\Big]^2(1+|v|)^{\gamma}dv dx\\
			=&\int_{\Omega^-}\int_{0}^{\tau_{-}(x,-v)}e^{-2\frac{1}{\kappa}\nu(|v|)t}
			\Big[\big(f_{in}\textbf{1}_{\partial\Omega^{-}_1}+(P(f)+\phi_2)\textbf{1}_{\partial\Omega^{-}_2}\big)(z,v)\Big]^2(1+|v|)^{\gamma}|v\cdot n(x)|dt dS dv\\
			=&\kappa\int_{\Omega^-}
			\Big[\big(f_{in}\textbf{1}_{\partial\Omega^{-}_1}+(P(f)+\phi_2)\textbf{1}_{\partial\Omega^{-}_2}\big)(z,v)\Big]^2\frac{(1+|v|)^{\gamma}}{\nu(|v|)}|v\cdot n(x)|dS dv\\\nonumber
			\leq&C\kappa\int_{\Omega^-}
			\big(f^2_{in}\textbf{1}_{\partial\Omega^{-}_1}+P^2(f)\textbf{1}_{\partial\Omega^{-}_2}+\phi^2_2\textbf{1}_{\partial\Omega^{-}_2}\big)(z,v)|v\cdot n(x)|dS dv\\\nonumber
			\leq&C\kappa \Big(\lVert f_{in}\rVert^2_{L^2(\partial\Omega^{-}_1)}+\lVert\phi_2\rVert^2_{L^2(\partial\Omega^{-}_2)}+\lVert P(f)\rVert^2_{L^2(\partial\Omega^{-}_2)}\Big)\\\nonumber
			=&C\kappa\Big(\lVert f_{in}\rVert^2_{L^2(\partial\Omega^{-}_1)}+\lVert\phi_2\rVert^2_{L^2(\partial\Omega^{-}_2)}+\lVert P(f)\rVert^2_{L^2(\partial\Omega^{+}_2)}\Big)\\\nonumber
			\leq& C\kappa\Big(\lVert f_{in}\rVert^2_{L^2(\partial\Omega^{-}_1)}+\lVert\phi_2\rVert^2_{L^2(\partial\Omega^{-}_2)}+\lVert f\rVert^2_{L^2(\partial\Omega^{+}_2)}\Big).\nonumber
		\end{align*}
		\qquad This finish the proof.
	\end{proof}
	
	\subsection{Estimate of $Q_+$ and $Q_-$}
	\begin{lemma}\label{lemma 2.1}
		For any $\beta\geq 1$,
		\begin{align}\label{eq 2.12}
			\int_{\mathbb{S}^2}\langle v'\rangle^{-\beta} \langle u'\rangle^{-\beta} d\sigma \leq C_\beta \left(\frac{1}{\sqrt{1+|v|^2+|u|^2}}\right)^{-(\beta+1)},
		\end{align}
		where $v'=\frac{v+u}{2}+\frac{|v-u|}{2}\sigma$ and $u'=\frac{v+u}{2}-\frac{|v-u|}{2}\sigma$.
	\end{lemma}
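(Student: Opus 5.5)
The inequality as literally printed, with the exponent $-(\beta+1)$ applied to $(1+|v|^2+|u|^2)^{-1/2}$, is trivial: the left-hand side is at most $|\mathbb{S}^2|=4\pi$, while the right-hand side is at least $C_\beta$. The bound that is useful later in the paper is the decay estimate
\begin{align*}
\int_{\mathbb{S}^2}\langle v'\rangle^{-\beta}\langle u'\rangle^{-\beta}d\sigma\leq C_\beta\,(1+|v|^2+|u|^2)^{-\frac{\beta+1}{2}},
\end{align*}
which implies the printed statement. My plan is to prove this stronger estimate. Set $E=1+|v|^2+|u|^2$, $c=\frac{v+u}{2}$ and $r=\frac{|v-u|}{2}$, so that $v'=c+r\sigma$ and $u'=c-r\sigma$. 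If $E\leq 4$, the trivial bound $4\pi$ suffices, so I assume $E>4$.

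\textbf{Step 1 (energy conservation).} Since $|v'|^2+|u'|^2=|v|^2+|u|^2$, we have $\max\{\langle v'\rangle,\langle u'\rangle\}^2\geq E/2$. Therefore
\begin{align*}
\langle v'\rangle^{-\beta}\langle u'\rangle^{-\beta}\leq C E^{-\beta/2}\left(\langle v'\rangle^{-\beta}+\langle u'\rangle^{-\beta}\right).
\end{align*}
The substitution $\sigma\mapsto-\sigma$ exchanges $v'$ and $u'$, so it remains to show $\int_{\mathbb{S}^2}\langle v'\rangle^{-\beta}d\sigma\leq CE^{-1/2}$. Because $\beta\geq1$, we have $\langle v'\rangle^{-\beta}\leq\langle v'\rangle^{-1}$, so it is enough to treat $\beta=1$.

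\textbf{Step 2 (reduction to one dimension).} Write $\cos\theta=\sigma\cdot c/|c|$ (any axis works if $c=0$). Then $|v'|^2=|c|^2+r^2+2|c|r\cos\theta$. With $s=|v'|^2$, we have $d\sigma=2\pi\,d(\cos\theta)=\frac{\pi}{|c|r}ds$, which gives
\begin{align*}
\int_{\mathbb{S}^2}\langle v'\rangle^{-1}d\sigma=\frac{\pi}{|c|r}\int_{(|c|-r)^2}^{(|c|+r)^2}(1+s)^{-1/2}ds=\frac{2\pi}{|c|r}\left(\sqrt{1+(|c|+r)^2}-\sqrt{1+(|c|-r)^2}\right).
\end{align*}
The function $t\mapsto\sqrt{1+t^2}$ is $1$-Lipschitz, so the bracket is at most $\big||c|+r\big|-\big||c|-r\big|=2\min\{|c|,r\}$. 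Hence the integral is at most $4\pi/\max\{|c|,r\}$. The degenerate cases $c=0$ or $r=0$ follow directly, since then $|v'|$ is constant on the sphere.

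\textbf{Step 3 (conclusion).} We have $|c|^2+r^2=\frac{|v|^2+|u|^2}{2}$, so $\max\{|c|,r\}^2\geq\frac{|v|^2+|u|^2}{4}\geq\frac{E}{8}$ when $E>4$. Thus $\int_{\mathbb{S}^2}\langle v'\rangle^{-1}d\sigma\leq CE^{-1/2}$. Combining this with Step 1 yields the claimed $E^{-(\beta+1)/2}$ decay.

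The only delicate point is Step 2. The naive bound replacing $\langle v'\rangle^{-1}$ by its pointwise worst value gives no decay, because $v'$ can be near $0$. The gain comes from the exact Jacobian $d\sigma\propto ds/(|c|r)$ together with the Lipschitz bound on the square root, which shows that the near-zero region of $v'$ has small spherical measure.
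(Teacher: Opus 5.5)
Your proof is correct, and your reading of the exponent is the intended one. The paper's own computation for $\beta=1$ ends with $16\pi/(1+|v|^2+|u|^2)$, and its integer induction step is written with the positive exponent $n+2$. Moreover, Proposition~\ref{proposition 2.9} needs the decaying bound, since that is what makes $\iint|v-u|^\gamma(1+|v|^2+|u|^2)^{-(\beta+1)/2}\,du\,dv$ finite for $\beta>5+\max\{0,\gamma\}$.

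Your route is genuinely different from the paper's. The paper never separates $\langle v'\rangle$ from $\langle u'\rangle$. With $A=1+|c|^2+r^2$ and $B=2|c|r$, it uses $\langle v'\rangle^2\langle u'\rangle^2=A^2-B^2\cos^2\theta$ and evaluates the $\beta=1$ integral exactly as $\frac{4\pi}{B}\sin^{-1}(B/A)$. It then applies $\sin^{-1}x\le 2x$ to get the bound $8\pi/A$. Larger $\beta$ is handled by induction: each extra power is peeled off with the pointwise bound $\langle v'\rangle\langle u'\rangle\ge\sqrt{A^2-B^2}\ge\sqrt{A}$, valid because $A-B=1+(|c|-r)^2\ge 1$, and a separate step covers fractional $\beta$.

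You instead decouple the factors by energy conservation, bounding the larger one pointwise by $(E/2)^{-\beta/2}$. All the work then goes into the single-factor estimate $\int_{\mathbb{S}^2}\langle v'\rangle^{-1}d\sigma\le 4\pi/\max\{|c|,r\}$, which you obtain from the exact Jacobian of $s=|v'|^2$ and the Lipschitz bound on $\sqrt{1+t^2}$. Both arguments rest on the same one-dimensional reduction: with axis $c/|c|$ the integrand depends only on $\cos\theta$, which is uniformly distributed under $d\sigma$.

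The paper's version gives a closed form for the symmetric product at $\beta=1$. Yours treats every real $\beta\ge 1$ at once, with no induction and no integer/fractional split. It also produces a one-factor bound that stays usable when only one of $v',u'$ carries a weight.
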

	This statement can be found in \cite{Ukai Seiji Solutions of the Boltzmann equation}. We provide a proof below.
	\begin{proof}
		For convenience, let $A=1+\left|\frac{v+u}{2}\right|^2+\left|\frac{v-u}{2}\right|^2$  and $B=2\left|\frac{v+u}{2}\right|\left|\frac{v-u}{2}\right|$. When $\beta=1$,
		\begin{align*}
			\int_{\mathbb{S}^2}\langle v'\rangle^{-1} \langle u'\rangle^{-1} d\sigma=\int_{0}^{2\pi}\int_{0}^{\pi}\frac{\sin\theta}{\sqrt{A^2-B^2\cos^2\theta}}d\theta \phi.
		\end{align*}
		Here we use sphere coordinate with $e_1=\frac{v+u}{|v+u|}$. Let $z=\cos\theta$ and $y=\sin^{-1}\left(Bz/A\right)$. We obtain
		\begin{align*}
			\int_{0}^{2\pi}\int_{0}^{\pi}\frac{\sin\theta}{\sqrt{A^2-B^2\cos^2\theta}}d\theta \phi
			&=4\pi\int_{0}^{1}\frac{1}{\sqrt{A^2-B^2z^2}}dz\\
			&=4\pi\int_{0}^{\sin^{-1}(B/A)}\frac{\cos(y)}{B|\cos(y)|}dy\\
			&\leq8\pi A^{-1}\\
			&\leq \frac{16\pi}{1+|v|^2+|u|^2}.
		\end{align*}
		The penultimate inequality follows from $\sin^{-1}(x)\leq 2x$ for any  $0\leq x\leq 1$. For $\beta>1$, we first consider the case of $\beta\in\mathbb{N}$. We use induction to establish the result. Since $\beta=1$ holds by the above calculation. Assume that $\beta=n$ holds. When $\beta=n+1$, we have
		\begin{align*}
			\int_{\mathbb{S}^2}\langle v'\rangle^{-(n+1)} \langle u'\rangle^{-(n+1)} d\sigma&=4\pi\int_{0}^{B/A}BA^{-n}\frac{1}{\left(\sqrt{1-y^2}\right)^{n+1}}dy\\
			&\leq \frac{4\pi}{A\sqrt{1-(B/A)^2}}\int_{0}^{B/A}BA^{1-n}\frac{1}{\left(\sqrt{1-y^2}\right)^{n}}dy\\
			&\leq\frac{C}{\sqrt{A^2-B^2}}\left(\frac{1}{\sqrt{1+|v|^2+|u|^2}}\right)^{n+1}\\
			&\leq\left(\frac{1}{\sqrt{1+|v|^2+|u|^2}}\right)^{n+2}.
		\end{align*}
		The second-to-last line follows from the induction hypothesis, so \eqref{eq 2.12} holds when $\beta\in\mathbb{N}$. For $\beta\notin\mathbb{N}$, suppose that $\beta=n+d$ for some $n\in\mathbb{N}$ and $d\in(0,1)$. Then, we have
		\begin{align*}
			4\pi\int_{0}^{B/A}BA^{1-\beta}\frac{1}{\left(\sqrt{1-y^2}\right)^{\beta}}dy&\leq
			\frac{4\pi}{\left(\sqrt{A^2-B^2}\right)^d}\int_{0}^{B/A}BA^{1-n}\frac{1}{\left(\sqrt{1-y^2}\right)^{n}}dy\\
			&\leq\frac{C}{\left(\sqrt{A^2-B^2}\right)^d}\left(\frac{1}{\sqrt{1+|v|^2+|u|^2}}\right)^{-n-1}\\
			&=C\left(\frac{1}{\sqrt{1+|v|^2+|u|^2}}\right)^{-(\beta+1)}.
		\end{align*}
	\end{proof}
	Now, we provide estimates of $Q_+, Q_-$, which will be used in \textbf{section 5}. 
	\begin{proposition}\label{proposition 2.9}
		Suppose that \eqref{eq 1.2} holds. Then,
		\begin{align}\label{eq 2.13}
			\left|\int_{\mathbb{R}^3}|v|S^\kappa_\Omega\left(Q_+[f,f]\right)(x,v)dv\right|\leq C_{\gamma,\beta} \lVert f\rVert^2_{L^\infty_{0,\beta}(\Omega\times\mathbb{R}^3)}
		\end{align}
		for any $\beta>5+\max\{0,\gamma\}$ and 
		\begin{align}\label{eq 2.14}
			\left|\int_{\mathbb{R}^3}|v|S^\kappa_\Omega\left(Q_-[f,f]\right)(x,v)dv\right|\leq C_{\gamma,\beta} \lVert f\rVert^2_{L^\infty_{0,\beta}(\Omega\times\mathbb{R}^3)}
		\end{align}
		for any $\beta>3+\max\{0,\gamma\}$.
	\end{proposition}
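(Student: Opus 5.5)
The plan is to reduce both estimates to a pointwise bound on $Q_\pm[f,f]$ in terms of $\lVert f\rVert_{L^\infty_{0,\beta}}$, and then to absorb the factor $|v|$ using the structure of $S^\kappa_\Omega$.

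\emph{Pointwise bound on the loss term.} We have
\begin{align*}
Q_-[f,f](v)=f(v)\int_{\mathbb{R}^3}\int_{\mathbb{S}^2}B\,f(u)\,d\omega\, du .
\end{align*}
Using $|f(x,u)|\le \lVert f\rVert\,\langle u\rangle^{-\beta}$ and the kernel \eqref{eq 1.2}, we get
\begin{align*}
\int_{\mathbb{R}^3}|v-u|^\gamma\langle u\rangle^{-\beta}du\le C\langle v\rangle^{\max\{\gamma,0\}}
\end{align*}
provided $\beta>3+\max\{0,\gamma\}$. For $\gamma<0$ the local singularity is integrable since $\gamma>-3$. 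Hence
\begin{align*}
|Q_-[f,f](x,v)|\le C\lVert f\rVert^2\langle v\rangle^{-\beta+\max\{\gamma,0\}}.
\end{align*}

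\emph{Pointwise bound on the gain term.} We use $|f(v')f(u')|\le \lVert f\rVert^2\langle v'\rangle^{-\beta}\langle u'\rangle^{-\beta}$ and apply Lemma \ref{lemma 2.1}, interpreting its right-hand side as $C\langle (v,u)\rangle^{-(\beta+1)}$. This gives
\begin{align*}
|Q_+[f,f](v)|\le C\lVert f\rVert^2\int_{\mathbb{R}^3}|v-u|^\gamma(1+|v|^2+|u|^2)^{-(\beta+1)/2}du .
\end{align*}
The worse case is $\gamma\ge0$, where $|v-u|^\gamma\le C(\langle v\rangle^\gamma+\langle u\rangle^\gamma)$. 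Splitting the $u$-integral into $|u|\le|v|$ and $|u|>|v|$ yields a bound of the form
\begin{align*}
C\lVert f\rVert^2\langle v\rangle^{-(\beta+1)+3+\max\{\gamma,0\}}.
\end{align*}
The $u$-tail converges since $\beta+1-\gamma>3$. This loss of three powers relative to $Q_-$, plus one more to be spent below, is why the gain estimate needs the stronger condition $\beta>5+\max\{0,\gamma\}$.

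\emph{Handling $|v|S^\kappa_\Omega$.} We do not want a factor $\kappa$, because the claimed constant is independent of $\kappa$. So we use
\begin{align*}
|S^\kappa_\Omega g(x,v)|\le\int_0^{\tau_-}|g(x-sv,v)|\,ds\le \frac{\mathrm{diam}(\Omega)}{|v|}\sup_x|g(x,v)|,
\end{align*}
which uses $e^{-\nu s/\kappa}\le1$ and $\tau_-(x,v)\le \mathrm{diam}(\Omega)/|v|$. Then $|v|\,|S^\kappa_\Omega Q_\pm|\le C\sup_x|Q_\pm(x,v)|$, and integrating in $v$ gives:
\begin{itemize}
\item for the loss term, $\int\langle v\rangle^{-\beta+\max\{\gamma,0\}}dv<\infty$ iff $\beta>3+\max\{\gamma,0\}$, which is exactly \eqref{eq 2.14};
\item for the gain term, $\int\langle v\rangle^{-\beta+2+\max\{\gamma,0\}}dv<\infty$ iff $\beta>5+\max\{\gamma,0\}$, which is exactly \eqref{eq 2.13}.
\end{itemize}

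\emph{Main obstacle.} The hard step is the gain-term computation: Lemma \ref{lemma 2.1} has to be used with the correct sign of the exponent, and the decay of the $u$-integral has to be tracked carefully. If the $|u|\le|v|$ region turns out to lose too many powers, I would instead use the pre/post-collisional change of variables $(v,u)\mapsto(v',u')$ with $dv\,du\,d\sigma$ preserved. This works because after the $v$-integration the $\sup_x$ commutes with everything, so one can bound $\int\!\!\int\!\!\int|v-u|^\gamma\langle v'\rangle^{-\beta}\langle u'\rangle^{-\beta}$ directly by energy conservation $\langle v'\rangle^2+\langle u'\rangle^2=\langle v\rangle^2+\langle u\rangle^2$. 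That route gives the same threshold.
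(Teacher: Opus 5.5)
Your proposal is correct and follows the paper's proof. You bound $|v|\,|S^\kappa_\Omega g(x,v)|$ by $\mathrm{diam}(\Omega)\sup_{y}|g(y,v)|$ using $\tau_-(x,v)\le \mathrm{diam}(\Omega)/|v|$ and $e^{-\nu s/\kappa}\le 1$, treat $Q_-$ directly, and treat $Q_+$ via Lemma~\ref{lemma 2.1}, correctly read with decay $(1+|v|^2+|u|^2)^{-(\beta+1)/2}$; the only cosmetic difference is that you do the $u$-integral first rather than estimating $\iint |v-u|^\gamma(1+|v|^2+|u|^2)^{-(\beta+1)/2}\,du\,dv$ in one go. Your fallback would actually work through the measure-preserving involution $(v,u)\mapsto(v',u')$, not through energy conservation, and it gives the sharper gain-term threshold $\beta>3+\max\{0,\gamma\}$ rather than the same one.
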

	\begin{proof}
		The proof of \eqref{eq 2.14} follows directly from 
		\begin{align}
			\lVert S^\kappa(f)\rVert_{L^\infty_{\alpha,\beta}(\Omega\times\mathbb{R}^3)}\leq \frac{C}{|v|}\lVert f\rVert_{L^\infty_{\alpha,\beta}(\Omega\times\mathbb{R}^3)}.
		\end{align} 
		The proof of \eqref{eq 2.13} proceeds as follows:
		\begin{align*}
			\left|\int_{\mathbb{R}^3}|v|S^\kappa_\Omega\left(Q_+[f,f]\right)(x,v)dv\right|&\leq C\left|\int_{\mathbb{R}^3}\lVert Q_+[f,f]\rVert_{L^\infty_{0,\beta}(\Omega\times\mathbb{R}^3)}dv\right|\\
			&\leq \int_{\mathbb{R}^3}\int_{\mathbb{R}^3}\int_{\mathbb{S}^2}|v-u|^\gamma\langle v'\rangle^{-\beta} \langle u'\rangle^{-\beta} d\sigma du dv\lVert f\rVert^2_{L^\infty_{0,\beta}(\Omega\times\mathbb{R}^3)}\\
			&\leq C\int_{\mathbb{R}^3}\int_{\mathbb{R}^3}|v-u|^\gamma\left(\frac{1}{\sqrt{1+|v|^2+|u|^2}}\right)^{-(\beta+1)} du dv\lVert f\rVert^2_{L^\infty_{0,\beta}(\Omega\times\mathbb{R}^3)}\\
			&\leq C_{\gamma,\beta}\lVert f\rVert^2_{L^\infty_{0,\beta}(\Omega\times\mathbb{R}^3)}.
		\end{align*}
		The second-to-last inequality follows from \textbf{Lemma \eqref{lemma 2.1}}.
	\end{proof}

	\section{$L^2$ estimate}\label{section 3}
	Before we begin the $L^2$ estimate, we recall a lemma concerning boundary data. This lemma ensures that the trace of $f$ is well-defined locally for a certain class of functions. The proof can be found in \cite{Esposito R 2013}.
	\begin{lemma}[Ukai Trace Theorem]\label{2.0.1}
		\emph{Define}
		\begin{align}\label{2.1}
			\gamma^{\varepsilon} \equiv \{ (x,v)\in \partial\Omega\times\mathbb{R}^3 : |n(x)\cdot v|\geq \varepsilon, \epsilon\leq\ |v|\leq \tfrac{1}{\varepsilon}\}.
		\end{align}
		\emph{Then}
		\begin{align}
			\| f1_{\gamma^{\varepsilon}} \|_{L^1(\partial\Omega\times\mathbb{R}^3)} \;\leq C_{\varepsilon,\Omega}\Big(\|f\|_{L^1(\Omega\times\mathbb{R}^3)} + \| v\cdot \nabla_x f \|_{L^1(\Omega\times\mathbb{R}^3)}\Big)
			.
		\end{align}
	\end{lemma}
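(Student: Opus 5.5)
We prove the trace estimate on $\gamma^\varepsilon$ by integrating along characteristics. Work in a small boundary layer and use the fundamental theorem of calculus along the line $s\mapsto x+sv$.

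\textbf{Step 1: reduction to smooth $f$.} By density we may assume $f$ is smooth up to the boundary, with $f$ and $v\cdot\nabla_x f$ in $L^1(\Omega\times\mathbb{R}^3)$. The trace is then defined by continuous extension of the resulting bound.

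\textbf{Step 2: the one-dimensional identity.} Fix $(x,v)\in\gamma^\varepsilon$. Because $|n(x)\cdot v|\ge\varepsilon$, $|v|\le 1/\varepsilon$ and $\partial\Omega$ is $C^1$ (compact), there is a uniform $t_\varepsilon>0$ depending only on $\varepsilon$ and $\Omega$ with the following property: the segment $x\mp sv$, $0<s<t_\varepsilon$, lies in $\Omega$, going inward for outgoing $v$ and outward for incoming $v$. Write $\mathcal T f(y,v,t)=f(y-tv,v)$ in the outgoing case. For $0<t<t_\varepsilon$,
\begin{align*}
f(x,v)=f(x-tv,v)+\int_0^t (v\cdot\nabla_x f)(x-sv,v)\,ds .
\end{align*}
Averaging over $t\in(0,t_\varepsilon)$ gives
\begin{align*}
|f(x,v)|\le \frac{1}{t_\varepsilon}\int_0^{t_\varepsilon}|f(x-tv,v)|\,dt+\int_0^{t_\varepsilon}|v\cdot\nabla_x f(x-sv,v)|\,ds .
\end{align*}
The incoming case is identical, with $x+tv$ in place of $x-tv$.

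\textbf{Step 3: integration over $\gamma^\varepsilon$.} Integrate the pointwise bound over $\gamma^\varepsilon$ with respect to $dS\,dv$. Use the change of variables $(x,t)\mapsto y=x-tv$ from $\partial\Omega\times(0,t_\varepsilon)$ into $\Omega$. Its Jacobian is $dy=|n(x)\cdot v|\,dt\,dS$, and for fixed $v$ the map is injective when $t_\varepsilon$ is small; convexity also gives injectivity directly. Hence
\begin{align*}
dS\,dt=\frac{dy}{|n(x)\cdot v|}\le \varepsilon^{-1}\,dy .
\end{align*}
Both terms are therefore bounded by $C_\varepsilon$ times $\|f\|_{L^1}$ and $\|v\cdot\nabla_x f\|_{L^1}$ respectively, with $C_{\varepsilon,\Omega}=\varepsilon^{-1}\max(1,t_\varepsilon^{-1})$.

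\textbf{Main obstacle.} The delicate point is choosing $t_\varepsilon$ uniformly so that the segments stay in $\Omega$ and the map is injective. This follows from the $C^1$ regularity and compactness of $\partial\Omega$: near each boundary point the boundary is a graph, and the transversality $|n\cdot v|\ge\varepsilon$, together with $|v|\le 1/\varepsilon$, controls how far the segment can travel before leaving the chart.
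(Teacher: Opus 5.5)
Your proof is correct and is the standard argument for Ukai's trace lemma: integrate along the characteristic through a uniform boundary layer of depth $t_\varepsilon$, average in $t$, and change variables with $dy=|n(x)\cdot v|\,dS\,dt$, using injectivity of $(x,t)\mapsto x\mp tv$. The paper gives no proof of its own and simply defers to Esposito--Guo--Kim--Marra, where the proof follows this same route; your only slip is wording, since for incoming $v$ the segment $x+sv$ also enters $\Omega$ rather than going ``outward''.
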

	Also, we recall Green's identity. Let $\gamma$ denote the trace operator.
	\begin{lemma}\label{Greens formula}
		Assume that
		$f(x,v),\, g(x,v) \in L^2(\Omega\times\mathbb{R}^3)$,
		$v\cdot\nabla_x f,\; v\cdot\nabla_x g \in L^2(\Omega\times\mathbb{R}^3)$,
		and
		$\gamma f,\; \gamma g \in L^2(\partial\Omega\times\mathbb{R}^3)$.
		Then
		\begin{equation}
			\iint_{\Omega\times\mathbb{R}^3}
			\bigl\{(v\cdot\nabla_x f)\,g + (v\cdot\nabla_x g)\,f\bigr\}\,dv\,dx
			=
			\int_{\partial\Omega\times\mathbb{R}^3} \gamma f \gamma g \, dS dv.
			\tag{2.3}
		\end{equation}
	\end{lemma}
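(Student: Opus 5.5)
The plan is to prove Green's identity first for smooth, compactly supported test functions in $\overline{\Omega}\times\mathbb{R}^3$ and then pass to the general class by density, with the trace theorem (Lemma \ref{2.0.1}) identifying the boundary limit.

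\textbf{Step 1: smooth functions.} For $f,g\in C^1_c(\overline{\Omega}\times\mathbb{R}^3)$ the product rule gives $(v\cdot\nabla_x f)g+(v\cdot\nabla_x g)f=\nabla_x\cdot(v\,fg)$. For each fixed $v$, I apply the divergence theorem in $x$ on the $C^1$ (indeed $C^2$) domain $\Omega$ and then integrate in $v$. This produces
\begin{align*}
\iint_{\Omega\times\mathbb{R}^3}\bigl\{(v\cdot\nabla_x f)g+(v\cdot\nabla_x g)f\bigr\}\,dv\,dx=\int_{\partial\Omega\times\mathbb{R}^3} fg\,(v\cdot n(x))\,dS\,dv .
\end{align*}
So the boundary measure in (2.3) must be read as the signed kinetic measure $(v\cdot n)\,dS\,dv$. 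This is consistent with the weighted boundary spaces $L^p(\partial E^-)$ used throughout the paper, and with the sign convention for $\partial\Omega^\pm$, which I will use when invoking (2.3).

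\textbf{Step 2: approximation.} Fix $f$ with $f$, $v\cdot\nabla_x f\in L^2$ and $\gamma f\in L^2(\partial\Omega\times\mathbb{R}^3,|v\cdot n|\,dS\,dv)$, and take $g$ of the same type.
\begin{itemize}
\item I will first cut off in $v$ by $\chi(|v|/R)$, which commutes with $v\cdot\nabla_x$.
\item I will then mollify in $x$ after a local flattening of $\partial\Omega$ via a partition of unity. Near the boundary I shift inward along the normal, $x\mapsto x-\epsilon n$, before convolving, so the approximants $f_k$ are defined up to the boundary.
\item The commutator of mollification with $v\cdot\nabla_x$ tends to $0$ in $L^2$ by a Friedrichs-type lemma, since $v$ is bounded on the cutoff region and the coefficients are constant in $x$. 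Hence $f_k\to f$ and $v\cdot\nabla_x f_k\to v\cdot\nabla_x f$ in $L^2(\Omega\times\mathbb{R}^3)$, and the same holds for $g_k$.
\end{itemize}
The interior side of the identity then converges by Cauchy--Schwarz.

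\textbf{Step 3: boundary term.} This is the main obstacle, because $L^2$ convergence in the interior does not directly control traces near the grazing set $\{v\cdot n=0\}$. I will handle it in two parts.
\begin{itemize}
\item \emph{Away from grazing.} On each $\gamma^\varepsilon$, apply Lemma \ref{2.0.1} to $(f_k-f)g_k$ and to $f(g_k-g)$. The $L^1$ norms of these products, and of their transport derivatives, are bounded by products of $L^2$ norms, so $\gamma f_k\,\gamma g_k\to\gamma f\,\gamma g$ in $L^1(\gamma^\varepsilon)$.
\item \emph{Near grazing, small, and large speeds.} On the complement of $\gamma^\varepsilon$ I need uniform smallness. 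I will get this from the hypothesis $\gamma f,\gamma g\in L^2$ together with the fact that the chosen approximants satisfy $\|\gamma f_k\|_{L^2}\le C\|\gamma f\|_{L^2}$. The bound holds because the inward shift along characteristics transports the trace values, and the error is controlled by the time integral of $v\cdot\nabla_x f$ over a short segment. Uniform integrability then gives that the contribution of $(\gamma^\varepsilon)^c$ tends to $0$ as $\varepsilon\to0$, uniformly in $k$.
\end{itemize}
Letting $k\to\infty$ and then $\varepsilon\to0$ yields (2.3).

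If this uniform control proves delicate, I would instead cite the standard Ukai / Esposito--Guo--Kim--Marra result, Lemma 2.2 in \cite{Esposito R 2013}, whose proof follows exactly this scheme, and only check that its hypotheses match ours.
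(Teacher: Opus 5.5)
Your reading of the boundary term in Step 1 is correct and necessary. The right-hand side must be $\int_{\partial\Omega\times\mathbb{R}^3}\gamma f\,\gamma g\,(v\cdot n(x))\,dS\,dv$. With the bare measure $dS\,dv$ the identity already fails for $f=g=\phi(v)$, and the energy identity in the proof of Theorem \ref{theorem 3.1} uses the signed version. The paper gives no argument of its own; it only cites \cite{Cercignani Carlo Reinhard Illner}. So your fallback of citing Lemma 2.2 of \cite{Esposito R 2013} coincides with what the paper does.

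Your self-contained route, however, has a genuine gap in Step 3, second bullet. This is the only place where the hypothesis $\gamma f,\gamma g\in L^2$ enters, and two things go wrong there.

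First, the reason you give for $\|\gamma f_k\|_{L^2}\le C\|\gamma f\|_{L^2}$ does not work. The shift $x\mapsto x-\epsilon n$ is along the normal, not along a characteristic. To express $f(x-\epsilon n-y,v)$ through boundary data you must follow the line in direction $v$. Precisely when $|v\cdot n(x)|$ or $|v|$ is small, that line can cross much of the domain before meeting $\partial\Omega$, with travel time up to $\mathrm{diam}(\Omega)/|v|$. So the short-segment control fails exactly on $(\gamma^\varepsilon)^c$, the set where you need it.

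Second, even a correct bound $\sup_k\|\gamma f_k\|_{L^2}<\infty$ would not give $\sup_k\int_{(\gamma^\varepsilon)^c}|\gamma f_k\,\gamma g_k|\,|v\cdot n|\,dS\,dv\to0$ as $\varepsilon\to0$. A bounded sequence in $L^2$ may concentrate on $\{|v\cdot n|<\varepsilon\}$, so a norm bound is not uniform integrability. What you need is a domination such as $|\gamma f_k|^2\le G_k$ with $G_k$ convergent in $L^1(|v\cdot n|\,dS\,dv)$. Producing $G_k$ forces you to write $f$ along whole chords, $f(z,v)=\gamma f(q(z,v),v)+\int_0^{\tau_-(z,v)}(v\cdot\nabla_x f)(z-sv,v)\,ds$. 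You must also use that, for fixed $v$, translating $x$ translates chords in the plane $v^\perp$, with convexity controlling multiplicities.

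Once you have that representation, the approximation is superfluous, and the standard Ukai-type proof follows directly.
\begin{itemize}
\item For convex $\Omega$, write $(x,v)=(q+tv,v)$ with $q=q(x,v)$, $(q,v)\in\partial\Omega^-$, and $t=\tau_-(x,v)\in(0,\ell(q,v))$, where $\ell(q,v)$ is the time the line spends in $\Omega$. Then $dx\,dv=|v\cdot n(q)|\,dS_q\,dt\,dv$.
\item For a.e.\ $(q,v)$, the map $t\mapsto f(q+tv,v)$ is absolutely continuous with derivative $(v\cdot\nabla_x f)(q+tv,v)$. Its endpoint values are $\gamma f(q,v)$ and $\gamma f(q+\ell v,v)$; this is how the trace is defined in the theory behind Lemma \ref{2.0.1}. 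The same holds for $g$, hence for $fg$.
\item Integrate $\frac{d}{dt}(fg)$ over each chord, then over $\partial\Omega^-$ against $|v\cdot n|\,dS\,dv$. Fubini is justified because $(v\cdot\nabla_x f)g+f(v\cdot\nabla_x g)\in L^1$, and the endpoint terms are integrable by the $L^2$ hypothesis on the traces.
\item The entry-to-exit map $(q,v)\mapsto(q+\ell(q,v)v,v)$ carries $|v\cdot n|\,dS\,dv$ on $\partial\Omega^-$ onto $|v\cdot n|\,dS\,dv$ on $\partial\Omega^+$, since both are area measure on $v^\perp$. This gives the signed boundary integral.
\end{itemize}
In this argument the grazing set and small speeds never have to be estimated, because $\partial\Omega^0$ is null for $|v\cdot n|\,dS\,dv$.
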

	The proof can be found in \cite{Cercignani Carlo Reinhard Illner}. We now state the main theorem of this section. The statement and strategy are similar to \cite{Duan R 2019} and \cite{Esposito R 2013}. However, 
	we need to modify the proof due to the difference between the mixed and diffuse boundary conditions. Moreover, we use \textbf{Lemma 1.2 (\cite{Kawagoe Daisuke 2025})} to simplify the argument. We present the details below.
	\begin{theorem}\label{theorem 3.1}
		Let $\Omega\in\mathbb{R}^3$ be a bounded convex domain with $C^2$ boundary. Assume that $-3<\gamma\leq 1$ and that \eqref{eq 1.2} holds. Then, for any $\phi_1\in L^2_{0,\max\{0,-\gamma/2\}}(\Omega\times\mathbb{R}^3)$,   $\phi_2\in L^2_{0,\gamma/2}(\partial\Omega^{-}_1)$, and   $f_{in}\in L^2_{0,\gamma/2}(\partial\Omega^{-}_2)$. There exists a unique solution $f\in L^2_{0,\gamma/2}$ to the following boundary value problem.
		\begin{align}\label{2.6}
			\left\{
			\begin{aligned}
				v\cdot\nabla_x f+\frac{1}{\kappa}L(f)&=\phi_1,&&\quad\text{in}\quad \Omega\times\mathbb{R}^3, \\
				f&=f_{in}, &&\quad\text{on}\quad \partial\Omega^{-}_1,\\
				f&=P(f)+\phi_2(x,v), &&\quad\text{on} \quad\partial\Omega^{-}_2.
			\end{aligned}
			\right.
		\end{align}
		Moreover,
		\begin{align}\label{equation 3.5}
			\lVert f\rVert^2_{L^2_{0,\gamma/2}}
			&+\lVert f\rVert^2_{L^2(\partial\Omega^{+}_1)}
			\leq C\max\{1,\kappa^2\}\Big(
			\lVert\phi_1\rVert^2_{L^2_{0,-\gamma/2}}
			+\lVert f_{in}\rVert^2_{L^2(\partial\Omega^{-}_1)}
			+\lVert\phi_2\rVert^2_{L^2(\partial\Omega^{-}_2)}
			\Big),
		\end{align}
	\end{theorem}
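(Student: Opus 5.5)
We split the argument into an a priori estimate followed by existence via the integral formulation. For the a priori bound \eqref{equation 3.5}, we apply Green's identity (Lemma \ref{Greens formula}) with $g=f$ to \eqref{2.6}:
\begin{align*}
\frac12\lVert f\rVert^2_{L^2(\partial\Omega^+)}-\frac12\lVert f\rVert^2_{L^2(\partial\Omega^-)}+\frac1\kappa\langle Lf,f\rangle=\langle \phi_1,f\rangle .
\end{align*}
On $\partial\Omega_2^-$ we write $f=P(f)+\phi_2$. The map $P$ is an orthogonal projection from $L^2(\partial\Omega^+_2)$ onto multiples of $m^{1/2}$, and $\lVert Pf\rVert_{L^2(\partial\Omega^-_2)}=\lVert Pf\rVert_{L^2(\partial\Omega^+_2)}$. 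Hence the boundary terms on $\partial\Omega_2$ leave $\frac12\lVert (I-P)f\rVert^2_{L^2(\partial\Omega^+_2)}$ minus cross terms with $\phi_2$. These cross terms are absorbed by Cauchy--Schwarz, using $\lVert Pf\rVert\le\lVert f\rVert_{L^2(\partial\Omega^+_2)}$. The $\partial\Omega_1$ contribution gives $\frac12\lVert f\rVert^2_{L^2(\partial\Omega_1^+)}-\frac12\lVert f_{in}\rVert^2$. Together with the spectral gap this yields
\begin{align*}
\frac{C_L}{\kappa}\lVert (I-P_L)f\rVert^2_{L^2_{0,\gamma/2}}+\lVert (I-P)f\rVert^2_{L^2(\partial\Omega_2^+)}+\lVert f\rVert^2_{L^2(\partial\Omega_1^+)}\lesssim |\langle\phi_1,f\rangle|+\lVert f_{in}\rVert^2+\lVert\phi_2\rVert^2+\lVert \phi_2\rVert\,\lVert f\rVert_{L^2(\partial\Omega_2^+)} .
\end{align*}
We then bring in Proposition \ref{proposition 2.7} to control $\lVert P_Lf\rVert^2_{L^2_{0,\gamma/2}}$ by the left-hand quantities and the data. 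Multiplying by a small constant times $\kappa^{-1}\min\{1,\kappa\}$ and adding gives control of the full $\lVert f\rVert^2_{L^2_{0,\gamma/2}}$. The pairing is split as $|\langle\phi_1,f\rangle|\le \varepsilon\lVert f\rVert^2_{L^2_{0,\gamma/2}}+C_\varepsilon\lVert\phi_1\rVert^2_{L^2_{0,-\gamma/2}}$, and Young's inequality handles the remaining cross terms. Tracking the powers of $\kappa$ produces the factor $\max\{1,\kappa^2\}$.

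The delicate part of this step is the term $\lVert f\rVert_{L^2(\partial\Omega^+_2)}$ that multiplies $\phi_2$: it is not itself controlled, only its $(I-P)$ part is. We handle it by writing $Pf=(Pf)$ on the outgoing side. We then bound $\lVert Pf\rVert_{L^2(\partial\Omega_2^+)}$ by $\lVert f\rVert_{L^2_{0,\gamma/2}}$, $\lVert(I-P)f\rVert$ and $\kappa$-dependent terms, via the trace lemma (Lemma \ref{2.0.1}) applied to $f^2$ on the nongrazing set $\gamma^\varepsilon$, where $v\cdot\nabla_x (f^2)$ is expressed through the equation. Since $Pf=z(x)m^{1/2}$ is determined by its values on $\gamma^\varepsilon$, this suffices, with constants depending on $\kappa$ that are absorbed into $\max\{1,\kappa^2\}$.

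For existence and uniqueness, uniqueness follows immediately from the estimate applied to the difference of two solutions, which has zero data. For existence we use the mild formulation
\[
f=\frac1\kappa S^\kappa_\Omega Kf+J^\kappa\big(f_{in}\mathbf 1_{\partial\Omega_1^-}+(P(f)+\phi_2)\mathbf 1_{\partial\Omega_2^-}\big)+S^\kappa_\Omega\phi_1 .
\]
First we introduce a penalized and truncated problem: the diffuse part $P$ is replaced by $(1-\frac1j)P$, and we add $\varepsilon f$. The boundary operator is then a strict contraction, so the map is solved by iteration in $L^2_{0,\gamma/2}$, using Propositions \ref{proposition 1.3} and \ref{proposition 2.8} for the bounds on $S^\kappa_\Omega$ and $J^\kappa$. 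Alternatively, since $S^\kappa_\Omega K$ is compact on $L^2_{0,\gamma/2}$, as in Lemma 1.2 of \cite{Kawagoe Daisuke 2025}, we use the Fredholm alternative: the a priori estimate shows the homogeneous problem has only the trivial solution, so $I-\frac1\kappa S^\kappa_\Omega K-(\text{boundary part})$ is invertible.

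The main obstacle is verifying the compactness and Fredholm structure once the boundary part $J^\kappa P$ is included. I would first try to invert the transport problem with mixed boundary condition directly, via the penalization $(1-\frac1j)P$ followed by the limit $j\to\infty$. The uniform estimate \eqref{equation 3.5} gives weak compactness in the limit, with boundary traces converging weakly in $L^2(\partial\Omega^\pm)$. The $K$ term is then treated as a compact perturbation.
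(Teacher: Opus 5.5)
Your proposal is correct and follows essentially the same route as the paper: the energy identity from Green's formula with the spectral gap, Proposition~\ref{proposition 2.7} for $P_L f$, and the Ukai trace lemma on $\gamma^{\varepsilon}$ to control $Pf$ on $\partial\Omega_2^{+}$ through a small absorption parameter, together with the double penalization ($\varepsilon f$ and $(1-\frac1j)P$, which are the paper's $\varpi$ and $\eta$) and Kawagoe's compactness of $S^\kappa_\Omega K$. The obstacle you anticipate (Fredholm structure once $J^\kappa P$ is included) does not arise in the paper's arrangement. At each iteration step the datum $\eta P(f_k)$ on $\partial\Omega_2^-$ is already known, so only $I-\frac1\kappa S^\kappa_\Omega K$ has to be inverted (Kawagoe's Lemma 1.2). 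The contraction in $k$ then comes from Green's identity in the norm $\lVert\cdot\rVert^2_{L^2}+(2\varpi)^{-1}\lVert\cdot\rVert^2_{L^2(\partial\Omega^+)}$, not from the operator bounds on $S^\kappa_\Omega$ and $J^\kappa$, since $\frac1\kappa S^\kappa_\Omega K$ is only $\mathcal{O}(1)$ and not a contraction. Finally, $\varpi\to0$ and $\eta\to1$ are taken as strong Cauchy limits by applying the estimate to differences.
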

	\begin{proof}
		We consider the following iteration with $f_1=0$,
		\begin{align}\label{2.7}
			\left\{
			\begin{aligned}
				\varpi f_{k+1}+v\cdot\nabla_x f_{k+1}+\frac{1}{\kappa}L(f_{k+1})&=\phi_1,&&\quad\text{in}\quad\in\Omega\times\mathbb{R}^3, \\
				f_{k+1}&=f_{in}, &&\quad\text{on}\quad \partial\Omega^{-}_1,\\
				f_{k+1}&=\eta P(f_{k})+\phi_2(x,v), &&\quad\text{on} \quad\partial\Omega^{-}_2,
			\end{aligned}
			\right.
		\end{align}
		where $0<\varpi, \eta<1$. 
		Note that \eqref{2.7} differs slightly \eqref{2.6}. More precisely, the boundary value problem \eqref{2.7} contains a penalization term $\varpi f_{k+1}$ and the factor $\eta$ inserted in front of $P(f)$.\par
		\qquad We first construct a solution to \eqref{2.7} in $L^2_{0,\gamma/2}$ for arbitrary $\varpi$ and $\eta$ and denote this solution by $f^{\varpi, \eta}$. We then show that, for fixed $\eta$, the family $\{f^{\varpi,\eta}\}$ converges in $L^2_{0,\gamma/2}$. Finally, we remove $\eta$ by proving that $\{f^{\eta}\}$ is a convergent sequence in $L^2_{0,\gamma/2}$,where $f^\eta=\lim_{\varpi\to 0} f^{\varpi,\eta}$.
		We emphasize that the penalization term is introduced in order to justify the application of Green's formula to \eqref{2.7} for all $-3<\gamma\leq 1$. Consequently, when $\gamma<0$, the procedure for  removing $\varpi$ and $\eta$ are different from that in the case of $\gamma\geq 0$.  Since the argument for the negative $\gamma$ case relies on an inequality established in \textbf{section \ref{section 4}}, for the sake of clarity we present only the case $\gamma\geq0$ here. The case  $\gamma<0$ can be treated by applying the same argument as in \cite{Duan R 2019}.\par 
		\qquad To construct the solution of \eqref{2.7}, by Green formula, we have
		\begin{align}\label{2.3}
			&\varpi\lVert f_{k+1}\rVert^2_{L^2}+\frac{1}{2}\lVert f_{k+1}\rVert^2_{L^2(\partial\Omega^{+})}+\frac{1}{\kappa}\langle L(f_{k+1}),f_{k+1} \rangle\\\nonumber
			&=\langle \phi_1,f_{k+1} \rangle+\frac{1}{2}\lVert f_{in}\rVert^2_{L^2(\partial\Omega^{-}_1)}+\frac{1}{2}\lVert \eta P(f_k)+\phi_2\rVert^2_{L^2(\partial\Omega^{-}_2)}.
		\end{align}
		In particular, we have
		\begin{align}
			\lVert f_{k+1}\rVert^2_{L^2(\partial\Omega^{+})}\leq C_{\varpi}\Big(\lVert \phi_1\rVert^2_{L^2_{0,-\gamma/2}}+\lVert f_{in}\rVert^2_{L^2(\partial\Omega^{-}_1)}+\lVert P(f_k)\rVert^2_{L^2(\partial\Omega^{+}_2)}+\lVert\phi_2\rVert^2_{L^2(\partial\Omega^{-}_2)}\Big) .
		\end{align}
		Using \textbf{Proposition \ref{proposition 2.8}} , we deduce that
		\begin{align}
			J^{\kappa}\large(f_{in}\chi_{\partial\Omega^{-}_1}+(P(f_k)+\phi_2)\chi_{\partial\Omega^{-}_2})\in L^2_{0,\gamma/2} \quad\text{if }\, f_k\in L^2_{\Omega^+_2}.
		\end{align}
		We can then rewrite \eqref{2.7} in integral form as:
		\begin{align}
			(I-\frac{1}{\kappa}S^{\kappa}_{\Omega}K)f_{k+1}=J^{\kappa}\large(f_{in}\chi_{\partial\Omega^{-}_1}+(P(f_k)+\phi_2)\chi_{\partial\Omega^{-}_2})+S_{\Omega}\phi_1.
		\end{align}
		Applying \textbf{Lemma 1.2} (\cite{Kawagoe Daisuke 2025}), we establish the existence of $f_{k+1}$. By induction, we can then prove that $f_{k+1}$ exists and belongs to $L^2_{0,\gamma/2}$ for any $k\in\mathbb{N}$. Applying Yong's inequality, we have
		\begin{align}\label{eqq 3.10}
			\lVert \eta P(f_k)+\phi_2\rVert^2_{L^2(\partial\Omega^{-}_2)}&=\eta^2\lVert P(f_k)\rVert^2_{L^2(\partial\Omega^{-}_2)}+\lVert \phi_2\rVert^2_{L^2(\partial\Omega^{-}_2)}+2\eta\langle P(f_{k+1}), \phi_2\rangle_{\partial^-_2}\\\nonumber
			&\leq \eta\lVert P(f_k)\rVert^2_{L^2(\partial\Omega^{-}_2)}+\frac{1}{1-\eta}\lVert \phi_2\rVert^2_{L^2(\partial\Omega^{-}_2)}\\\nonumber
			&\leq \eta\lVert f_k\rVert^2_{L^2(\partial\Omega^{+}_2)}+\frac{1}{1-\eta}\lVert \phi_2\rVert^2_{L^2(\partial\Omega^{-}_2)}
		\end{align}
		Then, we combine the \eqref{2.3} and \eqref{eqq 3.10} to obtain
		\begin{align}
			\lVert f_{k+1}\rVert^2_{L^2}+(2\varpi)^{-1}\lVert f_{k+1}\rVert^2_{L^2(\partial\Omega^{+})}
			&\leq\eta \left(\lVert f_{k}\rVert^2_{L^2}+(2\varpi)^{-1}\lVert f_{k}\rVert^2_{L^2(\partial\Omega^{+})}\right)\\\nonumber
			&\quad+C_{\varpi,\eta}\left(\lVert \phi_1\rVert^2_{L^2_{0,-\gamma/2}}+\lVert f_{in}\rVert^2_{L^2(\partial\Omega^{-}_1)}\right)+C_{\varpi,\eta}\lVert \phi_2\rVert^2_{L^2(\partial\Omega^{-}_2)}.
		\end{align}
		By same process, we have
		\begin{align}
			\lVert f_{n+1}-f_{m+1}\rVert^2_{L^2}+(2\varpi)^{-1}\lVert f_{n+1}-f_{m+1}\rVert^2_{L^2(\partial\Omega^{+})}
			&\leq
			\eta \left(\lVert f_{n}-f_{m}\rVert^2_{L^2}+(2\varpi)^{-1}\lVert f_{n}-f_{m}\rVert^2_{L^2(\partial\Omega^{+})}\right).
		\end{align}
		This implies that $\{f_k\}$ is a Cauchy sequence. For any fixed $\eta$ and $\varpi$, let $\displaystyle\lim_{k\to\infty}f_k\coloneqq f^{\varpi,\eta}$. By applying Green's formula once more, we have
		\begin{align}\label{(2.5)}
			&\varpi\lVert f^{\varpi,\eta}\rVert^2_{L^2}+\frac{1}{2}\lVert f^{\varpi,\eta}\rVert^2_{L^2(\partial\Omega^{+})}+\frac{1}{\kappa}\langle L(f^{\varpi,\eta}),f^{\varpi,\eta} \rangle
			\\\nonumber
			&=\langle \phi_1,f^{\varpi,\eta} \rangle+\frac{1}{2}\lVert f_{in}\rVert^2_{L^2(\partial\Omega^{-}_1)}+\frac{1}{2}\lVert \eta P(f^{\varpi,\eta})+\phi_2\rVert^2_{L^2(\partial\Omega^{-}_2)}.
		\end{align}
		Applying \eqref{eq 2.9} we have
		\begin{align}
			\lVert P_L(f^{\varpi,\eta})\rVert^2_{L^2_{0,\gamma/2}}&\leq C
			\begin{pmatrix*}[l]
				&\varpi\lVert f^{\varpi,\eta}\rVert^2_{L^2}
				+\left[1+\frac{1}{\kappa}\right]\lVert (I-P_L)(f^{\varpi,\eta})\rVert^2_{L^2_{0,\gamma/2}}
				+\lVert \phi_1\rVert^2_{L^2_{0,-\gamma/2}}\\
				&
				+\lVert f^{\varpi,\eta}\rVert^2_{L^2(\partial\Omega^{+}_1)}
				+
				\lVert (I-P)f^{\varpi,\eta}\rVert^2_{L^2(\partial\Omega^{+}_2)}
				+
				\lVert f_{in}\rVert^2_{L^2(\partial\Omega^{-}_1)}
				+\lVert\phi_2\rVert^2_{L^2(\partial\Omega^{-}_2)}.
			\end{pmatrix*}. 
		\end{align}
		Multiplying it with a constant $a>0$ and combining it with equation \eqref{(2.5)}, we derive that
		\begin{align*}\label{process}
			&\varpi(1-aC)\lVert f^{\varpi,\eta}\rVert^2_{L^2}+a\lVert P_L(f^{\varpi,\eta})\rVert^2_{L^2_{0,\gamma/2}}+C_L\frac{1}{\kappa}\lVert (I-P_L)(f^{\varpi,\eta})\rVert^2_{L^2_{0,\gamma/2}}+\frac{1}{2}\lVert f^{\varpi,\eta}\rVert^2_{L^2(\partial\Omega^{+})}\\
			&\leq aC\left(\left[1+\frac{1}{\kappa}\right]\lVert (I-P_L)(f^{\varpi,\eta})\rVert^2_{L^2_{0,\gamma/2}}+\lVert \phi_1\rVert^2_{L^2_{0,-\gamma/2}}+\lVert f^{\varpi,\eta}\rVert^2_{L^2(\partial\Omega^{+}_1)}+\lVert (I-P)f^{\varpi,\eta}\rVert^2_{L^2(\partial\Omega^{+}_2)}\right)\\
			&+aC\left(\lVert f_{in}\rVert^2_{L^2(\partial\Omega^{-}_1)}+\lVert\phi_2\rVert^2_{L^2(\partial\Omega^{-}_2)}\right)\\
			&+\frac{1}{2}\lVert f_{in}\rVert^2_{L^2(\partial\Omega^{-}_1)}+\frac{1}{2}\lVert \eta P(f^{\varpi,\eta})+\phi_2\rVert^2_{L^2(\partial\Omega^{-}_2)}+\langle \phi_1,f^{\varpi,\eta} \rangle.		 	
		\end{align*}
		Finally, by choosing $a$ sufficiently small, we conclude that
		\begin{align}
			&\lVert f^{\varpi,\eta}\rVert^2_{L^2_{0,\gamma/2}}+\lVert f^{\varpi,\eta}\rVert^2_{L^2(\partial\Omega^{+}_1)}+\lVert (I-P)f^{\varpi,\eta}\rVert^2_{L^2(\partial\Omega^{+}_2)}+C_L\lVert (I-P_L)(f^{\varpi,\eta})\rVert^2_{L^2_{0,\gamma/2}}\\\nonumber
			&\leq C_\eta\cdot\kappa\Big(\lVert \phi_1\rVert^2_{L^2_{0,-\gamma/2}}+\lVert f_{in}\rVert^2_{L^2(\partial\Omega^{-}_1)}+\lVert\phi_2\rVert^2_{L^2(\partial\Omega^{-}_2)}\Big).		 	
		\end{align}
		Hence, $f^{\varpi,\eta}$ is a bounded sequence in $L^2_{0,\gamma/2}$ for any fixed $\eta$. Observe that $f^{\eta,\varpi_1}-f^{\eta,\varpi_2}$ satisfies
		\[\left\{
		\begin{aligned}
			v\cdot\nabla_x  \Big(f^{\eta,\varpi_1}-f^{\eta,\varpi_2}\Big)+\frac{1}{\kappa}L\Big(f^{\eta,\varpi_1}-f^{\eta,\varpi_2}\Big)&=\varpi_1 f^{\eta,\varpi_1}-\varpi_2 f^{\eta,\varpi_2},&&\quad\text{in}\quad\Omega\times\mathbb{R}^3, \\
			f^{\eta,\varpi_1}-f^{\eta,\varpi_2}&=0, &&\quad\text{on} \quad\partial\Omega^{-}_1,\\
			f^{\eta,\varpi_1}-f^{\eta,\varpi_2}&=\eta P\Big(f^{\eta,\varpi_1}-f^{\eta,\varpi_2}\Big), &&\quad\text{on}\quad \partial\Omega^{-}_2,
		\end{aligned}\right.
		\]
		for any $0<\varpi_1,\varpi_2<1$. After taking $\varpi_1 f^{\eta,\varpi_1}-\varpi_2 f^{\eta,\varpi_2}$ as $\phi_1$ and setting $f_{in}=\phi_2=0$, by H\"older's inequality, we have
		\begin{align}
			&\lVert f^{\eta,\varpi_1}-f^{\eta,\varpi_2}\rVert^2_{L^2_{0,\gamma/2}}+\lVert f^{\eta,\varpi_1}-f^{\eta,\varpi_2}\rVert^2_{L^2(\partial\Omega^{+}_1)}
			+\lVert (I-P)\Big(f^{\eta,\varpi_1}-f^{\eta,\varpi_2}\Big)\rVert^2_{L^2(\partial\Omega^{+}_2)}\\\nonumber
			&\leq C_\eta\cdot \kappa \left(\varpi_1\lVert f^{\eta,\varpi_1}\rVert^2_{L^2_{0,-\gamma/2}}+ \varpi_2\lVert f^{\eta,\varpi_2}\rVert^2_{L^2_{0,-\gamma/2}}\right)\\\nonumber
			&\leq C_\eta\cdot\kappa\max\{\varpi_1,\varpi_2\}\Big(\lVert \phi_1\rVert^2_{L^2_{0,-\gamma/2}}+\lVert f_{in}\rVert^2_{L^2(\partial\Omega^{-}_1)}+\lVert\phi_2\rVert^2_{L^2(\partial\Omega^{-}_2)}\Big).		 	
		\end{align}
		This implies that $\displaystyle f^\eta=\lim_{\varpi\to 0}f^{\varpi,\eta}$ exists in $L^2_{0,\gamma/2}$ for any $0<\eta<1$. Moreover, applying the same procedure as in  \eqref{process}, we obtain
		\begin{align}\label{2.9}
			\lVert f^{\eta}\rVert^2_{L^2_{0,\gamma/2}}
			&\quad+\lVert f^{\eta}\rVert^2_{L^2(\partial\Omega^{+}_1)}+\lVert (I-P)f^{\eta}\rVert^2_{L^2(\partial\Omega^{+}_2)}+C_L\lVert (I-P_L)(f)\rVert^2_{L^2_{0,\gamma/2}}\\\nonumber
			&\leq C\cdot\kappa\Big(\lVert \phi_1\rVert^2_{L^2_{0,-\gamma/2}}+\lVert f_{in}\rVert^2_{L^2(\partial\Omega^{-}_1)}
			+C_\beta\lVert\phi_2\rVert^2_{L^2(\partial\Omega^{-}_2)}+\beta\lVert P(f^{\eta})\rVert^2_{L^2(\partial\Omega^{-}_2)}\Big),
		\end{align}
		where $\beta$ will be determined later. Furthermore, we have
		\begin{align}\label{2.10}
			v\cdot\nabla_x (f^\eta)^2=-\frac{2}{\kappa}L(f^\eta)f^\eta+2\phi_1 f^\eta.
		\end{align}
		Taking absolute values and integrating over $\Omega\times\mathbb{R}^3$ in (\ref{2.10}), and using (\ref{2.9}), we deduce that
		\begin{align}
			\lVert v\cdot\nabla_x (f^\eta)^2\rVert_{L^1}\leq C\max\{1,\kappa\}\Big(\lVert \phi_1\rVert^2_{L^2_{0,-\gamma/2}}+\lVert f_{in}\rVert^2_{L^2(\partial\Omega^{-}_1)}+C_\beta\lVert\phi_2\rVert^2_{L^2(\partial\Omega^{-}_2)}+\beta\lVert P(f^{\eta})\rVert^2_{L^2(\partial\Omega^{-}_2)}\Big).
		\end{align}
		Using \textbf{Lemma \ref{2.0.1}} , for any $\gamma^\epsilon$ in (\ref{2.1}) away from $\partial\Omega_0$, we obtain 
		\begin{align}\label{2.12}
			\lVert f^\eta \textbf{1}_{\gamma^\epsilon}\rVert^2_{L^2(\partial\Omega\times\mathbb{R}^3)}\leq C_\epsilon\max\{1,\kappa\}\Big(\lVert \phi_1\rVert^2_{L^2_{0,-\gamma/2}}+\lVert f_{in}\rVert^2_{L^2(\partial\Omega^{-}_1)}+C_\beta\lVert\phi_2\rVert^2_{L^2(\partial\Omega^{-}_2)}+\beta\lVert P(f^{\eta})\rVert^2_{L^2(\partial\Omega^{-}_2)}\Big) .
		\end{align}
		By the definition of $P(f)$, we can write $P(f)(x,v)=g(x)m^{1/2}(v)$ for some $g(x)$. When $\epsilon$ is sufficiently small, 
		\begin{align}\label{2.13}
			\lVert P(f^\eta) \textbf{1}_{\gamma^\epsilon}\rVert^2_{L^2(\partial\Omega^{+}_2)}
			&= \int_{\partial\Omega_2} |g(x)|^2 
			\int_{n(x)\cdot v\geq \varepsilon,\, |v|\leq \tfrac{1}{\varepsilon}} 
			m(v)\, |v\cdot n(x)|\, dv\, dx \notag \\
			&\geq \int_{\partial\Omega_2} |g(x)|^2 \, dx 
			\times \tfrac{1}{2} \int_{v\cdot n(x)>0} m(v)\, |v\cdot n(x)|\, dv \notag \\
			&= \tfrac{1}{2} \lVert P(f^\eta) \rVert^2_{L^2(\partial\Omega^{+}_2)}.
		\end{align}
		Here, we have used the fact that
		\begin{align}
			\int_{|n(x)\cdot v|\leq \varepsilon} m(v)\, |n(x)\cdot v|\, dv
			&\leq \int_{-\varepsilon'}^{\varepsilon} e^{-v_{\parallel}^2}
			|v_{\parallel}|\, dv_{\parallel}
			\int_{\mathbb{R}^2} e^{-|v_{\perp}|^2/2}\, dv_{\perp} 
			\;\leq C \varepsilon, \notag \\[6pt]
			\int_{|v|\geq 1/\varepsilon} m(v)\, |n(x)\cdot v|\, dv 
			&\leq C \varepsilon.
		\end{align}
		We observe that 
		\begin{align}
			\lVert P(f^\eta)\rVert^2_{L^2(\partial\Omega^{-}_2)}&=\lVert P(f^\eta)\rVert^2_{L^2(\partial\Omega^{+}_2)}\\
			f^\eta&=P(f^\eta)+(I-P)(f^\eta)
		\end{align}
		which yields
		\begin{align}\label{2.25}
			\lVert P(f^\eta)\textbf{1}_{\gamma^\epsilon}\rVert^2_{L^2(\partial\Omega^{+}_2)}\leq 2 \lVert f^\eta\textbf{1}_{\gamma^\epsilon}\rVert^2_{L^2(\partial\Omega^{+}_2)}+ 2\lVert (I-P)(f^\eta)\textbf{1}_{\gamma^\epsilon}\rVert^2_{L^2(\partial\Omega^{+}_2)}.
		\end{align}
		Combining this estimate with \eqref{2.9}, \eqref{2.12},  \eqref{2.13} and \eqref{2.25}, we conclude that
		\begin{align}\label{2.21}
			\lVert P(f^\eta)\rVert^2_{L^2(\partial\Omega^{+}_2)}\leq C\max\{1,\kappa\}\Big( \lVert \phi_1\rVert^2_{L^2_{0,-\gamma/2}}+\lVert f_{in}\rVert^2_{L^2(\partial\Omega^{-}_1)}+C_\beta\lVert\phi_2\rVert^2_{L^2(\partial\Omega^{-}_2)}+\beta\lVert P(f^{\eta})\rVert^2_{L^2(\partial\Omega^{-}_2)}\Big).  
		\end{align}
		Taking $\beta$ sufficiently small and combining \eqref{2.9} with \eqref{2.21}, we obtain
		\begin{align}\label{eq 3.24}
			&\lVert f^{\eta}\rVert^2_{L^2_{0,\gamma/2}}+\lVert f^{\eta}\rVert^2_{L^2(\partial\Omega^{+}_1)}+\lVert (I-P)f^{\eta}\rVert^2_{L^2(\partial\Omega^{+}_2)}\\\nonumber
			&\leq C\max\{1,\kappa^2\}\Big(\lVert \phi_1\rVert^2_{L^2_{0,-\gamma/2}}+\lVert f_{in}\rVert^2_{L^2(\partial\Omega^{-}_1)}+\lVert\phi_2\rVert^2_{L^2(\partial\Omega^{-}_2)}\Big) .
		\end{align}
		We observe that
		\[\left\{
		\begin{aligned}
			v\cdot\nabla_x  \Big(f^{\eta_1}-f^{\eta_2}\Big)+\frac{1}{\kappa}L\Big(f^{\eta_1}-f^{\eta_2}\Big)&=0,
			&&\quad\text{in}\quad\Omega\times\mathbb{R}^3, \\
			f^{\eta_1}-f^{\eta_2}&=0, 
			&&\quad\text{on}\quad \partial\Omega^{-}_1,\\
			f^{\eta_1}-f^{\eta_2}&=\eta_1 P(f^{\eta_1}-f^{\eta_2})+\Big(\eta_1-\eta_2\Big) P(f^{\eta_2}), 
			&&\quad\text{on}\quad \partial\Omega^{-}_2,
		\end{aligned}
		\right.
		\]  
		for any $0<\eta_1, \eta_2<1$. By regarding $\big(\eta_1-\eta_2\big) P(f^{\eta_2})$ as $\phi_2$, we obtain  
		\begin{align}
			&\lVert f^{\eta_1}-f^{\eta_2}\rVert^2_{L^2_{0,\gamma/2}}+\lVert f^{\eta_1}-f^{\eta_2}\rVert^2_{L^2(\partial\Omega^{+}_1)}
			+\lVert (I-P)\Big(f^{\eta_1}-f^{\eta_2}\Big)\rVert^2_{L^2(\partial\Omega^{+}_2)}\\\nonumber
			&\leq C\max\{1,\kappa^2\}  \Big(\eta_1-\eta_2\Big)^2\lVert P(f^{\eta_2})\rVert^2_{L^2(\partial\Omega^{-}_2)}\\
			&\leq C\max\{1,\kappa^2\}\max\{\kappa^{-2},\kappa^2\} \Big(\eta_1-\eta_2\Big)^2\Big(\lVert \phi_1\rVert^2_{L^2_{0,-\gamma/2}}+\lVert f_{in}\rVert^2_{L^2(\partial\Omega^{-}_1)}+\lVert\phi_2\rVert^2_{L^2(\partial\Omega^{-}_2)}\Big).		 	
		\end{align}
		This means that $\{f^{\eta}\}_{\eta}$ is a Cauchy sequence in $L^2_{0,\gamma/2}$. Therefore $\displaystyle f\coloneqq\lim_{\eta\to 1}f^{\eta}$ is the solution of \eqref{2.6}.
	\end{proof}

\section{$L^{\infty}$ estimate}\label{section 4}
	\qquad In this section, we denote $w(v)=e^{\alpha |v|^2}$. Let $h(x,v)=w(v)f(x,v)$. Then, $h$ satisfies
	\begin{align}\label{3.1}
		\left\{
		\begin{aligned}
			v\cdot\nabla_x h+\frac{1}{\kappa}L_w(h)&=w\phi_1,&&\quad\text{in}\quad\Omega\times\mathbb{R}^3, \\
			h&=wf_{in}, &&\quad\text{on}\quad\partial\Omega^{-}_1,\\
			h&=P_w(h)+w\phi_2(x,v), &&\quad\text{on}\quad \partial\Omega^{-}_2,
		\end{aligned}
		\right.
	\end{align}
	where $P_w(\cdot)\coloneq wP(w^{-1}\cdot)$ and $L_w(\cdot)\coloneq wL(w^{-1}\cdot)$. Furthermore, we denote 
	$$L_w(f)=\nu(v)f-K_\alpha(f)$$
	by defining $K_\alpha(\cdot)\coloneq wK(w^{-1}\cdot)$. 
	Note that $K_\alpha$ and $|K_\alpha|(f)\coloneq w\int_{\mathbb{R}^3}|k(v,u)|w^{-1}f(x,u)du$ have the same properties, where $k(v,u)$ denotes the kernel of $K$.
	In this section, we prove that problem \eqref{3.1} admits a unique solution in $L^\infty$ when $\Omega$ is unsealed with respect to $\partial\Omega_1$, and therefore \textbf{Theorem \ref{theorem 1.1}} in \textbf{subsection \ref{subsection 4.2}}. First, we consider the approximating problem with the parameters $\varpi$ and $\eta$, and we define a recursive sequence $h^{l}$ with $h^1=0$.
	\begin{align}\label{3.2}
		&\left\{
		\begin{aligned}
			\varpi h^{l+1}+v\cdot\nabla_x h^{l+1}+\frac{1}{\kappa}L_w(h^{l+1})&=w\phi_1,&&\quad\text{in}\quad\Omega\times\mathbb{R}^3, \\
			h^{l+1}&=wf_{in}, &&\quad\text{on}\quad\partial\Omega^{-}_1,\\
			h^{l+1}&=\eta P_w(h^{l})+w\phi_2, &&\quad\text{on}\quad \partial\Omega^{-}_2.
		\end{aligned}\right.
	\end{align}
	Second, we establish the following proposition to relate the $L^\infty$ norm of $h^l$ to the weighted $L^2$ norm of $h^l/w$. This relation allows us to apply the results from previous section to construct a solution to \eqref{3.2}  in $L^\infty$ and then remove the parameters $\varpi$ and $\eta$. This approach is known as the $L^2-L^\infty$ estimate, which is an useful tool ( \cite{Duan R 2019}, \cite{Esposito R 2013}) for constructing solutions of Boltzmann equation. Finally we note that \textbf{proposition \ref{property 3.1}} remains valid when $\varpi=0$ and $\eta=1$. The statement of the proposition is
	\begin{proposition}\label{property 3.1}
		Let $\Omega$ be a bounded set in $\mathbb{R}^3$ with $C^2$ boundary, and $\Omega$ is completely unsealed with respect to $\partial\Omega_1$. Suppose that $h^{l+1}$ is the solution of \eqref{3.2} and \eqref{eq 1.2} holds. Then, there exists $n$ depending only on the constant of unsealed set $\zeta$ such that 
		\begin{align}\label{equation 4.6}
			&\lVert h^{l+1}\rVert_{L^\infty(\Omega\times\mathbb{R}^3)}\\\nonumber
			&\leq C_{\zeta}\Big(\lVert f_{in}\rVert_{L^\infty_{\alpha,0}(\partial\Omega^-_1)}+\lVert \phi_2\rVert_{L^\infty_{\alpha,0}(\partial\Omega^-_2)}+\kappa\lVert \phi_1\rVert_{L^\infty_{a,-1}(\Omega\times\mathbb{R}^3)}\Big)\\\nonumber
			&\quad+\frac{1}{8}\max_{0\leq m\leq 2(n-1)} \lVert h^{l-m}\rVert_{L^\infty(\Omega\times\mathbb{R}^3)}+C_\zeta\min\{1,\kappa^{-1}\}\max_{0\leq m\leq 2(n-1)} \lVert h^{l+1-m}w^{-1}\rVert_{L^2_{0,\gamma/2}(\Omega\times\mathbb{R}^3)}.
		\end{align}
		for any $l\geq 2n$.
	\end{proposition}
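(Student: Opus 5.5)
We follow the $L^2$--$L^\infty$ strategy of \cite{Esposito R 2013} and \cite{Duan R 2019}. The new ingredient is that the stochastic cycles are cut off as soon as a backward trajectory reaches $\partial\Omega_1$, where the data $wf_{in}$ is prescribed. We use the complete unsealing to control how many diffuse bounces can occur before this happens.

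\emph{Step 1: mild form along characteristics.} Write $\nu_\varpi=\nu+\kappa\varpi$. Starting from $(x,v)$, the Duhamel form of \eqref{3.2} gives
\begin{align*}
h^{l+1}(x,v)=e^{-\frac{\nu_\varpi}{\kappa}\tau_-}h^{l+1}(q,v)+\int_0^{\tau_-}e^{-\frac{\nu_\varpi}{\kappa}s}\Big(\tfrac1\kappa K_\alpha h^{l+1}+w\phi_1\Big)(x-sv,v)\,ds .
\end{align*}
If $q\in\partial\Omega_1$, the boundary value is $wf_{in}$ and the trajectory stops. If $q\in\partial\Omega_2$, we replace $h^{l+1}(q,v)$ by $\eta P_w(h^l)+w\phi_2$. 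Here $P_w(h^l)(q,v)=w(v)m^{1/2}(v)\int_{v_1\cdot n>0}h^l(q,v_1)w^{-1}(v_1)m^{1/2}(v_1)\,v_1\cdot n\,dv_1$. The factor $wm^{1/2}$ is bounded because $\alpha<1/4$, and $w^{-1}m^{1/2}(v_1)\,v_1\cdot n\,dv_1$ is a probability-type measure.

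We iterate backward with $v_1,v_2,\dots$, one bounce at a time, and each bounce lowers the index $l$ by one. Complete unsealing with constant $\zeta$ means that from any boundary point the set of directions reaching $\partial\Omega_1$ in one flight has measure at least $\zeta$. Hence after $k$ bounces the remaining mass is at most $(1-c\zeta)^k$. We choose $n=n(\zeta)$ with $(1-c\zeta)^{n}$ small, so the term left after $n$ bounces, which involves $h^{l+1-n}$, is bounded by $\frac1{16}\max\|h^{l+1-m}\|_{L^\infty}$. The terms reaching $\partial\Omega_1$ or coming from $\phi_2$ contribute $C_\zeta\big(\|f_{in}\|_{L^\infty_{\alpha,0}}+\|\phi_2\|_{L^\infty_{\alpha,0}}\big)$. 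By Proposition \ref{proposition 2.2}, the source terms contribute $C_\zeta\kappa\|\phi_1\|_{L^\infty_{\alpha,-1}}$. The indices used so far run through $l+1-m$ with $m\le n-1$.

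\emph{Step 2: the $K_\alpha$ terms, iterated twice.} Every $K_\alpha h^{l+1-j}$ term is expanded a second time using the same representation at $(x-sv,v')$. This second expansion produces indices down to $l+1-2n$, i.e.\ $h^{l-m}$ with $m\le 2(n-1)$, which explains the range in \eqref{equation 4.6}. The resulting double integrals are of the form
\begin{align*}
\frac{1}{\kappa^2}\int\!\!\int e^{-\frac{\nu}{\kappa}(s+s')}|k_\alpha(v,v')||k_\alpha(v',v'')||h(y,v'')|.
\end{align*}
We split them in the standard way. The region $|v|\ge N$ is small by Proposition \ref{proposition 2.4} and Proposition \ref{proposition 2.5}, since $\int|k_\alpha|\le C(1+|v|)^{\gamma-2}$. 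The regions $|v'-v''|\ge N$ or $|v'|\ge 2N$ are small because of the Gaussian factor in \eqref{eq 2.3}. The times $s'$ near $0$ give a small portion of size $\epsilon$. On the remaining bounded region we truncate $k_\alpha$ to a bounded kernel and change variables $v'\mapsto y=x-sv-s'v'$, whose Jacobian $s'^{-3}$ is bounded away from the excluded times. Cauchy--Schwarz then yields $C_{N,\epsilon}\|h w^{-1}\|_{L^2}$. The bounded-region contribution carries the factor $\min\{1,\kappa^{-1}\}$ because the collision prefactors $\kappa^{-2}$ are offset by at most one $\kappa$ from the time integrals. Choosing $N$ and $\epsilon$ so that all small pieces add up to less than $\frac1{16}$ gives the stated bound, with the $L^2_{0,\gamma/2}$ norm dominating the plain $L^2$ norm on bounded velocity sets.

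\emph{Main obstacle.} The delicate point is Step 1 combined with Step 2. The inner integrals on $\partial\Omega_2$ are taken with respect to the diffuse measure, so we must show that this measure is nondegenerate uniformly in the direction $v$ and that grazing directions do not spoil the change of variables. I would first attempt this by excluding the sets $|v_j\cdot n|<\epsilon$ and $|v_j|>N$ at each bounce, each of measure $O(\epsilon)$ under $m^{1/2}(v_1)w^{-1}(v_1)\,v_1\cdot n$. After that exclusion, the unsealing constant $\zeta$ applies uniformly on what remains, and the estimate $(1-c\zeta+O(\epsilon))^{n}\le\frac1{16}$ fixes $n$.
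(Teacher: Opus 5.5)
Your architecture matches the paper's. You iterate the mild inequality through $n$ diffuse bounces, kill the tail $\hat P^n(|h^{l+1-n}|)$ by complete unsealing, and expand each $\frac1\kappa S^\kappa_\Omega|K_\alpha|(|h^{l+1-j}|)$ once more. You then split the resulting $A_{ij}=\kappa^{-2}\hat P^jS^\kappa_\Omega|K_\alpha|\hat P^iS^\kappa_\Omega|K_\alpha|$ into small pieces plus an $L^2$ piece, as in Lemmas \ref{lemma 3.1} and \ref{lemma 3.2}. In the paper, $\zeta$ bounds the mass that does \emph{not} reach $\partial\Omega_1$, so the tail is $C\zeta^{n-1}$ rather than $(1-c\zeta)^n$; this is only a difference of convention.

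\textbf{The gap: Step 2 for $i\ge 1$.} These are the terms where diffuse reflections occur between the two collision operators. Your displayed double integral and the change of variables $v'\mapsto y=x-sv-s'v'$ only describe $i=0$.
\begin{itemize}
\item For $i\ge1$ the second $K_\alpha$ acts at $y=x_{i-1}-s'v_i$.
\item $x_{i-1}$ depends on $v'$ only through the exit point $q(x-sv,v')$, which is invariant under rescaling $v'$.
\item Hence $v'\mapsto y$ has rank at most two and gives no Jacobian bound. Excluding grazing or large velocities does not change this.
\end{itemize}
The ``nondegeneracy of the diffuse measure'' that you single out as the main obstacle is therefore not what is needed.

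\textbf{The paper's fix.} It changes variables in the last diffuse velocity instead.
\begin{itemize}
\item For fixed $s'\ge\mathcal D/N$, the map $v_i\mapsto x_{i-1}-s'v_i$ has Jacobian $s'^{-3}$ and lands in $\Omega$.
\item The diffuse weight in $v_i$ is bounded on $\{|v_i|<N\}$, so only an \emph{upper} bound on that density is used.
\item One $K_\alpha$ suffices: H\"older in $(v_i,s',u)$ gives $C\min\{1,\kappa^{-1}\}\lVert f/w\rVert_{L^2_{0,\gamma/2}}$.
\item The outer $\kappa^{-1}\hat P^jS^\kappa_\Omega|K_\alpha|$ is simply bounded in $L^\infty$ with the factor $\zeta^{j-1}$.
\end{itemize}
No grazing cut is needed, since the unsealing bound is already uniform in the base point by definition. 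With this replacement your argument closes.

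One further caveat, shared with the paper: making the short-flight pieces ($s'<\mathcal D/N$) small with a $\kappa$-independent cutoff requires $\kappa$ bounded below, i.e.\ $\kappa\ge\kappa_0$. The uniformity in small $\kappa$ suggested by $\min\{1,\kappa^{-1}\}$ is therefore not actually obtained.
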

	To prove this proposition we need the following estimate for $h^l$.  
	\begin{align}
		|h^{l+1}|(x,v)&\leq\frac{1}{\kappa}S^\kappa_\Omega |K_\alpha|(|h^{l+1}|)+S^\kappa_\Omega(|w\phi_1|)+J^\kappa\textbf{1}_{\partial\Omega^-_1}|wf_{in}|+J^\kappa\textbf{1}_{\partial\Omega^-_2}|w\phi_2|+J^\kappa\textbf{1}_{\partial\Omega^-_2}P_w(|h^{l}|).
	\end{align}
	After iterating for n times, we obtain 
	\begin{align}\label{eq 4.4}
		|h^{l+1}|(x,v)&\leq\sum_{k=1}^{4}A_k(x,v)+\sum_{j=1}^{n-1}\hat{P}^j(A^j_1)(x,v)+\sum_{k=2}^{4}\sum_{j=1}^{n-1}\hat{P}^j(A_k)(x,v)+\hat{P}^n(|h^{l+1-n}|)(x,v).
	\end{align}
	Here the definitions of $A_k^j$ and $\hat{P}^j$ are given as follows.
	\begin{enumerate}
		\item $A^j_1(x,v)=\frac{1}{\kappa}S^\kappa_\Omega |K_\alpha|(|h^{l+1-j}|),\quad  A_1(x,v)=A^0_1$.
		\item $A_2(x,v)=S^\kappa_\Omega(|w\phi_1|)$.
		\item $A_3(x,v)=J^\kappa_1|wf_{in}|(x,v)=\textbf{1}_{\partial\Omega^-_1}(q(x,v))e^{-\frac{1}{\kappa}\nu(v)\tau_{-}(x,v)}|w(v)f_{in}|(q(x,v),v)$.
		\item $A_4(x,v)=J^\kappa_2|w\phi_2|(x,v)=\textbf{1}_{\partial\Omega^-_2}(q(x,v))e^{-\frac{1}{\kappa}\nu(v)\tau_{-}(x,v)}|w(v)\phi_2|(q(x,v),v)$.
		\item $\hat{P}(h)(x,v)=J^\kappa_2wP(w^{-1}h)=J^\kappa_2\left(W \displaystyle\int hw^{-1}m^{-1/2} d\sigma_{0,1}\right)(x,v)$.
		\item $\hat{P}^j(h)(x,v)=J^\kappa_2\left(W\displaystyle\int\cdots\int hw^{-1}m^{-1/2}d\sigma_{j-1,j}d\sigma^J_{j-2,j-1}\cdots d\sigma^J_{0,1}\right)(x,v)\quad\forall j\geq 2$.
	\end{enumerate}
	The definitions of the notations appearing above are as follows
	\begin{enumerate}
		\item $W\coloneqq w(v)m^{1/2}(v)$, $x_0=q(x,v)$, $x_j\coloneqq q(x_{j-1},v_j)$ for all $j\in\mathbb{N}$.
		\item $J^\kappa_i \coloneqq {1}_{\partial\Omega_i}J^\kappa$ for $i=1,2$.
		\item $\left(\displaystyle\int hd\sigma_{j-1,j}\right)(x_{j-1},v_{j-1})\coloneqq\displaystyle\int_{v_{j}\cdot n(x_{j-1})>0}h(x_{j-1},v_j) m(v_{j})v_{j}\cdot n(x_{j-1})dv_{j}\quad\forall j\geq 1$.
		\item $\left(\displaystyle\int hd\sigma^{1}_{j-1,j}\right)(x_{j-1},v_{j-1})\coloneqq\displaystyle\int_{v_{j}\cdot n(x_{j-1})>0}h(x_{j},v_j)\textbf{1}_{\partial\Omega\setminus\partial\Omega_1}(x_j) m(v_{j})v_{j}\cdot n(x_{j-1})dv_{j}\, \forall j\geq 1$.
		\item 
		$\left(\displaystyle\int hd\sigma^J_{j-1,j}\right)(x_{j-1},v_{j-1})\coloneqq\di\int_{v_{j}\cdot n(x_{j-1})>0} J^\kappa_2h(x_{j-1},v_j)m(v_{j})v_{j}\cdot n(x_{j-1})dv_{j}\quad\forall j\geq 1$.
	\end{enumerate}
	
	\subsection{The existence of solution to \eqref{3.1}}\label{subsection 4.1}
	\qquad We now introduce the definition of an unsealed set: 
	\begin{definition}\label{definition 4.1}
		Let $\Omega\subset\mathbb{R}^3$ be a bounded domain with $C^2$ boundary, and let $\partial\Omega_1$ be an open subset of $\partial\Omega$. We say that $\Omega$ is \textbf{unsealed} with respect to $\partial\Omega_1$ if there exist $0<\zeta<1$ and $m\in\mathbb{N}$ such that
		\begin{align}\label{1.3}
			\displaystyle\int\cdots\int \textbf{1}d\sigma^1_{m-1,m}\cdots d\sigma^1_{0,1}\leq \zeta\quad\forall x\in \partial\Omega\setminus\partial\Omega_1,
		\end{align}
		In particular, $\Omega$ is said to be \textbf{completely  unsealed} with respect to $\partial\Omega_1$ when  $m=1$. 
	\end{definition}
	\begin{remark}\label{remark 4.1}
		The $m$ can be replaced by $m_T$ in \textbf{Definition \ref{definition 4.1}} . Additional information about unsealed set is given in \textbf{Appendix \ref{appendix B}}.
	\end{remark} 
	Then, we prove \textbf{proposition \ref{property 3.1}} and state our main goal of this section. The statement is as follows:
	\begin{theorem}\label{theorem 4.1}
		Let $\Omega\subset\mathbb{R}^3$ be a bounded convex domain with $C^2$ boundary and suppose $\Omega$ is unsealed with respect to $\partial\Omega_1$. Assume that $-3<\gamma\leq 1$, $0<\alpha < \frac{1}{4}$ and \eqref{eq 1.2} holds. Then, for any $\phi_1\in L^\infty_{\alpha,-1}(\Omega\times\mathbb{R}^3), f_{in}\in L^\infty_{\alpha,0}(\partial\Omega_1),$ and $\phi_2\in L^\infty_{\alpha,0}(\partial\Omega_2)$, there exists a unique solution $h\in L^\infty_{0,0}(\Omega\times\mathbb{R}^3)$ to \eqref{3.1}. Furthermore. the solution $h$ satisfies following estimate
		\begin{align}\label{eq 4.6}
			\lVert h\rVert_{L^\infty(\Omega\times\mathbb{R}^3)}+\lVert h\rVert_{L^\infty(\partial\Omega^+_2)}
			&\leq C_{\zeta}\Big(\lVert f_{in}\rVert_{L^\infty_{\alpha,0}(\partial\Omega^-_1)}+\lVert \phi_2\rVert_{L^\infty_{\alpha,0}(\partial\Omega^-_2)}+\max\{1, \kappa\}\lVert \phi_1\rVert_{L^\infty_{\alpha,-1}(\Omega\times\mathbb{R}^3)}\Big).
		\end{align}
	\end{theorem}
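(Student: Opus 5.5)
The plan is a standard $L^2$--$L^\infty$ bootstrap run on the approximating scheme \eqref{3.2}, with the $L^2$ theory of Theorem \ref{theorem 3.1} as input and Proposition \ref{property 3.1} as the pointwise engine.

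\textbf{Step 1: the approximate solutions.} Fix $\varpi,\eta\in(0,1)$. The iterates $h^{l}$ of \eqref{3.2} exist. Indeed, since $w^{-1}$ decays like a Gaussian, $f^l=h^l/w$ solves the $L^2$ problem of Theorem \ref{theorem 3.1} with data $\phi_1$, $f_{in}$, $\phi_2$ in the relevant $L^2$ spaces. We also have $\lVert P_w(h)\rVert_{L^\infty}\le C\lVert h\rVert_{L^\infty(\partial\Omega^+_2)}$ for $\alpha<1/4$, so each $h^l$ stays in $L^\infty$.

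\textbf{Step 2: a uniform bound via Proposition \ref{property 3.1}.} Proposition \ref{property 3.1} is stated for completely unsealed sets. For a general unsealed set with index $m$, I would repeat its proof with the $n$-fold iteration \eqref{eq 4.4}, taking $n$ a multiple of $m$, so that $\hat P^n$ carries a factor $\zeta^{n/m}\eta^n$. The terms $A^j_1$ are handled by splitting the kernel as follows:
\begin{itemize}
\item near $|v|\to\infty$ and near the grazing set $\tau_-\approx0$, use smallness;
\item on small $|v-u|$, use smallness;
\item on the remaining compact piece, apply Cauchy--Schwarz against $\lVert h w^{-1}\rVert_{L^2_{0,\gamma/2}}$, using Propositions \ref{proposition 2.4} and \ref{proposition 2.5}.
\end{itemize}
The double Duhamel change of variables $y=x-sv$ is the usual one for the compact piece. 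This yields \eqref{equation 4.6} with the weak factor $\tfrac18$. I then iterate in $l$: the maximum over the window $0\le m\le 2(n-1)$ is absorbed, since $\tfrac18<1$. Combining this with the $L^2$ bound \eqref{equation 3.5} gives
\[
\sup_l\lVert h^l\rVert_{L^\infty}\le C_\zeta(\text{data}),
\]
uniformly in $\varpi,\eta$. Here the factor $\max\{1,\kappa^2\}$ from \eqref{equation 3.5} is compensated by the factor $\min\{1,\kappa^{-1}\}$ in \eqref{equation 4.6}, together with the factor $\kappa$ already in front of $\phi_1$.

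\textbf{Step 3: passing to the limits.} Differences $h^{l+1}-h^{l}$ satisfy the same scheme with zero data and boundary term $\eta P_w(h^l-h^{l-1})$. The same pointwise estimate therefore gives geometric contraction in $L^\infty$, using $\eta<1$ and the $L^2$ contraction from the proof of Theorem \ref{theorem 3.1}. So $h^l\to h^{\varpi,\eta}$ in $L^\infty$.

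The limits $\varpi\to0$ and $\eta\to1$ are handled the same way. Differences of solutions solve \eqref{3.1}-type problems with the following data:
\begin{itemize}
\item for the $\varpi$-limit, source $\varpi_1 h^{\varpi_1}-\varpi_2 h^{\varpi_2}$;
\item for the $\eta$-limit, boundary term $(\eta_1-\eta_2)P_w(h^{\eta_2})$.
\end{itemize}
Both are small in $L^\infty$ by the uniform bound, so Cauchy convergence follows from the linear estimate applied to differences, which is Proposition \ref{property 3.1} with $\varpi=0,\eta=1$. The boundary norm $\lVert h\rVert_{L^\infty(\partial\Omega^+_2)}$ comes from evaluating the mild formula along characteristics up to the boundary. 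Uniqueness follows from the $L^2$ uniqueness in Theorem \ref{theorem 3.1}, since $L^\infty$ solutions lie in $L^2_{0,\gamma/2}$ after dividing by $w$.

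\textbf{Main obstacle.} I expect the hardest part to be the estimate of the iterated diffuse-reflection terms $\hat P^j(A^j_1)$ in Step 2. There, $K_\alpha$ must be integrated against the boundary measures $d\sigma^J$ while keeping the constants uniform in $\kappa\ge\kappa_0$. The trajectories must also avoid the grazing set and the curves $\overline{\partial\Omega_1}\cap\overline{\partial\Omega_2}$. I would handle this by cutting off a small measure set of boundary velocities, using the unsealed constant $\zeta$ to gain smallness from the $\partial\Omega_1$ exits.
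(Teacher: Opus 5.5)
Your architecture matches the paper's. You iterate \eqref{3.2}, feed Proposition \ref{property 3.1} into Lemma \ref{lemma appendix B 6.1} to absorb the $\tfrac18$ window, and prove the Cauchy property in $l$. You then remove $\varpi$ and $\eta$ through the difference problems with source $\varpi_1h^{\varpi_1}-\varpi_2h^{\varpi_2}$ and boundary term $(\eta_1-\eta_2)P_w(h^{\eta_2})$.

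The step that fails as written is the end of Step 2. There you bound the $L^2$ remainder $\min\{1,\kappa^{-1}\}\lVert h^{l}/w\rVert_{L^2_{0,\gamma/2}}$ of the \emph{iterates} by \eqref{equation 3.5}, and conclude that $\sup_l\lVert h^l\rVert_{L^\infty}$ is bounded uniformly in $\varpi,\eta$. But \eqref{equation 3.5}, like \eqref{eq 3.24}, is an estimate for a function whose diffuse condition involves the \emph{same} function, $f=\eta P(f)+\phi_2$. In the passage \eqref{2.9}--\eqref{eq 3.24}, the incoming term $\tfrac12\lVert \eta P(f)+\phi_2\rVert^2_{L^2(\partial\Omega^-_2)}$ is absorbed by the outgoing term $\tfrac12\lVert P(f)\rVert^2_{L^2(\partial\Omega^+_2)}$ of that same $f$. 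That leaves only $\beta\lVert P(f)\rVert^2$, which the Ukai trace lemma then controls uniformly in $\eta$.

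The iterates satisfy $f^{l+1}=\eta P(f^{l})+\phi_2$, so no such absorption is possible. The only $L^2$ bounds for them are the contraction estimates in the proof of Theorem \ref{theorem 3.1}, in the norm $\lVert\cdot\rVert^2_{L^2}+(2\varpi)^{-1}\lVert\cdot\rVert^2_{L^2(\partial\Omega^+)}$ with constants $C_{\varpi,\eta}$. These degenerate as $\varpi\to0$ and $\eta\to1$. Consequently, the ``uniform bound'' that Step 3 uses to make $\varpi_1h^{\varpi_1}-\varpi_2h^{\varpi_2}$ and $(\eta_1-\eta_2)P_w(h^{\eta_2})$ small has not been established.

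The repair, which is what the paper does around \eqref{eq 4.43}, is to keep bounds on iterates separate from bounds on fixed points.
\begin{enumerate}
\item At fixed $(\varpi,\eta)$, the $C_{\varpi,\eta}$-dependent bounds on iterates suffice for the Cauchy argument in $l$.
\item Apply Proposition \ref{property 3.1} to the limit itself, taking $h^{l}\equiv h^{\varpi,\eta}$, and absorb $\tfrac18\lVert h^{\varpi,\eta}\rVert_{L^\infty}$. Bound the remainder by $\lVert f^{\varpi,\eta}\rVert^2_{L^2_{0,\gamma/2}}\le C_\eta\kappa(\cdots)$. This is uniform in $\varpi$, which is all the $\varpi$-limit needs.
\item After $\varpi\to0$, repeat the argument at $h^\eta$ using \eqref{eq 3.24}, whose constant is independent of $\eta$. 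This gives the $\eta$-uniform bound on $\lVert h^\eta\rVert_{L^\infty(\partial\Omega^+_2)}$, and hence on $\lVert P_w(h^{\eta_2})\rVert_{L^\infty}$.
\item Your $\kappa$-bookkeeping, $\max\{1,\kappa\}\min\{1,\kappa^{-1}\}=1$, is correct once applied to these fixed points, and it yields \eqref{eq 4.6} in the limit.
\end{enumerate}

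One further point: Proposition \ref{property 3.1} applied to a difference still leaves the remainder $\min\{1,\kappa^{-1}\}\lVert (h^{\varpi_1,\eta}-h^{\varpi_2,\eta})/w\rVert_{L^2_{0,\gamma/2}}$, and likewise for $\eta_1,\eta_2$. Smallness of the $L^\infty$ data does not remove it by itself. You must also apply the $L^2$ estimate to the difference problems, as in the proof of Theorem \ref{theorem 3.1}, to see that these remainders tend to zero.
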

	For clarity, we present the proof for the case of $\Omega$ is completely unsealed with respect to $\partial\Omega_1$. The proof for unsealed case is similar. 
	In order to achieve our goal, we need the following lemmas to establish \textbf{proposition \ref{property 3.1}}. 
	\begin{lemma}\label{lemma 3.1}
		Let $-3<\gamma\leq 1$ and assume that \eqref{eq 1.2} holds. Then
		\begin{align}
			|h^{l+1}|(x,v)&\leq C_{\zeta}\Big(\lVert f_{in}\rVert_{L^\infty_{\alpha,0}(\partial\Omega^-_1)}+\lVert \phi_2\rVert_{L^\infty_{\alpha,0}(\partial\Omega^-_2)}+\kappa\lVert \phi_1\rVert_{L^\infty_{a,-1}(\Omega\times\mathbb{R}^3)}\Big)\\\nonumber
			&+\frac{\zeta^{n-1}}{1-\zeta}\max_{0\leq j\leq n-1}\lVert h^{l+1-j-n}\rVert_{L^\infty(\Omega\times\mathbb{R}^3)}+\kappa^{-2}\sum_{j=0}^{n-1}\sum_{i=0}^{n-1}\hat{P}^jS^\kappa_\Omega |K_\alpha|\hat{P}^i S^\kappa_\Omega |K_\alpha|(|h^{l+1-j-i}|)(x,v)
		\end{align}
		for any $(x,v)\in\overline{\Omega}\times\mathbb{R}^3$.
	\end{lemma}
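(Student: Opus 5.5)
Starting from the already iterated inequality \eqref{eq 4.4}, I would estimate each group of terms separately; the only term I keep unexpanded is the double-$K$ term. The source terms are routine. By Proposition \ref{proposition 2.2} (or Proposition \ref{proposition 1.3} with $p=\infty$), $A_2=S^\kappa_\Omega(|w\phi_1|)\le C\kappa\lVert\phi_1\rVert_{L^\infty_{\alpha,-1}}$, since the factor $(1+|v|)^{-1}$ in the weight is absorbed by the extra $(1+|v|)^{-1}$ gained from $S^\kappa_\Omega$. The terms $A_3$ and $A_4$ are bounded pointwise by $\lVert f_{in}\rVert_{L^\infty_{\alpha,0}}$ and $\lVert\phi_2\rVert_{L^\infty_{\alpha,0}}$, because $e^{-\nu\tau_-/\kappa}\le 1$.

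For $\hat P^j$ applied to these, I use that $\hat P$ is an averaging operator. Each layer $\int\cdot\,w^{-1}m^{-1/2}d\sigma_{j-1,j}$ against the probability measure $m(v_j)v_j\cdot n\,dv_j$ (normalized, possibly after changing $m$ to $m_{T}$ as in Remark \ref{remark 4.1}), followed by the factor $W=wm^{1/2}$, returns a bounded multiple of the sup of the input. The key point is that at each bounce, the portion of the trajectory landing on $\partial\Omega_1$ is killed by $\mathbf 1_{\partial\Omega_2}$ in $J^\kappa_2$. By the completely unsealed condition (Definition \ref{definition 4.1} with $m=1$), the surviving mass is at most $\zeta$ per bounce. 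Hence $\hat P^j(A_k)\le C\zeta^{j-1}\lVert A_k\rVert_\infty$, and summing the geometric series over $j\le n-1$ gives the constant $C_\zeta$ in front of the data. The same argument bounds the last term of \eqref{eq 4.4}:
\[
\hat P^n(|h^{l+1-n}|)\le \zeta^{n-1}\lVert h^{l+1-n}\rVert_\infty ,
\]
which is dominated by $\frac{\zeta^{n-1}}{1-\zeta}\max_j\lVert h^{l+1-j-n}\rVert_\infty$.

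The main step is the treatment of $A_1$ and $\hat P^j(A_1^j)$, i.e.\ the terms $\frac1\kappa\hat P^jS^\kappa_\Omega|K_\alpha|(|h^{l+1-j}|)$. Inside, I substitute the pointwise inequality for $|h^{l+1-j}|(y,u)$ once more. I use the one-step version \eqref{eq 4.4} with $n$ bounces, applied at the point $(y,u)=(x-sv,u)$. This produces the following pieces.
\begin{itemize}
\item Source pieces: these are bounded using $\frac1\kappa S^\kappa_\Omega|K_\alpha|(1)\le C$. That bound follows from Proposition \ref{proposition 2.5} with $q=1$ and $\beta=0$, which gives $\int|k_\alpha|\,du\le C(1+|v|)^{\gamma-2}$, combined with Proposition \ref{proposition 2.2}, whose gain of $\kappa/(1+|v|)$ cancels the $1/\kappa$. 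The result is again $C_\zeta(\text{data})$.
\item Tail pieces $\hat P^n(\cdot)$: these give another contribution $\zeta^{n-1}\max\lVert h\rVert_\infty$ by the same bounded-operator argument, which I merge into the stated $\frac{\zeta^{n-1}}{1-\zeta}$ term.
\item The remaining piece: this is exactly $\kappa^{-2}\hat P^jS^\kappa_\Omega|K_\alpha|\hat P^iS^\kappa_\Omega|K_\alpha|(|h^{l+1-j-i}|)$, summed over $0\le i,j\le n-1$.
\end{itemize}

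The obstacle I expect is bookkeeping rather than analysis. I have to verify that $\hat P$ and $\frac1\kappa S^\kappa_\Omega|K_\alpha|$ are uniformly bounded on $L^\infty$, with constants independent of $\kappa\ge\kappa_0$, so that the composition of the constants over the indices $i,j$ closes into a single $C_\zeta$. I also have to check that the index shifts give exactly the range $l+1-j-i$ and $l+1-j-n$ appearing in the statement.
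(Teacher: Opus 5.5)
Your proposal is correct and follows essentially the same route as the paper. You bound the source terms and their bounced iterates $\hat P^j(A_k)$ using the completely unsealed condition to get factors $\zeta^{j-1}$, control the tail $\hat P^n(|h^{l+1-n}|)$ by $\zeta^{n-1}\lVert h^{l+1-n}\rVert_{L^\infty}$, and substitute the resulting inequality for $|h^{l+1-j}|$ back into $\frac{1}{\kappa}\hat P^jS^\kappa_\Omega|K_\alpha|(|h^{l+1-j}|)$ to produce the double sum. Your explicit justifications of $\frac{1}{\kappa}S^\kappa_\Omega|K_\alpha|(1)\le C$ via Propositions \ref{proposition 2.5} and \ref{proposition 2.2}, and of the bounds on $A_3,A_4$ via $e^{-\nu\tau_-/\kappa}\le 1$, fill in steps the paper leaves implicit; like the paper, you absorb harmless constants into the $\frac{\zeta^{n-1}}{1-\zeta}$ coefficient.
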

	\begin{proof}
		For $A_3$ and $A_4$, because $\Omega$ is completely unsealed with respect to $\partial\Omega_1$, we have
		\begin{align}\label{3.11}
			\begin{cases}
				\lVert A_3\rVert_{L^\infty(\Omega\times\mathbb{R}^3)}&\leq C \lVert f_{in}\rVert_{L^\infty_{\alpha,0}(\partial\Omega^-_1)},\\
				\lVert A_4\rVert_{L^\infty(\Omega\times\mathbb{R}^3)}&\leq C \lVert w(v)\phi_2\rVert_{L^\infty(\partial\Omega^-_2)}.
			\end{cases}
		\end{align}
		Also, similar to \eqref{3.11}, we obtain
		\begin{align}\label{3.12}
			\begin{cases}
				\lVert \sum_{j=1}^{n-1}\hat{P}^j(A_3)\rVert_{L^\infty(\Omega\times\mathbb{R}^3)}&\leq C \frac{1}{1-\zeta}\lVert f_{in}\rVert_{L^\infty_{\alpha,0}(\partial\Omega^-_1)},\\
				\lVert \sum_{j=1}^{n-1}\hat{P}^j(A_4)\rVert_{L^\infty(\Omega\times\mathbb{R}^3)}&\leq C \frac{1}{1-\zeta}\lVert f_{in}\rVert_{L^\infty_{\alpha,0}(\partial\Omega^-_1)}.
			\end{cases}
		\end{align}
		Using the \textbf{Proposition \ref{proposition 2.2}}, we obtain
		\begin{align}\label{3.13}
			\lVert A_2\rVert_{L^\infty(\Omega\times\mathbb{R}^3)}&\leq C \kappa\lVert \langle v \rangle^{-1} w(v)\phi_1\rVert_{L^\infty(\partial\Omega^-_1)}
		\end{align}
		and 
		\begin{align}\label{3.14}
			\left\lVert \sum_{j=1}^{n-1}\hat{P}^j(A_2)\right\rVert_{L^\infty(\Omega\times\mathbb{R}^3)}&\leq C\frac{\kappa}{1-\zeta} \lVert \phi_1\rVert_{L^\infty_{a,-1}(\Omega\times\mathbb{R}^3)}.
		\end{align}
		Applying that $\Omega$ is completely unsealed with respect to $\partial\Omega_1$, we again yield
		\begin{align}\label{3.15}
			\lVert \hat{P}^n( h^{l+1-n})\rVert_{L^\infty(\Omega\times\mathbb{R}^3)}&\leq C \zeta^{n-1} \lVert h^{l+1-n}\rVert_{L^\infty(\Omega\times\mathbb{R}^3)}.
		\end{align}
		Combine the above result, we conclude that
		\begin{align}
			|h^{l+1}|&\leq C \frac{1}{1-\zeta}\Big(\lVert f_{in}\rVert_{L^\infty_{\alpha,0}(\partial\Omega^-_1)}+\lVert \phi_2\rVert_{L^\infty_{\alpha,0}(\partial\Omega^-_2)}+\kappa\lVert \phi_1\rVert_{L^\infty_{a,-1}(\Omega\times\mathbb{R}^3)}\Big)\\\nonumber
			&\quad+\frac{1}{\kappa}\sum_{j=0}^{n-1}\hat{P}^jS^\kappa_\Omega |K_\alpha|(|h^{l+1-j}|)+\zeta^{n-1} \lVert h^{l+1-n}\rVert_{L^\infty(\Omega\times\mathbb{R}^3)}.
		\end{align}
		For each $0<j<n-1$, replace $l+1$ to $l+1-j$, we have 
		\begin{align}\label{3.17}
			|h^{l+1-j}|&\leq C \frac{1}{1-\zeta}\Big(\lVert f_{in}\rVert_{L^\infty_{\alpha,0}(\partial\Omega^-_1)}+\lVert \phi_2\rVert_{L^\infty_{\alpha,0}(\partial\Omega^-_2)}+\kappa\lVert \phi_1\rVert_{L^\infty_{\alpha,-1}(\Omega\times\mathbb{R}^3)}\Big)\\\nonumber
			&\quad+\frac{1}{\kappa}\sum_{i=0}^{n-1}\hat{P}^i S^\kappa_\Omega |K_\alpha|(|h^{l+1-j-i}|)+\zeta^{n-1} \lVert h^{l+1-j-n}\rVert_{L^\infty(\Omega\times\mathbb{R}^3)}.
		\end{align}
		Substitute $h^{l+1-j}$ by \ref{3.17}, we obtain
		\begin{align}
			|h^{l+1}|&\leq C_{\zeta}\Big(\lVert f_{in}\rVert_{L^\infty_{\alpha,0}(\partial\Omega^-_1)}+\lVert \phi_2\rVert_{L^\infty_{\alpha,0}(\partial\Omega^-_2)}+\kappa\lVert \phi_1\rVert_{L^\infty_{a,-1}(\Omega\times\mathbb{R}^3)}\Big)\\\nonumber
			&\quad+\frac{1}{\kappa}\sum_{j=0}^{n-1}\zeta^j\left\lVert \displaystyle S^\kappa_\Omega |K_\alpha|\Big(\lVert f_{in}\rVert_{L^\infty_{\alpha,0}(\partial\Omega^-_1)}+\lVert \phi_2\rVert_{L^\infty_{\alpha,0}(\partial\Omega^-_2)}\Big)\right\rVert_{L^\infty(\Omega\times\mathbb{R}^3)}\\\nonumber
			&\quad+\frac{1}{\kappa}\sum_{j=0}^{n-1}\zeta^j\left\lVert \displaystyle S^\kappa_\Omega |K_\alpha|\Big(\kappa\lVert \phi_1\rVert_{L_{\alpha,-1}^\infty(\Omega\times\mathbb{R}^3)}+\zeta^{n-1} \lVert h^{l+1-j-n}\rVert_{L^\infty(\Omega\times\mathbb{R}^3)}\Big)\right\rVert_{L^\infty(\Omega\times\mathbb{R}^3)}\\\nonumber
			&\quad+\kappa^{-2}\sum_{j=0}^{n-1}\sum_{i=0}^{n-1}\hat{P}^jS^\kappa_\Omega |K_\alpha|\hat{P}^i S^\kappa_\Omega |K_\alpha|(|h^{l+1-j-i}|)\\\nonumber
			&\leq C_{\zeta}\Big(\lVert f_{in}\rVert_{L^\infty_{\alpha,0}(\partial\Omega^-_1)}+\lVert \phi_2\rVert_{L^\infty_{\alpha,0}(\partial\Omega^-_2)}+\kappa\lVert \phi_1\rVert_{L^\infty_{a,-1}(\Omega\times\mathbb{R}^3)}\Big)\\
			&\quad+\frac{\zeta^{n-1}}{1-\zeta}\max_{0\leq j\leq n-1}\lVert h^{l+1-j-n}\rVert_{L^\infty(\Omega\times\mathbb{R}^3)}+\kappa^{-2}\sum_{j=0}^{n-1}\sum_{i=0}^{n-1}\hat{P}^jS^\kappa_\Omega |K_\alpha|\hat{P}^i S^\kappa_\Omega |K_\alpha|(|h^{l+1-j-i}|).
		\end{align}
	\end{proof}
	For convenience, we define 
	\begin{align}
		A_{ij}(f)\coloneq\kappa^{-2}\hat{P}^jS^\kappa_\Omega |K_\alpha|\hat{P}^i S^\kappa_\Omega |K_\alpha|(f)
	\end{align} 
	and denote $\mathcal{D}=\text{diam}(\Omega)$. Now, we prove the following lemma
	\begin{lemma}\label{lemma 3.2}
		Suppose $\Omega$ is a bounded set in $\mathbb{R}^3$ with $C^2$ boundary and $\Omega$ is completely  unsealed with respect to $\partial\Omega_1$. Given $\epsilon>0$ and $i,j\in\mathbb{N}$. Suppose $-3<\gamma\leq 1$ and \eqref{eq 1.2} holds. Then 
		\begin{align}\label{eqqu 4.19}
			\lVert A_{ij}(f)\rVert_{L^\infty(\Omega\times\mathbb{R}^3)}\leq C\zeta^{j+i-2}\Big(\epsilon \lVert f\rVert_{L^\infty(\Omega\times\mathbb{R}^3)}+C_\epsilon\min\left\{1,\frac{1}{\kappa}\right\}\lVert f/w\rVert_{L^2_{0,\gamma/2}(\Omega\times\mathbb{R}^3)}\Big)
		\end{align}
	\end{lemma}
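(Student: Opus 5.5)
The plan is the standard double Duhamel iteration of the $L^2$--$L^\infty$ method, adapted to the boundary operators $\hat{P}^j$. We peel off the two boundary operators using the unsealed property, expand the two $S^\kappa_\Omega|K_\alpha|$ factors as a double integral along characteristics, and split that integral into a \emph{bad} part, which is small in $L^\infty$, and a \emph{good} part. On the good part we change variables from velocity to space and pass to the $L^2$ norm.

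\textbf{Removing the boundary operators.} First we handle the outer factor $\hat{P}^j$. Each application of $\hat{P}$ is a $J^\kappa_2$ factor (bounded by $1$) times an average against the probability measure $d\sigma$. By Definition \ref{definition 4.1} with $m=1$, the mass that does not reach $\partial\Omega_1$ is at most $\zeta$. Iterating gives
\begin{align*}
\lVert \hat{P}^j g\rVert_{L^\infty}\leq C\zeta^{j-1}\sup_{x\in\partial\Omega_2,\,v}|g|,
\end{align*}
and likewise for $\hat{P}^i$. This produces the factor $\zeta^{i+j-2}$. It reduces the claim to bounding, uniformly in $(x,v)$ with $x$ a boundary or interior point,
\begin{align*}
\kappa^{-2}\,S^\kappa_\Omega |K_\alpha|\,\hat{P}^i\, S^\kappa_\Omega|K_\alpha|(|f|)(x,v).
\end{align*}
Since $\hat{P}^i$ is a positive averaging operator with total weight at most $C\zeta^{i-1}$, we write it out explicitly. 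We then bound the whole expression by an average, over the intermediate boundary velocities, of terms of the form $\kappa^{-2}S^\kappa|K_\alpha|(\cdot)$ composed with $S^\kappa|K_\alpha|(|f|)$ evaluated at a point $(y,v')$. This is the usual double integral
\begin{align*}
\kappa^{-2}\int_0^{\tau_-}e^{-\nu s/\kappa}\int k_\alpha(v,v')\int_0^{\tau'}e^{-\nu s'/\kappa}\int k_\alpha(v',v'')\,|f|(y-s'v',v'')\,dv''\,ds'\,dv'\,ds,
\end{align*}
with $y$ either $x-sv$ or the boundary point reached via $\hat{P}^i$.

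\textbf{Splitting the double integral.} We split into the following cases.
\begin{enumerate}
\item $|v|\ge N$, or $|v'|\ge N$, or $|v''|\ge 2N$. Here Proposition \ref{proposition 2.5} (with decay $(1+|v|)^{\gamma-2}$) and Proposition \ref{proposition 2.2} give a factor $CN^{-1}\lVert f\rVert_\infty$.
\item $s'\le\epsilon\kappa$ (resp.\ $s\le\epsilon\kappa$), the short time near the characteristic endpoint. This gives $C\epsilon\lVert f\rVert_\infty$.
\item The singular or large part of the kernel. We replace $k_\alpha$ by a bounded compactly supported $k_N$ with $\int|k_\alpha-k_N|\leq 1/N$, which again gives $\le C N^{-1}\lVert f\rVert_\infty$.
\end{enumerate}
The factors $\kappa^{-1}$ in front of each $S^\kappa$ are absorbed by Proposition \ref{proposition 2.2}, since $\kappa^{-1}\int_0^{\tau}e^{-\nu s/\kappa}ds\le C$.

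\textbf{The good region.} On the remaining region $|v'|,|v''|\le 2N$, $s'\ge\epsilon\kappa$ and $k_N\le C_N$, we substitute $z=y-s'v'$. This is legitimate because $s'\ge \epsilon\kappa$ and $y$ is confined to a bounded set. The Jacobian is $|dv'|=(s')^{-3}dz$, and $s'\le\mathcal{D}/|v'|$ keeps $z\in\Omega$. Cauchy--Schwarz then bounds the integral by
\begin{align*}
C_{N,\epsilon}\,\kappa^{-2}\cdot\kappa\,(\epsilon\kappa)^{-3/2}\Big(\int_\Omega\int_{|v''|\le 2N}|f/w|^2\,dv''\,dz\Big)^{1/2}.
\end{align*}
The weight $w$ and $(1+|v''|)^{\gamma/2}$ are bounded on $|v''|\le 2N$. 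This yields $C_\epsilon\lVert f/w\rVert_{L^2_{0,\gamma/2}}$ times a power of $\kappa$ which is at most $\min\{1,\kappa^{-1}\}$ after using the time integrals: for $\kappa\geq 1$ we get $\kappa^{-1}$ from the $\kappa^{-2}$ prefactor against one $\kappa$ from the $s$-integral, and for small $\kappa$ the exponential damping bounds everything by a constant. Choosing $N$ large relative to $\epsilon$ and renaming $\epsilon$ completes \eqref{eqqu 4.19}.

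\textbf{Main obstacle.} The hard step is the good-region change of variables when $\hat{P}^i$ intervenes between the two $K$'s. There the inner characteristic starts from a boundary point $x_i$ that depends on the outer variables, so the map $v'\mapsto z$ must be taken with $x_i$ frozen. We also need the averaging over the boundary velocities to be harmless: it is bounded by $\zeta^{i-1}$ times a supremum over $x_i\in\partial\Omega_2$. We would carry out the $z$-substitution for fixed $x_i$ and then take the supremum, relying on the estimate being uniform in the base point.
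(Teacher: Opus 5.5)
Your treatment of $i=j=0$ and your removal of the outer $\hat P^j$ by complete unsealedness agree with the paper. The case $i\ge 1$, however, which you yourself flag as the main obstacle, is set up incorrectly and left unresolved. Across a diffuse reflection there is no chain $k_\alpha(v,v')\,k_\alpha(v',v'')$. If $u$ denotes the outer collision velocity, then $u$ only determines the wall point $x_0=q(x-sv,u)$. The velocity along the inner characteristic is the last re-emitted velocity $v_i$ at $x_{i-1}$ (your $x_i$). It is drawn from $m(v_i)\,v_i\cdot n(x_{i-1})\,dv_i$ and is not linked to $u$ by any kernel. So your ``usual double integral'' with $y$ a boundary point is not the correct expression. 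If your $v'$ means $u$, then substituting $z=x_i-s'v'$ with $x_i$ frozen is illegitimate, because every $x_k$ depends on $u$. Nor does ``substitute for fixed $x_i$, then take the supremum'' give an inequality. Moving $\sup_{x_i}$ inside the $u$-integral leaves, for each fixed $u$, an integral along a single line, and that only sees $\lVert f\rVert_{L^\infty}$.

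The missing idea, which is the paper's route, is to let the diffuse reflection itself supply the velocity averaging, so the outer collision variable is never changed. It suffices to bound $\kappa^{-1}\hat P^iS^\kappa_\Omega|K_\alpha|(f)$ uniformly in $(x,v)$:
\begin{enumerate}
\item The innermost integral is over $v_i$ at $x_{i-1}$, which does not depend on $v_i$, and its density $w^{-1}(v_i)m^{1/2}(v_i)\,v_i\cdot n(x_{i-1})$ is bounded.
\item After discarding $|v_i|\ge N$, $|v_i-v''|\le\epsilon_1$ and short times $t$, substitute $y=x_{i-1}-tv_i$, $dv_i=t^{-3}dy$, and apply Cauchy--Schwarz.
\item The resulting bound is uniform in $x_{i-1}$, so it survives the remaining averages over $v_1,\dots,v_{i-1}$, which give $\zeta^{i-1}$.
\item Afterwards $\kappa^{-1}S^\kappa_\Omega|K_\alpha|$ and $\hat P^j$ only need to be bounded on $L^\infty$.
\end{enumerate}
If your $v'$ is read as $v_i$, your ``freeze the base point'' instinct is exactly this step. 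But then the outer kernel in your display is wrong, and truncating it is unnecessary.

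Separately, your small-$\kappa$ claim is false. Your own bound is of order $\kappa^{-5/2}$, and no $\kappa$-independent constant is possible. Take $f=\mathbf{1}_{B(x_0,r)}(x)\mathbf{1}_{\{|v|\le 1\}}$ with $B(x_0,r)\subset\Omega$ and $\kappa\ll r$. Then $A_{00}(f)(x_0,v)\ge c>0$ for $|v|\le 1$, with $c$ independent of $\kappa$ and $r$, while $\lVert f\rVert_{L^\infty}=1$ and $\lVert f/w\rVert_{L^2_{0,\gamma/2}}\lesssim r^{3/2}$. The lemma should therefore be proved for $\kappa\ge\kappa_0$, with constants depending on $\kappa_0$; this is the regime where the paper uses it. The paper's own discarding of $s<1/N$ is likewise harmless only when $\kappa$ is bounded below.
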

	\begin{proof}
		Without loss of generality, we assume $f\geq 0$. For $i=j=0$, we use properties of $|k_\alpha|$ like \ref{eq 2.3}, \text{proposition \ref{proposition 2.7}} , and \text{proposition \ref{proposition 2.5}}. By definition,
		\begin{align}
			A_{00}&=\kappa^{-2}\int_{0}^{\tau_{-}(x,v)}\int_{\mathbb{R}^3}\int_{0}^{\tau_{-}(x-tv,u)}\int_{\mathbb{R}^3}e^{-\frac{1}{\kappa}\nu(v)t}|k_\alpha|(v,u)e^{-\frac{1}{\kappa}\nu(u)s}|k_\alpha|(u,w)f(x-tv-su,w)dwdsdudt.
		\end{align}
		Fixed some $\epsilon_1,\epsilon_2>0$ and $N,M\in\mathbb{N}$, we define
		\begin{align}
			A=\{|v-u|>\epsilon_2, |u|<N\} \text{ and } B=\{|u-w|>\epsilon_1\}.
		\end{align}
		Moreover, we assume that $f\geq 0$. Then, we have
		\begin{align}
			&A_{00}\\
			&\nonumber\leq C \epsilon^{3-|\gamma|}_1 \lVert f\rVert_{L^\infty(\Omega\times\mathbb{R}^3)}\\\nonumber
			&+ \kappa^{-2}\int_{0}^{\tau_{-}(x,v)}\int_{\mathbb{R}^3}\int_{0}^{\tau_{-}(x-tv,u)}\int_{B}e^{-\frac{1}{\kappa}\nu(v)t}|k_\alpha|(v,u)e^{-\frac{1}{\kappa}\nu(u)s}|k_\alpha|(u,w)f(x-tv-su,w)dwdsdudt\\\nonumber
			&\leq C \Big(\epsilon^{3-|\gamma|}_1+\epsilon^{3-|\gamma|}_2\Big) \lVert f\rVert_{L^\infty(\Omega\times\mathbb{R}^3)}\\\nonumber
			&+
			\kappa^{-2}\int_{0}^{\tau_{-}(x,v)}\int_{\{|v-u|>\epsilon_2\}}\int_{0}^{\tau_{-}(x-tv,u)}\int_{B}e^{-\frac{\nu(v)t}{\kappa}}|k_\alpha|(v,u)e^{-\frac{\nu(u)s}{\kappa}}|k_\alpha|(u,w)f(x-tv-su,w)dwdsdudt\\\nonumber
			&\leq C \Big(\epsilon^{3-|\gamma|}_1+\epsilon^{3-|\gamma|}_2+\frac{1}{1+N}\Big) \lVert f\rVert_{L^\infty(\Omega\times\mathbb{R}^3)}\\\nonumber
			&+
			\kappa^{-2}\int_{0}^{\tau_{-}(x,v)}\int_{A}\int_{0}^{\tau_{-}(x-tv,u)}\int_{B}e^{-\frac{1}{\kappa}\nu(v)t}|k_\alpha|(v,u)e^{-\frac{1}{\kappa}\nu(u)s}|k_\alpha|(u,w)f(x-tv-su,w)dwdsdudt\\\nonumber
			&\leq C \Big(\epsilon^{3-|\gamma|}_1+\epsilon^{3-|\gamma|}_2+\frac{1}{1+N}+\frac{1}{N}\Big) \lVert f\rVert_{L^\infty(\Omega\times\mathbb{R}^3)}\\
			&+\kappa^{-2}
			\int_{0}^{\tau_{-}(x,v)}\int_{A}\int_{1/N}^{\tau_{-}(x-tv,u)}\int_{B}e^{-\frac{1}{\kappa}\nu(v)t}|k_\alpha|(v,u)e^{-\frac{1}{\kappa}\nu(u)s}|k_\alpha|(u,w)f(x-tv-su,w)dwdsdudt.
		\end{align}
		Using H\"older inequality and noticing that $k(v,u),k(u,w)$ are bounded on $A,B$ respectively. Let $y=x-tv-su$, we obtain
		\begin{align}
			A_{00}&\nonumber\leq C \Big(\epsilon^{3-|\gamma|}_1+\epsilon^{3-|\gamma|}_2+\frac{1}{1+N}+\frac{1}{N}+\frac{1}{1+M}\Big) \lVert f\rVert_{L^\infty(\Omega\times\mathbb{R}^3)}\\\nonumber
			&+C\textbf{1}_{|v|<M}\kappa^{-2}\int_{\frac{\mathcal{D}}{5M}}^{\frac{\mathcal{D}}{|v|}}\int_{A}\int_{1/N}^{\infty}\int_{B}e^{-\frac{1}{\kappa}\nu(v)t}|k_\alpha|(v,u)e^{-\frac{1}{\kappa}\nu(u)s}|k_\alpha|(u,w)f(x-tv-su,w)dwdsdudt\\\nonumber
			&\leq C \Big(\epsilon^{3-|\gamma|}_1+\epsilon^{3-|\gamma|}_2+\frac{1}{1+N}+\frac{1}{N}+\frac{1}{M}\Big) \lVert f\rVert_{L^\infty(\Omega\times\mathbb{R}^3)}\\\nonumber
			&+C\kappa^{-2}\textbf{1}_{|v|<M}\Big(\int_{\frac{\mathcal{D}}{5M}}^{\frac{\mathcal{D}}{|v|}}\int_{\mathbb{R}^3}\int_{1/N}^{\infty}\int_{\mathbb{R}^3}e^{-\frac{C_M}{\kappa}t}\frac{1}{s^3}\nu(w)f^2(y,w)w^{-2}(w)dwdsdydt\Big)^{1/2}\\
			&\leq C \Big(\epsilon^{3-|\gamma|}_1+\epsilon^{3-|\gamma|}_2+\frac{1}{1+N}+\frac{1}{N}+\frac{1}{M}\Big) \lVert f\rVert_{L^\infty(\Omega\times\mathbb{R}^3)}+C\min\left\{1,\frac{1}{\kappa}\right\}\lVert f/w\rVert_{L^2_{0,\gamma/2}(\Omega\times\mathbb{R}^3)}.
		\end{align}
		Hence, if $\epsilon_1,\epsilon_2$ are sufficiently small and $N,M$ are chosen large enough, the proof for $A_{00}$ is complete. For $i$ or $j\neq 0$, since $wm^{1/2}\leq C$ and $\Omega$ is completely unsealed with respect to $\partial\Omega_1$, it suffice to estimate $S^\kappa_\Omega |K_\alpha| \hat{P}^i S^\kappa_\Omega |K_\alpha|(f)$. Furthermore, we can observe that the character of $|K_\alpha|$ can be replace by $\hat{P}^i$. Therefore, the estimate $A_{ij}$ is similar to $A_{00}$ and it is sufficient to estimate $\frac{1}{\kappa}\hat{P}^i S^\kappa_\Omega |K_\alpha|(f)$. For instance, let $i=1$ and denote $q=q(x,v)$,
		\begin{align}
			&\hat{P} S^\kappa_\Omega |K_\alpha|(f)(x,v)\\\nonumber
			&=\frac{1}{\kappa}\di\int W(v)J^-_2(q(x,v),v)S^\kappa_\Omega |K_\alpha|(f)(q(x,v), v_1)w^{-1}(v_1)d\sigma_{0,1}\\\nonumber
			&\leq \frac{1}{\kappa}\di\int_{v_1\cdot n(q)>0}\int_{0}^{\tau_{-}(q,v_1)}\int_{\mathbb{R}^3} e^{-\frac{1}{\kappa}\nu(v_1)t}|k_\alpha|(v_1,u)f(q-tv_1,u)w^{-1}(v_1)m^{1/2}(v_1)v_1\cdot n(q)dudtdv_1\\\nonumber
			&=\frac{1}{\kappa}\di\int_{v_1\cdot n(q)>0}\int_{0}^{\tau_{-}(q,v_1)}\int_{\mathbb{R}^3} e^{-\frac{1}{\kappa}\nu(v_1)t}|k|(v_1,u)f(q-tv_1,u)w^{-1}(u)m^{1/2}(v_1)v_1\cdot n(q)dudtdv_1.
		\end{align}
		Similar to $A_{00}$, fix some $\epsilon_1>0$ and $N\in\mathbb{N}$, we define 
		\begin{align}
			A=\{ |v_1|<N\},\quad B=\{|v_1-u|>\epsilon_1\}.
		\end{align} 
		Then,
		\begin{align}
			&\hat{P} S^\kappa_\Omega |K_\alpha|(f)(x,v)\\\nonumber
			&\leq C \epsilon^{3-|\gamma|}_1 \lVert f\rVert_{L^\infty(\Omega\times\mathbb{R}^3)}\\\nonumber
			&\quad+\frac{1}{\kappa}\di\int_{v_1\cdot n(q)>0}\int_{0}^{\frac{\mathcal{D}}{|v_1|}}\int_{B} e^{-\frac{1}{\kappa}\nu(v_1)t}|k|(v_1,u)f(q-tv_1,u)w^{-1}(u)m^{1/2}(v_1)v_1\cdot n(q)dudtdv_1\\\nonumber
			&\leq C \Big(\epsilon^{3-|\gamma|}_1+\frac{1}{1+N}\Big) \lVert f\rVert_{L^\infty(\Omega\times\mathbb{R}^3)}\\
			&\quad+\frac{1}{\kappa}\di\int_{\{v_1\cdot n(q)>0\}\cap A}\int_{0}^{\frac{\mathcal{D}}{|v_1|}}\int_{B} e^{-\frac{1}{\kappa}\nu(v_1)t}|k|(v_1,u)f(q-tv_1,u)w^{-1}(u)m^{1/2}(v_1)v_1\cdot n(q)dudtdv_1.
		\end{align}
		Using H\"older inequality and notice that $k(v_1,u)$ is bounded on $B$. Let $y=q(x,v)-tv_1$. We obtain
		\begin{align}\label{eq 4.32}
			&\leq C \Big(\epsilon^{3-|\gamma|}_1+\frac{1}{1+N}\Big)\lVert f\rVert_{L^\infty(\Omega\times\mathbb{R}^3)}\\\nonumber
			&\quad+\frac{1}{\kappa}\di\int_{\{v_1\cdot n(q)>0\}\cap A}\int_{\frac{\mathcal{D}}{N}}^{\mathcal{D}/|v_1|}\int_{B}e^{-\frac{1}{\kappa}\nu(v_1)t}|k|(v_1,u)f(q-tv_1,u)w^{-1}(u)m^{1/2}(v_1)v_1\cdot n(q)dudtdv_1\\\nonumber
			&\leq C \Big(\epsilon^{3-|\gamma|}_1+\frac{1}{1+N}\Big)\lVert f\rVert_{L^\infty(\Omega\times\mathbb{R}^3)}\\
			&\quad+\frac{C}{\kappa}\Big(\di\int_{\{v_1\cdot n(q)>0\}\cap A}\int_{\frac{\mathcal{D}}{N}}^{\frac{\mathcal{D}}{|v_1|}}\int_{B}e^{-\frac{C_N}{\kappa}t}|k|(v_1,u)f^2(q-tv_1,u)w^{-2}(u)dudtdv_1\Big)^{1/2}\\\nonumber
			&\leq C \Big(\epsilon^{3-|\gamma|}_1+\frac{1}{1+N}\Big)\lVert f\rVert_{L^\infty(\Omega\times\mathbb{R}^3)}\\\nonumber
			&\quad+C\min\left\{1,\frac{1}{\kappa}\right\}\Big(\di\int_{\Omega}\int_{\frac{\mathcal{D}}{N}}^{\infty}\int_{\mathbb{R}^3}\frac{1}{t^3}\nu(u)f^2(y,u)w^{-2}(u)dudtdy\Big)^{1/2}\\
			&\leq C\Big(\epsilon^{3-|\gamma|}_1+\frac{1}{1+N}\Big)\lVert f\rVert_{L^\infty(\Omega\times\mathbb{R}^3)}+C\min\left\{1,\frac{1}{\kappa}\right\}\lVert f/w\rVert_{L^2_{0,\gamma/2}(\Omega\times\mathbb{R}^3)}.
		\end{align}
		Hence, when $\epsilon_1$ is sufficiently small and $N$ is sufficiently large, the proof for $A_{ij}$ is complete.
	\end{proof}
	\begin{proof}\textbf{The proof of property \ref{property 3.1}}\par
		\qquad We use \textbf{Lemma \ref{lemma 3.1}} and \textbf{Lemma \ref{lemma 3.2}} to obtain 
		\begin{align}
			&|h^{l+1}|(x,v)\\\nonumber
			&\leq C_{\zeta}\Big(\lVert f_{in}\rVert_{L^\infty_{\alpha,0}(\partial\Omega^-_1)}+\lVert \phi_2\rVert_{L^\infty_{\alpha,0}(\partial\Omega^-_2)}+\kappa\lVert \phi_1\rVert_{L^\infty_{a,-1}(\Omega\times\mathbb{R}^3)}\Big)\\\nonumber
			&\quad+\frac{\zeta^{n-1}}{1-\zeta}\max_{0\leq j\leq n-1}\lVert h^{l+1-j-n}\rVert_{L^\infty(\Omega\times\mathbb{R}^3)}\\\nonumber
			&\quad+\frac{C}{\zeta^2(1-\zeta)^2}\Big(\epsilon \max_{0\leq i,j\leq n-1}\lVert h^{l+1-i-j}\rVert_{L^\infty(\Omega\times\mathbb{R}^3)}+C_\epsilon\min\left\{1,\frac{1}{\kappa}\right\}\max_{0\leq i,j\leq n-1}\lVert h^{l+1-i-j}/w\rVert_{L^2_{0,\gamma/2}(\Omega\times\mathbb{R}^3)}\Big).
		\end{align}
		Taking $n$ is enough large and $\epsilon$ is sufficiently small, we have
		\begin{align}\label{3.38}
			&\lVert h^{l+1}\rVert_{L^\infty(\Omega\times\mathbb{R}^3)}\\\nonumber
			&\leq C_{\zeta}\Big(\lVert f_{in}\rVert_{L^\infty_{\alpha,0}(\partial\Omega^-_1)}+\lVert \phi_2\rVert_{L^\infty_{\alpha,0}(\partial\Omega^-_2)}+\kappa\lVert \phi_1\rVert_{L^\infty_{a,-1}(\Omega\times\mathbb{R}^3)}\Big)\\\nonumber
			&\quad+\frac{1}{8}\max_{0\leq m\leq 2(n-1)}\lVert h^{l-m}\rVert_{L^\infty(\Omega\times\mathbb{R}^3)}+C_\zeta\min\left\{1,\frac{1}{\kappa}\right\}\max_{0\leq m\leq 2(n-1)}\lVert h^{l+1-m}/w\rVert_{L^2_{0,\gamma/2}(\Omega\times\mathbb{R}^3)}.
		\end{align}
		The proof is complete.
	\end{proof}
	\begin{proof}\textbf{The proof of \textbf{Theorem \ref{theorem 4.1}} }\par
		\qquad
		Now, we construct the solution of \eqref{3.1} by starting from \eqref{3.2}. It is clear that \textbf{Property \ref{property 3.1}}  still holds when $\varpi=0$ or $\eta=1$. We rewrite \eqref{3.38} as 
		\begin{align}
			&\lVert h^{i+2(n-1)+1}\rVert_{L^\infty(\Omega\times\mathbb{R}^3)}\\\nonumber
			&\leq C_{\zeta}\Big(\lVert f_{in}\rVert_{L^\infty_{\alpha,0}(\partial\Omega^-_1)}+\lVert \phi_2\rVert_{L^\infty_{\alpha,0}(\partial\Omega^-_2)}+\kappa\lVert \phi_1\rVert_{L^\infty_{a,-1}(\Omega\times\mathbb{R}^3)}\Big)\\\nonumber
			&\quad+\frac{1}{8}\max_{0\leq m\leq 2(n-1)}\lVert h^{i+m}\rVert_{L^\infty(\Omega\times\mathbb{R}^3)}+C_\zeta\min\left\{1,\frac{1}{\kappa}\right\}\max_{0\leq m\leq 2(n-1)}\lVert h^{i+1+m}/w\rVert_{L^2_{0,\gamma/2}(\Omega\times\mathbb{R}^3)},
		\end{align}
		for any $i=0,1,\cdots$. By \textbf{Lemma \ref{lemma appendix B 6.1}}  with $a_i=\lVert h^{i}\rVert_{L^\infty(\Omega\times\mathbb{R}^3)}$ and $k=2(n-1)$ and notice that
		\begin{align}
			\lVert h^{i}/w\rVert_{L^2_{0,\gamma/2}(\Omega\times\mathbb{R}^3)}&=\lVert f^{i}\rVert_{L^2_{0,\gamma/2}(\Omega\times\mathbb{R}^3)}\\\nonumber
			&\leq C_\eta\max\{1,\kappa\}\Big(\lVert f_{in}\rVert_{L^2(\partial\Omega^-_1)}+\lVert \phi_2\rVert_{L^2(\partial\Omega^-_1)}+\lVert \phi_1\rVert_{L^2_{0,\gamma/2}(\Omega\times\mathbb{R}^3)}\Big),
		\end{align}
		for any $i\in\mathbb{N}$. Then 
		\begin{align}\label{3.42}
			\max_{0\leq m\leq 2(n-1)}\lVert h^{i+m}\rVert_{L^\infty(\Omega\times\mathbb{R}^3)}
			&\leq \left(\frac{1}{8}\right)^{[\frac{i}{2n-1}]}\max_{0\leq m\leq 4(n-1)}\lVert h^{m}\rVert_{L^\infty(\Omega\times\mathbb{R}^3)}\\\nonumber
			&\quad+C_{\zeta}\Big(\lVert f_{in}\rVert_{L^\infty_{\alpha,0}(\partial\Omega^-_1)}+\lVert \phi_2\rVert_{L^\infty_{\alpha,0}(\partial\Omega^-_2)}+\kappa\lVert \phi_1\rVert_{L^\infty_{a,-1}(\Omega\times\mathbb{R}^3)}\Big)\\\nonumber
			&\quad+C_{\varpi,\eta}\Big(\lVert f_{in}\rVert_{L^2(\partial\Omega^-_1)}+\lVert \phi_2\rVert_{L^2(\partial\Omega^-_1)}+\lVert \phi_1\rVert_{L^2_{0,\gamma/2}(\Omega\times\mathbb{R}^3)}\Big).
		\end{align}
		The statement of \textbf{Lemma \ref{lemma appendix B 6.1}} is
		\begin{lemma}[\cite{Duan R 2019}(lemma 6.1)]\label{lemma appendix B 6.1}
			Let $\{a_i\}_{i=0}\in\mathbb{R}_+$. For any fixed $k\in\mathbb{N}$, we denote $$A^k_i=\max\{a_i,a_{i+1},\cdots,a_{i+k}\}$$.
			\begin{enumerate}
				\item Assume $D\geq 0$. If $a_{i+1+k}\leq \frac{1}{8}A_i^k+D$ for $i=0,1,\cdots,$ then it holds that
				\begin{align}
					A_i^k\leq\left(\frac{1}{8}\right)^{[\frac{i}{k+1}]}\max\{A_0^k,A_1^k,\cdots,A_k^k\}+\frac{8+k}{7}D,\quad \text{for}\, i\leq k+1
				\end{align}
				\item Let $0\leq \eta <1$ with $\eta^{k+1}\geq \frac{1}{4}$. If $a_{i+1+k}\leq \frac{1}{8}A_i^k+C_k\eta^{i+k+1}$ for $i=0,1,\cdots$, then it holds that 
				\begin{align}
					A_i^k\leq\left(\frac{1}{8}\right)^{[\frac{i}{k+1}]}\max\{A_0^k,A_1^k,\cdots,A_k^k\}+2C_k\frac{8+k}{7}D,\quad \text{for}\, i\leq k+1
				\end{align}
			\end{enumerate}
		\end{lemma}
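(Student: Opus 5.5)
The plan is a block-by-block induction: first show that one window of length $k+1$ is controlled by the previous window, then iterate over consecutive blocks. I read the restriction ``for $i\leq k+1$'' as a misprint for ``for all $i\geq 0$''; the estimate is only meaningful in that form, and it is how \eqref{3.42} uses it.

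\emph{Step 1 (invariance of a level).} Fix $i$ and set $B_i\coloneq A^k_i+\tfrac{8}{7}D$. I claim that $a_m\leq B_i$ for every $m\geq i$. This holds for $m=i,\dots,i+k$ by the definition of $A_i^k$. Suppose it holds up to $m=i+k+j-1$. Then $A^k_{i+j-1}\leq B_i$, and the hypothesis gives
$$a_{i+k+j}\leq \tfrac18 B_i+D\leq B_i.$$
The last inequality is equivalent to $D\leq \tfrac78 B_i$, which is true because $B_i\geq \tfrac87 D$.

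\emph{Step 2 (contraction over one block).} For $j=1,\dots,k+1$, the window $A^k_{i+j-1}$ only involves indices $\geq i$. By Step 1,
$$a_{i+k+j}\leq \tfrac18\bigl(A_i^k+\tfrac87 D\bigr)+D=\tfrac18 A^k_i+\tfrac87 D .$$
Taking the maximum over $j$ gives $A^k_{i+k+1}\leq \tfrac18 A^k_i+\tfrac87 D$.

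\emph{Step 3 (iteration).} Write $i=p(k+1)+r$ with $0\leq r\leq k$, so $p=[\tfrac{i}{k+1}]$. Applying Step 2 $p$ times gives
$$A^k_i\leq 8^{-p}A^k_r+\tfrac87 D\sum_{q\geq 0}8^{-q}\leq 8^{-p}\max\{A^k_0,\dots,A^k_k\}+\tfrac{64}{49}D.$$
For $k\geq 1$ we have $\tfrac{64}{49}\leq\tfrac{8+k}{7}$, so this is the claimed bound. For $k=0$ I would iterate directly instead: $a_{i+1}\leq \tfrac18 a_i+D$ gives the constant $\tfrac{8}{7}=\tfrac{8+k}{7}$. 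The step I expect to need the most care is the bookkeeping of these constants; if the sharp $\tfrac{8+k}{7}$ fails somewhere, any $k$-dependent constant suffices for the application in \eqref{3.42}.

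For part 2, I would repeat Steps 1--3 with $D$ replaced by the index-dependent term $D_i\coloneq C_k\eta^{i+k+1}$. Since $\eta<1$, $D_m\leq D_i$ for $m\geq i$, so Step 1 holds verbatim with $B_i=A^k_i+\tfrac87 D_i$. Step 2 then yields $A^k_{i+k+1}\leq \tfrac18 A^k_i+\tfrac87 D_i$. When iterating, the forcing term at block $q$ is $D_{r+q(k+1)}=\eta^{q(k+1)}D_r$. Using $\eta^{k+1}\geq\tfrac14$, a forcing term from block $q'<q$ is compared with the current one by losing at most a factor $4^{\,q-q'}$. This loss is beaten by the contraction factor $8^{-(q-q')}$, so the resulting series $\sum_s (4/8)^s=2$ converges. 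This produces the extra factor $2$ in the statement, with $D$ understood as the current forcing $C_k\eta^{i+k+1}$ up to the normalization of $C_k$.
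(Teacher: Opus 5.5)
The paper gives no proof of this lemma; it only cites Duan et al. Your block architecture (an invariant level from index $i$ on, a one-block contraction, then iteration over blocks) is the standard route and is sound. The problems are in the constants, and in what part 2 actually proves.

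\textbf{Part 1.} The constant bookkeeping fails exactly where you worried. The inequality $\frac{64}{49}\le\frac{8+k}{7}$ is false for $k=1$, since $\frac97=\frac{63}{49}$. Your iteration delivers $\frac87D\sum_{q=0}^{p-1}8^{-q}=\frac{64}{49}(1-8^{-p})D$, which exceeds $\frac97D$ once $p\ge3$. So the case $k=1$ is not proved as written, and falling back on ``any $k$-dependent constant'' changes the statement.

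The loss comes from the level $A^k_i+\frac87D$ in Step 1, which lets the additive constants pile up from block to block. Use instead $X_i:=\max\{A^k_i,\frac87D\}$. The same induction gives $a_m\le X_i$ for all $m\ge i$, because $\frac18X_i+D\le X_i$. Then for $1\le j\le k+1$,
\[
a_{i+k+j}-\tfrac87D\le\tfrac18X_i+D-\tfrac87D=\tfrac18\bigl(X_i-\tfrac87D\bigr),
\quad\text{i.e.}\quad
\bigl(A^k_{i+k+1}-\tfrac87D\bigr)_+\le\tfrac18\bigl(A^k_i-\tfrac87D\bigr)_+ .
\]
Iterating gives $A^k_i\le 8^{-p}A^k_r+\frac87D$. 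This is at most the claimed bound for every $k\ge0$, and no separate treatment of $k=0$ is needed.

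\textbf{Part 2.} Your iteration is correct, but you misidentify what it proves. The forcing of the last completed block enters through index $i-k-1$ and equals $C_k\eta^{(i-k-1)+k+1}=C_k\eta^{i}$. The forcing from $s$ blocks earlier is at most $4^{s}C_k\eta^{i}$ and enters with weight $8^{-s}$. You therefore obtain
\[
A^k_i\le 8^{-[\frac{i}{k+1}]}\max\{A^k_0,\dots,A^k_k\}+\tfrac{16}{7}C_k\eta^{i}.
\]
Since $\frac{16}{7}\le 2\cdot\frac{8+k}{7}$, this is the statement with $D=C_k\eta^{i}$.

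Replacing this by ``the current forcing $C_k\eta^{i+k+1}$ up to the normalization of $C_k$'' costs a factor $\eta^{-(k+1)}\in(1,4]$. With the stated constant, that version is false. Take $k=1$, $\eta=\frac12$, $a_0=a_1=0$, and equality in the hypothesis. Then $a_2=\frac{C_1}{4}$ and $a_3=\frac{5C_1}{32}$, so $A^1_2=\frac{C_1}{4}$. But $\frac18\max\{A^1_0,A^1_1\}+\frac{18}{7}C_1\eta^{4}=\frac{43}{224}C_1<\frac{C_1}{4}$.

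In the same example $\eta^{-n}a_n\to 2C_1$, so even the exponent $i+k$ fails for large $i$. State part 2 with $C_k\eta^{i}$; your argument proves that version.
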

		The proof of \textbf{Lemma \ref{lemma appendix B 6.1}} is omitted as it appears in the appendix of \cite{Duan R 2019}. Now, we should control $\di\max_{0\leq m\leq 4(n-1)}\lVert h^{m}\rVert_{L^\infty(\Omega\times\mathbb{R}^3)}$. We can use same techniques in the proof of \textbf{Property \ref{property 3.1}} and induction to show that
		\begin{align}\label{3.45}
			\lVert h^{m}\rVert_{L^\infty(\Omega\times\mathbb{R}^3)}&\leq C_{\zeta}\Big(\lVert f_{in}\rVert_{L^\infty_{\alpha,0}(\partial\Omega^-_1)}+\lVert \phi_2\rVert_{L^\infty_{\alpha,0}(\partial\Omega^-_2)}+\kappa\lVert \phi_1\rVert_{L^\infty_{a,-1}(\Omega\times\mathbb{R}^3)}\Big)\\\nonumber
			&\quad+C_{\zeta,\varpi,\eta}\Big(\lVert f_{in}\rVert_{L^2(\partial\Omega^-_1)}+\lVert \phi_2\rVert_{L^2(\partial\Omega^-_1)}+\lVert\phi_1\rVert_{L^2_{0,\gamma/2}(\Omega\times\mathbb{R}^3)}\Big),
		\end{align}
		for $0\leq m\leq 4(n-1)$. Here, $C_{n,\eta},C_{n,\zeta}$ are increasing with respect to $n$. In particular, combining \eqref{3.42} and \eqref{3.45}, we have
		\begin{align}\label{hl is bounded}
			\lVert h^{l}\rVert_{L^\infty(\Omega\times\mathbb{R}^3)}
			&\leq \left(\frac{1}{8}\right)^{[\frac{l}{2n-1}]}C_{\zeta}\Big(\lVert f_{in}\rVert_{L^\infty_{\alpha,0}(\partial\Omega^-_1)}+\lVert \phi_2\rVert_{L^\infty_{\alpha,0}(\partial\Omega^-_2)}+\kappa\lVert \phi_1\rVert_{L^\infty_{a,-1}(\Omega\times\mathbb{R}^3)}\Big)\\\nonumber
			&\qquad+C_{\zeta,\varpi,\eta}\Big(\lVert f_{in}\rVert_{L^\infty_{\alpha,0}(\partial\Omega^-_1)}+\lVert \phi_2\rVert_{L^\infty_{\alpha,0}(\partial\Omega^-_2)}+\lVert \phi_1\rVert_{L^\infty_{a,-1}(\Omega\times\mathbb{R}^3)}\Big),
		\end{align}
		
		since $L^2_{0,\gamma/2}(\Omega\times\mathbb{R}^3)\subset L^\infty_{\alpha,0}(\Omega\times\mathbb{R}^3)$.\par
		\qquad Now, we show that $\{h^{l}\}$ is a Cauchy sequence in $L^\infty(\Omega\times\mathbb{R}^3)$. Consider $2n\ll m_1,m_2$, we notice that $h^{m_1+1}-h^{m_2+1}$ satisfies
		\[\left\{
		\begin{aligned}
			&\varpi\nu(v)\left(h^{m_1+1}-h^{m_2+1}\right)+v\cdot\nabla_x \left(h^{m_1+1}-h^{m_2+1}\right)+\frac{1}{\kappa}L_w\left(h^{m_1+1}-h^{m_2+1}\right)=0,
			&&\quad\text{in}\quad\Omega\times\mathbb{R}^3, \\
			&\left(h^{m_1+1}-h^{m_2+1}\right)=0, &&\quad\text{on}\quad\partial\Omega^{-}_1,\\
			&\left(h^{m_1+1}-h^{m_2+1}\right)=\eta P_w\left(h^{m_1}-h^{m_2}\right), 
			&&\quad\text{on}\quad \partial\Omega^{-}_2.
		\end{aligned}\right.
		\]
		By the same procedure as in \textbf{Property \ref{property 3.1}}, we obtain
		\begin{align}
			\lVert h^{m_1+1}-h^{m_2+1}\rVert_{L^\infty(\Omega\times\mathbb{R}^3)}&\leq\frac{1}{8}\max_{0\leq m\leq 2(n-1)} \lVert h^{m_1-m}-h^{m_2-m}\rVert_{L^\infty(\Omega\times\mathbb{R}^3)}\\\nonumber
			&+C_\zeta\min\{1,\kappa^{-1}\}\max_{0\leq m\leq 2(n-1)} \lVert f^{m_1+1-m}-f^{m_2+1-m}\rVert_{L^2_{0,\gamma/2}(\Omega\times\mathbb{R}^3)}.
		\end{align}
		Note that $\{f_m\}$ converges in $L^2_{0,\gamma/2}$ (see the proof of \textbf{Theorem \ref{theorem 3.1}}). Given $\epsilon>0$, by definition, there exists $M$ such that for any $m_1,m_2> M$, we have 
		\begin{align}
			C_\zeta\max_{0\leq m\leq 2(n-1)} \lVert f^{m_1+1-m}-f^{m_2+1-m}\rVert_{L^2_{0,\gamma/2}(\Omega\times\mathbb{R}^3)}<\frac{\epsilon}{2(2n+7)/7}.
		\end{align}
		we then obtain
		\begin{align}
			\lVert h^{m_1+1}-h^{m_2+1}\rVert_{L^\infty(\Omega\times\mathbb{R}^3)}&\leq\frac{1}{8}\max_{0\leq m\leq 2(n-1)} \lVert h^{m_1-m}-h^{m_2-m}\rVert_{L^\infty(\Omega\times\mathbb{R}^3)}+\epsilon.
		\end{align}
		By \eqref{hl is bounded}, we find that
		\begin{align}
			\lVert h^{m_1}-h^{m_2}\rVert_{L^\infty(\Omega\times\mathbb{R}^3)}&\leq C_{f_{in},\phi_1,\phi_2}C_{n,\eta,\zeta}\left(\frac{1}{8}\right)^{[\frac{\min\{m_1,m_2\}}{2n-1}]}+\epsilon/2.
		\end{align}
		Hence, if $m_1,m_2$ satisfy 
		\begin{align*}
			C_{f_{in},\phi_1,\phi_2}C_{n,\eta,\zeta}\left(\frac{1}{8}\right)^{[\frac{\min\{m_1,m_2\}}{2n-1}]}<\epsilon/2,
		\end{align*}
		then it follows that $\lVert h^{m_1}-h^{m_2}\rVert_{L^\infty(\Omega\times\mathbb{R}^3)}<\epsilon$. From the discussion above, $\{h^{l}\}$ is a Cauchy sequence. Therefore, the limit $\di\lim_{l\to\infty}h^{l}=h^{\varpi,\eta}$ exists for any $\alpha,\eta$. Moreover, $h^{\varpi,\eta}$ satisfies
		\[\left\{
		\begin{aligned}
			\varpi\nu(v)h^{\varpi,\eta}+v\cdot\nabla_x h^{\varpi,\eta}+\frac{1}{\kappa}L_w(h^{\varpi,\eta})&=w\phi_1,&&\quad\text{in}\quad\Omega\times\mathbb{R}^3, \\
			h^{\varpi,\eta}&=wf_{in}, &&\quad\text{on}\quad\partial\Omega^{-}_1,\\
			h^{\varpi,\eta}&=\eta P_w(h^{\varpi,\eta})+w\phi_2, &&\quad\text{on}\quad\partial\Omega^{-}_2.
		\end{aligned}\right.
		\]
		Moreover, we observe that the argument used in \textbf{Property \ref{property 3.1}} also applies to $h^{\varpi,\eta}$ for any fixed $0<\eta<1$. Then we obtain
		\begin{align}\label{eq 4.43}
			\lVert h^{\varpi,\eta}\rVert_{L^\infty(\Omega\times\mathbb{R}^3)}
			&\leq C_{\zeta}\Big(\lVert f_{in}\rVert_{L^\infty_{\alpha,0}(\partial\Omega^-_1)}+\lVert \phi_2\rVert_{L^\infty_{\alpha,0}(\partial\Omega^-_2)}+\kappa\lVert \phi_1\rVert_{L^\infty_{a,-1}(\Omega\times\mathbb{R}^3)}\Big)\\\nonumber
			&\qquad+\frac{C_\zeta}{\kappa}\lVert h^{\varpi,\eta}/w\rVert_{L^2_{0,\gamma/2}(\Omega\times\mathbb{R}^3)}\\\nonumber
			&= C_{\zeta}\Big(\lVert f_{in}\rVert_{L^\infty_{\alpha,0}(\partial\Omega^-_1)}+\lVert \phi_2\rVert_{L^\infty_{\alpha,0}(\partial\Omega^-_2)}+\kappa\lVert \phi_1\rVert_{L^\infty_{a,-1}(\Omega\times\mathbb{R}^3)}\Big)\\\nonumber
			&\qquad+C_\zeta\min\{1,\kappa^{-1}\}\lVert f^{\varpi,\eta}\rVert_{L^2_{0,\gamma/2}(\Omega\times\mathbb{R}^3)}\\\nonumber
			&\leq C_{\zeta}\Big(\lVert f_{in}\rVert_{L^\infty_{\alpha,0}(\partial\Omega^-_1)}+\lVert \phi_2\rVert_{L^\infty_{\alpha,0}(\partial\Omega^-_2)}+\max\{1,\kappa\}\lVert \phi_1\rVert_{L^\infty_{a,-1}(\Omega\times\mathbb{R}^3)}\Big)\\\nonumber
			&\qquad+C_{\zeta,\varpi,\eta}\Big(\lVert f_{in}\rVert_{L^\infty_{\alpha,0}(\partial\Omega^-_1)}+\lVert \phi_2\rVert_{L^\infty_{\alpha,0}(\partial\Omega^-_2)}+\lVert \phi_1\rVert_{L^\infty_{a,-1}(\Omega\times\mathbb{R}^3)}\Big)
		\end{align}
		Let $0<\varpi_1,\varpi_2<1$. We recall that 
		\[\left\{
		\begin{aligned}
			v\cdot\nabla_x \left(h^{\varpi_1,\eta}-h^{\varpi_2,\eta}\right)+\frac{1}{\kappa}L_w\left(h^{\varpi_1,\eta}-h^{\varpi_2,\eta}\right)&=-\varpi_1h^{\varpi_1,\eta}+\varpi_2h^{\varpi_2,\eta},
			&&\quad\text{in}\quad\Omega\times\mathbb{R}^3, \\
			\left(h^{\varpi_1,\eta}-h^{\varpi_2,\eta}\right)&=0, &&\quad\text{on}\quad\partial\Omega^{-}_1,\\
			\left(h^{\varpi_1,\eta}-h^{\varpi_2,\eta}\right)&=\eta P_w\left(h^{\varpi_1,\eta}-h^{\varpi_2,\eta}\right), &&\quad\text{on}\quad\partial\Omega^{-}_2.
		\end{aligned}\right.
		\]
		Regard  $-\varpi_1h^{\varpi_1,\eta}+\varpi_2h^{\varpi_2,\eta}$ as $\phi_1$. By repeating the same process as above, we obtain
		\begin{align}
			&\lVert h^{\varpi_1,\eta}-h^{\varpi_2,\eta}\rVert_{L^\infty(\Omega\times\mathbb{R}^3)}\\[2mm]\nonumber
			&\leq C_{\zeta}\cdot\kappa\cdot\max\{\varpi_1,\varpi_2\}\left(\lVert \langle v \rangle^{-1} h^{\varpi_1,\eta}\rVert_{L^\infty(\Omega\times\mathbb{R}^3)}+\langle v \rangle^{-1} h^{\varpi_2,\eta}\rVert_{L^\infty(\Omega\times\mathbb{R}^3)}\right)\\\nonumber
			&\quad+C_\zeta\min\{1,\kappa^{-1}\}\lVert f^{\varpi_1,\eta}-f^{\varpi_2,\eta}\rVert_{L^2_{0,\gamma/2}(\Omega\times\mathbb{R}^3)}\\\nonumber
			&\leq C_{\zeta}\cdot\kappa\cdot\max\{\varpi_1,\varpi_2\}\left(\lVert f_{in}\rVert_{L^\infty_{\alpha,0}(\partial\Omega^-_1)}+\lVert \phi_2\rVert_{L^\infty_{\alpha,0}(\partial\Omega^-_2)}+\kappa\lVert \phi_1\rVert_{L^\infty_{a,-1}(\Omega\times\mathbb{R}^3)}\right)\\\nonumber
			&\quad+C_\zeta\min\{1,\kappa^{-1}\}\lVert f^{\varpi_1,\eta}-f^{\varpi_2,\eta}\rVert_{L^2_{0,\gamma/2}(\Omega\times\mathbb{R}^3)}.
		\end{align}
		This implies that $\{h^{\varpi,\eta}\}$ is a Cauchy sequence in $L^\infty(\Omega\times\mathbb{R}^3)$. After we denote $h^{\eta}=\di\lim_{\alpha\to 0}h^{\varpi,\eta}$, note that $h^{\eta_1}-h^{\eta_2}$ satisfies 
		\[\left\{
		\begin{aligned}
			&v\cdot\nabla_x \left(h^{\eta_1}-h^{\eta_2}\right)+\frac{1}{\kappa}L_w\left(h^{\eta_1}-h^{\eta_2}\right)=0,
			&&\quad\text{in}\quad\Omega\times\mathbb{R}^3, \\
			&\left(h^{\eta_1}-h^{\eta_2}\right)=0, &&\quad\text{on}\quad\partial\Omega^{-}_1,\\
			&\left(h^{\eta_1}-h^{\eta_2}\right)=\eta_1P_w\left(h^{\eta_1}-h^{\eta_2}\right)+\left(\eta_1-\eta_2\right) P_w\left(h^{\eta_2}\right), &&\quad\text{on}\quad\partial\Omega^{-}_2.
		\end{aligned}\right.
		\]
		We can regard $\left(\eta_1-\eta_2\right) P_w\left(h^{\eta_2}\right)$ as $\phi_2$ and apply the same argument again. We obtain   
		\begin{align}
			\lVert h^{\eta_1}-h^{\eta_2}\rVert_{L^\infty(\Omega\times\mathbb{R}^3)}
			&\leq C_{\zeta}|\eta_1-\eta_2|\cdot\lVert P_w(h^{\eta_2})\rVert_{L^\infty(\partial\Omega^-_2)}+C_\zeta\min\{1,\kappa^{-1}\}\lVert f^{\eta_1}-f^{\eta_2}\rVert_{L^2_{0,\gamma/2}(\Omega\times\mathbb{R}^3)}\\\nonumber
			&\leq C_{\zeta}|\eta_1-\eta_2|\cdot\lVert h^{\eta_2}\rVert_{L^\infty(\partial\Omega^+)}+C_\zeta\min\{1,\kappa^{-1}\}\lVert f^{\eta_1}-f^{\eta_2}\rVert_{L^2_{0,\gamma/2}(\Omega\times\mathbb{R}^3)}\\\nonumber
			&\leq C_{\zeta}|\eta_1-\eta_2|\cdot\left(\lVert f_{in}\rVert_{L^\infty_{\alpha,0}(\partial\Omega^-_1)}+\lVert \phi_2\rVert_{L^\infty_{\alpha,0}(\partial\Omega^-_2)}+\kappa\lVert \phi_1\rVert_{L^\infty_{a,-1}(\Omega\times\mathbb{R}^3)}\right)\\\nonumber
			&+C_\zeta\min\{1,\kappa^{-1}\}\lVert f^{\eta_1}-f^{\eta_2}\rVert_{L^2_{0,\gamma/2}(\Omega\times\mathbb{R}^3)}.
		\end{align}
		The last inequality follows from a similar argument showing that $\lVert h^{\varpi,\eta}\rVert_{L^\infty}$ is bounded with respect to $\varpi,\eta$. Thus $\{h^{\eta}\}_\eta$ is a Cauchy sequence in $L^\infty$ and $h\coloneq\lim_{\eta\to 1}h^{\eta}$ is a mild solution of \eqref{3.1}. The proof of the estimate \eqref{eq 4.6} is similar to the proof of \eqref{eq 4.43}.
	\end{proof}
	\begin{remark}
		the proof for the unseal set is similar to the case of the complete unsealed set. We just take $[\frac{n}{m}]$ sufficiently large in the proof of \textbf{Proposition} \ref{property 3.1} to make $\zeta^{[\frac{n}{m}]}$ is small enough.
	\end{remark}
	\subsection{the proof of \textbf{Theorem \ref{theorem 1.1}}}
	\label{subsection 4.2}
	Now, we conclude this section by proving \textbf{Theorem \ref{theorem 1.1}}.
	\begin{proof}\textbf{the proof of theorem \ref{theorem 1.1}}
		We consider the following iteration with $f_1=0$.
		\begin{align}\label{eq 4.48}
			\left\{
			\begin{aligned}
				v\cdot\nabla_x f_{k+1}+\frac{1}{\kappa}L(f_{k+1})=&\frac{1}{\kappa}\Gamma(f_k,f_k),&&\quad\text{in}\quad \Omega\times\mathbb{R}^3, \\
				f_{k+1}=&f_{in}, &&\quad\text{on}\quad \partial\Omega^{-}_1,\\
				f_{k+1}=&P(f_{k+1})+m^{-1}(m_{T(x)}-m)P(f_k)\\
				&\quad+\frac{1}{\sqrt{2\pi}}m^{-1/2}(m_{T(x)}-m), &&\quad\text{on} \quad\partial\Omega^{-}_2.
			\end{aligned}
			\right.
		\end{align}
		By \textbf{Property \ref{property 3.1}}, we obtain 
		\begin{align}\label{eq 4.47}
			\lVert f_{k+1}\rVert_{L^\infty_{\alpha,0}(\Omega\times\mathbb{R}^3)}+\lVert f_{k+1}\rVert_{L^\infty_{\alpha,0}(\partial\Omega^+_2)}&\leq C_\zeta
			\begin{pmatrix*}[l]
				&\lVert f_{in}\rVert_{L^\infty_{\alpha,0}(\partial\Omega^-_1)}\\
				+&\lVert m^{-1}(m_{T(x)}-m)P(f_k)\rVert_{L^\infty_{\alpha,0}(\partial\Omega^-_2)}\\
				+&\lVert \frac{1}{\sqrt{2\pi}}m^{-1/2}(m_{T(x)}-m)\rVert_{L^\infty_{\alpha,0}(\partial\Omega^-_2)}\\\nonumber
				+&\max\{1,\kappa^{-1}\}\lVert \Gamma(f_k,f_k)\rVert_{L^\infty_{a,-1}(\Omega\times\mathbb{R}^3)}
			\end{pmatrix*}\\
			&\leq C_\zeta
			\begin{pmatrix*}[l]
				&\lVert f_{in}\rVert_{L^\infty_{\alpha,0}(\partial\Omega^-_1)}\\
				+&C\lVert m^{-1/2}(m_{T(x)}-m)\rVert_{L^\infty_{\alpha,0}(\partial\Omega^-_2)}\lVert f_k\rVert_{L^\infty_{\alpha,0}(\partial\Omega^+_2)}\\
				+&\lVert \frac{1}{\sqrt{2\pi}}m^{-1/2}(m_{T(x)}-m)\rVert_{L^\infty_{\alpha,0}(\partial\Omega^-_2)}\\
				+&\max\{1,\kappa^{-1}\}\lVert \Gamma(f_k,f_k)\rVert_{L^\infty_{a,-1}(\Omega\times\mathbb{R}^3)}
			\end{pmatrix*}.
		\end{align} 
		Let $\mathcal{A}=\lVert f_{in}\rVert_{L^\infty_{\alpha,0}(\partial\Omega^-_1)}$ and $\mathcal{B}=\lVert \frac{1}{\sqrt{2\pi}}m^{-1/2}(m_{T(x)}-m)\rVert_{L^\infty_{\alpha,0}(\partial\Omega^-_2)}$. It is well-known that 
		\begin{align}
			\lVert\Gamma(f,f)\rVert_{L^\infty_{\alpha,-1}(\Omega\times\mathbb{R}^3)}\leq C \lVert f\rVert^2_{L^\infty_{a,0}(\Omega\times\mathbb{R}^3)}. 
		\end{align}
		and if $0<\alpha<1/4$ and $T(x)<2$, then 
		\begin{align}
			\lVert \frac{1}{\sqrt{2\pi}}m^{-1/2}(m_{T(x)}-m)\rVert_{L^\infty_{\alpha,0}(\partial\Omega^-_2)}\leq C_{\alpha}\lVert T(x)-1\rVert_{L^\infty(\partial\Omega^-_2)}.
		\end{align}
		We show that there exists $\delta$ such that, if $\mathcal{A}+\mathcal{B}\leq \delta$, then 
		\begin{align}
			\lVert f_{k+1}\rVert_{L^\infty_{\alpha,0}(\Omega\times\mathbb{R}^3)}+\lVert f_{k+1}\rVert_{L^\infty_{\alpha,0}(\partial\Omega^+_2)}\leq 2C_\zeta(\mathcal{A}+\mathcal{B})
		\end{align}
		for all $k\in\mathbb{N}$. The constant $C_\zeta$ is defined in \textbf{Theorem \ref{theorem 4.1}}.\par
		For $k=1$,  
		\begin{align*}
			\lVert f_{k+1}\rVert_{L^\infty_{\alpha,0}(\Omega\times\mathbb{R}^3)}+\lVert f_{k+1}\rVert_{L^\infty_{\alpha,0}(\partial\Omega^+_2)}\leq C_{\zeta}(\mathcal{A}+\mathcal{B})
		\end{align*}
		by \eqref{eq 4.47}. Suppose the statement holds for $k=n$ . For $k=n+1$, 
		\begin{align}\label{eqqq 4.50}
			\lVert f_{k+1}\rVert_{L^\infty_{\alpha,0}(\Omega\times\mathbb{R}^3)}+\lVert f_{k+1}\rVert_{L^\infty_{\alpha,0}(\partial\Omega^+_2)}
			&\leq C_{\zeta}\Big[(\mathcal{A}+\mathcal{B})
			+C\mathcal{B}(\mathcal{A}+\mathcal{B})+4CC_\zeta\max\{1,\kappa^{-1}\}(\mathcal{A}+\mathcal{B})^2\Big]\\\nonumber
			&\leq C_\zeta\Big[1+C\mathcal{B}+4CC_\zeta\max\{1,\kappa^{-1}\}(\mathcal{A}+\mathcal{B})\Big](\mathcal{A}+\mathcal{B}).
		\end{align}
		We choose 
		$$\delta<
		\min\left\{\frac{1}{4C},\frac{1}{16CC_\zeta\max\{1,\kappa^{-1}\}}\right\}.
		$$ 
		Then, the proof is complete. Now, consider $f_{k+1}-f_k$ for $k\in\mathbb{N}$. Following a similar procedure, we obtain
		\begin{align*}
			\lVert f_{k+1}-f_k\rVert_{L^\infty_{\alpha,0}(\Omega\times\mathbb{R}^3)}+\lVert f_{k+1}-f_k\rVert_{L^\infty_{\alpha,0}(\partial\Omega^+_2)}\leq \frac{1}{2}\Big(\lVert f_k-f_{k-1}\rVert_{L^\infty_{\alpha,0}(\Omega\times\mathbb{R}^3)}+\lVert f_k-f_{k-1}\rVert_{L^\infty_{\alpha,0}(\partial\Omega^+_2)}\Big).
		\end{align*}
		This implies that $\{f_k\}$ is a Cauchy sequence in $L^\infty_{\alpha,0}(\Omega\times\mathbb{R}^3)$. Hence, we obtain the solution by taking the limit $f=\lim_{k\to\infty}f_k$.  Uniqueness follows by same argument. Further the upper bound of $f$, by \eqref{eqqq 4.50}, we have
		\begin{align}
			\lVert f\rVert_{L^\infty_{\alpha,0}(\Omega\times\mathbb{R}^3)}+\lVert f\rVert_{L^\infty_{\alpha,0}(\partial\Omega^+_2)}
			\leq C(\mathcal{A}+\mathcal{B}).
		\end{align}
	\end{proof}

	\section{Application to Thermal transpiration and thermomolecular pressure difference}\label{section 5}
	\subsection{the model of the transpiration : the proof of \textbf{Theorem \ref{theorem 1.2}} and \textbf{Theorem \ref{corollary 1.1}}}
	\label{subsection 5.1}
	In this section, we prove the \textbf{Theorem} \ref{theorem 1.2}. Without loss of generality, we assume that $a=0$ and $b=L$. To simplify the calculation and to obtain a more explicit estimate, we prove \textbf{Theorem \ref{theorem 1.2}} when $\Omega$ satisfies
	\begin{align*}
		\Omega\cap\{0\leq x_3\leq L\}&=\{(x_1,x_2,x_3)\in\mathbb{R}^3 |\, x_1^2+x_2^2\leq 1, 0\leq x_3\leq L\}.
	\end{align*} 
	The boundary condition is imposed as follows.
	\begin{align}
		\begin{aligned}
			F(x,v)&=\frac{1}{(2\pi)^{3/2}}e^{-\frac{v^2}{2}},&&\, \text{on}\quad \partial \Omega^-_{1,a}=\partial\Omega^-\cap\{x_3<0\},\\
			F(x,v)&=\frac{1}{(2\pi)^{3/2}T^{5/2}_2}e^{-\frac{v^2}{2T_2}},&&\, \text{on}\quad \partial \Omega^-_{1,b}=\partial\Omega^-\cap\{x_3>L\},\\
			F(x,v)&=m_{T(x)}\int_{v\cdot n(x)>0}F(x,v)v\cdot n(x)dv,&&\, \text{on}\quad \partial \Omega^-_2=\partial\Omega^-\setminus\left(\partial \Omega^-_{1,a}\cup  \partial \Omega^-_{1,b}\right).
		\end{aligned}
	\end{align}
	Without loss of generality,  we assume that $T_2>1$. 
	We rewrite
	\begin{align}\label{eq 5.2}
		F(x,v)=m_{T_q}(x,v)+f(x,v)=\frac{1}{(2\pi)^{3/2}T^2(q(x,v))}e^{-\frac{|v|^2}{2T(q(x,v))}}+f(x,v).
	\end{align}
	Recall that $m_{T_q}(x,v)$ is a solution of 
	\begin{align}
		\left\{
		\begin{aligned}
			v\cdot\nabla_x F&=0,&&\text{in}\quad \Omega\times\mathbb{R}^3, \\
			F(x,v)&=\frac{1}{(2\pi)^{3/2}}e^{-\frac{|v|^2}{2}},&& \text{on}\quad \partial \Omega^-_{1,a}=\partial\Omega^-_1\cap\{x_3<a\},\\
			F(x,v)&=\frac{1}{(2\pi)^{3/2}T^2_2}e^{-\frac{|v|^2}{2T_2}},&&\text{on}\quad \partial \Omega^-_{1,b}=\partial\Omega^-_1\cap\{x_3>b\},\\
			F(x,v)&=m_T(x,v)\int_{v\cdot n(x)>0}F(x,v)v\cdot n(x)dv,&&\text{on}\quad \partial \Omega^-_2=\partial \Omega^-\setminus\left(\partial \Omega^-_{1,a}\cup \partial \Omega^-_{1,b}\right).
		\end{aligned}
		\right.
	\end{align}
	The proof appears in the \cite{Sone Y}. Now, the equation \eqref{eq 1.1} becomes
	\begin{align}
		\left\{
		\begin{aligned}\label{eq 5.5}
			v\cdot\nabla_x f&=\frac{2}{\kappa}Q(m_{T_q},f)+\frac{1}{\kappa}Q(f,f)+\frac{1}{\kappa}Q(m_{T_q},m_{T_q}),&&\quad\text{in}\quad \Omega\times\mathbb{R}^3, \\
			f&=0, &&\quad\text{on}\quad\partial \Omega^-_{1,a},\\
			f&=\frac{1}{\sqrt{2\pi}}\left(\frac{1}{\sqrt{T_2}}-1\right)\frac{1}{2\pi T_2^2}e^{-\frac{|v|^2}{2T_2}}\coloneq -C_{T_2}m_{T_2}, &&\quad\text{on} \quad\partial \Omega^-_{1,b},\\
			f&=\frac{1}{(2\pi)T^2(x)}e^{-\frac{|v|^2}{2T(x)}}\int_{v\cdot n(x)>0}f(x,v)v\cdot n(x)dv,&&\quad\text{on}\quad\partial \Omega^-_2.
		\end{aligned}
		\right.
	\end{align}
	Note that $C_{T_2}>0$. Recall that the local flux $u(x)$ and the total flux $U(z)$ in the $x_3$-direction are defined as
	\begin{align}
		u(x)\coloneq\int_{\mathbb{R}^3}v_3F(x,v)dv,\quad U(z)\coloneq \int_{\{x_1^2+x_2^2\leq 1, x_3=z\}}u(x) dS.
	\end{align}
	Now, we establish a lemma concerning $\Omega$ and $\partial \Omega_{1,a} \cup \partial \Omega_{1,b}$.
	\begin{lemma}\label{lemma 5.1}
		$\Omega$ is completely unsealed with respect to $\partial \Omega_{1,a} \cup \partial \Omega_{1,b}$.
	\end{lemma}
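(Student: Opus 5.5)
We need to show that, with $m=1$ in Definition \ref{definition 4.1}, there is $\zeta<1$ such that for every $x\in\partial\Omega_2$ (the lateral cylinder wall $\{x_1^2+x_2^2=1,\ 0\le x_3\le L\}$),
\begin{align*}
\int_{v_1\cdot n(x)>0}\mathbf{1}_{\partial\Omega\setminus(\partial\Omega_{1,a}\cup\partial\Omega_{1,b})}(q(x,v_1))\,m(v_1)\,v_1\cdot n(x)\,dv_1\le\zeta .
\end{align*}
The diffuse measure $m(v_1)|v_1\cdot n(x)|dv_1$ is normalized to total mass $1$ on the half-space, up to the constant fixed by the convention for $m$. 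Equivalently, we must show that the mass of directions whose backward characteristic from $x$ lands outside the lateral wall is bounded below uniformly in $x$. So the plan is to exhibit, for each such $x$, a fixed-size cone of directions escaping through the ends, i.e. hitting $\{x_3<0\}$ or $\{x_3>L\}$.

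I will work in cylindrical geometry. Take $x=(\cos\theta,\sin\theta,z)$ with $z\in[0,L]$. Then $n(x)=(\cos\theta,\sin\theta,0)$, and the relevant directions are $-v_1$ with $v_1\cdot n>0$, pointing into the cylinder. A ray from $x$ in direction $-v_1$ meets the wall again at horizontal distance $2(v_{1}\cdot n)/|v_{1,h}|^2\cdot|v_{1,h}|$, where $v_{1,h}$ is the horizontal part. The travel time is therefore $t=2(v_1\cdot n)/|v_{1,h}|^2$, and the vertical displacement is $t\,|v_{1,3}|$. The ray leaves the finite cylinder through an end provided $t|v_{1,3}|>\max\{z,L-z\}$, which holds whenever $t|v_{1,3}|>L$. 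I will use only the choice of sign of $v_{1,3}$ that sends the ray toward the farther end, which is at distance at least $L/2$ and at most $L$. Once the ray crosses $x_3=0$ or $x_3=L$ inside the convex region, its first boundary point $q(x,v_1)$ lies in $\partial\Omega_{1,a}\cup\partial\Omega_{1,b}$. This uses convexity of $\Omega\cap\{0\le x_3\le L\}$ together with the assumption that $\partial\Omega_{1,a}$ and $\partial\Omega_{1,b}$ comprise all of $\partial\Omega$ beyond the slab.

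Concretely, I will fix the set
\begin{align*}
V=\{v_1:\ 1\le v_1\cdot n\le 2,\ |v_1\cdot e_\theta|\le 1,\ |v_{1,3}|\ge 2L+2\},
\end{align*}
where $e_\theta$ is the tangential horizontal direction. On $V$ we have $|v_{1,h}|^2\le 5$, so $t\ge 2/5$. The vertical displacement is then $\ge (2/5)(2L+2)>L$, at least for the half of $V$ with the favorable sign of $v_{1,3}$. The weighted measure of that half is a positive constant $c_L$ independent of $\theta$ and $z$, by rotational invariance of $m$ and because the constraints are stated in the local frame. Hence $\zeta=1-c_L<1$. For the top and bottom discs and the boundary beyond the slab, nothing needs proving, since those points belong to $\partial\Omega_1$ and Definition \ref{definition 4.1} only concerns $x\in\partial\Omega\setminus\partial\Omega_1$.

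The main obstacle is justifying that an escaping ray indeed first hits $\partial\Omega_1$ and not some other part of $\partial\Omega_2$ in the general domain. With the explicit cylinder section this is immediate: inside the slab the only boundary is the lateral wall, and the ray leaves the slab before returning to it. For the general statement I would appeal to convexity of $\Omega\cap\{a\le x_3\le b\}$, which guarantees the segment from $x$ to the exit height stays inside, exactly as noted in the remark after Theorem \ref{corollary 1.1}.
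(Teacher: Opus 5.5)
Your strategy works, but one inequality is false as written. On $V$ you only get $t|v_{1,3}|\ge \tfrac{2}{5}(2L+2)=\tfrac{4L+4}{5}$, and this exceeds $L$ only when $L<4$. Raise the threshold, for example to $|v_{1,3}|\ge 3L$; then $t|v_{1,3}|\ge \tfrac{6L}{5}>L$. Once $t|v_{1,3}|>L$, the backward ray exits through whichever end it is heading toward. So both signs of $v_{1,3}$ escape, and you need not discard half of $V$. With this correction the argument is complete for the cylinder. The backward ray stays in the open cylinder until it crosses $\{x_3=0\}$ or $\{x_3=L\}$. Since $x_3$ is monotone along the ray, $q(x,v_1)$ lies in $\partial\Omega_{1,a}\cup\partial\Omega_{1,b}$.

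Your route differs from the paper's. The paper writes the non-escaping mass at a wall point as an explicit, purely angular integral $\Theta(x_3)$; the radial factor of the Maxwellian integrates out. It states without proof that $\Theta$ is continuous on $[0,L]$. It notes that $\Theta(z)<1$ at each height because the escape cones through the ends are never empty, and then gets $\sup\Theta<1$ by compactness. You instead use a single box of escaping velocities whose weighted mass is the same at every wall point by rotational symmetry. This gives an explicit $\zeta$ and avoids the unproved continuity step. The paper's explicit angular description of the escape sets, in turn, is what it reuses to compute the flux terms $A_1$ and $A_2$.

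Two further remarks. First, the paper measures with $m_T$ (Remark \ref{remark 4.1}), which is the version used in Section \ref{section 5}. Since escape depends only on the direction of $v_1$, replace $V$ by the cone $\{\lambda v:\ v\in V,\ \lambda>0\}$; your lower bound then becomes independent of $T$. Second, your closing remark about general domains needs more than convexity. For a general convex slab as in Lemma \ref{lemma 5.2}, convexity gives escape but not uniformity, which in your argument came from the symmetry of the cylinder. There one needs a continuity/compactness argument using the far end.
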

	\begin{proof}
		For any fixed $x\in\overline{\Omega\cap\{0\leq x_3\leq L\}}$, we classify $v\in\mathbb{R}^3$ as follows.
		\begin{align}
			A_{1}&=\left\{(v_1,v_2,v_3)\in\mathbb{R}^3\, \Big | \,(q(x,v),v)\in \partial\Omega_{1,a}^-\right\},\\\nonumber
			A_{2}&=\left\{(v_1,v_2,v_3)\in\mathbb{R}^3\, \Big| \, (q(x,v),v)\in \partial\Omega_{1,b}^-\right\},\\\nonumber
			A_{3}&=\left\{(v_1,v_2,v_3)\in\mathbb{R}^3\, \Big| \, (q(x,v),v)\in \partial\Omega_{2}^-\right\},
		\end{align}
		Let $v=(v_1,v_2,v_3)=(\rho\cos\phi\sin\theta,\rho\sin\phi\sin\theta,\rho\cos\theta)$ and $\hat{v}=(v_1,v_2),\, \hat{x}=(x_1,x_2)$. Then the sets $A_i$ can be written more explicitly as follow.
		\begin{align}\label{eq 5.7 5.8 5.9}
			A_{1}&=\left\{(v_1,v_2,v_3)\in\mathbb{R}^3\, \Big | \, \frac{x_3}{\sqrt{x_3^2+r^2}}<\cos\theta\leq 1\right\},\\\nonumber
			A_{2}&=\left\{(v_1,v_2,v_3)\in\mathbb{R}^3\, \Big| \, -1\leq\cos\theta< \frac{x_3-L}{\sqrt{(x_3-L)^2+r^2}}\right\},\\\nonumber
			A_{3}&=\left\{(v_1,v_2,v_3)\in\mathbb{R}^3\, \Big| \, \frac{x_3-L}{\sqrt{(x_3-L)^2+r^2}}\leq \cos\theta\leq \frac{x_3}{\sqrt{x_3^2+r^2}}\right\},
		\end{align}
		where 
		\begin{align*}
			r\coloneq h+\sqrt{h^2+k^2},\quad
			h\coloneq \hat{x}\cdot\frac{\hat{v}}{|\hat{v}|},\quad
			k\coloneq \sqrt{1-|\hat{x}|^2}.
		\end{align*}
		Recalling \textbf{Remark} \ref{remark 4.1}, we now consider the case $x\in \partial\Omega_{2}^-$. Without loss of generality, we assume that $(x_1,x_2)=(1,0)$. Then,
		\begin{align*}
			\int_{v\cdot n(x)>0}\textbf{1}_{\partial \Omega_2}\circ q(x,v)m_T(x,v)v\cdot n(x)dv&=2\int_{-\pi/2}^{\pi/2}\int_{\cos^{-1}\left(\frac{x_3}{\sqrt{x_3^2+4\sin^2\phi}}\right)}^{\cos^{-1}\left(\frac{x_3-L}{\sqrt{(x_3-L)^2+4\sin^2\phi}}\right)}\cos\phi\sin^2\theta d\theta d\phi\\
			&\coloneq \Theta(x_3).
		\end{align*}
		Observe that $0\leq \Theta(x_3)\leq 1$ and $\Theta(x_3)$ is a continuous  on $[0,L]$. Therefore, there exists $z_0\in[0,L]$ such that $\di\max_{[0,L]}\Theta(x_3)=\Theta(z_0)$. However, $\Theta(z_0)$ is equal to $1$ if and only if
		\begin{align}
			\cos^{-1}\left(\frac{z_0-L}{\sqrt{(z_0-L)^2+4\sin^2\phi}}\right)=\pi\quad \text{and}\quad \cos^{-1}\left(\frac{z_0}{\sqrt{z_0^2+4\sin^2\phi}}\right)=0
		\end{align}  
		for all $\phi$. This is impossible. Hence, 
		\begin{align*}
			\int_{v\cdot n(x)>0}\textbf{1}_{\partial \Omega_2}\circ q(x,v)m_T(x,v)v\cdot n(x)dv&\leq\Theta(z_0)<1\quad \text{for all}\, x\in\partial \Omega_2.
		\end{align*}	
	\end{proof}
	Note that if $T_2$ is sufficiently close to $1$ and $\Omega$ has $C^2$ boundary, then \textbf{Theorem \ref{theorem 1.1}} guarantees the existence of $F$ in $L^\infty_{0,\beta}(\Omega\times\mathbb{R}^3)$.
	\begin{proof}
		\textbf{The proof of theorem \ref{theorem 1.2} for the pipe case.}\\
		Note that the $f$ in \ref{eq 5.5} has the form
		\begin{align}\label{transpiration of solution}
			f(x,v)
			&=-\textbf{1}_{\partial \Omega^-_{1,b}}(q(x,v))C_{T_2}m_{T_2}\\\nonumber
			&\quad+\textbf{1}_{\partial \Omega^-_2}( q(x,v))m_T(q(x,v),v)\int_{v\cdot n(x)>0}f(q(x,v),v)v\cdot n(x)dv\\\nonumber
			&\quad+\frac{1}{\kappa}\left(\int_{0}^{\tau_{-}(x,v)}\left[2Q(m_{T_q},f)+Q(f,f)+Q(m_{T_q},m_{T_q})\right](x-sv,v)ds\right)\\\nonumber
			&\coloneq\textbf{1}_{\partial \Omega^-_{1,b}}g(x,v)+  J^T_{\partial \Omega^-_2}\overline{P}(f)(x,v)+\frac{1}{\kappa}h[f](x,v).
		\end{align}
		Observe that
		\begin{align*}
			\int_{\mathbb{R}^3}v_3m_{T_q}dv=\int_{0}^{\infty}\int_{0}^{2\pi}\int_{0}^{\pi}\frac{e^{-\frac{\rho^2}{2T(\theta,\phi)}}}{2\pi T^2(\theta,\phi)}\rho^3\cos\theta\sin\theta d\theta d\phi d\rho=0.
		\end{align*}
		We now use a procedure similar to the one in \textbf{section \ref{section 4}} . When tracing back $n+1$ time for $n\geq 1$, we obtain
		\begin{align}\label{eq 5.12}
			\int_{\mathbb{R}^3}v_3F(x,v)dv
			&=\int_{\mathbb{R}^3}v_3f(x,v)dv\\\nonumber
			&=\int_{\mathbb{R}^3}v_3\textbf{1}_{\partial \Omega^-_{1,b}}gdv+\frac{1}{\kappa}\int_{\mathbb{R}^3}v_3h[f]dv+\int_{\mathbb{R}^3}v_3  J^T_{\partial \Omega^-_2}\overline{P}(f)dv\\\nonumber
			&=\int_{\mathbb{R}^3}v_3\textbf{1}_{\partial \Omega^-_{1,b}}g dv+\int_{\mathbb{R}^3}v_3  J^T_{\partial \Omega^-_2}\overline{P}(\textbf{1}_{\partial \Omega^-_{1,b}}g)dv\\\nonumber
			&\quad+\sum_{j=1}^{n}\int_{\mathbb{R}^3}v_3  J^T_{\partial \Omega^-_2}\hat{P}^j\circ\overline{P}(\textbf{1}_{\partial \Omega^-_{1,b}}g)+\frac{1}{\kappa}\int_{\mathbb{R}^3}v_3h[f]dv\\\nonumber
			&\quad+\frac{1}{\kappa}\int_{\mathbb{R}^3} J^T_{\partial \Omega^-_2}\overline{P}(h[f])dv+\frac{1}{\kappa}\sum_{j=1}^{n}\int_{\mathbb{R}^3} J^T_{\partial \Omega^-_2}\hat{P}^j\circ\overline{P}(h[f])dv\\\nonumber
			&\quad+\int_{\mathbb{R}^3}v_3 J^T_{\partial \Omega^-_2}\hat{P}^{n+1}\circ\overline{P}(f)dv\\\nonumber
			&\coloneq A_1+A_2+\sum_{j=1}^{n}B_j+\frac{1}{\kappa}\Big(C_1+C_2\Big)+D.
		\end{align} 
		The definitions of $\overline{P}(f)$, $\hat{P}^j(f)$ and $ J^T_{\partial A}$ are as follows.
		\begin{align*}
			J^T_{A}(f)(x,v)
			&\coloneq \textbf{1}_{A}( q(x,v))m_T(q(x,v),v)f(q(x,v)),\\
			\overline{P}(f)(x,v)
			&\coloneq\Big(\int_{v_0\cdot n(x)>0}f(x,v_0)v_0\cdot n(x)dv_0\Big)(x,v),\\
			\hat{P}^j(f)(x,v)
			&\coloneq\Big(\int\cdots\int fd\sigma^T_{j-1,j}d\sigma^T_{j-2,j-1}\cdots d\sigma^T_{0,1}\Big)(x,v),\\
			\Big(\displaystyle\int fd\sigma^{T}_{j-1,j}\Big)(x_{j-1}, v_{j-1})
			&\coloneqq\displaystyle\int_{v_{j}\cdot n(x_{j-1})>0} J^T_{\partial \Omega^-_2}(f)(x_{j-1},v_j)v_{j}\cdot n(x_{j-1})dv_{j}.
		\end{align*}
		Here $x_0=q(x,v)$, $v_0=v$, and $x_{i}=q(x_{i-1},v_i)$  for all $i\in\mathbb{N}$.\par 
		Now, we estimate each term in \eqref{eq 5.12}.
		The definition of $A_1$ to $D$ appearing in the last line of \eqref{eq 5.12} are:
		\begin{align}
			A_1&=\int_{\mathbb{R}^3}v_3\textbf{1}_{\partial \Omega^-_{1,b}}gdv,\\
			A_2&=\int_{\mathbb{R}^3}v_3  J^T_{\partial \Omega^-_2}\overline{P}(\textbf{1}_{\partial \Omega^-_{1,b}}g)dv,\\
			B_j&=\int_{\mathbb{R}^3}v_3  J^T_{\partial \Omega^-_2}\hat{P}^j\circ\overline{P}(\textbf{1}_{\partial \Omega^-_{1,b}}g),\\
			C_1&=\int_{\mathbb{R}^3}v_3h[f]dv,\\
			C_2&=\int_{\mathbb{R}^3} v_3J^T_{\partial \Omega^-_2}\overline{P}(h[f])dv+\sum_{j=1}^{n}\int_{\mathbb{R}^3} v_3J^T_{\partial \Omega^-_2}\hat{P}^j\circ\overline{P}(h[f])dv,\\
			D&=\int_{\mathbb{R}^3}v_3 J^T_{\partial \Omega^-_2}\hat{P}^{n+1}\circ\overline{P}(f)dv.
		\end{align} 
		For $A_1$, using \eqref{eq 5.7 5.8 5.9}. we obtain 
		\begin{align*}
			A_1&=-\int_{0}^{\infty}\int_{0}^{2\pi}\int_{\cos^{-1}\left(\frac{x_3-L}{\sqrt{(x_3-L)^2+r^2}}\right)}^{\pi}C_{T_2}\frac{e^{-\frac{\rho^2}{2T_2}}}{2\pi T_2}\rho^3\cos\theta\sin\theta d\theta d\phi d\rho\\
			&=C\left(\int_{0}^{2\pi}\frac{r^2}{(x_3-L)^2+r^2}d\phi\right)\left(1-\frac{1}{\sqrt{T_2}}\right)\\
			&>0.
		\end{align*}
		For $A_2$, we observe that
		\begin{align*}
			\overline{P}(\textbf{1}_{\partial \Omega^-_{1,b}}g)(x)&=\int_{v_0\cdot n(x)>0}\textbf{1}_{\partial \Omega^-_{1,b}}gv_0\cdot n(x)dv_0,\\
			&=2C\int_{-\pi/2}^{\pi/2}\int^{\pi}_{\cos^{-1}\left(\frac{x_3-L}{\sqrt{(x_3-L)^2+4\sin^2\phi}}\right)}\cos\phi\sin^2\theta d\theta d\phi\\
			&\coloneq-\Theta(x_3)\left(1-\frac{1}{\sqrt{T_2}}\right).
		\end{align*}
		Let $v=(\rho\cos\phi\sin\theta,\rho\sin\phi\sin\theta,\rho\cos\theta)$. Then, we have
		\begin{align*}
			A_2&=\frac{1}{T_2^2}\left(1-\frac{1}{\sqrt{T_2}}\right)\int_{0}^{\infty}\int_{0}^{2\pi}\int_{\cos^{-1}\left(\frac{x_3-L}{\sqrt{(x_3-L)^2+r^2}}\right)}^{\cos^{-1}\left(\frac{x_3}{\sqrt{x_3^2+r^2}}\right)}m_T\Theta(q(x,v))\rho^3\cos\theta\sin\theta d\theta d\phi d\rho\\
			&\coloneq \overline{\Theta}_1(x)\left(1-\frac{1}{\sqrt{T_2}}\right).
		\end{align*}
		Similarly, there exists $\overline{\Theta}_{2,n}(x)$ such that
		\begin{align*}
			\sum_{j=1}^{n}B_j=\overline{\Theta}_{2,n}(x)\left(1-\frac{1}{\sqrt{T_2}}\right).
		\end{align*}
		The relation between the explicit forms of $\Theta(x)$, $\overline{\Theta}_1(x)$ and $\overline{\Theta}_2(x)$ is given by 
		\begin{align}
			\Theta(x)&=2\int_{-\pi/2}^{\pi/2}\int^{\pi}_{\cos^{-1}\left(\frac{x_3-L}{\sqrt{(x_3-L)^2+4\sin^2\phi}}\right)}\cos\phi\sin^2\theta d\theta d\phi,\\
			\overline{\Theta}_1(x)&=\int_{\mathbb{R}^3} v_3J^T_{\partial \Omega^-_2}\Theta dv\\\nonumber
			&=\int_{0}^{\infty}\int_{0}^{2\pi}\int_{\cos^{-1}\left(\frac{x_3-L}{\sqrt{(x_3-L)^2+r^2}}\right)}^{\cos^{-1}\left(\frac{x_3}{\sqrt{x_3^2+r^2}}\right)}m_T\Theta(q(x,v))\rho^3\cos\theta\sin\theta d\theta d\phi d\rho,\\
			\overline{\Theta}_2(x)&=\sum_{j=1}^{n}\int_{\mathbb{R}^3}v_3  J^T_{\partial \Omega^-_2}\hat{P}^j(\Theta(x)).\\\nonumber
			&=\sum_{j=1}^{n}\int_{0}^{\infty}\int_{0}^{2\pi}\int_{\cos^{-1}\left(\frac{x_3-L}{\sqrt{(x_3-L)^2+r^2}}\right)}^{\cos^{-1}\left(\frac{x_3}{\sqrt{x_3^2+r^2}}\right)}m_T\hat{P}^j(\Theta)(q(x,v))\rho^3\cos\theta\sin\theta d\theta d\phi d\rho,
		\end{align}
		where $v=(\rho\cos\phi\sin\theta,\rho\sin\phi\sin\theta,\rho\cos\theta)$. Since $\Omega$ is completely unsealed with respect to $\partial \Omega_{1,a} \cup \partial \Omega_{1,b}$ and applying \textbf{Proposition \ref{proposition 2.9}} , we obtain
		\begin{align*}
			\frac{1}{\kappa}\Big(|C_1|+|C_2|\Big)\leq \frac{C_\zeta}{\kappa}\lVert f\rVert^2_{L^\infty_{0,\beta}(\Omega\times\mathbb{R}^3)},\quad |D|\leq C\zeta^{n+1}\lVert f\rVert_{L^\infty_{0,\beta}(\Omega\times\mathbb{R}^3)}.
		\end{align*}
		Observe that $\overline{\Theta}_1(x)$ and $\overline{\Theta}_{2,n}(x)$ are positive for any $x\in\{x_3=0\}$. After combining all estimates and taking $n$ and $\kappa$ sufficiently large, there exists a constant $C$ such that
		\begin{align}
			U(0)=\int_{\{x_1^2+x_2^2\leq 1, x_3=0\}}u(x) dS\geq C\left(1-\frac{1}{\sqrt{T_2}}\right).
		\end{align} 
		Then, we finish the proof by invoking the conservation law of mass.
	\end{proof}
	For the general case, the terms $A_1, A_2$ and $B_j$ are very difficult to write down in explicit form, since \eqref{eq 5.7 5.8 5.9} depends on $\partial\Omega \cap \{x_3=0\}$ and $\partial\Omega \cap \{x_3=L\}$. However, we can still prove that $\Omega$ is a completely unsealed set with respect to $\partial \Omega_{1,a} \cup \partial \Omega_{1,b}$, and the proof of the general case proceed in the same way as before.
	\begin{lemma}\label{lemma 5.2}
		$\Omega$ is completely unsealed set with respect to 
		$\partial \Omega_{1,a} \cup \partial \Omega_{1,b}$	
	\end{lemma}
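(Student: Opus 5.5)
We verify Definition \ref{definition 4.1} with $m=1$ by bounding from below, uniformly in $x\in\partial\Omega_2$, the mass of outgoing directions whose backward characteristic lands on $\partial\Omega_{1,a}\cup\partial\Omega_{1,b}$. For $x\in\overline{\partial\Omega_2}$ we set
\begin{align*}
G(x)\coloneq\int_{v\cdot n(x)>0}\textbf{1}_{\partial\Omega_{1,a}\cup\partial\Omega_{1,b}}(q(x,v))\,m_T(x,v)\,v\cdot n(x)\,dv .
\end{align*}
Since $\int_{v\cdot n>0}m_T\,v\cdot n\,dv=1$, the quantity in \eqref{1.3} equals $1-G(x)$. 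It therefore suffices to show $\inf_{x\in\overline{\partial\Omega_2}}G(x)>0$ and take $\zeta=1-\inf G$. By Remark \ref{remark 4.1} we may use $m_T$ in place of $m$. Because $0<T_{\min}\le T(x)\le T_{\max}$ on $\partial\Omega_2$, the density $m_T(x,\cdot)$ is bounded below by a positive constant, uniform in $x$, on any compact velocity set. Hence it is enough to show that the set of good directions contains a set of unit vectors of solid angle bounded below, uniformly in $x$.

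\textbf{Step 1: geometric pointwise claim.} Fix $x\in\overline{\partial\Omega_2}\subset\{a\le x_3\le b\}$. The backward characteristic is traced along $-v$, and the direction $-v$ points into $\Omega$. Since $\Omega\cap\{a\le x_3\le b\}$ is convex and has nonempty interior near the level $x_3=a$, we can choose a point $p$ in the interior of $\Omega$ with $p_3<a$ and $p$ close to the slice $\{x_3=a\}$. The open segment from $x$ to the interior of the slice lies in the convex middle part. Hence the straight ray from $x$ in direction $e=(p-x)/|p-x|$ stays in $\Omega$ until it crosses $\{x_3=a\}$ at an interior point, and it continues downward afterwards. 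Once the ray is in $\{x_3<a\}$ with $e_3<0$, its $x_3$-coordinate keeps decreasing. Its first boundary hit therefore has $x_3<a$, so $q(x,-e)\in\partial\Omega_{1,a}$.

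In the edge case $x_3=a$, I would instead take any inward direction with negative third component. Such directions exist because the inward half-space and $\{e_3<0\}$ always meet in an open cone. The same monotonicity argument applies. Both conditions (a transversal crossing of the slice at an interior point, or staying strictly below $a$) are open in $(x,e)$. Hence for each $x$ there is an open cone of good directions, and $G(x)>0$.

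\textbf{Step 2: uniformity.} The set $\mathcal{U}=\{(x,e): \text{the ray from } x \text{ in direction } e \text{ crosses } \{x_3=a\} \text{ transversally at an interior point of }\Omega \text{ before leaving the middle part}\}$, taken together with its edge analogue, is open in $\overline{\partial\Omega_2}\times\mathbb{S}^2$. Then $x\mapsto G(x)$ dominates $x\mapsto\int \textbf{1}_{\mathcal{U}}(x,\hat v)\,m_{T_{\min}/T_{\max}\text{-bound}}\,(v\cdot n)_+\,dv$. The latter is lower semicontinuous by Fatou's lemma, because the indicator of an open set is lsc. (The factor $(v\cdot n(x))_+$ is continuous, since $\partial\Omega$ is $C^1$.) A positive lsc function on the compact set $\overline{\partial\Omega_2}$ attains a positive minimum. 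This yields $\zeta<1$.

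The main obstacle is Step 1 near the corner curves $\overline{\partial\Omega_1}\cap\overline{\partial\Omega_2}$ and near points where the normal is nearly vertical. There one must check that the good cone does not degenerate, so that the lower semicontinuity argument genuinely applies up to the closure. I would handle this by always aiming rays at a fixed small ball $B\subset\Omega\cap\{x_3<a\}$ placed adjacent to the slice, chosen via convexity so that every segment from $\overline{\partial\Omega_2}$ to $B$ lies in $\overline\Omega$ and touches $\partial\Omega$ only at $x$. The solid angle subtended by $B$ is then bounded below by $c\,\mathrm{diam}(\Omega)^{-2}$, uniformly in $x$.
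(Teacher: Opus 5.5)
Your Step 1 is correct for points with $x_3>a$. The open segment from $x$ to a point $p\in\Omega\cap\{x_3=a\}$ lies in the interior of the convex middle part, and after crossing the slice $x_3$ decreases monotonically, so $q(x,-e)\in\partial\Omega_{1,a}$.

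The gap is that you only ever aim at the lower opening. Your edge-case claim, that $\{e\cdot n(x)<0\}\cap\{e_3<0\}$ is always a nonempty open cone, is false when $n(x)=-e_3$. This can happen under the lemma's hypotheses, since only $\Omega\cap\{a\le x_3\le b\}$ is assumed convex.

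For example, let the middle part be a rounded convex cylinder $\{|x'|<2,\ a<x_3<b\}$. Its bottom face $\{x_3=a\}$ belongs to $\partial\Omega$ except for the disk $\{|x'|<1\}$, through which $\Omega$ continues downward; the $C^1$ fillet sits in $\{x_3<a\}$.
\begin{itemize}
\item At a floor point with $x_3=a$ and $1<|x'|<2$, every inward direction has $e_3>0$. So no backward characteristic reaches $\partial\Omega_{1,a}$.
\item At points of $\partial\Omega_2$ slightly above the floor, the cone aimed at the hole has solid angle tending to zero.
\end{itemize}
Hence your lower semicontinuous minorant vanishes somewhere on the compact set $\overline{\partial\Omega_2}$, and compactness yields no $\zeta<1$.

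The proposed remedy, a fixed ball $B\subset\Omega\cap\{x_3<a\}$ reached by segments inside $\overline{\Omega}$, fails for the same reason. From a floor point every segment toward $\{x_3<a\}$ leaves $\overline{\Omega}$ immediately. In any case, convexity gives no control over segments in $\{x_3<a\}$, where nothing is assumed.

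The missing idea, which is what the paper does, is to use both openings and always aim at the far one. The paper fixes small disks $\Pi$ in the relative interiors of $\Omega\cap\{x_3=a\}$ and $\Omega\cap\{x_3=b\}$.
\begin{itemize}
\item For $x$ in the upper half $\{x_3\ge (a+b)/2\}$, including the top slice, it counts directions whose backward ray passes through the bottom disk.
\item For the lower half, including the bottom slice, it uses the top disk.
\end{itemize}
The target is then at vertical distance at least $(b-a)/2$. The open segment from $x$ to the disk lies in the interior of the convex middle part, and the crossing is transversal. The good cone, weighted by $v\cdot n(x)$, is bounded below uniformly in $x$; the paper phrases this as continuity of $W_1$ and $W_2$ on the respective halves plus compactness. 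After the crossing the ray moves monotonically into $\{x_3<a\}$ or $\{x_3>b\}$ and lands on $\partial\Omega_{1,a}$ or $\partial\Omega_{1,b}$. With this change, your openness, lower semicontinuity and compactness argument goes through.
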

	\begin{proof}
		Since $\Omega$ is an open set, there exist $\rho>0$ and $a_1\in \{x_3=0\}, a_2\in\{x_3=L\}$ such that 
		\begin{align}
			B(a_1,\rho)\cap \{x_3=0\}\subset\Omega\cap\{x_3=0\}\quad \text{and}\quad B(a_2,\rho)\cap \{x_3=L\}\subset\Omega\cap\{x_3=L\}.
		\end{align}
		Then we define
		\begin{align}
			\Pi\coloneq B(a_1,\rho)\cap \{x_3=0\}\cup B(a_2,\rho)\cap \{x_3=L\}.
		\end{align}
		We show that $\Omega\cap\{0\leq x_3 \leq L\}$ is completely unsealed with respect to $\Pi$. We can then deduce the lemma from this conclusion.
		For any fixed $x\in\Xi\coloneq\partial(\Omega\cap\{0< x_3 < L\})$, without loss of generality, we may assume that $a_1=(0,0,0)$,  $x=(x_1,x_2,x_3)$, and $a_2=(a_{2,1},a_{2,2},L)$. Then we classify $v\in\mathbb{R}^3$ as follows.
		\begin{align*}
			A_{1}&=\left\{(v_1,v_2,v_3)\in\mathbb{R}^3\, \Big | \,q(x,v)\in B(0,\rho)\cap \{x_3=0\}\right\},\\
			A_{2}&=\left\{(v_1,v_2,v_3)\in\mathbb{R}^3\, \Big| \, q(x,v)\in B(a_2,\rho)\cap \{x_3=L\}\right\},\\
			A_3&=\mathbb{R}^3\setminus(A_1\cup A_2).
		\end{align*}
		Let $v=(\hat{\rho}\sin\theta\cos\phi,\hat{\rho}\sin\theta\sin\phi,\hat{\rho}\cos\theta)$. Then the sets $A_1$ and $A_2$ can be written more explicitly as follow.
		\begin{align}\label{eq 5.25}
			A_1=
			&\left\{(\hat{\rho},\phi,\theta)\in \mathbb{R}_+\times[0,2\pi]\times[0,\frac{\pi}{2}]\,\Big|0\leq\alpha_1^2+\gamma_1,\quad \frac{\alpha_1-\sqrt{\alpha_1^2+\gamma_1}}{x_3}< \tan\theta < \frac{\alpha_1+\sqrt{\alpha_1^2+\gamma_1}}{x_3}\right\}\\
			A_2=
			&\left\{(\hat{\rho},\phi,\theta)\in \mathbb{R}_+\times[0,2\pi]\times[\frac{\pi}{2},\pi]\,\Big|0\leq\alpha_2^2+\gamma,\quad \frac{-\alpha_2-\sqrt{\alpha_2^2+\gamma_2}}{L-x_3}< \tan\theta < \frac{-\alpha_2+\sqrt{\alpha_2^2+\gamma_2}}{L-x_3}\right\}
		\end{align}
		where 
		\begin{align*}
			\begin{aligned}
				&\alpha_1=x_1\cos\phi+x_2\sin\phi,\,&&\gamma_1=\rho^2-\left(x_1^2+x_2^2\right)^2\\
				&\alpha_2=[x_1-a_{2,1}]\cos\phi+[x_2-a_{2,2}]\sin\phi,\,&&\gamma_2=\rho^2-[(x_1-a_{2,1})^2+(x_2-a_{2,2})^2].
			\end{aligned}
		\end{align*}
		Let
		\begin{align*}
			W_1(x)\coloneq=1-\int_{u\cdot n(x)>0}1_{A_1}(u)m_{T(q(x,u))}u\cdot n(x) du
		\end{align*}
		and 
		\begin{align*}
			W_2(x)\coloneq 1-\int_{u\cdot n(x)>0}1_{A_2}(u)m_{T(q(x,u))}u\cdot n(x) du.
		\end{align*}
		Since $\Omega\cap\{0\leq x_3 \leq L\}$ is a convex set and $\partial\Omega\cap\{0\leq x_3 \leq L\}$ is a $C^1$ boundary, $W_1$ is continuous on $\partial\Omega\cap\{\frac{L}{2}\leq x_3 \leq L\}$ and $W_2$ is continuous on $\partial\Omega\cap\{0\leq x_3 \leq\frac{L}{2}\}$. Moreover, $W_1$ and $W_2$ are continuous on $\overline{\Omega\cap\{x_3=L\}\setminus\Pi}$ and $\overline{\Omega\cap\{x_3=0\}\setminus\Pi}$ respectively. Hence, we have
		\begin{align*}
			&\sup_{x\in\Xi\setminus\Pi}\int_{u\cdot n(x)>0}1_{\Xi\setminus\Pi}(q(x,u))m_{T(q(x,u))}u\cdot n(x) du\\
			&\leq\max\left\{\max_{x\in\partial\Omega\cap\{\frac{L}{2}\leq x_3 \leq L\}}W_1(x),\max_{x\in\overline{\Omega\cap\{x_3=L\}\setminus\Pi}}W_1(x),\max_{x\in\partial\Omega\cap\{\frac{L}{2}\leq x_3 \leq L\}}W_2(x),\max_{x\in\overline{\Omega\cap\{x_3=0\}\setminus\Pi}}W_2(x)\right\}\\
			&\coloneq \zeta.
		\end{align*}
		Similar to the argument of \text{lemma} \ref{lemma 5.1}, we deduce that $\zeta$ is less than one. Thus, we conclude that $\Omega$ is completely unsealed set with respect to 
		$\partial \Omega_{1,a} \cup \partial \Omega_{1,b}$.
	\end{proof}
	The proof of \textbf{Theorem \ref{corollary 1.1}} is similar to the proof of \textbf{Theorem \ref{theorem 1.2}} because, after rewriting \eqref{eq 1.4} by equation \eqref{eq 5.2}, \eqref{eq 1.4} becomes
	\begin{align}
		\left\{
		\begin{aligned}\label{eq 5.6}
			v\cdot\nabla_x f&=\frac{2}{\kappa}Q(m_{T_q},f)+\frac{1}{\kappa}Q(f,f)+\frac{1}{\kappa}Q(m_{T_q},m_{T_q}),&&\quad\text{in}\quad \Omega\times\mathbb{R}^3, \\
			f&=0, &&\quad\text{on}\quad \partial \Omega^-_{1,a},\\
			f&=0, &&\quad\text{on} \quad\partial \Omega^-_{1,b},\\
			f&=\frac{1}{2\pi T^2(x)}e^{-\frac{v^2}{2T(x)}}\int_{v\cdot n(x)>0}f(x,v)v\cdot n(x)dv,&&\quad\text{on}\quad\partial \Omega^-_2.
		\end{aligned}
		\right.
	\end{align}
	Then, equation \eqref{eq 5.12} becomes
	\begin{align}
		\int_{\mathbb{R}^3}v_3F(x,v)dv
		&=\int_{\mathbb{R}^3}v_3f(x,v)dv\\\nonumber
		&=\frac{1}{\kappa}\int_{\mathbb{R}^3}v_3hdv+\int_{\mathbb{R}^3}v_3  J^T_{\partial \Omega^-_2}\overline{P}(f)dv\\\nonumber
		&=\frac{1}{\kappa}\int_{\mathbb{R}^3}v_3h[f]dv+\frac{1}{\kappa}\int_{\mathbb{R}^3} J^T_{\partial \Omega^-_2}\overline{P}(h[f])dv+\frac{1}{\kappa}\sum_{j=1}^{n}\int_{\mathbb{R}^3} J^T_{\partial \Omega^-_2}\hat{P}^j\circ\overline{P}(h[f])dv\\\nonumber
		&\quad+\int_{\mathbb{R}^3}v_3 J^T_{\partial \Omega^-_2}\hat{P}^{n+1}\circ\overline{P}(f)dv\\\nonumber
		&\coloneq\frac{1}{\kappa}\Big(C_1+C_2\Big)+D.
	\end{align}
	Since the upper bound of $|C_1|+|C_2|$ can be estimated independently of $n$ and $D$ can be made arbitrary small when $n$ is sufficiently large, the proof is complete.

	\section{The $H^2$ estimate of Laplace equation with Robin boundary condition}\label{section 6}
	In this section, we prove the inequality \eqref{H2 estimate}.
	That is, we discuss about the following elliptic equation
	\begin{align}\label{Robin boundary condition}
		\left\{
		\begin{aligned}
			-\Delta U &= F, && \text{in }\Omega, \\
			\chi(x)U+\partial_\nu U&= 0, && \text{on }\partial\Omega.
		\end{aligned}
		\right.
	\end{align}
	Here $\chi(x)$ is a non-negative $C^2$ function and $\chi(x)=0$ on $\partial\Omega_{2}$. We require that $\chi> 0$ on a subset of $\partial\Omega$ of positive measure. Before we start our argument, we recall several classical trace theorems and elliptic estimate. Here $\gamma$ is the trace operator.
	\begin{theorem}
		Let $u,v \in W^1(\Omega)$ and $\Delta u, \Delta v\in L^2(\Omega)$ . Then:
		\begin{enumerate}[(i)]
			\item
			\[
			\int_\Omega \Delta u \, dx
			=
			\int_{\partial \Omega} \gamma\partial_\nu u\, dS.
			\]
			
			\item
			\[
			\int_\Omega \nabla v \cdot \nabla u \, dx
			=
			-\int_\Omega u \Delta v \, dx
			+
			\int_{\partial \Omega} u\, \gamma\partial_\nu v\, dS.
			\]
			
			\item
			\[
			\int_\Omega \bigl(u\,\Delta v - v\,\Delta u\bigr)\, dx
			=
			\int_{\partial \Omega}
			\left(
			u\,\gamma\partial_\nu v
			-
			v\,\gamma\partial_\nu u
			\right)
			\, dS.
			\]
		\end{enumerate}
	\end{theorem}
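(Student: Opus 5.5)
The plan is to prove the three identities in turn. Each follows from the Gauss–Green (divergence) theorem, first for smooth functions and then for the stated Sobolev class by density. Throughout I will take $\Omega$ to be bounded with $C^2$ boundary, the standing assumption of this section. I read $W^1(\Omega)$ as $H^1(\Omega)$.

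The key preliminary is that for $u\in H^1(\Omega)$ with $\Delta u\in L^2(\Omega)$, the normal derivative $\gamma\partial_\nu u$ is well defined as an element of $H^{-1/2}(\partial\Omega)$. I will set this up as follows. Put $\mathbf{q}=\nabla u$. Then $\mathbf{q}\in L^2(\Omega)^3$ and $\operatorname{div}\mathbf{q}=\Delta u\in L^2(\Omega)$, so $\mathbf{q}$ lies in the space $H(\operatorname{div};\Omega)$. For smooth vector fields $\mathbf{q}\in C^1(\overline\Omega)^3$ and $\varphi\in C^1(\overline\Omega)$, the divergence theorem gives
\begin{align*}
\int_\Omega \varphi\,\operatorname{div}\mathbf{q}\,dx+\int_\Omega \nabla\varphi\cdot\mathbf{q}\,dx=\int_{\partial\Omega}\varphi\,\mathbf{q}\cdot\nu\,dS .
\end{align*}
Both sides are continuous in $\mathbf{q}$ for the $H(\operatorname{div})$ norm and in $\varphi$ for the $H^1$ norm. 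The left side depends on $\varphi$ only through its trace, because the trace map $H^1(\Omega)\to H^{1/2}(\partial\Omega)$ is surjective with a bounded right inverse. Hence the normal trace $\mathbf{q}\mapsto\mathbf{q}\cdot\nu$ extends by density of $C^1(\overline\Omega)^3$ in $H(\operatorname{div};\Omega)$ to a bounded map into $H^{-1/2}(\partial\Omega)$. The identity then holds for all $\varphi\in H^1(\Omega)$, with the boundary integral read as the duality pairing. This density step is the main obstacle. I would prove it by the standard partition of unity, local flattening of the $C^2$ boundary, translation inward, and mollification, as in Grisvard. Alternatively, since this section works on $C^2$ domains, one could cite elliptic regularity to avoid it, but the density route is cleaner.

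With this Green formula in hand, the three items follow directly.

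\begin{enumerate}[(i)]
\item Take $\varphi\equiv1$ and $\mathbf{q}=\nabla u$. This gives $\int_\Omega\Delta u\,dx=\langle\gamma\partial_\nu u,1\rangle$, which is a genuine integral whenever $\gamma\partial_\nu u\in L^1(\partial\Omega)$.
\item Take $\mathbf{q}=\nabla v$ and $\varphi=u\in H^1(\Omega)$. Rearranging gives $\int_\Omega\nabla v\cdot\nabla u\,dx=-\int_\Omega u\Delta v\,dx+\int_{\partial\Omega}\gamma u\,\gamma\partial_\nu v\,dS$.
\item Write (ii) as stated and again with $u$ and $v$ interchanged, then subtract. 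The symmetric term $\int_\Omega\nabla u\cdot\nabla v\,dx$ cancels and leaves the identity.
\end{enumerate}

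Finally, I would note how these identities are used for \eqref{Robin boundary condition}. There the boundary condition gives $\gamma\partial_\nu U=-\chi\gamma U\in H^{1/2}(\partial\Omega)$, so every boundary pairing above is an honest $L^2(\partial\Omega)$ integral. This is the form needed later for the energy identity and the $H^2$ bound \eqref{H2 estimate}.
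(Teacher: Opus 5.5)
The paper gives no proof of this statement. It is recalled at the start of Section 6 as a classical fact, next to the Grisvard trace theorems, and used in the form (i) in the Robin $H^2$ lemma. Your argument is the standard one: a normal trace on $H(\operatorname{div};\Omega)$, density of $C^1(\overline\Omega)^3$, and the bounded right inverse of $\gamma\colon H^1(\Omega)\to H^{1/2}(\partial\Omega)$ (Lipschitz boundary already suffices). It is correct, and (i)--(iii) follow as you say; (ii) in fact needs only $\Delta v\in L^2(\Omega)$.

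Compared with the paper's bare citation, your version makes explicit something its statement glosses over. Under the stated hypotheses, $\gamma\partial_\nu u$ is in general only an element of $H^{-1/2}(\partial\Omega)$, so the boundary ``integrals'' must be read as duality pairings. Your closing remark, that $\gamma\partial_\nu U=-\chi\gamma U$ for the Robin problem, is what makes the paper's later use (the compatibility condition $\int_\Omega F\,dx=\int_{\partial\Omega}\chi U\,dS$) an honest integral.

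Two minor corrections:
\begin{itemize}
\item Two separate facts are merged in your argument. That the volume side depends on $\varphi$ only through $\gamma\varphi$ comes from the smooth identity, extended to $\varphi\in H^1(\Omega)$ with $\mathbf q$ fixed. The bounded right inverse is a different ingredient: it is what yields $\|\mathbf q\cdot\nu\|_{H^{-1/2}}\le C\|\mathbf q\|_{H(\operatorname{div})}$ for smooth $\mathbf q$. Your sentence attributes the first to the second.
\item The suggested shortcut via elliptic regularity does not work as stated, since $u\in H^1(\Omega)$ with $\Delta u\in L^2(\Omega)$ need not lie in $H^2(\Omega)$. You would first have to split $u=w+h$, with $w\in H^2\cap H^1_0$ solving $\Delta w=\Delta u$ and $h$ harmonic, and then approximate $h$ in $H^1$ by harmonic functions in $H^2(\Omega)$.
\end{itemize}
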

	\begin{theorem}[\cite{Grisvard Pierre.} Theorem 1.5.1.3]\label{thm:trace 1}
		Let $\Omega$ be a bounded open subset of $\mathbb{R}^n$ with a Lipschitz boundary $\Gamma$.
		Then the mapping
		\[
		u \mapsto \gamma u
		\]
		which is defined for $u \in C^{0,1}(\overline{\Omega})$, has a unique continuous extension
		as an operator from $W^{1}_{p}(\Omega)$ onto $W^{1-\frac{1}{p}}_{p}(\Gamma)$.
		This operator has a right continuous inverse independent of $p$.
	\end{theorem}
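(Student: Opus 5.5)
Since this is a classical result quoted from \cite{Grisvard Pierre.}, the plan follows the standard localization argument. The boundary is reduced to a flat model, where both the trace estimate and the extension operator are explicit. We treat $1<p<\infty$; the endpoint $p=1$, where $W^{0}_1(\Gamma)=L^1(\Gamma)$, needs Gagliardo's separate construction and we do not pursue it.

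\textbf{Localization.} Cover $\Gamma$ by finitely many open sets $U_1,\dots,U_N$. In each $U_j$, after a rigid motion, $\Omega\cap U_j$ is the part of $U_j$ lying above the graph of a Lipschitz function $\psi_j:\mathbb{R}^{n-1}\to\mathbb{R}$. Add an interior open set $U_0\Subset\Omega$ and take a smooth partition of unity $\{\theta_j\}$ subordinate to $\{U_j\}$. The map $\Phi_j(x',x_n)=(x',x_n-\psi_j(x'))$ is bi-Lipschitz with Jacobian determinant $1$. Composition with a bi-Lipschitz map preserves $W^1_p$, by the chain rule for Lipschitz maps. It also preserves the Gagliardo seminorm
\[
|g|^p_{W^{1-1/p}_p}=\iint\frac{|g(x')-g(y')|^p}{|x'-y'|^{n-2+p}}\,dx'dy',
\]
up to constants. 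The same holds for the boundary graph parametrization $x'\mapsto(x',\psi_j(x'))$, whose surface element is bounded above and below. It therefore suffices to work on the half space $\mathbb{R}^n_+$ with compactly supported functions.

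\textbf{Trace estimate on $\mathbb{R}^n_+$.} Take $u\in C^{0,1}$ with compact support. The $L^p$ bound $\|u(\cdot,0)\|_{L^p}\le C\|u\|_{W^1_p}$ follows from $u(x',0)=-\int_0^\infty\partial_n u(x',t)\,dt$ on the support. For the seminorm, fix $x',y'$, set $h=|x'-y'|$, and compare $u(x',0)$ and $u(y',0)$ with the average of $u$ over the ball $B\big(\tfrac{x'+y'}{2},h\big)\times(h,2h)$. Each difference is bounded by $\int_0^{Ch}$ of $|\nabla u|$ along segments, averaged over the ball. Hardy's inequality in the variable $h$ then gives $|u(\cdot,0)|_{W^{1-1/p}_p}\le C\|\nabla u\|_{L^p}$. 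The space $C^{0,1}(\overline\Omega)$ is dense in $W^1_p(\Omega)$ for Lipschitz $\Omega$, by local translation inward along the graph direction and mollification. Hence $\gamma$ extends uniquely and continuously. Uniqueness is automatic from density.

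\textbf{Right inverse.} Given $g\in W^{1-1/p}_p(\mathbb{R}^{n-1})$ with compact support, define
\[
(Rg)(x',t)=\varphi(t)\,(\rho_t*g)(x'),\qquad \rho_t(z)=t^{1-n}\rho(z/t),
\]
with $\rho$ a smooth mollifier of integral one and $\varphi$ a cutoff equal to $1$ near $0$. Then $Rg(\cdot,t)\to g$ in $L^p$ as $t\to0$, so $\gamma Rg=g$. Since $\nabla\rho$ has zero integral, we can write
\[
\partial_{x_i}(\rho_t*g)(x')=t^{-n}\int(\partial_i\rho)\!\left(\tfrac{x'-y'}{t}\right)\big(g(y')-g(x')\big)\,dy',
\]
and similarly for $\partial_t$. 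Hölder's inequality and integration in $t$ then give $\|\nabla Rg\|^p_{L^p}\le C|g|^p_{W^{1-1/p}_p}$; this step is exactly where the exponent $n-2+p$ appears. The globally defined operator is
\[
\mathcal{R}g=\sum_j\eta_j\cdot\big(R\big((\theta_j g)\circ\text{chart}\big)\big)\circ\Phi_j,
\]
where $\eta_j$ are cutoffs equal to $1$ on the support of $\theta_j$. The formula does not involve $p$, so one operator works for every $p$, and $\gamma\mathcal{R}g=g$ gives surjectivity.

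\textbf{Main obstacle.} We expect the hardest step to be the Hardy-type inequality bounding $\nabla Rg$ by the Gagliardo seminorm, together with checking that bi-Lipschitz (not $C^1$) flattening preserves both norms. The first attempt would be to use the Jacobian bound and the comparability $|\Phi(x)-\Phi(y)|\sim|x-y|$ directly in the double integral.
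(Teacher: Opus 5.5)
Your proposal is correct. The paper gives no proof of this statement: it is quoted from Grisvard and used as a black box in the $H^2$ estimate section. Your outline is the standard argument behind the cited theorem. Its ingredients are flattening by the volume-preserving bi-Lipschitz map $\Phi_j$; the half-space bound, obtained by comparing $u(x',0)$ and $u(y',0)$ with an average over $B(\frac{x'+y'}{2},h)\times(h,2h)$ and applying Hardy's inequality in $h$; density of Lipschitz functions; and Gagliardo's lifting $\varphi(t)\,(\rho_t*g)$, whose $x'$- and $t$-derivative kernels both have zero mean. I checked the two key computations (the averaged-segment bound followed by Jensen and Hardy, and the $t^{-p-n+1}$ integration producing the exponent $n-2+p$), and they go through.

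Two small remarks.

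First, restricting to $1<p<\infty$ is necessary, not merely convenient. For $p=1$ the trace is still onto $L^1(\Gamma)$ by Gagliardo's construction. However, Peetre showed that no bounded linear right inverse exists, so the clause about a $p$-independent continuous right inverse fails there. The statement as quoted must therefore be understood for $1<p<\infty$.

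Second, the step from $Rg(\cdot,t)\to g$ in $L^p$ to $\gamma Rg=g$ deserves one line of justification. Approximate $g$ by $g_k\in C_c^\infty(\mathbb{R}^{n-1})$ in $W^{1-1/p}_p$. Then $Rg_k$ is Lipschitz on $\overline{\mathbb{R}^n_+}$ with boundary value $g_k$, because the zero-mean kernel gives $|\partial_t(\rho_t*g_k)|\le C\|\nabla g_k\|_\infty$. Hence $\gamma Rg_k=g_k$, and continuity of $R$ and $\gamma$ yields $\gamma Rg=g$.
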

	The following theorem is a general version of the above theorem
	\begin{theorem}[\cite{Grisvard Pierre.} Theorem 1.5.1.2]\label{thm:trace 2}
		Let $\Omega$ be a bounded open subset of $\mathbb{R}^n$ with boundary
		$\Gamma=\partial\Omega$ of class $C^{k,1}$.
		Assume that
		\[
		s-\frac1p \notin \mathbb{N}, \qquad s \le k+1, \qquad
		s-\frac1p = l+\sigma,
		\]
		where $l\in\mathbb{N}_0$ and $0<\sigma<1$.
		Then the mapping
		\[
		u \longmapsto
		\bigl(
		\gamma u,\,
		\gamma \partial_\nu u,\,
		\ldots,\,
		\gamma \partial_\nu^{\,l} u
		\bigr),
		\]
		initially defined for $u\in C^{k,1}(\overline{\Omega})$, admits a unique
		continuous extension as an operator
		\[
		W_p^{\,s}(\Omega)
		\;\longrightarrow\;
		\prod_{j=0}^{l} W_p^{\,s-j-\frac1p}(\Gamma),
		\]
		which is onto.
		Moreover, this operator possesses a continuous right inverse which does
		not depend on $p$.
	\end{theorem}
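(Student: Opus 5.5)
The plan is to reduce the statement to the model case of the half-space $\mathbb{R}^n_+=\{x_n>0\}$ by localization and flattening, prove the trace estimate and construct a right inverse there, and then transport both back to $\Omega$. First take a finite covering of $\Gamma$ by open sets $U_1,\dots,U_N$. In each $U_i$ there is a bi-$C^{k,1}$ map $\Phi_i$ sending $U_i\cap\Omega$ onto a neighbourhood of $0$ in $\mathbb{R}^n_+$ and $U_i\cap\Gamma$ into $\{x_n=0\}$; add an interior set $U_0\Subset\Omega$ and a subordinate smooth partition of unity $\{\theta_i\}$. For $u\in C^{k,1}(\overline\Omega)$, write $u=\sum_i\theta_iu$ and let $v_i=(\theta_iu)\circ\Phi_i^{-1}$. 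The first step is to check that composition with a $C^{k,1}$ diffeomorphism preserves $W^s_p$ for $s\le k+1$, both on $\mathbb{R}^n_+$ and on the $(n-1)$-dimensional boundary pieces. This is the only place where the hypothesis $s\le k+1$ enters: derivatives of order up to $k+1$ of $\Phi_i$ are bounded, and one interpolates, or uses the Slobodeckij seminorm, for the fractional part. The second step is to relate the normal derivatives $\partial_\nu^j u$ to $\partial_{x_n}^j v_i$. In flattened coordinates, $\partial_\nu^j$ equals $\partial_{x_n}^j$ times a $C^{k-j,1}$ nonvanishing factor, plus lower-order derivatives (normal and tangential) with $C^{k-j,1}$ coefficients. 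This triangular relation is invertible, so it suffices to treat $(\gamma v,\gamma\partial_n v,\dots,\gamma\partial_n^l v)$ on the half-space.

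On $\mathbb{R}^n_+$, I would prove $\|\gamma\partial_n^jv\|_{W^{s-j-1/p}_p(\mathbb{R}^{n-1})}\le C\|v\|_{W^s_p(\mathbb{R}^n_+)}$ for $0\le j\le l$. For each $j$, apply the basic case to $w=\partial_n^jv\in W^{s-j}_p$, where $s-j-1/p>0$ is non-integer because $s-1/p=l+\sigma$. The basic case $W^t_p(\mathbb{R}^n_+)\to W^{t-1/p}_p(\mathbb{R}^{n-1})$ for $t-1/p\notin\mathbb{N}$ with $t-1/p>0$ I would do in two steps. First, handle tangential derivatives of integer order $\lfloor t-1/p\rfloor$, which commute with the trace. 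Then, for the fractional remainder, use the Gagliardo argument: bound the difference $w(x',0)-w(y',0)$ by integrating $\nabla w$ along a path that leaves the boundary to height $|x'-y'|$, and then apply Hardy's inequality to control the Slobodeckij double integral by the $W^t_p$ norm. Density of $C^{k,1}(\overline\Omega)$ in $W^s_p(\Omega)$, valid for Lipschitz and hence $C^{k,1}$ domains, then gives the unique continuous extension.

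For surjectivity and the right inverse, I would build an explicit lifting on the half-space. Given $(g_0,\dots,g_l)$ with $g_j\in W^{s-j-1/p}_p(\mathbb{R}^{n-1})$, set
\begin{align*}
Rg(x',x_n)=\sum_{j=0}^{l}\frac{x_n^j}{j!}\,\chi(x_n)\,\bigl(\rho_{x_n}*g_j\bigr)(x'),
\end{align*}
where $\rho_t(y)=t^{1-n}\rho(y/t)$ is a mollifier whose moments vanish up to the relevant order and $\chi$ is a cutoff. Then $\partial_n^iRg|_{x_n=0}=g_i$, and the $W^s_p$ bound follows from the same Hardy-type estimates run in reverse. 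Since neither the operator nor the mollifier depends on $p$, the right inverse is $p$-independent. Finally, transport back to $\Omega$: on each chart, lift the flattened data (corrected by the triangular relation from the first paragraph) with $R$, compose with $\Phi_i$, multiply by a cutoff $\tilde\theta_i$ equal to $1$ on the support of $\theta_i$, and sum.

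I expect the main obstacle to be the bookkeeping in the change of variables. Specifically, one must show that the triangular system linking $(\gamma\partial_\nu^ju)_j$ to $(\gamma\partial_n^jv)_j$ has coefficients regular enough to act as multipliers on $W^{s-j-1/p}_p(\Gamma)$, and that the resulting right inverse on $\Omega$ is genuinely exact rather than merely approximate. My first attempt would be to invert the system recursively in $j$, using that the multiplier regularity $C^{k-j,1}$ exceeds the order $s-j-1/p<k+1-j$. If cross terms spoil exactness, I would correct the lift by a Neumann-series argument in the chart sizes.
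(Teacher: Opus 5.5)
The paper does not prove this statement; it only quotes Grisvard. Your architecture is the standard route to that theorem: charts and flattening, a triangular relation between $\gamma\partial_\nu^j u$ and the flat traces $\gamma\partial_{x'}^{\beta'}\partial_n^{q} v$, a half-space trace estimate, and the $p$-independent lifting $\sum_j \frac{x_n^j}{j!}\chi(x_n)\,\rho_{x_n}*g_j$ with a moment-vanishing $\rho$.

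The gap is in your half-space ``basic case''. After stripping $\lfloor t-1/p\rfloor$ tangential derivatives, the residual order is $t'=\sigma+1/p\in(1/p,1+1/p)$. For smooth $w$, the path-plus-Hardy argument you describe bounds the $W^{t'-1/p}_p(\mathbb{R}^{n-1})$ seminorm of $\gamma w$ by a constant times
\[
\int_{\mathbb{R}^n_+}|\nabla w(x)|^p\,x_n^{(1-t')p}\,dx .
\]
Only when $t'=1$ is this $\|\nabla w\|^p_{L^p}$.
\begin{itemize}
\item For $1<t'<1+1/p$ the integral is finite, but only because of the fractional Hardy inequality $\|x_n^{-(t'-1)}\nabla w\|_{L^p}\le C\|\nabla w\|_{W^{t'-1}_p(\mathbb{R}^n_+)}$, valid since $0<(t'-1)p<1$. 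The one-dimensional Hardy inequality for averages does not supply this.
\item For $1/p<t'<1$ (already for $p=2$, $s=0.8$) the step fails. Here $\nabla w$ need not be a function, and the weighted integral is not controlled by $\|w\|_{W^{t'}_p}$: an oscillation of frequency $N$ away from the boundary makes it grow like $N^p$, while $\|w\|^p_{W^{t'}_p}$ grows only like $N^{t'p}$.
\end{itemize}
In the second case you need a genuinely fractional argument. One option is to compare $\gamma w$ with averages of $w$ over cubes of side $r$ at height $r$, telescope dyadically in $r$, and control each increment by the Slobodeckij double integral of $w$. Another is to obtain the non-integer cases by real interpolation between integer orders, using $W^\tau_p=B^\tau_{p,p}$ for non-integer $\tau$. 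Note that the two Hardy inequalities degenerate exactly at $t'=1/p$ and $t'=1+1/p$, i.e.\ at $s-1/p\in\mathbb{N}$; this is how that hypothesis enters. The same caveat applies to your lifting: for non-integer $s$, the Slobodeckij seminorm of $D^{\lfloor s\rfloor}Rg$ over $\mathbb{R}^n_+\times\mathbb{R}^n_+$ does not follow from running the integer-order Hardy computation backwards and needs its own argument.

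For integer $s$ one has $t'=1$, and your outline does go through. This covers the only use of the theorem in the paper: $s=2$, $p=2$ in Section 6, to produce $G\in H^2(\Omega)$ with $\partial_\nu G=-\chi U\in H^{1/2}(\partial\Omega)$.

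Your worry about exactness is unfounded, and no Neumann series is needed. Your lift satisfies $\gamma\partial_{x'}^{\beta'}\partial_n^{q} Rh=\partial_{x'}^{\beta'}h_q$ for $|\beta'|+q\le l$. The recursively solved chart data $h^{(i)}$ are supported in $\Phi_i(\operatorname{supp}\theta_i\cap\Gamma)$, since the triangular inversion uses only differential operators and multiplications. Hence every trace of a derivative of order at most $l$ of $Rh^{(i)}\circ\Phi_i$ vanishes on $\Gamma$ off $\operatorname{supp}\theta_i$. All Leibniz terms containing derivatives of $\tilde\theta_i$ therefore have zero trace, and the construction gives $\gamma\partial_\nu^j u=\sum_i\theta_i g_j=g_j$ exactly.
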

	
	\begin{theorem}[\cite{Grisvard Pierre.} Boundary $H^2$-regularity]\label{Neumann boundary condition}
		Assume
		\begin{equation*}
			f \in L^{2}(\Omega).
		\end{equation*}
		Suppose that $u \in H^{1}(U)$ is a weak solution of the elliptic boundary-value problem
		\begin{equation*}
			\begin{cases}
				\Delta u = f & \text{in } \Omega, \\
				\gamma \partial_\nu u = 0 & \text{on } \partial \Omega .
			\end{cases}
		\end{equation*}
		Assume finally
		\begin{equation*}
			\partial \Omega \text{ is } C^{2} \text{ and } \int_{\Omega}f =0.
		\end{equation*}
		Then
		\[
		u \in H^{2}(\Omega),
		\]
		and we have the estimate
		\begin{equation*}
			\|u\|_{H^{2}(\Omega)} \leq C \|f\|_{L^{2}(\Omega)},
		\end{equation*}
		the constant $C$ depending only on $\Omega$.
	\end{theorem}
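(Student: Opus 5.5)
The plan is to treat this Neumann $H^2$ regularity result (Theorem \ref{Neumann boundary condition}) in the classical way: first produce the weak solution by a variational argument, then upgrade it to $H^2$ by difference quotients near a flattened boundary. The compatibility condition $\int_\Omega f=0$ is what makes the weak problem solvable, and it also fixes the additive constant.

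\textbf{Step 1: weak solution and the $H^1$ bound.} Work in $V=\{u\in H^1(\Omega):\int_\Omega u=0\}$ with the form $B(u,\phi)=\int_\Omega\nabla u\cdot\nabla\phi$. Since $\Omega$ is bounded, $C^2$ and connected (as assumed in Proposition \ref{proposition 2.7}), the Poincaré--Wirtinger inequality makes $B$ coercive on $V$. Lax--Milgram then gives a unique $u\in V$ with $B(u,\phi)=-\int_\Omega f\phi$ for all $\phi\in V$. Because $\int_\Omega f=0$, this identity also holds for constant $\phi$, hence for all $\phi\in H^1(\Omega)$, which is exactly the weak form of the Neumann problem. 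Testing with $\phi=u$ and using Poincaré gives $\|u\|_{H^1}\le C\|f\|_{L^2}$. Any other weak solution differs from $u$ by a constant, so the estimate applies to this normalized representative.

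\textbf{Step 2: interior regularity.} Use the standard Nirenberg difference-quotient argument with a cutoff $\zeta$ to get $\|u\|_{H^2(\Omega')}\le C(\|f\|_{L^2}+\|u\|_{H^1})$ for every $\Omega'\Subset\Omega$. Nothing here depends on the boundary condition.

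\textbf{Step 3: boundary regularity (the main obstacle).} Near a boundary point, straighten $\partial\Omega$ with a $C^2$ map $\Phi$. The equation becomes a divergence-form operator $\partial_i(a^{ij}\partial_j\tilde u)$ with $C^1$, uniformly elliptic coefficients, and the weak Neumann formulation becomes a natural (conormal) condition. This matters: no boundary integral appears once we test with functions that do not vanish on $\{y_n=0\}$.

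Tangential difference quotients $D_k^h$, for $k<n$, preserve the half-space. So $\phi=-D_k^{-h}(\zeta^2D_k^h\tilde u)$ is an admissible test function, and it yields $\nabla D_k^h\tilde u\in L^2$ uniformly in $h$. This controls every second derivative $\partial_{ij}\tilde u$ except $\partial_{nn}\tilde u$. The remaining derivative comes from the equation itself: $a^{nn}\partial_{nn}\tilde u$ equals $\tilde f$ minus terms already bounded, and $a^{nn}\ge\theta>0$. The delicate point is checking that the test function $\phi$ is legitimate in the Neumann weak form; it is, because the form is posed on all of $H^1$. One also needs the Jacobian of $\Phi$ to be $C^1$ so that the coefficient error terms are controlled, which is why $C^2$ regularity of the boundary is needed.

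\textbf{Step 4: conclusion.} Cover $\partial\Omega$ by finitely many such charts, combine the local bounds with Step 2, and then absorb $\|u\|_{H^1}$ using Step 1. This gives $\|u\|_{H^2(\Omega)}\le C\|f\|_{L^2(\Omega)}$ with $C$ depending only on $\Omega$.
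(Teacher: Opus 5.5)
Your proof is correct and is the standard argument: Lax--Milgram on the mean-zero subspace, interior difference quotients, tangential difference quotients in flattened charts (using that the Neumann condition is natural), and $\partial_{nn}$ recovered from the equation. The paper gives no proof of this statement; it imports it from Grisvard, so your version simply makes the cited result self-contained.

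One remark in your Step 1 should be stated as a correction to the statement, not as a convenience. Since $u+c$ is a weak solution whenever $u$ is, the bound $\|u\|_{H^2(\Omega)}\le C\|f\|_{L^2(\Omega)}$ cannot hold for every weak solution with $C$ depending only on $\Omega$. What you actually prove, namely $u\in H^2(\Omega)$ for every weak solution and the estimate for the mean-zero one, is the true statement. Equivalently, write $u=u_0+c$ with $u_0$ the mean-zero solution and $c=|\Omega|^{-1}\int_\Omega u$. Then $\|c\|_{H^2(\Omega)}=|\Omega|^{-1/2}|\int_\Omega u|\le\|u\|_{L^2(\Omega)}$, so every weak solution satisfies $\|u\|_{H^2(\Omega)}\le C(\|f\|_{L^2(\Omega)}+\|u\|_{L^2(\Omega)})$.

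That is the form the paper actually needs when it applies the theorem to $U-G$ in the Robin lemma, since $U-G$ has no reason to have mean zero there. The lemma's conclusion still survives, because its right-hand side already contains $\|U\|_{H^1(\Omega)}$.

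Finally, you borrow connectedness from Proposition \ref{proposition 2.7}, and it is genuinely used (Poincar\'e--Wirtinger and uniqueness up to constants) even though it is not in the statement. On a disconnected domain one instead normalizes on each component.
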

	Now, we prove the following estimate
	\begin{lemma}
		Let $\Omega$ be a bounded open subset of $\mathbb{R}^n$ with $C^2$ boundary. Assume that $F\in L^2(\Omega)$. Suppose that $U \in H^{1}(\Omega)$ is a weak solution of \eqref{Robin boundary condition}. 
		Then
		\[
		U \in H^{2}(\Omega),
		\]
		and we have the estimate
		\begin{equation*}
			\|u\|_{H^{2}(\Omega)} \leq C \left(\|F\|_{L^{2}(\Omega)}+\|u\|_{H^{1}(\Omega)}\right),
		\end{equation*}
		the constant $C$ depending only on $\Omega$.
	\end{lemma}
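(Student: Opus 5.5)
The plan is to reduce the Robin problem \eqref{Robin boundary condition} to a Neumann problem and then apply \textbf{Theorem \ref{Neumann boundary condition}}, with the boundary term handled by a lifting built from the trace theorems. The one algebraic fact needed is that, since $U\in H^1(\Omega)$, its trace satisfies $\gamma U\in H^{1/2}(\partial\Omega)$ by \textbf{Theorem \ref{thm:trace 1}} with $p=2$. Because $\chi\in C^2$ and $\partial\Omega$ is compact, multiplication by $\chi$ is bounded on $H^{1/2}(\partial\Omega)$. Hence
\begin{align*}
g\coloneq -\chi\,\gamma U\in H^{1/2}(\partial\Omega),\qquad \|g\|_{H^{1/2}(\partial\Omega)}\leq C\|U\|_{H^1(\Omega)}.
\end{align*}
In weak form, $U$ therefore solves $-\Delta U=F$ with Neumann data $\gamma\partial_\nu U=g$.

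Next I would lift the Neumann data. \textbf{Theorem \ref{thm:trace 2}} applies with $k=1$ (a $C^2$ boundary is $C^{1,1}$), $s=2$, $p=2$. Then $s-1/p=3/2=1+1/2$, so $l=1$ and $\sigma=1/2$. The theorem gives a continuous right inverse $R$ of $u\mapsto(\gamma u,\gamma\partial_\nu u)$ from $H^{3/2}(\partial\Omega)\times H^{1/2}(\partial\Omega)$ into $H^2(\Omega)$. I set $G\coloneq R(0,g)$, so that
\begin{align*}
G\in H^2(\Omega),\qquad \gamma\partial_\nu G=g,\qquad \|G\|_{H^2(\Omega)}\leq C\|g\|_{H^{1/2}(\partial\Omega)}\leq C\|U\|_{H^1(\Omega)}.
\end{align*}
Put $W\coloneq U-G\in H^1(\Omega)$. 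Then $W$ is a weak solution of $-\Delta W=F+\Delta G\coloneq\tilde F$ with $\gamma\partial_\nu W=0$. To check this, I would use part (ii) of the Green identity theorem on the weak formulations of $U$ and $G$ and subtract; the boundary terms cancel because both have Neumann data $g$. The new source obeys $\|\tilde F\|_{L^2}\leq\|F\|_{L^2}+C\|U\|_{H^1}$.

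The main obstacle is the compatibility condition $\int_\Omega\tilde F=0$ required by \textbf{Theorem \ref{Neumann boundary condition}}. It should hold automatically: test the weak formulation of $W$ with the constant function $1$, which lies in $H^1(\Omega)$. This gives $\int_\Omega\tilde F=\int_\Omega\nabla W\cdot\nabla 1=0$, and equivalently follows from part (i) of the Green identity theorem. If one prefers not to rely on that, I would instead subtract the mean $\frac{1}{|\Omega|}\int_\Omega\tilde F$ and correct $G$ by adding a fixed smooth $H^2$ function whose normal derivative is constant on $\partial\Omega$ and whose Laplacian is constant. That correction is bounded by $|\int\tilde F|\leq C\|\tilde F\|_{L^2}$.

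With compatibility settled, \textbf{Theorem \ref{Neumann boundary condition}} gives $W\in H^2$. Its bound only controls $W$ up to an additive constant, so I would handle the mean separately: $|\frac{1}{|\Omega|}\int W|\leq C\|W\|_{L^2}\leq C(\|U\|_{H^1}+\|G\|_{L^2})$. This yields
\begin{align*}
\|W\|_{H^2}\leq C(\|F\|_{L^2}+\|U\|_{H^1}).
\end{align*}
Finally $U=W+G$, and combining the bounds on $W$ and $G$ gives $\|U\|_{H^2}\leq C(\|F\|_{L^2}+\|U\|_{H^1})$, with $C$ depending only on $\Omega$ and $\chi$.
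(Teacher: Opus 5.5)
Your proposal is correct and follows essentially the same route as the paper. Both arguments treat $-\chi\,\gamma U$ as Neumann data and lift it to $G\in H^2(\Omega)$ via the right inverse in Theorem \ref{thm:trace 2}. Both then check the compatibility condition for $U-G$ and apply the Neumann $H^2$ estimate to $U-G$. Your explicit treatment of the mean of $U-G$ fixes a small imprecision in the paper: the Neumann estimate as the paper states it controls solutions only up to an additive constant. You also correctly note that the constant $C$ depends on $\chi$ as well as on $\Omega$.
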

	\begin{proof}
		By Green's identity, we have
		\begin{align}\label{compatibility condition}
			\int_{\Omega}F dx=\int_{\partial\Omega}\chi(x)U dS.
		\end{align}
		We consider the following elliptic problem
		\begin{align}
			\left\{
			\begin{aligned}
				-\Delta U &= F, && \text{in }\Omega, \\
				\partial_nU&=g\coloneq-\chi(x)U, && \text{on }\partial\Omega.
			\end{aligned}
			\right.
		\end{align}
		After applying theorem \ref{thm:trace 2}, there exists $G\in W^2(\Omega)$ such that $\partial_\nu G=-\chi(x)U$. Then, $U-G$ satisfies
		\begin{align}
			\left\{
			\begin{aligned}
				-\Delta (U-G) &= F+\Delta G, && \text{in }\Omega, \\
				\partial_n(U-G)&=0, && \text{on }\partial\Omega.
			\end{aligned}
			\right.
		\end{align}
		By \eqref{compatibility condition} and Green's identity, we have
		\begin{align*}
			\int_{\Omega}F+\Delta G dx=\int_{\Omega}F dx+\int_{\Omega}\Delta G dx=\int_{\Omega}F dx+\int_{\partial\Omega}\partial_\nu G dS=\int_{\Omega}F dx-\int_{\partial\Omega}\chi(x)U dS=0.
		\end{align*}
		Applying theorem \ref{Neumann boundary condition}, we yield
		\begin{equation*}
			\|U-G\|_{H^{2}(\Omega)} \leq C \|F+\Delta G\|_{L^{2}(\Omega)}.
		\end{equation*}
		Using theorem \ref{thm:trace 2} and theorem \ref{thm:trace 1}, we obtain
		\begin{align*}
			\|U\|_{H^{2}(\Omega)} 
			&
			\leq C\left( \|F+\Delta G\|_{L^{2}(\Omega)}+\|G\|_{H^{2}(\Omega)}\right)\\
			&
			\leq C\left( \|F\|_{L^{2}(\Omega)}+\|G\|_{H^{2}(\Omega)}\right)\\
			&
			\leq C\left( \|F\|_{L^{2}(\Omega)}+\|\chi(x)U\|_{H^{1/2}(\partial\Omega)}\right)\\
			&
			\leq C\left( \|F\|_{L^{2}(\Omega)}+\|U\|_{H^{1/2}(\partial\Omega)}\right)\\
			&
			\leq C\left( \|F\|_{L^{2}(\Omega)}+\|U\|_{H^{1}(\Omega)}\right)\\
		\end{align*}
		The proof is complete.
	\end{proof}
	\begin{lemma}
		Let $\Omega$ be a bounded connected open subset of $\mathbb{R}^n$ with $C^2$ boundary. Assume that $F\in L^2(\Omega)$. 
		Then, \eqref{Robin boundary condition} admits a unique weak solution $U$. Furthermore,
		\begin{equation}\label{estimate H^1}
			\|U\|_{H^1(\Omega)} \leq C \|F\|_{L^{2}(\Omega)},
		\end{equation}
		the constant $C$ depending only on $\Omega$.
	\end{lemma}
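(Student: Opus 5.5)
We will prove the lemma with the Lax--Milgram theorem applied to the weak formulation of \eqref{Robin boundary condition}. Multiplying by a test function $\varphi\in H^1(\Omega)$ and using Green's identity (ii), together with the boundary condition $\partial_\nu U=-\chi U$, gives the bilinear form
\begin{align*}
	B[U,\varphi]\coloneq\int_\Omega \nabla U\cdot\nabla\varphi\,dx+\int_{\partial\Omega}\chi(x)\,U\varphi\,dS,
\end{align*}
and the weak problem reads: find $U\in H^1(\Omega)$ such that $B[U,\varphi]=\int_\Omega F\varphi\,dx$ for all $\varphi\in H^1(\Omega)$.

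Continuity of $B$ on $H^1(\Omega)\times H^1(\Omega)$ is immediate. Since $\chi\in C^2$ is bounded, the trace theorem (Theorem \ref{thm:trace 1} with $p=2$) gives
\begin{align*}
	\Big|\int_{\partial\Omega}\chi U\varphi\,dS\Big|\leq \|\chi\|_\infty\|U\|_{L^2(\partial\Omega)}\|\varphi\|_{L^2(\partial\Omega)}\leq C\|U\|_{H^1}\|\varphi\|_{H^1}.
\end{align*}
The right-hand side $\varphi\mapsto\int_\Omega F\varphi\,dx$ is a bounded functional with norm at most $\|F\|_{L^2}$.

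The main step is coercivity, namely a Friedrichs/Poincaré-type inequality
\begin{align*}
	\|U\|^2_{L^2(\Omega)}\leq C\Big(\|\nabla U\|^2_{L^2(\Omega)}+\int_{\partial\Omega}\chi U^2\,dS\Big).
\end{align*}
Since $\chi\geq0$, the right-hand side is exactly $C\,B[U,U]$. We prove it by the standard compactness contradiction argument. Suppose there are $U_k$ with $\|U_k\|_{L^2}=1$ and $\|\nabla U_k\|^2_{L^2}+\int_{\partial\Omega}\chi U_k^2\,dS\to0$. Then $U_k$ is bounded in $H^1(\Omega)$, so by Rellich's theorem and compactness of the trace $H^1(\Omega)\to L^2(\partial\Omega)$ a subsequence converges strongly in $L^2(\Omega)$ and in $L^2(\partial\Omega)$ to some $U$. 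Because $\nabla U_k\to0$ in $L^2$, in fact $U_k\to U$ in $H^1$ and $\nabla U=0$. As $\Omega$ is connected, $U\equiv c$ is constant, and $\|U\|_{L^2}=1$ forces $c\neq0$. On the other hand $\int_{\partial\Omega}\chi c^2\,dS=0$, which contradicts $\chi>0$ on a set of positive surface measure. This is the only place where connectedness and the positivity assumption on $\chi$ are used, and it is the key point of the proof: without it the Neumann part $\chi=0$ on $\partial\Omega_2$ would allow the constants as a kernel.

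With coercivity, $B[U,U]\geq c_0\|U\|^2_{H^1}$, so Lax--Milgram yields a unique weak solution $U\in H^1(\Omega)$. Testing the weak equation with $\varphi=U$ gives
\begin{align*}
	c_0\|U\|^2_{H^1}\leq B[U,U]=\int_\Omega FU\,dx\leq\|F\|_{L^2}\|U\|_{H^1},
\end{align*}
which is \eqref{estimate H^1}. The constant depends on $\Omega$ and $\chi$, where $\chi$ is fixed once and for all with $\Omega$. Combined with the previous lemma, this gives $\|U\|_{H^2}\leq C\|F\|_{L^2}$, i.e.\ \eqref{H2 estimate}.
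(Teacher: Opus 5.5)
Your proof is correct and follows essentially the same route as the paper: Lax--Milgram for the form $\int_\Omega\nabla U\cdot\nabla\varphi\,dx+\int_{\partial\Omega}\chi U\varphi\,dS$, with coercivity obtained by a Rellich-compactness contradiction argument. The differences are small improvements. You read off the $H^1$ bound directly from coercivity by testing with $\varphi=U$, whereas the paper runs a second contradiction argument. You also use connectedness explicitly to identify the limit as a nonzero constant before invoking $\chi>0$ on a set of positive measure, a step the paper skips when it jumps to $u=0$.
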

	\begin{proof}
		Note that the weak solution of \eqref{Robin boundary condition} means
		\begin{align}\label{weak solution}
			\int_\Omega \nabla U \cdot \nabla v \, dx
			+\int_{\partial \Omega} \chi Uv\, dS
			=\int_\Omega Fv \, dx.
		\end{align}
		Let 
		$$a(u,v)\coloneq\int_\Omega \nabla u \cdot \nabla v \, dx
		+\int_{\partial \Omega} \chi uv\, dS
		.$$
		Since $\chi\in L^\infty(\partial\Omega)$ and $u,v\in H^1(\Omega)$, we have
		$$
		|a(u,v)|\leq C\|u\|_{H^1(\Omega)}\|v\|_{H^1(\Omega)}
		.$$
		Assume that $a$ is not coercive, There exists a sequence in $H^1(\Omega)$ such that 
		\begin{align}\label{estimate of u_n}
			\|u_n\|^2_{H^1(\Omega)}>n\left(\int_\Omega |\nabla u_n|^2 \, dx
			+\int_{\partial \Omega} \chi u_n^2\, dS\right).	
		\end{align}
		Without loss of generality, we assume that $\|u_n\|_{L^2(\Omega)}=1$ for any $n\in\mathbb{N}$. By \eqref{estimate of u_n} and $\|u_n\|_{L^2(\Omega)}=1$, we discover that $\{u_n\}$ is a bounded sequence in $H^1(\Omega)$. Hence $\{u_n\}$ is weakly compact in $H^1(\Omega)$ and is compact in $L^2(\Omega)$. Let $\{u_{n_j}\}$ be a convergent subsequence of $\{u_n\}$ and $u$ be its limit. That is, 
		\begin{align}
			\begin{aligned}
				&u_{n_j}\rightharpoonup u &&\quad\text{ in } H^1(\Omega),\\
				&u_{n_j}\to u &&\quad\text{ in } L^2(\Omega).
			\end{aligned}
		\end{align}

		We observe that 
		$$
		0=\lim_{n_j\to\infty}\frac{1}{n_j}\|\nabla u\|_{L^2(\Omega)}\geq \lim_{n_j\to\infty}\|\nabla u_{n_j}\|_{L^2(\Omega)}\|\nabla u\|_{L^2(\Omega)}\geq \lim_{n_j\to\infty}\int_\Omega \nabla u_{n_j}\cdot\nabla u \, dx=\int_\Omega |\nabla u|^2 \, dx.
		$$
		It implies that $u_{n_j}$ converges strongly to $u$ in $H^1(\Omega)$; hence, 
		$$\lim_{n_j\to\infty}\int_{\partial \Omega} \chi u_n^2\, dS=\int_{\partial \Omega} \chi u^2\, dS=0.
		$$
		Thus, we deduce that $u=0$, which yields a contradiction since $\|u_n\|_{L^2(\Omega)}=1$. By Lax-Milgram theorem, the proof of the existence is complete. Assume that \eqref{estimate H^1} does not hold, there exists a sequence $\{U_n\}$ such that 
		\begin{equation}\label{111111}
			\|U_n\|_{H^1(\Omega)} \geq n\|F_n\|_{L^{2}(\Omega)}.
		\end{equation}
		Note that 
		\begin{align}\label{222222}
			\int_\Omega |\nabla U_n|^2 \, dx
			+\int_{\partial \Omega} \chi U_n^2\, dS
			=\int_\Omega F_nU_n \, dx \leq \|F_n\|_{L^2(\Omega)}\|U_n\|_{L^2(\Omega)}.
		\end{align}
		Without loss of generality, we assume that $\|U_n\|_{L^2(\Omega)}=1$ for any $n\in\mathbb{N}$. By \eqref{111111} and \eqref{222222}, we can immediately conclude that $\{U_n\}$ is bounded in $H^1(\Omega)$ and there exists a subsequence of $U_n$ which strongly converge to $0$ in $H^1(\Omega)$ by similar argument. It is a contradiction since $\|U_n\|_{L^2(\Omega)}=1$ for any $n\in\mathbb{N}$.
		The proof is complete.
	\end{proof}
	\section{The sufficient conditions for unsealed sets}
	\label{section 7}
	Let $\Omega$ be a subset in $\mathbb{R}^3$ and $\partial B$ be a subset of $\partial\Omega$. Determining whether $\Omega$ is unsealed with respect to $\partial B$ is a difficult problem. In this section, we provide some sufficient conditions concerning this problem.
	\begin{lemma}\label{lemma 5.3}
		Let $\Omega\in\mathbb{R}^3$ be a bounded convex domain with $C^1$ boundary and $\partial \Omega_{1,a}, \partial \Omega_{1,b}$ be subsets of $\partial\Omega$. If  $\partial \Omega_{1,a}\subseteq\partial \Omega_{1,b}$ and $\Omega$ is unsealed with respect to $\partial \Omega_{1,a}$, then $\Omega$ is  unsealed with respect to $\partial \Omega_{1,b}$. 
	\end{lemma}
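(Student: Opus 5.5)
The plan is a monotonicity argument directly on the iterated integral of Definition \ref{definition 4.1}. Let $m$ and $\zeta<1$ be the constants for which $\Omega$ is unsealed with respect to $\partial\Omega_{1,a}$. I will show that the same $m$ and $\zeta$ work for $\partial\Omega_{1,b}$. The basic observation is that the indicator in the measure $d\sigma^1_{j-1,j}$ is $\textbf{1}_{\partial\Omega\setminus\partial\Omega_1}(x_j)$. Since $\partial\Omega_{1,a}\subseteq\partial\Omega_{1,b}$, we have
\begin{align*}
\partial\Omega\setminus\partial\Omega_{1,b}\subseteq\partial\Omega\setminus\partial\Omega_{1,a},
\qquad\text{hence}\qquad
\textbf{1}_{\partial\Omega\setminus\partial\Omega_{1,b}}\leq \textbf{1}_{\partial\Omega\setminus\partial\Omega_{1,a}}
\end{align*}
pointwise on $\partial\Omega$.

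First, I fix $x\in\partial\Omega\setminus\partial\Omega_{1,b}$. By the inclusion above, this point also lies in $\partial\Omega\setminus\partial\Omega_{1,a}$, so the hypothesis \eqref{1.3} for $\partial\Omega_{1,a}$ is available at $x$.

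Second, I write out the $m$-fold iterated integral explicitly. Its integrand is the product over $j=1,\dots,m$ of the nonnegative factors
\begin{align*}
\textbf{1}_{\partial\Omega\setminus\partial\Omega_{1,b}}(x_j)\, m(v_j)\, v_j\cdot n(x_{j-1}),
\end{align*}
with $x_0=x$ and $x_j=q(x_{j-1},v_j)$. Each $v_j$ is integrated over $\{v_j\cdot n(x_{j-1})>0\}$. The backward trajectories $x_j$ are determined by $\Omega$ and the velocities alone, not by the choice of $\partial\Omega_1$, so the two iterated integrals (for $\partial\Omega_{1,a}$ and $\partial\Omega_{1,b}$) run over the same domains and differ only in their indicator factors. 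Replacing each indicator for $\partial\Omega_{1,b}$ by the larger one for $\partial\Omega_{1,a}$ can only increase the nonnegative integrand. Monotonicity of the integral then gives that the quantity for $\partial\Omega_{1,b}$ is at most the quantity for $\partial\Omega_{1,a}$, which is at most $\zeta$. If Remark \ref{remark 4.1} is in force, with $m_T$ in place of $m$, the weight is still nonnegative and the argument is unchanged.

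The only point needing care is measurability and the well-definedness of $x_j$ for almost every velocity. Convexity and the $C^1$ boundary ensure that $q(x_{j-1},v_j)$ is defined off the null set $\{v_j\cdot n=0\}$, and this is the same for both subsets. Since the definition also asks that $\partial\Omega_1$ be open, I note that the lemma implicitly takes $\partial\Omega_{1,b}$ open in $\partial\Omega$, as in Assumption \ref{assumption of boundary}. I do not expect any genuine obstacle here; the inclusion-reversal of the complements is the whole content.
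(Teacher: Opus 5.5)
Your proposal is correct and matches the paper's proof, which consists of exactly the inclusion $\partial\Omega\setminus\partial\Omega_{1,b}\subseteq\partial\Omega\setminus\partial\Omega_{1,a}$ followed by an appeal to Definition~\ref{definition 4.1}. You have simply made explicit what that appeal uses: the same $m$ and $\zeta$, the pointwise domination of the nonnegative indicator factors in the iterated integral, and the smaller set of base points $x$. Your observation that $\partial\Omega_{1,b}$ must also be taken open in $\partial\Omega$ for the definition to apply is fair, and the paper leaves it implicit.
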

	\begin{proof}
		Since $\partial \Omega_{1,a}\subseteq\partial \Omega_{1,b}$, $\big(\partial\Omega\setminus\partial \Omega_{1,b}\big)\subseteq\big(\partial\Omega\setminus\partial \Omega_{1,a}\big)$. Then, by \textbf{definition \ref{definition 4.1}}, the proof is complete.
	\end{proof}
	\begin{proposition}\label{proposition 5.1}
		Let $\Omega\subset\mathbb{R}^3$ be a bounded domain with $C^1$ boundary ,and let $\partial \Omega_1$ be an open subset of $\partial\Omega$. If there exist $\partial C_1$, $\partial C_2 \subset\partial\Omega\setminus\partial \Omega_1$ and $0<\zeta<1$ such that 
		\begin{enumerate}
			\item $\partial C_1\cup\partial C_2 =\partial\Omega\setminus\partial \Omega_1$.
			\item $\di\int_{v\cdot n(x)>0}\textbf{1}_{\partial\Omega\setminus\partial \Omega_1}(q(x,v))m(v)v\cdot n(x)dv\leq \zeta \quad\forall x\in\partial C_1$.
			\item $\di\int_{v\cdot n(x)>0}\textbf{1}_{\partial C_2}(q(x,v))m(v)v\cdot n(x)dv=0 \quad\forall x\in\partial C_2$.
		\end{enumerate} Then, $\Omega$ is unsealed with respect to $\partial \Omega_1$. 
	\end{proposition}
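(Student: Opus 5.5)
We aim to verify Definition \ref{definition 4.1} with $m=2$ and the constant $\zeta$ given in hypothesis 2. The idea is to split the two-step visibility integral according to where the first back-traced point $x_1=q(x_0,v_1)$ lands. Fix $x_0\in\partial\Omega\setminus\partial\Omega_1$ and write
\begin{align*}
I(x_0)\coloneq\int\!\!\int \textbf{1}\,d\sigma^1_{1,2}\,d\sigma^1_{0,1}
=\int_{v_1\cdot n(x_0)>0}\textbf{1}_{\partial\Omega\setminus\partial\Omega_1}(x_1)\,\Psi(x_1)\,m(v_1)v_1\cdot n(x_0)\,dv_1,
\end{align*}
where $\Psi(y)\coloneq\int_{v\cdot n(y)>0}\textbf{1}_{\partial\Omega\setminus\partial\Omega_1}(q(y,v))m(v)v\cdot n(y)\,dv$. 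The normalization of $m$ gives $0\le\Psi\le 1$.

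\textbf{Case $x_0\in\partial C_1$.} We bound $\Psi(x_1)\le 1$. Then hypothesis 2 gives $I(x_0)\le\Psi(x_0)\le\zeta$.

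\textbf{Case $x_0\in\partial C_2$.} By hypothesis 3, the set of $v_1$ with $x_1\in\partial C_2$ has measure zero with respect to the weight $m(v_1)v_1\cdot n(x_0)$. Using hypothesis 1, up to a null set we have $\textbf{1}_{\partial\Omega\setminus\partial\Omega_1}(x_1)=\textbf{1}_{\partial C_1}(x_1)$. On that set, hypothesis 2 gives $\Psi(x_1)\le\zeta$, so $I(x_0)\le\zeta\,\Psi(x_0)\le\zeta$.

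Hence $I(x_0)\le\zeta<1$ for every $x_0\in\partial\Omega\setminus\partial\Omega_1$, which is exactly \eqref{1.3} with $m=2$. If the convention requires $x_0\in\partial C_1\cap\partial C_2$ to be treated separately, either case applies, since both bounds hold.

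The main obstacle is a measure-theoretic point: we need $\Psi(x_1)$ to be evaluated at points $x_1\in\partial C_1$ in the a.e. sense. To handle it, we would note that the map $v_1\mapsto x_1=q(x_0,v_1)$ pushes the measure $m(v_1)v_1\cdot n(x_0)\,dv_1$ to a measure on $\partial\Omega$ that is absolutely continuous with respect to $dS$. Since hypothesis 2 holds for all points of $\partial C_1$, the pointwise bound is then enough. We also need the nesting order of the iterated integrals in Definition \ref{definition 4.1} to match the composition above, which we would check directly from the definition of $d\sigma^1_{j-1,j}$. Finally, Remark \ref{remark 4.1} lets us replace $m$ by $m_T$ if needed, without changing the argument.
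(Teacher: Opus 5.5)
Your argument is correct and is essentially the paper's proof: take $m=2$ and split according to whether $x_0\in\partial C_1$ (bound the inner integral by $1$ and apply hypothesis 2 at $x_0$) or $x_0\in\partial C_2$ (hypothesis 3 removes the contribution from $x_1\in\partial C_2$, and hypothesis 2 at $x_1\in\partial C_1$ bounds the inner integral by $\zeta$). The measure-theoretic obstacle you anticipated does not arise: hypothesis 2 holds at every point of $\partial C_1$, and hypothesis 3 is already a statement about the weighted velocity measure, so you do not need absolute continuity of the push-forward to $\partial\Omega$.
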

	\begin{proof}
		For convenience, we use same symbol in \textbf{section \ref{section 4}}
		\begin{align*}
			&\displaystyle\int \textbf{1}_{\partial\Omega\setminus\partial \Omega_1}(x_2)d\sigma_{2,1}\textbf{1}_{\partial\Omega\setminus\partial \Omega_1}(x_1)d\sigma_{0,1}\\
			&=\displaystyle\int \textbf{1}_{\partial\Omega\setminus\partial \Omega_1}(x_2)d\sigma_{2,1}\textbf{1}_{\partial C_1}(x_1)d\sigma_{0,1}+\displaystyle\int \textbf{1}_{\partial C_1}(x_2)d\sigma_{2,1}\textbf{1}_{\partial C_2}(x_1)d\sigma_{0,1}+\displaystyle\int \textbf{1}_{\partial C_2}(x_2)d\sigma_{2,1}\textbf{1}_{\partial C_2}(x_1)d\sigma_{0,1}\\
			&\leq\displaystyle\int \textbf{1}_{\partial\Omega\setminus\partial \Omega_1}(x_2)d\sigma_{2,1}\textbf{1}_{\partial C_1}(x_1)d\sigma_{0,1}+\displaystyle\int \textbf{1}_{\partial C_2}(x_1)d\sigma_{0,1}.
		\end{align*}
		If $x\in\partial C_2$, then 
		\begin{align*}
			\displaystyle\int \textbf{1}_{\partial\Omega\setminus\partial \Omega_1}(x_2)d\sigma_{2,1}\textbf{1}_{\partial C_1}(x_1)d\sigma_{0,1}\leq \zeta.
		\end{align*}
		Otherwise, 
		\begin{align*}
			\displaystyle\int \textbf{1}_{\partial\Omega\setminus\partial \Omega_1}(x_2)d\sigma_{2,1}\textbf{1}_{\partial\Omega\setminus\partial \Omega_1}(x_1)d\sigma_{0,1}&\leq\displaystyle\int \textbf{1}_{\partial\Omega\setminus\partial \Omega_1}(x_2)d\sigma_{2,1}\textbf{1}_{\partial C_1}(x_1)d\sigma_{0,1}+\displaystyle\int \textbf{1}_{\partial C_2}(x_1)d\sigma_{0,1}\\
			&\leq\int\displaystyle\textbf{1}_{\partial C_1}(x_1)d\sigma_{0,1}+\displaystyle\int \textbf{1}_{\partial C_2}(x_1)d\sigma_{0,1}\\
			&=\int_{v\cdot n(x,v)>0}\textbf{1}_{\partial\Omega\setminus\partial \Omega_1}(q(x,v))m(v)v\cdot n(x)dv\\
			&\leq \zeta.
		\end{align*}
		So
		\begin{align}
			\displaystyle\int \textbf{1}_{\partial\Omega\setminus\partial \Omega_1}(x_2)d\sigma_{2,1}\textbf{1}_{\partial\Omega\setminus\partial \Omega_1}(x_1)d\sigma_{0,1}\leq\zeta \quad\forall x\in\partial\Omega\setminus\partial \Omega_1.
		\end{align}
	\end{proof}

	\bibliographystyle{plain}

\begin{thebibliography}{99}
		
		\bibitem{A rarefied gas flow induced by a temperature field}
		Kazuo Aoki, Yoshio Sone, and Yorifumi Waniguchi. "A rarefied gas flow induced by a temperature field: Numerical analysis of the flow between two coaxial elliptic cylinders with different uniform temperatures." Computers \& Mathematics with Applications 35.1-2 (1998): 15-28.
		
		
		\bibitem{Bennett M J Tompkins F C 1957}
		M, J Bennett,, F. C Tompkins. (1957). Thermal transpiration: application of Liang's equation. Transactions of the Faraday Society, 53, 185-192.
		
		\bibitem{Bird G A 1994}
		Graeme Austin Bird. (1994). Molecular gas dynamics and the direct simulation of gas flows. Oxford university press.
		
		\bibitem{Caflisch ; Russel E.}
		Russel Caflisch. The Boltzmann equation with a soft potential. I. Linear, spatiallyhomogeneous. Comm. Math. Phys. 74 (1980), no. 1, 71–95.
		
		\bibitem{Rojas Cardenas Marcos}
		Rojas Cardenas, Marcos, et al. "Thermal transpiration flow: A circular cross-section microtube submitted to a temperature gradient." Physics of Fluids 23.3 (2011).
		
		\bibitem{Cercignani Carlo Reinhard Illner}
		Cercignani, Carlo, Reinhard Illner, and Mario Pulvirenti. The mathematical theory of dilute gases. Vol. 106. Springer Science Business Media, 2013.
		
		
		\bibitem{Chen C C Chen I K Liu T P Sone Y 2007}
		Chiun-Chuan Chen, I-Kun Chen, Tai-Ping Liu, Yoshio Sone. (2007). Thermal transpiration for the linearized Boltzmann equation. Communications on Pure and Applied Mathematics: A Journal Issued by the Courant Institute of Mathematical Sciences, 60(2), 147-163.
		
		\bibitem{Choulli 1999}
		Mourad Choulli, Plamen Stefanov. An inverse boundary value problem for the stationary transport equation. \emph{Osaka J. Math}. \text{36} (1999), no.1, 87-104.
		
		\bibitem{Chen I-Kun 2024}
		I-Kun Chen, Chun-Hsiung Hsia, Kawagoe, Daisuke, Jhe-Kuan Su. Geometric effects on $W^{1,p}$ regularity of the stationary linearized Boltzmann equation, Indiana University mathematical Journal 74(6) 1749-1793.
		
		\bibitem{Chen I K Liu T P Takata S 2014}
		I-Kun Chen, Tai-Ping Liu,  Shigeru Takata. (2014). Boundary singularity for thermal transpiration problem of the linearized Boltzmann equation. Archive for Rational Mechanics and Analysis, 212(2), 575-595.
		
		\bibitem{Chen I 2025}
		I-Kun Chen, Chun-Hsiung Hsia, Kawagoe, Daisuke. (2025). On the existence and regularity of weakly nonlinear stationary Boltzmann equations: a Fredholm alternative approach. preprint arXiv:2501.02419.
		
		\bibitem{Crookes W 1874}
		William Crookes. (1874). XV. On attraction and repulsion resulting from radiation. Philosophical transactions of the Royal society of London, (164), 501-527.
		
		
		\bibitem{Duan R 2017}
		Renjun Duan, Feimin Huang, Yong Wang, Tong Yang. Global Well-Posedness of the Boltzmann Equation with Large Amplitude Initial Data. Arch Rational Mech Anal 225, 375–424 (2017).
		
		\bibitem{Duan R 2019}
		Renjun Duan, Feimin Huang, Yong Wang,  Zhu Zhang. Effects of Soft Interaction and Non-isothermal Boundary Upon Long-Time Dynamics of Rarefied Gas. \emph{Arch Rational Mech Anal}. \textbf{234} (2019), 925–1006.
		
		\bibitem{Esposito R 2013}
		Raffaele Esposito, Yan Guo, Chanwoo Kim, Rossana Marra. Non-Isothermal Boundary in the Boltzmann Theory and Fourier Law. Commun. Math. Phys. 323, 177–239 (2013).
		
		\bibitem{Evans Lawrence C 2022}
		Lawrence Craig Evans. Partial differential equations. Vol. 19. American mathematical society, 2022.
		
		\bibitem{elliptic problem}
		Gazzola, Filippo, Hans-Christoph Grunau, and Guido Sweers. Polyharmonic boundary value problems: positivity preserving and nonlinear higher order elliptic equations in bounded domains. Springer, 2010.
		
		
		\bibitem{Glassey Robert T 1996}
		Robert Theodore Glassey. The Cauchy problem in kinetic theory. Society for Industrial and Applied Mathematics, 1996.
		
		
		\bibitem{Grisvard Pierre.}
		Pierre Grisvard. Elliptic problems in nonsmooth domains. Society for Industrial and Applied Mathematics, 2011
		
		\bibitem{Guo X Singh D Murthy J  Alexeenko A A 2009}
		Xiaohui Guo, Dhruv Singh, Jayathi Murthy, Alina A. Alexeenko1. (2009). Numerical simulation of gas-phonon coupling in thermal transpiration flows. Physical Review E—Statistical, Nonlinear, and Soft Matter Physics, 80(4), 046310.
		
		\bibitem{Guo Y 2003}
		Yan Guo. Classical Solutions to the Boltzmann Equation for Molecules with an Angular Cutoff. Arch. Rational Mech. Anal. 169, 305–353 (2003).
		
		
		
		\bibitem{Han Y L Phillip Muntz E Alexeenko A Young M 2007}
		Yen-Lin Han, Eric Phillip Muntz, Alina A. Alexeenko, Marcus Young. (2007). Experimental and computational studies of temperature gradient–driven molecular transport in gas flows through nano/microscale channels. Nanoscale and Microscale Thermophysical Engineering, 11(1-2), 151-175.
		
		\bibitem{Mouhot Clément 2007}
		Clément Mouhot and Robert Michael Strain. "Spectral gap and coercivity estimates for linearized Boltzmann collision operators without angular cutoff." Journal de mathématiques pures et appliquées 87.5 (2007): 515-535.
		
		\bibitem{Kandlikar S 2006}
		Dongqing Li, Michael R. King, Satish G. Kandlikar, Stéphane Colin, Srinivas Garimella. (2006). Heat transfer and fluid flow in minichannels and microchannels. elsevier.
		
		\bibitem{Kawagoe Daisuke 2025}
		Kawagoe, Daisuke. $ H^ s_x $ regularity of solutions to the stationary Boltzmann equation with the incoming boundary condition,
		\emph{} preprint, arXiv:2507.18211.
		
		\bibitem{Knudsen M 1909}
		Martin Knudsen. (1909). Eine revision der gleichgewichtsbedingung der gase. Thermische molekularströmung. Annalen der Physik, 336(1), 205-229.
		
		\bibitem{Knudsen M 1910}
		Martin Knudsen. (1910). Thermischer molekulardruck der gase in röhren. Annalen der Physik, 338(16), 1435-1448.
		
		\bibitem{Liang S C 1953}
		Shih Ching Liang. (1953). On the calculation of thermal transpiration. The Journal of Physical Chemistry, 57(9), 910-911.
		
		
		\bibitem{Loyalka S K 1971}
		Shankar K. Loyalka. (1971). Kinetic theory of thermal transpiration and mechanocaloric effect. I. The Journal of Chemical Physics, 55(9), 4497-4503.
		
		\bibitem{Maxwell J C 1879}
		James Clerk Maxwell. (1879). VII. On stresses in rarified gases arising from inequalities of temperature. Philosophical Transactions of the royal society of London, (170), 231-256.
		
		\bibitem{Niimi H 1971}
		Hideyuki Niimi. (1971). Thermal creep flow of rarefied gas between two parallel plates. Journal of the Physical Society of Japan, 30(2), 572-574.
		
		
		
		\bibitem{Ohwada T Sone Y Aoki K 1989}
		Taku Ohwada, Yoshio Sone,  Kazuo Aoki. (1989). Numerical analysis of the Poiseuille and thermal transpiration flows between two parallel plates on the basis of the Boltzmann equation for hard‐sphere molecules. Physics of Fluids A: Fluid Dynamics, 1(12), 2042-2049.
		
		\bibitem{Setting}
		Sarantis Pantazisa, Steryios Narisa, Christos Tantosa, Dimitris Valougeorgisa, Julien Andréb, Francois Milletb, Jean Paul Perin (2013). Nonlinear vacuum gas flow through a short tube due to pressure and temperature gradients. Fusion Engineering and Design, 88(9-10), 2384-2387.
		
		\bibitem{Passian A Warmack R J Ferrell T L  Thundat T 2003}
		Ali Passian, Robert J. Warmack, Thomas L. Ferrell, Thomas Thundat. (2003). Thermal transpiration at the microscale: a Crookes cantilever. Physical review letters, 90(12), 124503.
		
		\bibitem{Radtke G A Hadjiconstantinou N G  Wagner W 2011}
		Gregg A. Radtke, Nicolas G. Hadjiconstantinou, Wolfgang Wagner. (2011). Low-noise Monte Carlo simulation of the variable hard sphere gas. Physics of fluids, 23(3).
		
		\bibitem{Renardy}
		Michael Renardy, Robert C. Rogers. An introduction to partial differential equations. New York, NY: Springer New York, 2004.
		
		\bibitem{Reynolds O 1879}
		Osborne Reynolds. (1879). Xviii. on certain dimensional properties of matter in the gaseous state.-Part I. Experimental researches on thermal transpiration of gases through porous plates and on the laws of transpiration and impulsion, including an experimental proof that gas is not a continuous plenum.-part ii. on an extension of the dynamical theory of gas, which includes the stresses, tangential and normal, caused by a varying condition of gas, and affords an explanation of the phenomena of transpiration and impulsion. Philosophical Transactions of the Royal Society of London, (170), 727-845.
		
		
		\bibitem{Sone Y}
		Yoshio Sone, (Ed.). (2007). Molecular gas dynamics: theory, techniques, and applications. Boston, MA: Birkhäuser Boston.
		
		\bibitem{Strongrich A Pikus A Sebastiao I B  Alexeenko A}
		Andrew Strongrich, Alexander Pikus, Inderjit B. Sebastiao, Alina Alexeenko. (2017). Microscale in-plane knudsen radiometric actuator: Design, characterization, and performance modeling. Journal of Microelectromechanical Systems, 26(3), 528-538.
		
		\bibitem{Su W Zhu L Wang P Zhang Y W L}
		Wei Su, Zhu, Liang Zhu, Peng Wang, Yonghao Zhang, Lei Wu. (2020). Can we find steady-state solutions to multiscale rarefied gas flows within dozens of iterations?. Journal of Computational Physics, 407, 109245.
		
		\bibitem{Tang G H Zhang Y H Gu X J Barber R W  Emerson D R 2009}
		Gui Hua Tang, Yonghao Zhang, Xiao Jun Gu, Robert W. Barber, David R.  (2009). Lattice Boltzmann model for thermal transpiration. Physical Review E—Statistical, Nonlinear, and Soft Matter Physics, 79(2), 027701.
		
		\bibitem{Takaishi T Sensui Y 1963}
		Tetsuo Takaishi, Yoshihiko Sensui. (1963). Thermal transpiration effect of hydrogen, rare gases and methane. Transactions of the Faraday Society, 59, 2503-2514.
		
		\bibitem{Takata S Funagane H 2011}
		Shigeru Takata, Hitoshi Funagane. (2011). Poiseuille and thermal transpiration flows of a highly rarefied gas: over-concentration in the velocity distribution function. Journal of fluid mechanics, 669, 242-259.
		
		
		\bibitem{Tcheremissine F 2005 May}
		Felix G. Tcheremissine. (2005, May). Direct numerical solution of the Boltzmann equation. In AIP Conference Proceedings (Vol. 762, No. 1, pp. 677-685). American Institute of Physics.
		
		
		\bibitem{Ukai Seiji Solutions of the Boltzmann equation}
		Seiji Ukai. "Solutions of the Boltzmann equation." Studies in Mathematics and its Applications. Vol. 18. Elsevier, 1986. 37-96.
		
		
		\bibitem{Vargo S E Muntz E P Shiflett G R Tang W C 1999}
		Stephen E. Vargo, E. Phillip Muntz, Gerald R. Shiflett, William C. Tang. (1999). Knudsen compressor as a micro-and macroscale vacuum pump without moving parts or fluids. Journal of Vacuum Science Technology A: Vacuum, Surfaces, and Films, 17(4), 2308-2313.
		
		\bibitem{Kuan-Hsiang Wang 2024}
		Kung-Chien Wu, Kuan-Hsiang Wang. H\" older regularity of solutions of the steady Boltzmann equation with soft potentials,
		\emph{} preprint arXiv:2409.12513.
		
		
		\bibitem{Wu L Reese J M Zhang Y 2014}
		Lei Wu, Reese, Jason M. Reese, Zhang, Yonghao Zhang. (2014). Solving the Boltzmann equation deterministically by the fast spectral method: application to gas microflows. Journal of Fluid Mechanics, 746, 53-84.
		
		
		
	\end{thebibliography}
	
\end{document}